\documentclass[10pt]{amsart}
\usepackage{fullpage}
\usepackage{amsfonts, amssymb, amsthm, amsmath, amscd, euscript}
\usepackage[all]{xypic}
\usepackage{comment}

\usepackage{tikz-cd}
\usepackage{textcomp}
\usepackage{rotating}
\usepackage{mathbbol}
\usepackage{xcolor}

\numberwithin{equation}{section}
\theoremstyle{plain}
\newtheorem{theorem}[equation]{Theorem}
\newtheorem{proposition}[equation]{Proposition}
\newtheorem{corollary}[equation]{Corollary}
\newtheorem{lemma}[equation]{Lemma}

\theoremstyle{definition}
\newtheorem{definition}[equation]{Definition}
\newtheorem{defn}[equation]{Definition}

\newtheorem{notation}[equation]{Notation}

\newtheorem{example}[equation]{Example}

\newtheorem{remark}[equation]{Remark}

\newcommand{\beq}{\begin{equation}}
\newcommand{\eeq}{\end{equation}}

\newcommand{\mb}{\mathbb}
\newcommand{\mc}{\mathcal}
\newcommand{\mf}{\mathfrak}

\newcommand{\kk}{{\Bbbk}}

\newcommand{\CC}{{\Bbbk}}

\newcommand{\NN}{\mb Z_{\ge0}} 

\newcommand{\ZZ}{\mb{Z}}

\newcommand{\DMO}{\DeclareMathOperator}
\DMO{\GK}{GKdim}
\DMO{\PGK}{PGKdim}
\DMO{\gr}{gr}
\DMO{\Der}{Der}
\DMO{\trdeg}{trdeg}
\DMO{\Kdim}{Kdim}
\DMO{\codim}{codim}
\DMO{\ord}{ord}
\DMO{\FPol}{Det}

\DMO{\Res}{Res}
\DMO{\MSpec}{MSpec}
\DMO{\Sa}{S}
\DMO{\Ua}{U}
\DMO{\Core}{Core}
\DMO{\rk}{rk}
\DMO{\Ker}{\ker}
\DMO{\End}{End}
\DMO{\Aut}{Aut}
\DMO{\im}{im}
\DMO{\Ann}{Ann}
\DMO{\supp}{supp}
\DMO{\ldeg}{ldeg}
\DMO{\sdeg}{gaps}
\DMO{\CoreP}{P}
\DMO{\diag}{diag}
\DMO{\Sat}{Sat}
\DMO{\Sym}{Sym}
\DMO{\GKdim}{GKdim}

\DMO{\LslT}{\mathrm{L}\mf{sl}_2}
\DMO{\Lg}{\mathrm{L} \mf {g}}

\DMO{\LslTH}{\mathrm{L}\mf{sl}_3}
\DMO{\GslT}{\mathrm{L}\mathrm{SL}(2)}
\DMO{\Sg}{\mf S}
\newcommand{\Lh}{{\mathrm L} h}
\DMO{\Eu}{Eu}
\DMO{\lie}{lie}
\DMO{\PSpec}{PSpec}
\DMO{\len}{len}
\DMO{\Spec}{Spec}
\DMO{\PhiP}{\underline{\Phi}}

\newcommand{\Vir}{V\!ir}

\newcommand{\del}{\partial}
\newcommand{\ssm}{\setminus}

\newcommand{\Lmf}{\mathrm{L}\mathfrak}

\DMO{\ev}{ev}
\DMO{\lyr}{filt}
\DMO{\Ryn}{Reyn}

\newcommand{\U}{{\rm U}}

\newcommand{\normal}{\vartriangleleft}
\providecommand{\abs}[1]{|#1|}

\DMO{\res}{res}

\renewcommand{\baselinestretch}{1.49}

\newcommand{\wh}{\widehat}
\DMO{\I}{I}

\DMO{\Stab}{Stab}

\begin{document}

\title{Ideals in Enveloping Algebras of Loop Algebras}
\author{Alexey V. Petukhov and Susan J. Sierra}
\date{\today}

\address{Sierra: School of Mathematics and Maxwell Institute, University of Edinburgh, Edinburgh EH9 3FD, United Kingdom}

\email{s.sierra@ed.ac.uk}

\address{Petukhov: Institute for Information Transmission Problems, Bolshoy Karetniy 19-1, Moscow 127994, Russia}
\email{alex-{}-2@yandex.ru}

\renewcommand{\baselinestretch}{1.49}

\begin{abstract}
 Let $\mf g$ be a finite-dimensional simple complex Lie algebra, and consider the loop algebra $\Lg = \mf g[s, s^{-1}]$
 and the affine Lie algebra $\wh{\mf g}$, which is an extension of $\Lg$ by a central element $c$. 
 It is known that the rings $\Ua(\wh{\mf g})/(c-\lambda)$ 
 are simple for any nonzero scalar $\lambda$.
 On the other hand, the universal enveloping algebra 
 $\Ua(\Lg) \cong \Ua(\wh{\mf g})/(c)$ has many two-sided ideals,
 and 
 in this paper we 
 investigate their structure.  
 We show  that the two-sided structure of $\Ua(\Lg)$ is both intricate and well-behaved.
Specifically, $\Ua(\Lg)$ does not satisfy the ascending chain condition on two-sided ideals, but  the two-sided ideals still have a nice structure: 
there is a canonical nested collection of ideals $I_n$, parameterised by positive integers, so that any nonzero two-sided ideal of $\Ua(\Lg)$ contains all $I_n$ for $n \gg 0$. The ideals $I_n$ can be thought of as universal annihilators of natural classes of finite-dimensional representations of $\Lg$. 
We also study the Poisson structure of the symmetric algebra $\Sa(\Lg)$ and give applications to the representation theory of $\Lg$.

\end{abstract}

\keywords{loop Lie algebra, universal enveloping algebra,  two-sided ideals, Poisson ideals, countable separation property, Dixmier-Moeglin equivalence}

\subjclass{16D25, 16D70, 17B35, 17B67 (primary) 13B25, 13N15, 17B08, 17B63, 20C32 (secondary)}
\maketitle

\tableofcontents

\section{Introduction}\label{INTRO2}

In the introduction, let  the ground field be  $\kk = \mb C$; in the body of the paper we will allow $\kk$ to be
an arbitrary algebraically closed field of characteristic 0.
Let $\mf g$ be a finite-dimensional simple Lie algebra and consider the {\em loop algebra} $\Lg = \mf g \otimes \kk[s,s^{-1}]$ and its universal enveloping algebra $\Ua(\Lg)$.
It is well known that $\Ua(\Lg)$ is neither left nor right noetherian,  but the two-sided structure of $\Ua(\Lg)$ is, a priori, much less clear.
An illustration of the subtleties is that if $\widehat{\mf g}$ is the affine Lie algebra associated to $\mf g$, so $\wh{\mf g}$ has a central element $c$ with $\wh{\mf g}/(c) \cong \Lg$,  for any $\lambda \in \kk$ then $\Ua(\wh{\mf g})/(c-\lambda)$ fails to be left or right noetherian (see the beginning of subsection~\ref{ss:nonnoeth}).
On the other hand, by \cite[Theorem~1.1(1)]{BS} if $\lambda \neq 0$ then $\Ua(\wh{\mf g})/(c-\lambda)$ is a simple ring.

This paper is devoted to the two-sided structure of $\Ua(\Lg) \cong \Ua(\wh{\mf g})/(c)$.
In contrast to nontrivial central quotients of $\Ua(\wh{\mf g})$, we will see that $\Ua(\Lg)$ has many two-sided ideals, and in fact (Theorem~\ref{thm:nonnoeth}) does not satisfy the ascending chain condition on two-sided ideals.
Paradoxically, however, our main result is that two-sided ideals of $\Ua(\Lg)$ also have a great deal of structure.

This structure is intimately related to the representation theory of $\Lg$.
Recall that if $V$ is a representation of $\mf g$ and $a \in \kk^\times$, there is a natural {\em evaluation representation} $V_a$ of $\Lg$, where $gs^i \in \Lg$ acts on $V$ by $a^ig$.
By construction, the action of $\Lg$ on $V_a$ factors through the {\em universal evaluation homomorphism}
\begin{align*} \Phi_1: \Ua(\Lg) & \to \Ua(\mf g) \otimes \kk[t,t^{-1}] \\
    g s^i & \mapsto g t^i
\end{align*}
Define $\Phi_n$ to be the composition
\[\xymatrix{
\Ua(\Lg) \ar@/_1pc/[rr]_{\Phi_n} \ar[r]^{\Delta^n} & \Ua(\Lg)^{\otimes n} \ar[r]^{(\Phi_1)^{\otimes n}} & (\Ua(\mf g)[t, t^{-1}])^{\otimes n}, 
}\]
where $\Delta$ is the coproduct on $\Ua(\Lg)$.
Our main result is that the kernels of the $\Phi_n$ are cofinal in the ideal lattice of $\Ua(\Lg)$.
That is:
\begin{theorem}[Theorem~\ref{TmainI+}]\label{ithm:main}
Let $K$ be a nonzero ideal of $\Ua(\Lg)$.
Then there exists $n \geq 1$ so that $K \supseteq \ker \Phi_n$.
Further, since $\ker \Phi_n \supseteq \ker \Phi_{n+1}$, thus $K \supseteq \ker \Phi_m$ for all $m \geq n$.
\end{theorem}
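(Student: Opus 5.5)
The plan is to pass to the associated graded Poisson algebra $\Sa(\Lg)$, prove a Poisson version of the statement there, and then lift it back to $\Ua(\Lg)$.

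\textbf{Geometric description of $\ker\gr\Phi_n$.} Filter $\Ua(\Lg)$ by PBW degree and $(\Ua(\mf g)[t,t^{-1}])^{\otimes n}$ compatibly. The associated graded map of $\Phi_n$ is the Poisson homomorphism
\[\Sa(\Lg)\to \Sa(\mf g)[t_1^{\pm1},\dots,t_n^{\pm1}],\qquad gs^k\mapsto \textstyle\sum_i g\,t_i^k .\]
It is comorphism to the map $(\mf g^*\times\kk^\times)^n\to \Lg^*$ sending $(\chi_i,a_i)_i$ to the functional $gs^k\mapsto\sum_i a_i^k\chi_i(g)$. Let $X_n$ be the closure of its image, and write $I(X_n)\subseteq\Sa(\Lg)$ for the ideal of functions vanishing on $X_n$.

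A point $\chi\in\Lg^*$ is the same as a sequence $(\chi_k)_{k\in\ZZ}$ in $\mf g^*$. Points of the image are the sequences that are sums of at most $n$ exponentials. So I expect $X_n$ to be cut out by the vanishing of all $(n+1)$-minors of the block Hankel matrices $(\chi_{j+l}(g))$, up to the invertibility conditions coming from $a_i\neq0$. I would first check that $\gr(\ker\Phi_n)=I(X_n)$. This means showing that the symbols of elements of $\ker\Phi_n$ fill out the whole kernel of $\gr\Phi_n$. I would try to do this by a dimension/Hilbert-series count on each filtered piece, using the fact that $\Phi_n$ is $\mf g$-equivariant and torus-graded.

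\textbf{Poisson version.} The goal is: every nonzero Poisson ideal $P\subseteq\Sa(\Lg)$ satisfies $V(P)\subseteq X_n$ for some $n$. Take $0\neq f\in P$. It involves only the variables $gs^k$ with $|k|\le N$. Apply iterated brackets with the elements $hs^m$; these act by shifting the sequence $(\chi_k)$ and by the coadjoint action of $\mf g$. This produces, inside $P$, nonzero elements of a controlled shape. I then want to show that a point $\chi$ whose block Hankel matrix has infinite rank has a Poisson-orbit closure (under the loop group and the rescalings $s\mapsto\lambda s$) that is large enough to meet the complement of $V(f)$. Hence every $\chi\in V(P)$ has finite Hankel rank, and the rank is bounded in terms of $N$. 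Since $V(P)$ is closed and the rank bound is uniform, this gives $V(P)\subseteq X_n$ with $n$ depending only on $N$ and $\dim\mf g$.

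Finally I would show $P\supseteq I(X_n)$, and not merely $\sqrt P\supseteq I(X_n)$. The idea is to replace $n$ by a larger $m$ and use that $I(X_m)$ is generated by explicit minors, which can be produced directly from $f$ by bracketing. If that fails, I would settle for $P\supseteq I(X_n)^r$ and absorb the power in the lifting step.

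\textbf{Lifting to $\Ua(\Lg)$.} Let $K\neq0$. Then $\gr K$ is a nonzero Poisson ideal, so $\gr K\supseteq I(X_n)=\gr\ker\Phi_n$ (possibly up to a power). Knowing $\gr K\supseteq\gr\ker\Phi_n$ does not by itself give $K\supseteq\ker\Phi_n$. I would instead work in $A_m=\Ua(\Lg)/\ker\Phi_m$ for $m\ge n$. This ring embeds in the domain $(\Ua(\mf g)[t^{\pm1}])^{\otimes m}$, so it is prime with finite GK-dimension; moreover $\gr A_m=\Sa(\Lg)/I(X_m)$ is a finitely generated commutative ring.

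Suppose $K\not\supseteq\ker\Phi_m$ for every $m$. Then the image of $K$ in each $A_m$ is a proper nonzero ideal. I would compare the graded ideals $\gr(K\cap\ker\Phi_m)$ and $\gr K\cap I(X_m)$ and use the $\gr$-containment above to conclude that $\ker\Phi_m\subseteq K+\ker\Phi_{m'}$ for suitable $m'>m$. The argument should then close via the inclusion principle: if $J\subseteq I$ and $\gr J=\gr I$, then $J=I$.

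\textbf{Expected obstacles.} This lifting step is where I expect the main difficulty. Controlling $\gr$ of intersections of ideals needs precise knowledge of $\gr\ker\Phi_m$, and that is exactly the identity $\gr(\ker\Phi_n)=I(X_n)$ from the first step. The orbit-closure argument in the Poisson step is the second hardest part.
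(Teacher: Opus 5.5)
\textbf{Verdict: there is a genuine gap.} The lifting step from $\Sa(\Lg)$ back to $\Ua(\Lg)$, which you flag as the main obstacle, does not work, and filtration arguments alone cannot fix it.

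\textbf{Why the lifting fails.} A containment $\gr K\supseteq\gr\ker\Phi_n$ carries no information about whether $K\supseteq\ker\Phi_n$. Already in $\Ua(\mf g)$, the ideals $(\Omega-c)$ and $(\Omega-c')$ with $c\neq c'$ have the same associated graded ideal $\omega\,\Sa(\mf g)$, yet neither contains the other. Your proposed mechanism does not get around this:
\begin{itemize}
\item $\gr(K\cap\ker\Phi_m)$ can be much smaller than $\gr K\cap\gr\ker\Phi_m$.
\item The principle ``$J\subseteq I$ and $\gr J=\gr I$ imply $J=I$'' needs, as input, the very containment you are trying to prove.
\item No argument is given for $\ker\Phi_m\subseteq K+\ker\Phi_{m'}$, and that containment would not yield $\ker\Phi_m\subseteq K$ anyway.
\end{itemize}
Also, $\gr A_m=\Sa(\Lg)/\ker\PhiP_m$ is not finitely generated. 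Since $\ker\Phi_m$ has no $\NN^d$-components with $|q|\le m$, the degree-one part of $\gr A_m$ is all of $\Lg$. By Theorem~\ref{thm:nonnoeth}, even $A_1$ fails the ascending chain condition on two-sided ideals, so noetherian-style arguments are unavailable.

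\textbf{The Poisson step has a parallel weakness.} An orbit-closure analysis of points only controls $V(P)$, that is, $\sqrt P$. Upgrading $\sqrt P\supseteq\ker\PhiP_n$ to $P\supseteq\ker\PhiP_m$ needs an honest-membership statement. Your claim that $\ker\PhiP_m$ is generated by minors obtainable from $f$ by bracketing is unsupported. (Your first step, $\gr\ker\Phi_n=\ker\PhiP_n$, is true: under the $\Theta$-maps both $(\ker\Phi_n)_q$ and $(\ker\PhiP_n)_q$ correspond to $\I(\Delta^{|q|}_n)\cap\mc Z_q^{\Sg_q}$. It just does not help with the lifting.)

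\textbf{The missing idea: never pass to the associated graded.} Intersect $K$ with the commutative subalgebra $\Ua(E)$, where $E=e\otimes\kk[s^{\pm1}]$ for a highest weight vector $e$.
\begin{itemize}
\item Because $E$ is abelian, $\Ua(E)=\Sa(E)$ is a genuine polynomial ring inside $\Ua(\Lg)$.
\item $K\cap\Sa(E)$ is an honest ideal of this ring. It is nonzero because $\Ua(\Lg)/K$ has finite GK-dimension \cite{BS}, and it is stable under the adjoint action of $\mathrm{L}\mf h$.
\item Via the $\Theta$-maps it corresponds to an $\Sg_\infty$-ideal of $\mc Z_\infty$. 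There, the Nagpal--Snowden result gives \emph{actual} membership of some $\FPol_k$, not merely radical membership.
\item Combined with Cohen's finite generation, this puts the discriminants $D_\ell$ themselves in $K$ for $\ell\gg0$.
\end{itemize}

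\textbf{The remaining step is an exact computation in $\Ua(\Lg)$.} The component $(\ker\Phi_{dk})_q$ corresponds to $\I(\Delta^{|q|}_{dk})\cap\mc Z_q^{\Sg_q}$. By Lovett's theorem, this is generated by $\Sg_{|q|}$-translates of $\FPol_{dk+1}$. Choose the basis of $\mf g$ to consist of $G$-conjugates of $e$. By pigeonhole, one block of variables, attached to some $b_\beta$, contains $k+1$ of the $dk+1$ indices, so the corresponding $\FPol_{k+1}$ divides the generator. Conjugating $e$ to $b_\beta$ shows these elements lie in the two-sided ideal generated by $D_{k+1}$. Hence $K\supseteq(D_{k+1})\supseteq\ker\Phi_{dk}$, and no lifting problem ever arises.
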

This result provides a precise version for $\Ua(\Lg)$ of the general countable separation property for associative algebras, see~\cite{Irving1979, Petukhov2025}.

Besides giving insight into the two-sided structure of $\Ua(\Lg)$, Theorem~\ref{ithm:main} has applications to the representation theory of $\Lg$.
As an immediate corollary, we see that if $W$ is a finite-dimensional representation of $\Lg$, then there exists $n \geq 1$ so that $(\ker \Phi_n) \cdot W = 0$.
Note that if $V^1, \dots, V^n$ are representations of $\mf g$ and $a_1, \dots, a_n \in \kk^\times$, then by construction
$(\ker \Phi_n)\cdot (V^1_{a_1} \otimes \cdots \otimes V^n_{a_n}) = 0$, but the statement for general $W$ does not seem to be previously known.

Furthermore, we are able to explicitly describe the ideals $\ker \Phi_n$ (see Lemma~\ref{lem:Phi evaluation} and Proposition~\ref{prop:kernel}) and as a result show (Corollary~\ref{cor:kernew})
that $0$ is the only element of $\Ua(\Lg)$ that annihilates all finite-dimensional irreducible representations of $\Ua(\Lg)$. 
By contrast, for $\Ua(\wh{\mf g})$ we have
\[(c) =  \bigcap \{ \Ann_{\Ua(\wh{\mf g})}(W) \mid \mbox{ $W$ is a finite-dimensional irreducible representation of $\wh{\mf g}$ } \}, \]
since all finite-dimensional irreducible representations of $\wh{\mf g}$ have trivial central character. 

Our main method for proving Theorem~\ref{ithm:main} is to intersect an  ideal  with a large commutative subalgebra of $\Ua(\Lg)$, and study the structure of the intersection, which is easier to work with as an ideal of a commutative ring.
Let $\mf b \subseteq \mf g$ be a Borel subalgebra and let $\mf h \subseteq \mf b$ be the associated Cartan subalgebra.
Let $e \in \mf b$ be a highest weight vector for the adjoint action of $\mf h$ and let $E = \mathrm{L} e = e \otimes \kk[s,s^{-1}] \subseteq \Lg$.
Since $E$ is abelian, the subalgebra $\Ua(E)$ of $\Ua(\Lg)$ is isomorphic to the symmetric algebra $\Sa(E)$,
and more explicitly 
\[ \Ua(E) \cong \Sa(E) \cong \kk[e_i \mid i \in \ZZ],\]
where $e_i = es^i \in E$.
Now let $ K  \neq (0)$ be an ideal of $\Ua(\Lg)$.
By \cite[Theorem~1.1(0)]{BS}, $\Ua(\Lg)/K$ has polynomial growth, that is finite GK-dimension,
and it follows that $K \cap \Sa(E) \neq (0)$.
Furthermore, $H := \mathrm{L} \mf h$ acts on $\Sa(E)$ via the adjoint action, and $K\cap \Sa(E)$ is closed under this action: we say that $K \cap \Sa(E)$ is an {\em $H$-ideal} of $\Sa(E)$.
To understand ideals of $\Ua(\Lg)$ we are thus naturally led to study the structure of (nontrivial) $H$-ideals of $\Sa(E)$.
(We note here that this structure is independent of the specific Lie algebra $\mf g$ with which we began.)

As an $H$-ideal of $\Sa(E)$ is also an $\mf h \cong \mf h \otimes 1$-subrepresentation, $H$-ideals of $\Sa(E)$ are automatically graded.
Furthermore, if we let $\mc Z_k := \kk[z_1^{\pm 1}, \dots, z_k^{\pm 1}]$, with the natural permutation action of the symmetric group $\Sg_k$,
then an easy computation (Lemma~\ref{Lthid}) gives that the bijective maps
\begin{align*} \Theta^k_\star: \Sa^k(E) &  \longleftrightarrow \mc Z_k^{\Sg_k}: \Theta_k^\star \\
 e_{i_1}e_{i_2} \dots e_{i_k} & \mapsto \Ryn_k(z_1^{i_1}z_2^{i_2} \dots z_k^{i_k}) = \frac{1}{k!} \sum_{\sigma \in \Sg_k} \sigma(z_1^{i_1}z_2^{i_2} \dots z_k^{i_k})
 \end{align*}
 intertwine $H$-subrepresentations of $\Sa^k(E)$ with ideals of $Z_k^{\Sg_k}$.
 (We caution the reader that a slightly more complicated notation is used for these maps in the body of the paper; see \eqref{Theta}.)
 This allows us to relate $H$-ideals of $\Sa(E)$ to ideals of $\mc Z_\infty = \bigcup_k \mc Z_k = \kk[z_1^{\pm 1}, z_2^{\pm 1}, \dots]$
which are invariant under the permutation action of the infinite symmetric group $\Sg_\infty$.
That is, there are {\em $\Theta$-maps}
\[ \Theta_\star: \{\mbox{ $H$-ideals of $\Sa(E)$ } \} \leftrightarrows \{ \mbox{ $\Sg_\infty$-invariant ideals of $\mc Z_\infty$ } \}: \Theta^\star\]
(see Definition~\ref{Dtheta} for the precise definitions).  

The $\Theta$-maps are not bijections, as the examples in Section~\ref{EXAMPLES} show,
but they are still extremely useful to our project, since $\Sg_\infty$-invariant ideals of $\mc Z_\infty$ have a great deal of structure.  
The main facts we use are summarised in:
\begin{theorem}\label{ithm:A}
\begin{enumerate}
    \item (Cohen) The lattice of $\Sg_\infty$-invariant ideals of $\mc Z_\infty$ satisfies the ascending chain condition.
    \item (Nagpal-Snowden) If $ J_\star \neq (0)$ is an $\Sg_\infty$-invariant ideal of $\mc Z_\infty$, then there exists $n \geq 1$ so that
  \[ \FPol_n := \prod_{1 \leq i< j \leq n} (z_i -z_j) \in J_\star.\]
\end{enumerate}
\end{theorem}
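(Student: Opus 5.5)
Both parts are attributed results: (1) is Cohen's theorem, and (2) is essentially Nagpal--Snowden. The paper will likely cite them, so my plan is to explain how each reduces to its source, transferring from polynomial rings to Laurent polynomial rings.

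\textbf{Part (1).} Cohen proved that $\kk[z_1, z_2, \dots]$ is $\Sg_\infty$-noetherian, and I would deduce the Laurent version by localization. Given an ascending chain $J^1 \subseteq J^2 \subseteq \cdots$ of $\Sg_\infty$-invariant ideals of $\mc Z_\infty$, set $P_\infty = \kk[z_1, z_2, \dots]$ and contract: $J^m \cap P_\infty$ is an $\Sg_\infty$-invariant ideal of $P_\infty$. Every ideal $J$ of $\mc Z_\infty$ is recovered from its contraction: any $f \in J$ lies in some $\mc Z_k$, and multiplying by a monomial $z_1^N\cdots z_k^N$ gives an element of $J \cap P_\infty$. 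Hence $J$ is the extension of $J \cap P_\infty$. So stabilization of the contracted chain (Cohen) forces stabilization of the original chain.

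\textbf{Part (2).} Let $J_\star \neq 0$ be $\Sg_\infty$-invariant. Pick $0 \neq f \in J_\star \cap \mc Z_k$ and clear denominators as above, so that $f$ is a nonzero polynomial in $z_1, \dots, z_k$.

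The key claim is: \emph{for $n$ large, the ideal of $\kk[z_1,\dots,z_n]$ generated by the $\Sg_n$-orbit of $f$ contains $\FPol_n$.} I would prove this via the Nullstellensatz, in the following steps.
\begin{enumerate}
\item Let $V \subseteq \kk^n$ be the zero locus of the orbit of $f$. If $x \in V$ has $m$ pairwise distinct coordinates with $m \geq k$, then by permuting we may place any $k$ distinct coordinate values $(x_{\sigma(1)},\dots,x_{\sigma(k)})$ into the first $k$ slots. Thus $f$ vanishes on all injective assignments of $k$ values drawn from the set of coordinates of $x$.
\item This cannot persist once the coordinate set is large. I would make this precise with a combinatorial Nullstellensatz argument: a nonzero polynomial of degree $d$ in each variable cannot vanish on $S^k$ minus its diagonals once $|S|$ is large compared to $d$ and $k$.
\item Conclude that every point of $V$ has at most $N$ distinct coordinates, for a fixed $N$. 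Then for $n > N$, $\FPol_n$ vanishes on $V$, since some two coordinates of each point coincide.
\item By the Nullstellensatz, $\FPol_n^r$ lies in the orbit ideal for some $r$.
\end{enumerate}

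\textbf{Main obstacle.} The hard step is removing the radical, i.e.\ getting $\FPol_n$ itself rather than a power $\FPol_n^r$. This is precisely the content of Nagpal--Snowden's result. One possible route is to note that $\FPol_{n'}$ for $n' \gg n$ generates, via symmetrization, elements of the form $\FPol_n \cdot g$. I would therefore rely on their theorem rather than reprove it, after verifying that the passage to Laurent polynomials is harmless: contraction to $P_\infty$ is nonzero, and the inclusion $P_\infty \subseteq \mc Z_\infty$ carries $\FPol_n$ to itself.
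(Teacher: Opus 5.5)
Part (1) is fine. The paper simply cites Cohen's theorem (Theorem~\ref{thm:Cohen}) directly for $\mc Z_\infty$. Your contraction argument transfers it correctly from the polynomial ring: every ideal of $\mc Z_\infty$ is the extension of its intersection with $\kk[z_1,z_2,\dots]$, because monomials are units. Your reduction of (2) to the polynomial ring by clearing denominators is also exactly what the paper does in Theorem~\ref{thm:NS}.

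The gap is in part (2). Your Steps 1--4 are correct (Step 2 works once $|S|\ge d+k$). But they only yield $\FPol_n^r$ in the orbit ideal, i.e.\ $\FPol_n\in\sqrt{J_\star}$. This is the a priori weaker radical-level statement, essentially what Theorem~\ref{thm:Lov} already gives. Passing from $\FPol_n^r\in J_\star$ to $\FPol_{n'}\in J_\star$ for some $n'$ is the entire content of (2), and you defer it to Nagpal--Snowden. So either you are citing the result, and the Nullstellensatz detour is superfluous, or the proof is missing its key step. Your suggested route does not close the gap. Knowing that symmetrizations of $\FPol_{n'}$ produce multiples of $\FPol_n$ shows that the $\Sg_\infty$-ideal generated by $\FPol_{n'}$ lies in the one generated by $\FPol_n$, which is the wrong direction.

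The missing idea, which the paper's Proposition~\ref{P:NS} supplies without any geometry, is a determinant trick combined with induction over an enlarging coefficient ring.

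\textbf{One variable.} Let $f=f_0+\dots+f_dz_1^d\in A[z_1]$ over an \emph{arbitrary} commutative ring $A$, with $f_d\neq 0$. Take the $(d+1)\times(d+1)$ determinant whose rows are $(z_j^i)_{1\le j\le d+1}$ for $0\le i\le d-1$, with last row $(f(z_j))_{1\le j\le d+1}$. It lies in the $\Sg_\infty$-ideal $I_f$ (expand along the last row). It also equals $\pm f_d\FPol_{d+1}$, since the terms $f_lz^l$ with $l<d$ reproduce an earlier row.

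\textbf{Induction.} For more variables, view $f$ over $A[z_1]$ and use permutations fixing $z_1$ to get $c\,\FPol_n(z_2,\dots,z_{n+1})\in I_f$ with $0\neq c\in A[z_1]$. Then apply the one-variable case in $z_1$ over $A[z_2,\dots,z_{n+1}]$, using permutations fixing $z_2,\dots,z_{n+1}$. This gives $c''\FPol_n(z_2,\dots,z_{n+1})\FPol_m(z_1,z_{n+2},\dots,z_{n+m})\in I_f$ with $0\neq c''\in A$. Up to sign, this product of discriminants divides $\FPol_{n+m}(z_1,\dots,z_{n+m})$, so $c''\FPol_{n+m}\in I_f$.

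Working over general coefficient rings and tracking a nonzero coefficient is what makes the induction run. The Nullstellensatz cannot provide this, since it needs a field and only sees radicals. The exact membership matters later: Theorem~\ref{Disc for H ideals} uses $\FPol_k$ itself, not a power, to conclude $D_\ell\in I^\star$.
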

 The paper~\cite{NS} by Nagpal and Snowden is available only as a preprint; we therefore provide a self-contained proof, following their argument,  of Theorem~\ref{ithm:A}(2) in Proposition~\ref{P:NS}.

By using the $\Theta$-maps, we are able to transfer this structure to $\Sa(E)$ and prove:
\begin{theorem}\label{ithm:B}
\begin{enumerate}
    \item {\rm (Corollary~\ref{Cacc}).} The lattice of $H$-ideals of $\Sa(E)$ satisfies the ascending chain condition.
\item {\rm (Theorem~\ref{Disc for H ideals}).} If $ I^\star \neq (0)$ is an $H$-ideal of $\Sa(E)$, then there exists $n \geq 1$ so that
\[ D_n := \Theta^\star_n(\FPol_n^2) \in I^\star.\]
\end{enumerate}
\end{theorem}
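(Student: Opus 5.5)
We transfer both parts from Theorem~\ref{ithm:A} via the $\Theta$-maps, using the degreewise correspondence of Lemma~\ref{Lthid}. An $H$-ideal $I^\star$ of $\Sa(E)$ is graded, since it is an $\mf h$-subrepresentation. Each graded piece $I^\star_k := I^\star \cap \Sa^k(E)$ is an $H$-subrepresentation, so $\Theta^k_\star(I^\star_k)$ is an ideal of $\mc Z_k^{\Sg_k}$. We let $\Theta_\star(I^\star)$ be the $\Sg_\infty$-invariant ideal of $\mc Z_\infty$ generated by all $\Theta^k_\star(I^\star_k)$, viewed inside $\mc Z_k \subseteq \mc Z_\infty$. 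In the other direction, for an $\Sg_\infty$-invariant ideal $J_\star$ we set $\Theta^\star(J_\star) := \bigoplus_k \Theta_k^\star(J_\star \cap \mc Z_k^{\Sg_k})$.

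The first thing to check is that $\Theta^\star(J_\star)$ really is an $H$-ideal. Stability under $H$ is immediate from Lemma~\ref{Lthid}, since $J_\star\cap\mc Z_k^{\Sg_k}$ is an ideal. The point needing care is multiplication by $e_i$, which maps $\Sa^k(E)$ to $\Sa^{k+1}(E)$. Under $\Theta$, this should correspond to symmetrising $f(z_1,\dots,z_k)\, z_{k+1}^i$ over $\Sg_{k+1}$. That symmetrisation lies in $J_\star$, because $J_\star$ is an $\Sg_\infty$-invariant ideal containing $f$. I will verify that $\Theta$ intertwines these two operations by a direct computation on monomials with $\Ryn_k$.

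Next I prove $\Theta^\star\Theta_\star(I^\star)\supseteq I^\star$ and that $\Theta_\star$ reflects inclusions strictly enough. Write $\Theta_\star(I^\star)\cap \mc Z_k^{\Sg_k}$ for the degree-$k$ part. The expected main obstacle is controlling it: symmetric elements of degree $k$ in the ideal generated by lower pieces could a priori be larger than $\Theta^k_\star(I^\star_k)$. This is exactly why the maps fail to be bijections. I will therefore avoid needing equality.

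For (1), take an ascending chain $I^\star_{(1)}\subseteq I^\star_{(2)}\subseteq\cdots$ of $H$-ideals. By Cohen's theorem, Theorem~\ref{ithm:A}(1), $\Theta_\star$ of the chain stabilises, say from index $N$. I then need $\Theta_\star$ to be injective on chains. My plan is a Reynolds-operator argument, showing that $\Theta_\star(I^\star)\cap\mc Z_k^{\Sg_k}$ is determined by $I^\star$ up to a filtration. Concretely, I will show that $\Ryn_k$ applied to $\mc Z_k\cdot\Theta^{j}_\star(I^\star_j)$, for $j<k$, is contained in $\Theta_\star^k$ of $\Sa(E)\cdot I^\star_j \subseteq I^\star_k$. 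Using the multiplication compatibility above together with the $H$-action, this should give $\Theta_\star(I^\star)\cap\mc Z_k^{\Sg_k}=\Theta^k_\star(I^\star_k)$ after all, and hence $\Theta^\star\Theta_\star=\mathrm{id}$ on $H$-ideals. Injectivity then follows. If this exact equality fails, I fall back on a Noetherian-type comparison degree by degree: $\Theta_\star(I^\star)$ is finitely generated as an $\Sg_\infty$-ideal by Cohen's theorem, so a stabilised chain must stabilise in each degree beyond the generators' degrees. Low degrees are handled separately, since each $\mc Z_k^{\Sg_k}$ is noetherian.

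For (2), let $I^\star\neq 0$ be an $H$-ideal. Then $J_\star=\Theta_\star(I^\star)\neq 0$, so by Theorem~\ref{ithm:A}(2) some $\FPol_n$ lies in $J_\star$. The difficulty is that $\FPol_n$ is antisymmetric rather than symmetric, and it may arise only through generation by lower-degree pieces. I will handle both issues at once. First I multiply by $\FPol_n$, so that $\FPol_n^2\in J_\star\cap\mc Z_n^{\Sg_n}$. Second, I write $\FPol_n^2=\Ryn_n(\FPol_n^2)=\sum_m \Ryn_n(a_m\,g_m)$ with $g_m\in\Theta^{j_m}_\star(I^\star_{j_m})$. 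I then use the containment established in the previous paragraph to conclude that $\FPol_n^2\in\Theta^n_\star(I^\star_n)$. This requires first restricting from $\mc Z_\infty$ to $\mc Z_n$ by specialising the extra variables $z_{>n}$ to $1$, which preserves the ideal by $\Sg_\infty$-invariance. Applying $\Theta^\star_n$ then gives $D_n\in I^\star$.
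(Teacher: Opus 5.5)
There is a genuine gap in each part. Both stem from the same false expectation: that $\Theta_\star(I^\star)$ remembers $I^\star$ degree by degree.

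\textbf{Part (1).} Your containment $\Ryn_k(\mc Z_k\,\Theta^{e^j}_{\Sg;\star}(\gr_j I^\star))\subseteq\Theta^{e^k}_{\Sg;\star}(\gr_k I^\star)$ for $j<k$ is correct (it is Lemma~\ref{lem:B}). But it only moves information \emph{upward} in degree. Elements of $\Theta_\star(I^\star)\cap\mc Z_k^{\Sg_k}$ can also be manufactured from pieces of degree $j>k$, using variables beyond $z_k$, and nothing controls those. Example~\ref{Ex1} shows this really happens: $\Theta_\star(A^\star_{\ge m})=A_\star$ for every $m$, since $(z_1+\dots+z_N-N)-(z_1+\dots+z_{N-1}-(N-1))=z_N-1$. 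Hence $\Theta^\star\Theta_\star\neq\mathrm{id}$, and $\Theta_\star$ is not injective even along a chain: $A^\star_{\ge 3}\subsetneq A^\star$ have the same image. Your fallback, that once $\Theta_\star(I^\star_{(n)})$ stabilises the chain stabilises in all high degrees, is exactly the nontrivial content, and you give no argument for it. What finite generation of $\Theta_\star(I^\star_{(N)})$ plus Lemma~\ref{lem:B} actually yields is only $I^\star_{(n)}\subseteq\Sat(I^\star_{(N)})$ (compare Proposition~\ref{Lheqs}(1)), i.e.\ agreement up to saturation.

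The paper closes this gap with Theorem~\ref{Tacc}: every $H$-ideal is generated in finitely many degrees. The device there is $\lceil\Theta\rceil_\star$ rather than $\Theta_\star$. By Lemma~\ref{lem:B} the ideals $\lceil\Theta^{e^m}_{\Sg;\star}(\gr_m I^\star)\rceil$ increase with $m$. So Aschenbrenner--Hillar stabilisation of the symmetrisation-invariant chain $(\lceil\Theta\rceil_\star(I^\star)\cap\mc Z_n)_n$, together with noetherianity of a single $\mc Z_N$, realises everything in one degree $N'$. This translates back into $\gr_m I^\star\subseteq\Sa(E)\gr_{N'}I^\star$ for $m\ge N'$. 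ACC then follows by applying this to the union of the chain and using noetherianity of the finitely many $\mc Z_k^{\Sg_k}$, $k\le N'$.

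\textbf{Part (2).} The same phenomenon makes your target false: you cannot in general get $D_n$ for the $n$ with $\FPol_n\in\Theta_\star(I^\star)$. For $I^\star=A^\star_{\ge 3}$ we have $\FPol_2=z_1-z_2\in A_\star=\Theta_\star(I^\star)$, but $I^\star$ has nothing in degree $2$, so $D_2\notin I^\star$.

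The failing step is $\FPol_n^2=\sum_m\Ryn_n(a_mg_m)$. An expression of $\FPol_n^2$ as an element of $\Theta_\star(I^\star)$ may involve translates $\sigma_m(g_m)$ and coefficients $a_m$ in variables $z_i$ with $i>n$, and generators $g_m$ of degree $j_m>n$. Neither $\Ryn_n$ nor Lemma~\ref{lem:B} applies to these. Specialising $z_{>n}\mapsto 1$ gives a valid identity in $\mc Z_n$, but the specialised translates need not lie in $\Theta_\star(I^\star)$, let alone come from pieces of $I^\star$ of degree at most $n$. For instance, the $\Sg_\infty$-ideal $B_\star$ of Example~\ref{Exmz} contains $z_1-z_3$ but not $z_1-1$.

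The repair is to go up in degree rather than down, which is harmless since the statement only asks for some $n$.
\begin{itemize}
\item[(a)] Choose $N'\ge n$ so large that the finite expression $\FPol_n=\sum_m a_m\sigma_m(g_m)$ lies in $\mc Z_{N'}$, with all $j_m\le N'$ and all $\sigma_m$ replaceable by elements of $\Sg_{N'}$. (The paper invokes Theorem~\ref{thm:Cohen} to bound the $j_m$ uniformly, though one finite expression suffices.)
\item[(b)] For every $\ell\ge N'$, the multiple $\FPol_\ell$ of $\FPol_n$ then lies in the ideal of $\mc Z_\ell$ generated by the $\Sg_\ell$-translates of the $g_m$.
\item[(c)] Your Reynolds argument now goes through verbatim: $\FPol_\ell^2=\Ryn_\ell(\FPol_\ell^2)\in\sum_m\Ryn_\ell\bigl(\Theta^{e^{j_m}}_{\Sg;\star}(\gr_{j_m}I^\star)\,\mc Z_\ell\bigr)\subseteq\Theta^{e^\ell}_{\Sg;\star}(\gr_\ell I^\star)$ by Lemma~\ref{lem:B}, so $D_\ell\in I^\star$.
\end{itemize}
This is the paper's proof of Theorem~\ref{Disc for H ideals}.
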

(In the statement of $(2)$, note that $\FPol_n^2 \in \mc Z_n^{\Sg_n}$, whereas $\FPol_n$ is not.)

In spite of the similarity of the statements of Theorems~\ref{ithm:A} and \ref{ithm:B}, the proof of Theorem~\ref{ithm:B} is delicate, and takes all of Sections~\ref{H IDEALS} and \ref{S:disc}.
Note also the contrast between Theorem~\ref{ithm:B}(1) and the non-noetherianity of the ideal lattice of $\Ua(\Lg)$.
That is, the passage from ideals of $\Ua(\Lg)$ to $H$-ideals of $\Sa(E)$ is one-way, and $H$-ideals do not capture all information about ideals of $\Ua(\Lg)$.

Nevertheless, our main result Theorem~\ref{ithm:main} follows relatively quickly from Theorem~\ref{ithm:B}(2).
Let $ K \neq (0)$ be an ideal of $\Ua(\Lg)$.
Since $K \cap \Sa(E) \neq (0)$, by Theorem~\ref{ithm:B}(2) there is $n \geq 1$ so that $D_n \in K\cap \Sa(E)$.
The description of  the $\ker \Phi_k$ in Proposition~\ref{prop:kernel} now allows us to conclude
(Proposition~\ref{Pkto3k}) that $K \supseteq \ker \Phi_{d(n-1)}$, where $d = \dim \mf g$, giving the result.

From the above discussion,   two-sided ideals of $\Ua(\Lg)$ are closely related to  $H$-ideals of $\Sa(E)$. 
The latter ideals can be studied by tools of differential algebra. 
Following these lines we introduce the concept of $H$-primitive ideals.
Every radical $H$-ideal is an intersection of a family of $H$-primitive ideals --- in particular these ideals are the points of the topological space defined by the lattice of radical $H$-ideals. 
This construction mimics the construction of the Jacobson (Zariski) topology for associative algebras. 
We show that $H$-primitive ideals are exactly the $H$-ideals generated by their intersection with $E\subseteq\Sa(E)$.

There are also Poisson versions of many of our results.
First, it follows by standard associated graded techniques
that, since the ideal lattice of $\Ua(\Lg)$ does not satisfy the ascending chain condition, neither does the Poisson ideal lattice of $\Sa(\Lg)$.
However, this is proved directly in Theorem~\ref{thm:nonnoeth}.
Further,  \cite{BS} also shows that if $K$ is a nonzero Poisson ideal of the symmetric algebra $\Sa(\Lg)$, then $\Sa(\Lg)/K$ has polynomial growth.
Thus,  $K \cap \Sa(E)$ is again a nontrivial $H$-ideal, and we obtain a Poisson version of our main result Theorem~\ref{ithm:main}.
\begin{theorem}[Theorem~\ref{TPmainI+}]\label{ithm:Pmain}
For all $n \geq 1$, let
\[ \underline{\Phi}_n: \Sa(\Lg) \to (\Sa(\mf g) [t, t^{-1}])^{\otimes n}
\]
be the associated graded map of $\Phi_n$.
If $ K \neq (0)$ is a Poisson ideal of $\Sa(\Lg)$, then there is $n$ so that $K \supseteq \ker {\underline{\Phi}_n}$.
\end{theorem}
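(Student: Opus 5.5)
The plan is to mirror the argument sketched above for Theorem~\ref{ithm:main}, replacing the associative objects by their Poisson/graded analogues. Let $K \neq (0)$ be a Poisson ideal of $\Sa(\Lg)$.

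\emph{Step 1: a nonzero $H$-ideal of $\Sa(E)$.} By \cite{BS}, $\Sa(\Lg)/K$ has polynomial growth. Since $\Sa(E) \cong \kk[e_i \mid i \in \ZZ]$ has infinite GK-dimension, the restriction of the quotient map to $\Sa(E)$ cannot be injective, so $K \cap \Sa(E) \neq (0)$. Because $K$ is Poisson, it is stable under $\{h s^j, -\}$ for $hs^j \in H = \mathrm{L}\mf h$. On $\Sa(E)$ this bracket acts by derivations, with $\{hs^j, e_i\} = \alpha(h) e_{i+j}$, which is exactly the adjoint action used before. Hence $K \cap \Sa(E)$ is an $H$-ideal. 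Theorem~\ref{ithm:B}(2) then gives $n$ with $D_n \in K$.

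\emph{Step 2: from $D_n$ to $\ker \PhiP_m$.} We need a Poisson analogue of Proposition~\ref{prop:kernel} and Proposition~\ref{Pkto3k}. The map $\PhiP_m$ is $\Sa(\Lg) \to \Sa(\mf g)[t^{\pm1}]^{\otimes m} \cong \kk[(\mf g^*)^m \times (\kk^\times)^m]$. Its kernel should be the ideal of functions on $\Lg^*$ vanishing on the union of images of the maps
\[(\xi_1,\dots,\xi_m,a_1,\dots,a_m) \mapsto \sum_k \xi_k \circ \ev_{a_k},\]
that is, on "$m$-point evaluation" covectors. This is the graded version of the description of $\ker\Phi_m$, and I expect it to follow by the same argument as in the associative case. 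Alternatively, it follows by noting that $\PhiP_m$ is the associated graded of $\Phi_m$ with respect to PBW filtrations, so that $\ker \PhiP_m = \gr \ker \Phi_m$.

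We then run the argument of Proposition~\ref{Pkto3k} in the Poisson setting. Applying iterated brackets with elements of $\Lg$ to $D_n$ stays inside $K$. The same computation as in the associative case should show that the Poisson ideal generated by $D_n$ contains $\ker\PhiP_{d(n-1)}$. If Proposition~\ref{Pkto3k} was proved via the associated graded, one can instead argue as follows. The Poisson ideal generated by $D_n$ contains the $\gr$ of the two-sided ideal generated by $D_n$ in $\Ua(\Lg)$ (note $\Sa(E)=\Ua(E)$ and $D_n$ is its own symbol). By Proposition~\ref{Pkto3k}, that two-sided ideal contains $\ker\Phi_{d(n-1)}$, and so the Poisson ideal contains $\gr\ker\Phi_{d(n-1)} = \ker\PhiP_{d(n-1)}$.

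\emph{Main obstacle.} The delicate point is the inclusion
\[\gr\bigl(\Ua(\Lg) D_n \Ua(\Lg)\bigr) \subseteq \text{Poisson ideal generated by } D_n.\]
In general $\gr$ of an ideal can be larger than the Poisson ideal generated by the symbol. So I would first try to prove the kernel description and the $D_n \Rightarrow \ker\PhiP_{d(n-1)}$ step directly in $\Sa(\Lg)$, geometrically. The zero locus of the Poisson ideal generated by $D_n$ is $\mathrm{L}G$-stable, where the group acts by coadjoint action. On the set where $D_n$ is nonvanishing, $\Theta$-type arguments show that the covector has "support" of more than $n-1$ points. Combining this with the radical description of $\ker\PhiP_m$, we would show that $\ker\PhiP_{d(n-1)}$ lies in the radical of that ideal.

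It then remains to handle radicality, since $\ker\PhiP_m$ is prime. Here I would use that Poisson ideals in characteristic $0$ have Poisson radicals, together with a power argument, adjusting $m$ upward if necessary.
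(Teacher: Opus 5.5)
Your Step~1 is correct and is exactly how the paper begins (Lemma~\ref{lem:JtoH} followed by Theorem~\ref{Disc for H ideals}). The gap is in Step~2, which carries all the content and is never actually established. Your first route (``the same computation as in the associative case should show\dots'') is the right one, but it is only asserted. Your second route needs $\gr\bigl(\Ua(\Lg)D_n\Ua(\Lg)\bigr)$ to lie inside the Poisson ideal $P$ generated by $D_n$. The automatic inclusion is the reverse one, $P\subseteq\gr\bigl(\Ua(\Lg)D_n\Ua(\Lg)\bigr)$, so Proposition~\ref{Pkto3k} cannot be transported this way. Your third, geometric route can at best give $\ker\PhiP_m\subseteq\sqrt{P}$, and the passage from the radical back to $P$ (or $K$) is unjustified. $\Sa(\Lg)$ is not noetherian and $\ker\PhiP_m$ is not known to be finitely generated (compare Remark~\ref{rmk:fingen}). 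So knowing that each element of $\ker\PhiP_m$ has some power in $K$ gives no $m'$ with $\ker\PhiP_{m'}\subseteq K$. Your ``power argument, adjusting $m$ upward'' would need an inclusion like $\ker\PhiP_{m'}\subseteq(\ker\PhiP_m)^N$ together with uniform nilpotence modulo $K$, and you have neither.

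The missing idea is that no radicals are needed: in $\Theta$-coordinates the containment becomes an exact, element-by-element divisibility statement. Take $n=k+1$. By Proposition~\ref{prop:kernel-Poi} and Theorem~\ref{thm:Lov}, any $r\in\ker\PhiP_{dk}\cap\Sa(\Lg)_q$ satisfies $\underline{\Theta}^{b^q}_{\Sg;\star}(r)=\Ryn_q\bigl(\sum_I\FPol_{dk+1}(z_{i_1},\dots,z_{i_{dk+1}})\,p_I\bigr)$. By pigeonhole, each $\FPol_{dk+1}(z_{i_1},\dots,z_{i_{dk+1}})$ is divisible by $\FPol_{k+1}$ evaluated at $k+1$ variables from a single block of $\Sg_q$.

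On the other side, $P$ is closed under multiplication and under $\{H,-\}$, which in $\Theta$-coordinates is multiplication by power sums (Lemma~\ref{Lthid}). Together with \eqref{Erad}, this gives, as in Lemma~\ref{Lfpolgen}, that $P$ contains $\underline{\Theta}^{\Sg;\star}_{b^q}\bigl(\Ryn_q(\mc Z_q\,\FPol_{k+1}(z_{\hat q_\beta+1},\dots,z_{\hat q_\beta+k+1}))\bigr)$ whenever $b_\beta=e$ and $q_\beta\ge k+1$. By $\Sg_q$-symmetry the same holds for any $k+1$ variables of that block. Now choose a basis of highest weight vectors $b_\beta=g_\beta\cdot e$ with $g_\beta\in G$. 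Since $P$ is stable under the adjoint action of $G$, the same containment holds for every block, hence $r\in P\subseteq K$. This is the content of Proposition~\ref{Pkto3k-Poi}, and it is simpler than the associative case because multiplication is commutative. You should carry this out in place of the geometric route.
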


We  study Poisson primitive ideals of $\Sa(\Lg)$ and show that they have a surprisingly similar characterisation (see Proposition~\ref{Ppoipr}) to that of Poisson primitive ideals in the symmetric algebra of the {\em Witt algebra} $W = \Der \kk[t,t^{-1}]$, as described in \cite{PS2}. 
We also show that although the prime Poisson ideals of $\Sa(\Lg)$ do not  satisfy the {\em Poisson  Dixmier-Moeglin equivalence (PDME)}, 
which, for symmetric algebras of  finite-dimensional Lie algebras, provides two equivalent conditions (rationality, local closedness) for a prime Poisson ideal to be Poisson primitive, 
nevertheless the PDME almost holds for $\Sa(\Lg)$, again similar to $\Sa(W)$.
Finally, we reduce the description of the Poisson primitive ideals of $\Sa(\Lg)$ to the description of primitive ideals of the symmetric algebras of an infinite family of finite-dimensional Lie algebras, and to coadjoint orbits of the loop group associated to $\Lg$.

The starting point of this project was the prior work of the authors on the symmetric and enveloping algebras of the Witt algebra and its (nontrivial) central extension $\Vir$, the {\em Virasoro algebra}.
For $W$ and $\Vir$, one can almost completely describe Poisson ideals of $\Sa(\Vir)$ and $\Sa(W)$, but the analogous statements for $\Ua(\Vir)$ and $\Ua(W)$ remain conjectural as of this writing (see Section~7 of \cite{PS2}).
However, because of the existence of the large commutative subalgebra $\Sa(E)$ of $\Ua(\Lg)$, in contrast to the situation with $\Ua(W)$ and $\Ua(\Vir)$ we have been able to work out a great deal of the two-sided structure of $\Ua(\Lg)$.

It is natural to think of loop algebras, in particular $\LslT$, as ``toy models'' for the Witt algebra and to think of affine algebras such as $\wh{\mf{sl}_2}$ as ``toy models'' for the Virasoro algebra.
The similarity between the description of Poisson primitive ideals of $\Sa(\Lg)$ in Proposition~\ref{Ppoipr} and the results of \cite{PS2} supports the validity of this approach.
It is an extremely interesting question which of the structural (non-)results in this paper for $\Ua(\Lg)$ and $\Ua(\wh{\mf g})$
have analogues which hold for $\Ua(W)$ and $\Ua(\Vir)$.

We now describe the structure of the paper.
After giving brief background in Section~\ref{BACKGROUND}, we define the maps $\Phi_n$ and prove non-noetherianity of the ideal lattice of $\Ua(\Lg)$ in Section~\ref{NONNOETHERIAN}.
(The relationship between the two is that we actually show that the ideal lattice of $\im \Phi_1 \cong \Ua(\Lg)/\ker \Phi_1$ is non-noetherian.)
In Section~\ref{H IDEALS} we introduce $H$-ideals and the $\Theta$-maps and prove fundamental results about them,
and in Section~\ref{S:disc} we prove Theorem~\ref{ithm:B}.
Then in Section~\ref{S:strid} we describe $\ker \Phi_n$ and prove Theorem~\ref{ithm:main}.
We give applications to the representation theory of $\Lg$ in Section~\ref{REPRESENTATIONS}.
In Section~\ref{POISSON} we consider what can be deduced about Poisson ideals of $\Sa(\Lg)$ from their relationship with $H$-ideals, and we end with examples in Section~\ref{EXAMPLES}.

\vspace{1em}
\noindent{\bf Use of AI: } We  used the AI tools DeepSeek and Aristotle  to flag  typos and awkward wording in the text, and 
  used Aristotle  to check proofs. Aristotle  found a few minor gaps, which we  fixed on our own.

\vspace{1em}
\noindent{\bf Acknowledgements: }  We thank Jason Bell, Vyjayanthi Chari,  Duncan Laurie, Omar Le\'on S\'anchez, and Toby Stafford  for informative and useful discussions.  The second author was partially supported by EPSRC grant EP/T018844/1 and thanks the EPSRC for their support. 

\section{Background}\label{BACKGROUND}

We establish notation, which will hold throughout the paper.
Fix an algebraically closed ground field $\kk$ of characteristic 0.
(In Section~\ref{POISSON} we will  also assume $\kk$ to be uncountable.)
All (associative, Lie) algebras are $\kk$-algebras.  
We will use $\otimes$ to denote $\otimes_\kk$.
If $V,W$ are vector spaces and $v\in V$, we will abuse notation and write $v\otimes W$ for $\kk v \otimes W$. 
We will sometimes write $I \normal R$ or $\mf k \normal \mf g$ to denote a (Lie) ideal of a ring $R$ or a Lie algebra $\mf g$.

If $\mf a$ is any Lie algebra let $\Ua(\mf a)$ be the universal enveloping algebra of $\mf a$ and let $\Sa(\mf a)$ be the symmetric algebra of $\mf a$.  Let $\kk = \Ua_{\leq 0}(\mf a) \subseteq \Ua_{\leq 1}(\mf a) \subseteq \dots$ be the canonical filtration of $\Ua(\mf a)$.
The associated graded ring of this filtration is isomorphic to $\Sa(\mf a) = \bigoplus_{k \geq 0} \Sa^k(\mf a)$ by the Poincar\'{e}-Birkhoff-Witt theorem.
If $r \in \Ua_{\leq k}(\mf a) \ssm \Ua_{\leq k-1}(\mf a)$, we write 
\[ \gr(r) = r + U_{\leq k-1}(\mf a) \in \Sa^k(\mf a).\]

As the associated graded ring of $\Ua(\mf a)$, the algebra $\Sa(\mf a)$ has a canonical induced Poisson algebra structure.
That is, there is a Lie bracket $\{ -,-\}$ on $\Sa(\mf a)$ that is a derivation in each input and such that $\{x,y\} = [x,y]$ for all $x,y \in \mf a$.
Recall that if $A$ is a Poisson algebra, then an ideal $I$ of $A$ is a {\em Poisson ideal} if $I$ is also  a Lie ideal of $A$ under the Poisson bracket.

Throughout the paper, let $\mf g $ be a finite-dimensional simple Lie algebra, of dimension $d = \dim \mf g$. 
Let $\Lg = \mf g[s, s^{-1}]$ be the loop algebra of $\mf g$.
We will occasionally use the affine Lie algebra $\wh{\mf g}$ associated to $\mf g$, that is, the unique (up to isomorphism) nontrivial central extension of $\Lg$ by the one-dimensional central subalgebra $\kk c$.

The universal enveloping algebra $\Ua(\Lg)$  of $\Lg$ and  the symmetric algebra  $\Sa(\Lg)$ of $\Lg$ are the main objects of study in this paper.
The foundational result in the structure theory of two-sided ideals of $\Ua(\Lg)$ is that these ideals are ``large''.  Similar results hold for Poisson ideals of $\Sa(\Lg)$.  
To explain this result, we recall that   the Gelfand-Kirillov (GK-) dimension of an algebra $A$ is defined to  be
\[ \sup_V \varlimsup_{n \to \infty} \log_n \dim_\kk V^n,\]
where the supremum is taken over all finite-dimensional subspaces $V \subseteq A$.
Our  reference for basic facts about GK-dimension is \cite{KL}.
We say that $A$ has {\em polynomial growth} (or {\em finite growth}) if $\GKdim A < \infty$.
Further,  $A$ has {\em just-infinite growth}  if $\GKdim A = \infty$, but any proper quotient of $A$ has finite growth: $A/J$ has finite growth for every    nonzero ideal $J$ of $A$.

\begin{theorem}\label{thm:BS}
Let $\mf g$ be a finite-dimensional simple Lie algebra and let $\Lg$ be the loop algebra of $\mf g$.
\begin{enumerate}
    \item \cite[Theorem~1.1]{BS} $\Ua(\Lg)$ has just-infinite growth.
    \item \cite[Theorem~1.2]{BS} $\Sa(\Lg)$ has just-infinite growth as a Poisson algebra:  that is, if $J$ is a nonzero Poisson ideal of $\Sa(\Lg)$, then $\Sa(\Lg)/J$ has polynomial growth.
\end{enumerate}
\end{theorem}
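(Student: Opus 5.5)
This result is quoted from \cite{BS}, so in the paper it is simply cited. If I were to prove it myself, I would first dispose of the infinite-growth half of both statements. $\Sa(E)\cong\kk[e_i\mid i\in\ZZ]$ sits inside both $\Ua(\Lg)$ and $\Sa(\Lg)$, and a polynomial ring in infinitely many variables has infinite GK-dimension. Next I would reduce (1) to (2). If $J\neq(0)$ is an ideal of $\Ua(\Lg)$, then $\gr J$ is a nonzero Poisson ideal of $\Sa(\Lg)$, and by the standard filtered-graded comparison \cite{KL}, $\GKdim \Ua(\Lg)/J\le \GKdim \Sa(\Lg)/\gr J$. So everything rests on showing that $\Sa(\Lg)/I$ has polynomial growth for a nonzero Poisson ideal $I$.

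For (2), the first step is to show that $I\cap\Sa(E)\neq(0)$ directly. Take $0\neq f\in I$ and use the root decomposition of $\mf g$ with respect to $\mf h$, extended $s$-linearly to $\Lg$. The operators $\{e s^j,-\}$ act locally nilpotently and raise $\mf h$-weight. Applying a suitable word in these operators, and in operators $\{x s^j,-\}$ for positive root vectors $x$, should push $f$ into the highest-weight component. The aim is a nonzero element $g$ of $I$ that involves only the variables $e_i$; choosing the word so that the leading term survives is a bookkeeping argument on weights and $s$-degrees.

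Second, having $0\neq g\in I\cap\Sa(E)$ of degree $k$, I would use the operators $\{h s^j,-\}$, which act on $e_{i_1}\cdots e_{i_k}$ by shifting indices. This produces many elements of $I\cap\Sa(E)$. Via the translation to symmetric Laurent polynomials (the $\Theta$-maps of the introduction), I expect them to force a relation of Vandermonde type: every monomial in the $e_i$ with more than a bounded number of ``spread-out'' indices reduces modulo $I$ to monomials with few gaps. This would give $\GKdim \Sa(E)/(I\cap\Sa(E))<\infty$.

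Third, I would propagate this to all of $\Sa(\Lg)$. Applying $\{y s^j,-\}$ for lowering root vectors $y$ to the relations found in $\Sa(E)$ yields analogous relations for every root direction and for $\mf h$. The aim is to show that $\Sa(\Lg)/I$ is spanned, in each degree, by monomials whose index sets have boundedly many gaps. Counting such monomials of total degree $\le n$ and index range $\le n$ gives a polynomial bound.

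The main obstacle is this last growth estimate. One must ensure that the gap-reduction relations obtained in each root direction combine compatibly, so that the spanning set really is uniformly controlled. I would handle this with a filtration by $s$-degree span, inducting on the number of gaps.
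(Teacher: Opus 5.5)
The paper only cites \cite{BS} for this theorem, so your outline has to stand on its own, and it has two genuine gaps.

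\textbf{The reduction of (1) to (2).} The general filtered--graded comparison gives only $\GKdim \gr A\le\GKdim A$. The reverse inequality $\GKdim \Ua(\Lg)/J\le \GKdim \Sa(\Lg)/\gr J$ that you invoke is available for finite-dimensional filtrations with finitely generated associated graded algebra. That is not the situation here, since $\Lg$ is infinite-dimensional, and the inequality can genuinely fail. Take an abelian Lie algebra $\mf a$ with basis $x_1,x_2,\dots$ and $J=(x_{i+1}-x_i^2\mid i\ge1)$. Then $\Ua(\mf a)/J\cong\kk[x_1]$ has GK-dimension $1$. But $\gr(\Ua(\mf a)/J)$ has basis the images of the $x_1^m$, in degree equal to the binary digit sum of $m$, and a product of two basis elements vanishes as soon as a carry occurs. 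So every finitely generated subalgebra is finite-dimensional, and the GK-dimension is $0$.

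You can bypass this, because your first step works directly in $\Ua(\Lg)$ once it is given a real argument. \emph{Bookkeeping so that the leading term survives} is not one: the finite-dimensional analogue fails because of Casimir elements. The argument goes as follows.
\begin{itemize}
\item For $0\ne f\in J$, take a word $w$ of maximal length in the operators $\mathrm{ad}(xs^j)$, with $x\in\mf n_+$ a root vector and $j\in\ZZ$, such that $F:=w\cdot f\neq0$. Such a word exists because these operators raise $\mf h$-weight and the weights occurring in $\Ua_{\le n}(\Lg)$ lie in a finite set. Then $F\in J$ is killed by $\mathrm{ad}(\mf n_+\otimes\kk[s,s^{-1}])$.
\item The missing idea is that such invariants lie in $\Ua(E)$. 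Fix a basis $y_1=e,y_2,\dots,y_d$ of $\mf g$, suppose $\gr F$ involves only variables $y_as^i$ with $|i|\le N$, and take $j>2N$. Then $\{xs^j,\gr F\}=\sum_{a,i}[x,y_a]s^{i+j}\,\partial(\gr F)/\partial(y_as^i)$ is linear in variables not occurring in $\gr F$.
\item Hence, for each $i$, the element $v_i:=\sum_a y_a\otimes\partial(\gr F)/\partial(y_as^i)\in\mf g\otimes\Sa(\Lg)$ is killed by $\mathrm{ad}\,x\otimes 1$ for every $x\in\mf n_+$. Since $\mf g^{\mf n_+}=\kk e$, this forces $\partial(\gr F)/\partial(y_as^i)=0$ for $a\ge2$.
\item Subtract the element of $\Ua(E)$ with the same symbol; it is again invariant since $[\mf n_+,e]=0$. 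Induction on filtration degree then gives $0\ne F\in J\cap\Ua(E)$. The Poisson version is the same without the induction.
\end{itemize}

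\textbf{Steps 2 and 3 carry all the content but are only anticipated.} Producing a Vandermonde-type relation from one nonzero element of $J\cap\Ua(E)$ is not something index shifting gives you for free. It is the Nagpal--Snowden result (Proposition~\ref{P:NS}) transported through the $\Theta$-maps, with \eqref{Erad} used to pass from $\FPol_n$ to $\FPol_n^2$, as in Theorem~\ref{Disc for H ideals}.

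Moreover, what must be carried forward is not finiteness of the GK-dimension of $\Sa(E)$ modulo the intersection, but an explicit element $D_{k+1}\in J$. Propagating it to all of $\Lg$ needs three further ingredients. First, the description of $\ker\Phi_n$ by vanishing on the $n$-fold diagonal (Proposition~\ref{prop:kernel} with Theorem~\ref{thm:Lov}). Second, transport of $D_{k+1}$ by the adjoint group to the analogous discriminants for a basis of $\mf g$ consisting of conjugates of $e$; no separate relation \emph{for $\mf h$} is needed, and none arises by conjugation since $h$ is not conjugate to $e$. Third, a pigeonhole argument. Together these are Proposition~\ref{Pkto3k}, giving $J\supseteq\ker\Phi_{dk}$.

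With that in hand your \emph{main obstacle} disappears. $\Ua(\Lg)/J$ is a quotient of $\im\Phi_{dk}\subseteq\Ua(\mf g^{\oplus dk})[t_1^{\pm1},\dots,t_{dk}^{\pm1}]$, which has finite GK-dimension, so no counting of monomials with few gaps is required. The Poisson case is identical, with $\underline{\Phi}_{dk}$ and Proposition~\ref{Pkto3k-Poi}.

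Theorem~\ref{Disc for H ideals} and Proposition~\ref{Pkto3k} do not depend on \cite{BS}. So your repaired first step together with them would give a complete proof of both parts. As written, your outline stops short exactly where these ingredients are needed.
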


We will also need notation coming from group actions. 
If $G$ is a finite group acting (linearly) on a vector space $V$,
let $\Ryn_G: V \to V^G$ be the Reynolds operator:
\[ \Ryn_G(v) = \frac{1}{\abs{G}} \sum_{\sigma\in G} \sigma\cdot v.\]
Since $\Ryn_G$ fixes $G$-invariant elements, $\Ryn_G^2 = \Ryn_G$; more generally, if $H \leq G$ then
\beq\label{proj}
\Ryn_G = \Ryn_G\circ \Ryn_H.
\eeq
Note also that if $A$ is a $\kk$-algebra and $G$ acts on $A$ via algebra homomorphisms, then $\Ryn_G:A \to A$ is $A^G$-linear.
If $G = \Sg_k$ is the $k$th symmetric group, we will denote $\Ryn_{\Sg_k}$ by $\Ryn_k$. 

Now let $V$ be a vector space and let $k \geq 1$. 
We write $$\Sa^k(V) := V^{\otimes k}/\langle \sigma\cdot q-q \mid q\in V^{\otimes k}, \sigma\in\Sg_k\rangle$$ for the coinvariants of $V^{\otimes k}$ with respect to the permutation action of $\Sg_k$, and $$\Sym^k(V) := (V^{\otimes k})^{\Sg_k}$$ for the subspace  of $\Sg_k$-invariant tensors in  $V^{\otimes k}$.
Since we are in characteristic 0, thus $\Ryn_k$ factors through the natural map $V^{\otimes k} \to \Sa^k(V)$ to give 
 linear isomorphisms
\beq\label{split2} \xymatrix{
\Sym^k(V) \ar[r]^{\sim} 
& \Sa^k(V) \ar@/^1pc/[l] 
} . \eeq

\subsection{The result of Nagpal and Snowden}
In this subsection we prove Theorem~\ref{ithm:A}(2), which is \cite[Proposition~2.6]{NS}, following the arguments of the unpublished preprint \cite{NS}.
This result is a key tool of this paper.

Let $A$ be a unital commutative ring. 
For $a_1, a_2, \ldots, a_n\in A$ define 
\beq\label{eq:FPol}
\FPol_n(a_1, a_2, \ldots, a_n):=\prod\limits_{1\le i< j\le n}(a_i-a_j).
\eeq
We also set $\FPol_0:=\FPol_1:=1$.
\begin{proposition}[{see~\cite[Proposition~2.6]{NS}}]\label{P:NS}
Let $\mathcal A:=A[z_1, z_2, \ldots]$ be the polynomial ring in countably many variables. The infinite symmetric group $\Sg_\infty$ acts on $\mathcal A$ by
permuting the variables. Let $f\in\mathcal A\setminus(0)$ and let 
\[I_f = \sum\limits_{\sigma\in\Sg_\infty}\mathcal A\sigma(f).\]
 Then there exist $n\ge0$ and a nonzero  $c \in  A $ that is a coefficient of $f$, such that $c\FPol_n(z_1, \ldots, z_n)\in I_f$.
\end{proposition}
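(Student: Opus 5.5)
The plan is to induct on the total degree of $f$ in the variables $z_i$, using the standard ``transposition with a fresh variable'' trick to lower the degree. We then need a separate combinatorial step that turns an arbitrary product of differences $z_i-z_j$ into a Vandermonde $\FPol_n$. Throughout I assume $\mb Q\subseteq A$, which holds in the paper's setting since $\kk$ has characteristic $0$. Note also that $c\,\FPol_n\in I$ implies $c\,\FPol_{n'}\in I$ for all $n'\ge n$, because $\FPol_{n'}$ is a multiple of $\FPol_n$. It is therefore enough to produce some $n$.

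\emph{Step 1 (reduction to products of differences).} I would prove the stronger statement that $I_f$ contains $c\,\Pi$ for some nonzero coefficient $c$ of $f$ and some finite product $\Pi=\prod_k (z_{a_k}-z_{b_k})$ with $a_k\neq b_k$. If $f\in A$ is constant, take $\Pi=1$. Otherwise suppose $f$ involves only $z_1,\dots,z_m$ and has positive degree in $z_m$. Let $\tau$ be the transposition $(m\ N)$ with $N>m$. Then
\[ f-\tau f=(z_m-z_N)\,g, \]
with $g\neq0$ and $\deg g<\deg f$. Moreover, taking a monomial of $f$ of maximal $z_m$-degree, one sees that some coefficient of $g$ equals (up to sign) a coefficient $c$ of $f$. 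By induction, $c\,\Pi'=\sum_i a_i\sigma_i(g)$ for finitely many $\sigma_i$. Multiplying through by $P=\prod_i(z_{\sigma_i(m)}-z_{\sigma_i(N)})$ makes each summand a multiple of $\sigma_i\bigl((z_m-z_N)g\bigr)\in I_f$. Hence $c\,\Pi' P\in I_f$, which is again of the required form. One has to check that the induction carries the right coefficient along, which is why I track $c$ as a coefficient of the top $z_m$-part.

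\emph{Step 2 (from $c\,\Pi$ to $c\,\FPol_n$).} Let $I$ be any $\Sg_\infty$-invariant ideal containing $c\,\Pi$, where $\Pi$ involves only $z_1,\dots,z_r$ and has degree $D$. For $m\gg r$, set $p=c\,\Pi\, z^\mu$, where $z^\mu$ is a monomial in $z_1,\dots,z_m$ chosen so that $\deg p=\binom m2$. Then the antisymmetrisation $\sum_{\sigma\in\Sg_m}\operatorname{sgn}(\sigma)\,\sigma(p)$ lies in $I$. It is alternating of degree $\binom m2$, so it equals $\lambda c\,\FPol_m$ for some integer $\lambda$. It remains to choose $\mu$ so that $\lambda\neq0$, and by the assumption $\mb Q\subseteq A$ we can then divide by $\lambda$.

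The coefficient $\lambda$ is the signed count of monomials of $\Pi z^\mu$ whose exponent vectors are permutations of the staircase $(m-1,m-2,\dots,0)$. I would first try choosing $\mu$ with very widely spaced exponents on $z_1,\dots,z_r$: take $\mu_i=(m-1)-iK$ for a large spacing $K>D$, and distribute the remaining staircase values over $z_{r+1},\dots,z_m$, absorbing the degree $D$ of $\Pi$ by lowering some of those exponents. The aim is that the only staircase-type monomial comes from a single monomial of $\Pi$, the lex-leading one, so that $\lambda=\pm1$.

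\emph{Main obstacle.} I expect this exponent bookkeeping to be the real difficulty. Cancellation between different monomials of $\Pi$ must be ruled out, and the degree of $\Pi$ must fit inside the staircase without collisions.

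If this direct choice fails, my fallback is to avoid arbitrary $\Pi$ altogether. I would strengthen Step 1 so that, after renaming variables with a suitable permutation, $\Pi$ divides $\FPol_n$ for some $n$. This is possible because the new factors $z_{\sigma_i(m)}-z_{\sigma_i(N)}$ can be arranged, by choosing $N$ fresh and conjugating, to use pairs of distinct variables not already occurring in $\Pi'$, so that no repeated factors arise. Once $\Pi$ divides $\FPol_n$, multiplying $c\,\Pi$ by the complementary factor gives $c\,\FPol_n\in I_f$ directly.
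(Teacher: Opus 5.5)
\textbf{Where the gap is.} Your Step 1 is correct, and it is a genuinely different route from the paper. The paper inducts on the number of variables. It handles one variable with a Vandermonde determinant whose last row is $f(z_1),\dots,f(z_{d+1})$, and then multiplies two Vandermondes on the disjoint variable sets $\{z_2,\dots,z_{n+1}\}$ and $\{z_1,z_{n+2},\dots,z_{n+m}\}$, so repeated factors never arise.

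The gap is in your Step 2, which is exactly where the Vandermonde has to be produced. You never establish $\lambda\neq0$, as you acknowledge yourself. The sketched choice of $\mu$ is too vague to check. In particular, lowering exponents of $z_{r+1},\dots,z_m$ carelessly creates a repeated exponent among those variables; that exponent is shared by every monomial of $\Pi z^\mu$, so antisymmetrisation kills everything and $\lambda=0$.

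The fallback is unsupported as stated. The $\sigma_i$ are whatever the inductive call on $g$ returns, and $g$ already involves $z_N$. So freshness of $N$ gives no control over whether some pair $\{\sigma_i(m),\sigma_i(N)\}$ already occurs as a factor of $\Pi'$. Conjugating by a permutation moves $\Pi'$ and $P$ together and cannot separate them. Making this work would need a stronger inductive statement recording the exact shape of the $\sigma_i$, plus a separate combinatorial argument, and you give neither. Also, $\mb Q\subseteq A$ is not a hypothesis of the proposition, which is stated for an arbitrary unital commutative ring $A$.

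\textbf{How to close it.} The antisymmetrisation route works if you fix the final exponent vector first. Let $D=\deg\Pi$ and let $\gamma^0$ be the lex-leading exponent of $\Pi\in\ZZ[z_1,\dots,z_r]$. Its coefficient is $\pm1$, being the product of the leading coefficients of the factors $z_a-z_b$.
\begin{enumerate}
\item For $m\ge r(D+1)$, choose a bijection $t\colon\{1,\dots,m\}\to\{0,\dots,m-1\}$ with $t_i\ge D$ for $i\le r$ and $|t_i-t_j|>D$ for distinct $i,j\le r$.
\item Put $\mu=t-\gamma^0$, extended by $0$ outside $\{1,\dots,r\}$. Then $\mu\ge0$ and $|\mu|=\binom m2-D$.
\item For $\gamma\in\supp\Pi$, the vector $\gamma+\mu$ agrees with $t$ outside $\{1,\dots,r\}$. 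So it is a permutation of $(m-1,\dots,0)$ only if $\{t_i+\gamma_i-\gamma^0_i\}_{i\le r}=\{t_i\}_{i\le r}$. Since $|\gamma_i-\gamma^0_i|\le D$, the spacing forces $\gamma=\gamma^0$.
\item Every other $\gamma+\mu$ has a repeated entry, because $m$ distinct entries summing to $\binom m2$ would form a staircase permutation. So its monomial is killed by antisymmetrisation.
\end{enumerate}
Hence $\sum_{\sigma\in\Sg_m}\operatorname{sgn}(\sigma)\,\sigma(c\,\Pi z^\mu)=\pm c\,\FPol_m\in I_f$. This gives $\lambda=\pm1$, so no division is needed and the assumption $\mb Q\subseteq A$ becomes unnecessary.

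\textbf{A small repair to Step 1.} The inductive hypothesis hands you an \emph{unspecified} nonzero coefficient of $g$. So knowing that some coefficient of $g$ is a coefficient of $f$ is not enough; you need that every one is. This does hold: writing $f=\sum_k f_kz_m^k$ with $f_k\in A[z_1,\dots,z_{m-1}]$, one has $g=\sum_{k\ge1}f_k\sum_{a+b=k-1}z_m^a z_N^b$, with all inner coefficients equal to $1$.
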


Since $I_f $ is the smallest $\Sg_\infty$-invariant ideal of $\mc A$ containing $f$, Theorem~\ref{ithm:A}(2) is an immediate consequence of Proposition~\ref{P:NS}.

\begin{proof}[Proof of Proposition~\ref{P:NS}]
    Our proof is inductive: on the $i$th step we prove that for every unital commutative ring $A$ and every nonzero $f\in A[z_1, \ldots, z_i]$, there exist a nonzero coefficient $c$ of $f$ and $n\ge0$ such that $c\FPol_n(z_1, \ldots, z_n)\in I_f$. 
    The  base case $i=0$ (that is, $f\in A$) is trivial. We also provide a separate argument for the case $i=1$. The general argument works only from step 2 onward.

    Case $i=1$. In this case $f\in A[z_1]$. 
    Write $f=f_0 + f_1 z_1 + \dots + f_d z_1^d$ with $f_j\in A$ and $f_d \neq 0$, so $d = \deg f$. 
    We
   claim that $f_d \FPol_{d+1}(z_1, \dots, z_{d+1}) \in I_f$.
   To see this, let 
%
    \beq\label{eq:F} F=\begin{vmatrix}
         1&1&\ldots&1\\
         z_1&z_2&\ldots&z_{d+1}\\
         z_1^2&z_2^2&\ldots&z_{d+1}^2\\
         \vdots&\vdots&\ddots&\vdots\\
         z_1^{d-1}&z_2^{d-1}&\ldots&z_{d+1}^{d-1}\\
         f(z_1)&f(z_2)&\ldots&f(z_{d+1})\\
    \end{vmatrix}
    = \sum_{j=0}^d f_j \begin{vmatrix}
         1&1&\ldots&1\\
         z_1&z_2&\ldots&z_{d+1}\\
         z_1^2&z_2^2&\ldots&z_{d+1}^2\\
          \vdots&\vdots&\ddots&\vdots\\
         z_1^{d-1}&z_2^{d-1}&\ldots&z_{d+1}^{d-1}\\
         z_1^j&z_2^j&\ldots&z_{d+1}^j\\
    \end{vmatrix}.
    \eeq
    Since $f(z_1), \dots, f(z_{d+1}) \in I_f$, expanding the determinant in the first expression for $F$ along the bottom row we see that $F \in I_f$.
    On the other hand, 
    the determinants in the sum expansion of $F$ on the right hand side of \eqref{eq:F} vanish for $j = 0, \dots, d-1$, and so 
    \[ F = f_d \begin{vmatrix}
         1&1&\ldots&1\\
         z_1&z_2&\ldots&z_{d+1}\\
         z_1^2&z_2^2&\ldots&z_{d+1}^2\\
           \vdots&\vdots&\ddots&\vdots\\
         z_1^{d-1}&z_2^{d-1}&\ldots&z_{d+1}^{d-1}\\
         z_1^d&z_2^d&\ldots&z_{d+1}^d\\
    \end{vmatrix} = \pm f_d \FPol_{d+1}(z_1, \dots, z_{d+1}).\]
      This finishes the case $i=1$.
    
    Now we give the induction step.
    Let $i\ge1$ and let 
     $f\in A[z_1,\ldots, z_{i+1}]$ with $f\ne0$. 
   Since 
    $$\mathcal A \cong A[z_1]\otimes_{A} A[z_2, z_3, \ldots].$$
    we can apply the induction hypothesis with base ring $A[z_1]$. 
    Let $\Sg'_\infty := \Stab_{\Sg_\infty}(z_1) \leq \Sg_\infty$
    and let 
    $I'_f = \sum_{\sigma \in \Sg'_\infty} \mc A \sigma(f)$.
    By induction, there is $c \in A[z_1] \ssm \{0\}$ and $n \geq 0$ so that
    \[ C := c \FPol_n(z_2, \dots, z_{n+1}) \in I'_f.\]
Further, $c$ is a coefficient of $f$ when we think of $f$ as a polynomial in the variables $z_2, \dots, z_{i+1}$ with coefficients in $A[z_1]$.
Note that $I'_f \subseteq I_f$ and that the $A$-coefficients of  $c$ are  a subset of the $A$-coefficients of $f$, up to sign.

Now, $C \in A[z_1, \dots, z_{n+1}] \cong A[z_2, \dots, z_{n+1}][z_1]$.
We apply the $i=1$ case, with coefficient ring $A[z_2, \dots, z_{n+1}]$.
There are thus $m \geq 0$ and a nonzero coefficient $c'$ of $C$ so that 
$c' \FPol_m(z_1, z_{n+2}, \dots, z_{n+m}) \in I_f$.
Here $c'$ is an $A[z_2, \dots, z_{n+1}]$-coefficient of $C = c \FPol_n(z_2, \dots, z_{n+1})$, so 
$c' = c'' \FPol_n(z_2, \dots, z_{n+1})$, where $c''$ is an $A$-coefficient of $c \in A[z_1]$,  and thus, up to sign, an $A$-coefficient of $f$.

From the definition \eqref{eq:FPol} of the determinants $\FPol_k$ and the fact that
\[c'' \FPol_n(z_2, \dots, z_{n+1})\FPol_m(z_1, z_{n+2}, \dots, z_{n+m}) \in I_f,\]
we see that $c'' \FPol_{n+m}(z_1, \dots, z_{n+m}) \in I_f$.
 %
%
%
\end{proof}

\section{Non-noetherianity of the ideal lattice}\label{NONNOETHERIAN}

\subsection{Evaluation homomorphisms}\label{ss:Phi}

We now define the homomorphisms  $\Phi_n$ which will play the starring role in our analysis.
We let $\Phi_1:  \Ua(\Lg) \to \Ua(\mf g)[t, t^{-1}]$ be the ring homomorphism induced from the obvious Lie algebra map 
\[\Lg = \mf g[s, s^{-1}] \to \Ua(\mf g)[t, t^{-1}], \quad g s^i \mapsto g t^i.\]
(Note the change of variable from $s$ to $t$, which will simplify our later notation.)
For $n \geq 1$, define 
\beq\label{Phidef}
\Phi_n = (\Phi_1)^{\otimes n} \circ \Delta^n: \Ua(\Lg ) \to \left( \Ua(\mf g)[t, t^{-1}]\right)^{\otimes n},
\eeq
where $\Delta$ is the coproduct on $\Ua(\Lg)$.
To write $\Phi_n$ explicitly, we use the isomorphism 
\beq\label{direct sum isom}
\left( \Ua(\mf g)[t, t^{-1}]\right)^{\otimes n} \cong \Ua(\mf g^{\oplus n})[t_1^{\pm 1}, \dots, t_n^{\pm 1}].
\eeq
If $g \in \mf g $ let 
\beq \label{eq:component} \begin{matrix}
g_{\{i\}}= 
 &
        &(0,\dots,0,&g,&0,\dots,0) \in \mf g^{\oplus n},\\
        &&&i  \\
\end{matrix}
\eeq
that is with $g$ in the $i$th position. 
 Then in terms of the isomorphism \eqref{direct sum isom} we write
 \[ \Phi_n: \Ua(\Lg) \to \Ua(\mf g^{\oplus n})[t_1^{\pm 1}, \dots, t_n^{\pm 1}]\]
 with 
\begin{equation}\label{EPhi2}
\Phi_n(g s^i)  = g_{\{1\}} t_1^i + \cdots + g_{\{n\}} t_n^i.
\end{equation}

The maps $\Phi_n$ are closely related to  the representation theory of $\Lg$.
If $a\in \kk^\times$, recall that there is an {\em evaluation homomorphism} $\ev_a: \Ua(\Lg) \to \Ua(\mf g)$ defined by $g s^i \mapsto a^i g$.
The maps $\ev_a$ all factor through $\Phi_1$, and in fact $\ker \Phi_1 = \bigcap_{a \in \kk^\times} \ker \ev_a$. 
To see this, for $a \in \kk^\times$ let $\varphi_a: \kk[t, t^{-1}]\to \kk$ be the homomorphism which sends $t \mapsto a$,
so $ev_a = (1\otimes \varphi_a)\circ \Phi_1$.
It follows from the fact that $\bigcap_a \ker \varphi_a =(0)$ that 
\[ \ker \Phi_1 = \Phi_1^{-1} \bigl(\bigcap_a \ker (1 \otimes \varphi_a)\bigr) = \bigcap_a \ker ev_a.\]
We thus refer to $\Phi_1$ as the {\em universal evaluation homomorphism}. 

Evaluation homomorphisms are used in the construction of {\em evaluation representations} of $\Lg$, which we now define.
Let $V$ be a representation of $\mf g$ and let $a \in \kk^\times$.  
The {\em evaluation representation } $V_a$ of $\Lg$ is $\res_{\ev_a}(V)$, i.e., $g s^i$ acts on $V$ by $a^i g$.
We will say that an {\em $n$-fold evaluation representation } is an $n$-fold tensor product
\[ V^1_{a_1} \otimes \cdots \otimes V^n_{a_n}\]
of evaluation representations, where $V^1, \dots, V^n$ are representations of $\mf g$ and $a_1, \dots, a_n \in \kk^\times$.
By \cite{Chari86} (see also \cite[Corollary~6.1]{NSS}), any finite-dimensional irreducible representation of $\Lg$ is an $n$-fold evaluation representation for some $n$.

The following lemma is immediate:

\begin{lemma}\label{lem:Phi evaluation}
For all $n \geq 1$ we have
\[ 
\ker \Phi_n = \bigcap \{ \ \Ann_{\Ua(\Lg)}(\widetilde{V}) \mid \widetilde V \mbox{ is an $n$-fold evaluation representation of $\Lg$ } \}.
\]
\end{lemma}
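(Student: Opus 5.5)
We mimic the argument given above for $\ker \Phi_1 = \bigcap_a \ker \ev_a$. Both inclusions come from factoring the action on an $n$-fold evaluation representation through $\Phi_n$.

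First we show that every $n$-fold evaluation representation factors through $\Phi_n$. Let $\widetilde V = V^1_{a_1}\otimes\cdots\otimes V^n_{a_n}$ and set $\mathbf a = (a_1,\dots,a_n)\in(\kk^\times)^n$. Write $\varphi_{\mathbf a}: \kk[t_1^{\pm1},\dots,t_n^{\pm1}]\to\kk$ for the evaluation $t_j\mapsto a_j$. Under \eqref{direct sum isom}, the map $(1\otimes\varphi_{\mathbf a})\circ\Phi_n$ equals $(\ev_{a_1}\otimes\cdots\otimes\ev_{a_n})\circ\Delta^n$. This holds because both are algebra maps agreeing on generators: by \eqref{EPhi2} each sends $gs^i$ to $\sum_j a_j^i g_{\{j\}}$. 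The action of $\Ua(\Lg)$ on a tensor product of modules is given by the coproduct. Hence the action on $\widetilde V$ is the composite of $(1\otimes\varphi_{\mathbf a})\circ\Phi_n$ with the action of $\Ua(\mf g^{\oplus n})\cong\Ua(\mf g)^{\otimes n}$ on $V^1\otimes\cdots\otimes V^n$. This gives $\ker\Phi_n\subseteq\Ann(\widetilde V)$.

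For the reverse inclusion, let $u\in\Ua(\Lg)$ annihilate every $n$-fold evaluation representation, and write $\Phi_n(u)=\sum_{\mathbf m} x_{\mathbf m}t^{\mathbf m}$ with $x_{\mathbf m}\in\Ua(\mf g^{\oplus n})$. Take $V^1=\cdots=V^n=\Ua(\mf g)$, the regular (left) representation; these evaluation representations are infinite-dimensional, which the statement allows. Then $\Ua(\mf g)^{\otimes n}\cong \Ua(\mf g^{\oplus n})$ is the regular module, so the element $(1\otimes\varphi_{\mathbf a})\Phi_n(u)=\sum_{\mathbf m} x_{\mathbf m}\mathbf a^{\mathbf m}$ acts by zero on it. Applying it to $1$ shows the element itself is zero, for every $\mathbf a\in(\kk^\times)^n$.

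To conclude, choose a PBW basis of $\Ua(\mf g^{\oplus n})$ and view $\Phi_n(u)$ as a finite sum of basis elements times Laurent polynomials in $t_1,\dots,t_n$. The previous step shows each of these Laurent polynomials vanishes at every point of the torus $(\kk^\times)^n$. Since $\kk$ is infinite, such a Laurent polynomial is zero, because $(\kk^\times)^n$ is Zariski dense in the torus. Hence $\Phi_n(u)=0$.

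There is no serious obstacle. The only point needing care is that the Hopf-algebra action on tensor products matches $\Phi_n$ under \eqref{direct sum isom}, which is checked on the generators $gs^i$. If one wanted only finite-dimensional $V^j$, one would instead use that finite-dimensional representations of $\mf g^{\oplus n}$ separate points of $\Ua(\mf g^{\oplus n})$ (Harish-Chandra), but this is not needed here.
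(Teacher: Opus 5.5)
Your proof is correct and takes the same route as the paper. The action on $V^1_{a_1}\otimes\cdots\otimes V^n_{a_n}$ factors through $\Phi_n$ followed by the evaluation $t_j\mapsto a_j$. The reverse inclusion combines two facts: the regular representation detects nonzero elements of $\Ua(\mf g^{\oplus n})$, and the maximal ideals of $\kk[t_1^{\pm1},\dots,t_n^{\pm1}]$ have trivial intersection. You have simply written out the details that the paper's one-line proof leaves implicit.
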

\begin{proof}
 This follows from the facts that the intersection of the maximal ideals of  $\kk[t_1^{\pm 1}, \dots, t_n^{\pm 1}]$  is trivial,
  and that 
 $0$ is the only element of $\Ua(\mf g)$ which annihilates all  representations of $\mf g$. 
\end{proof}

From Lemma~\ref{lem:Phi evaluation}, we refer to $\Phi_n$ as the {\em  universal $n$-fold evaluation homomorphism}.

\begin{remark}\label{rem:findim}
Lemma~\ref{lem:Phi evaluation} remains true if we restrict to considering {\em finite-dimensional irreducible} $n$-fold evaluation representations of $\Lg$.
This follows from three facts that allow us to improve the proof of Lemma~\ref{lem:Phi evaluation}.

$(1)$ As is well known to experts, if $V^1, \dots, V^n$ are finite-dimensional irreducible representations of $\mf g$ and $a_1, \dots, a_n \in \kk^\times$ are distinct, then $V^1_{a_1} \otimes \cdots \otimes V^n_{a_n}$ is an irreducible representation of $\Lg$; see \cite[(2.3)]{Rao}. 

$(2)$ The intersection 
  \[ \bigcap \{ (t_1 - \lambda_1, \dots, t_n - \lambda_n) \mid \lambda_1, \dots, \lambda_n \in \kk^\times \mbox{ are pairwise distinct}\} \]
  of all ``off-diagonal'' maximal ideals of $\kk[t_1^{\pm 1}, \dots, t_n^{\pm 1}]$  is trivial.

$(3)$    By \cite[Theorem~2.5.7]{Dix},  $0$ is the only element of $\Ua(\mf g)$ which annihilates all finite-dimensional irreducible representations of $\mf g$. 
\end{remark}

\subsection{Infinitely generated ideals}\label{ss:nonnoeth}

It is not hard to see that $\Ua(\Lg)$,
or more generally its deformation $\Ua(\wh{\mf g})/(c-\lambda)$, where $\lambda \in \kk$,
is neither left nor right noetherian: for example, for any $0 \neq g\in \mf g$ the left or right ideal of $\Ua(\wh{\mf g})/(c-\lambda)$ generated by the elements  $\{g s^i \mid i \geq 0\}$ cannot be finitely generated by the PBW theorem.
We now use the homomorphism $\Phi_1$ to prove a  much stronger negative result for $\Ua(\Lg)$ (respectively $\Sa(\Lg)$): it fails to have {\em any} two-sided (respectively  Poisson) noetherianity property.  

\begin{theorem}\label{thm:nonnoeth}
There is an ideal $J$ of $\Ua(\Lg)$ that satisfies the following properties:
\begin{enumerate}
\item  $J^n$ is not finitely generated as a left, right, or two-sided ideal of $\Ua(\Lg)$ for any $n\geq 1$;
\item  $\gr (J^n)$ is not finitely generated as a Poisson ideal of $\Sa(\Lg)$ for any $n\geq 1$;
\item  $(\gr J)^n$ is not finitely generated as a Poisson ideal of $\Sa(\Lg)$ for any $n\geq 1$;
\item  $((\gr J)^n)_{2n}$ is not a finitely generated subrepresentation of $\Sa^{2n}(\Lg)$ for any $n\geq 1$.
\end{enumerate}
\end{theorem}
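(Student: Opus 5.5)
Take $J = \ker \Phi_1$ and work in its image, as the section introduction indicates: $R := \im \Phi_1 \cong \Ua(\Lg)/J$, a subalgebra of $\Ua(\mf g)[t,t^{-1}]$. The first step is to make $J$ and its associated graded explicit. In degree two, $\Phi_1(g s^i \cdot h s^j) = gh\, t^{i+j}$, so $J$ contains elements such as
\[ x_{i,j} = (gs^i)(hs^j) - (gs^{i+1})(hs^{j-1}), \]
and $\gr J$ contains the symbols of these elements in $\Sa^2(\Lg)$. The associated graded $\underline\Phi_1 : \Sa(\Lg) \to \Sa(\mf g)[t,t^{-1}]$ sends $gs^i \mapsto g t^i$, and I would first verify that $\gr J = \ker \underline{\Phi}_1$. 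Since this kernel is generated in degree $2$ by the elements $(gs^i)(hs^j) - (gs^{i+1})(hs^{j-1})$, it is a "Segre/Veronese-type" ideal, and the loop grading by $s$-degree makes it $\ZZ$-graded. The plan for all four statements is to find a grading-type invariant that a finitely generated ideal would bound, and then show that $J^n$ violates it.

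The key invariant is the following. Give $\Ua(\Lg)$ and $\Sa(\Lg)$ the $\ZZ$-grading by $s$-degree, and for a monomial $(g^1 s^{i_1})\cdots(g^k s^{i_k})$ consider its \emph{spread} $\max i_r - \min i_r$. An element of $J^n$ of filtration degree $2n$ lies, modulo lower terms, in $((\gr J)^n)_{2n}$, which is spanned by products of $n$ quadratic generators. A single product $x_{i_1,j_1}\cdots x_{i_n,j_n}$ can have arbitrarily large spread. I would show the following: if $J^n$ (or the Poisson ideal $\gr(J^n)$, or $(\gr J)^n$) were generated by finitely many elements, then every element of minimal filtration degree $2n$ in $J^n$ would be a combination of $\Lg$- (respectively adjoint-) translates of the generators. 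The degree-$2n$ component is a representation of $\Lg$ acting by brackets, and since bracketing with $gs^m$ preserves degree only through the leading part, the component $((\gr J)^n)_{2n}$ would then be a finitely generated $\Lg$-subrepresentation of $\Sa^{2n}(\Lg)$. Thus (4) implies (3); (2) follows similarly because $\gr(J^n) \supseteq (\gr J)^n$ and the two agree in degree $2n$; and (1) follows by taking symbols of lowest degree, using that left, right, or two-sided generation of $J^n$ gives Poisson generation of its degree-$2n$ symbol part by the degree-$2n$ symbols of the generators. For this reduction I need that $J^n$ has no elements of filtration degree $< 2n$, i.e.\ that $J$ has nothing in degree $\le 1$. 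This holds because $\Phi_1$ is injective on $\kk \oplus \Lg$.

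The main obstacle is (4): showing that $M_n := ((\gr J)^n)_{2n}$ is not a finitely generated $\Lg$-module. My attempt would be to view $M_n$ inside $\Sa^{2n}(\Lg)$ and show that $\Lg$ acts on $M_n$ modulo a "spread filtration". Acting by $gs^m$ on a monomial shifts one index by $m$, which changes the spread in a controlled way. To control this, I would find a quotient of $M_n$ (say by $\mf g$-isotypic components or by a suitable $\Lg$-submodule) on which $\Lg$ acts through a commutative quotient. Concretely, in the rank-one pieces, the $s^m$-action is by multiplication, via the $\Theta$-type correspondence from $\Sa^{2n}(E)$ to symmetric Laurent polynomials in $2n$ variables. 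In that model I would identify $M_n \cap \Sa^{2n}(E)$ with the $H$-submodule corresponding to the ideal of functions vanishing on the "diagonal pairs" locus, which is infinitely generated as a module over $\kk[z^{\pm}]^{\Sg}$ acting through $\mathrm L\mf h$ on only one slot at a time. In the simplest case $n=1$, with $e,e$ and $\mathrm{L}\mf h$ acting by $[h s^m, e_ie_j] = \alpha(h)(e_{i+m}e_j + e_ie_{j+m})$, I would check directly that the span of $e_ie_j - e_{i+1}e_{j-1}$ is not generated by finitely many elements under this action. If this explicit computation resists, the fallback is to exhibit infinitely many linearly independent $\Lg$-coinvariants (a strictly increasing chain of submodules generated by spread-$\le N$ elements) and prove strictness by an invariant linear functional detecting spread exactly $N+1$.
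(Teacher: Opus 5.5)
\textbf{The gap is the choice $J=\ker\Phi_1$, and with it the step you flag as the main obstacle.}

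Your $n=1$ test case comes out the opposite way. Under $\Theta^{e^2}_{\Sg;\star}$, the span of the $e_ie_j-e_{i+1}e_{j-1}$ is the set of symmetric Laurent polynomials vanishing on $z_1=z_2$. That is the principal ideal $(z_1-z_2)^2\mc Z_2^{\Sg_2}$, so by Lemma~\ref{Lthid} it is the cyclic $\Lh$-module generated by $D_2=2(e_0e_2-e_1^2)$. Note that $D_2$ has spread $2$ but generates elements of every spread.

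More generally, $h_m$ acts on $\Sa^k(E)$ by multiplication by the power sum $z_1^m+\dots+z_k^m$, not "on one slot at a time". So every $H$-subrepresentation of $\Sa^k(E)$ corresponds to an ideal of the noetherian ring $\mc Z_k^{\Sg_k}$ and is finitely generated. These slices can never detect non-finite generation, and they are not $\Lg$-quotients of $M_n$ in any case.

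Your fallback fails too. Identify $(\ker\underline{\Phi}_1)_2\cong(\mf g\otimes\mf g\otimes(z_1-z_2)\mc Z_2)^{\Sg_2}$ and let $\mathrm{Cas}=\sum_i g^i\otimes g_i$. The $\mf g$-invariant projection of $xs^m\cdot(a\otimes b\otimes f)$ is $\frac{1}{d}\kappa([x,a],b)(z_1^m-z_2^m)f\,\mathrm{Cas}$. Apply this with $m=1$ to $(a\otimes b-b\otimes a)\otimes(z_1-z_2)g$, where $g$ is symmetric. This hits all of the invariant part $\mathrm{Cas}\otimes(z_1-z_2)^2\mc Z_2^{\Sg_2}$. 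The nontrivial isotypic parts satisfy $V=\mf g V$, so this module equals $\Lg$ applied to itself and has no nonzero coinvariants. This is consistent with Remark~\ref{rmk:fingen}, where the authors say they expect $\ker\Phi_k$ to be finitely generated. With your $J$ you would be trying to prove (1), already for $n=1$, for an ideal that is expected to be finitely generated.

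\textbf{Missing idea.} The paper takes $J$ strictly larger than $\ker\Phi_1$, built from the centre $Z=\kk[\Omega_1,\dots,\Omega_\ell]$ of $\Ua(\mf g)$. Namely $J=\Phi_1^{-1}(Z_+\Ua(\mf g)[t^{\pm1}])$, where $Z_+$ is the augmentation ideal of $Z$.

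In $R=\im\Phi_1=\kk\oplus\Ua(\mf g)_+[t^{\pm1}]$, the elements $\Omega_jt^i$ are central but $t\notin R$, so you cannot shift $t$-degree. Hence $\Phi_1(J^n)=(Z_+)^n\Ua(\mf g)[t^{\pm1}]$ is the strictly increasing union of the ideals $K(n,m)=\sum_{|i|\le m}(Z_+)^nt^iR$. Strictness is proved by projecting onto $Z$, using Kostant's theorem that $\Ua(\mf g)$ is free over $Z$.

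On the symbol side, $\underline{\Phi}_1(((\gr J)^n)_{2n})$ contains the Poisson-central elements $\omega^nt^i$ for all $i$. So its $\mf g$-invariant part is an infinite-dimensional trivial $\Lg$-direct summand, which cannot be finitely generated. This is exactly the kind of invariant you were looking for, and it is invisible for $\ker\Phi_1$ because $\underline{\Phi}_1$ kills $\gr\ker\Phi_1$.

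Your reductions of (1) and (2) to (4) are also not justified. Elements of $J^n$ of filtration degree $2n$ can arise from cancellation of leading terms. So neither ``generators of $J^n$ give $\Lg$-generators of $(\gr J^n)_{2n}$'' nor ``$\gr(J^n)$ and $(\gr J)^n$ agree in degree $2n$'' is automatic. The valid implication runs the other way: lifts of Poisson generators of $\gr(J^n)$ generate $J^n$. That is why the paper proves (1) directly in $R$ and deduces (2) from it. Your lowest-degree argument for deducing (3) from (4) is fine.
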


\begin{proof}
Let $\U = \Ua(\mf g)$, with augmentation ideal $\U_+$, and let $R = \im \Phi_1 \subseteq \U[t,t^{-1}]$.
Let $\U_+[t,t^{-1}]$ denote the ideal of $\U(\mf g)\otimes \kk[t,t^{-1}]$ generated by $\U_+$.
Since $\mf g[t, t^{-1}] \subseteq R$ it is not hard to see that $R= \kk \oplus \U_+  [t, t^{-1}]$.

Let $Z = Z(\U) $ be the centre of $\U$ and let  $\ell$ be the rank of $\mf g$: that is, $\ell$ is the dimension of a Cartan subalgebra of $\mf g$.
Then (see for example \cite[Theorem~7.3.8(ii)]{Dix}) there are algebraically independent $\Omega_1, \dots, \Omega_\ell$, which we can choose to lie in $\U_+$,   so that
$Z = \kk[\Omega_1, \dots, \Omega_\ell]$.
Since the $\Omega_i$ lie in $\U_+$, note that $Z_+:= \U_+\cap Z$ is the ideal of $Z$ generated by $\Omega_1, \dots, \Omega_{\ell}$ and is contained in $R$.  
Furthermore,
\beq\label{RZ} R\cap Z = (\kk \oplus \Ua_+ [t,t^{-1}]) \cap Z = \kk  \oplus Z_+ [t, t^{-1}]. \eeq
Let $K = Z_+ \U[t, t^{-1}]$, and let $J = \Phi_1^{-1}(K)$.

$(1)$, $(2)$.  Let $n \geq 1$.  
We have $\Phi_1(J^n) = K^n = (Z_+)^n\U[t, t^{-1}]$.
For $m \geq 0$, let 
\[K(n, m) = \sum_{i = -m}^m  (Z_+)^n t^i R,\]
so $K^n = \bigcup K(n, m)$.
We can write
\beq\label{eq:K} K(n,m) = (Z_+)^{n-1}\bigl( \sum_{i=-m}^m \sum_{j = 1}^\ell \Omega_j t^i R\bigr).\eeq
Since the $\Omega_j t^i$ are (central) in $R$, thus $K(n,m)$ is an ideal of $R$.
Further, since $t \not\in R$, no $\Omega_k^n t^{m+1}$ is in $K(n,m) $. 
In more detail, $\Ua$ is free over  $Z$  by  Kostant's theorem \cite[Theorem~8.2.4]{Dix}, and in particular $Z$ is a $Z$-summand of $\Ua$.
If $\Omega_k^n t^{m+1} \in K(n,m)$, projecting this to $Z$ by taking the $\mf g$-trivial component of \eqref{eq:K}, we find
\begin{align*}
\Omega_k^n t^{m+1} & \in \sum_{i=-m}^m (Z_+)^n t^i (R\cap Z) \\
 & = \sum_{i=-m}^m (Z_+)^n t^i \oplus (Z_+)^{n+1}[t,t^{-1}] \quad \mbox{ by \eqref{RZ}.}
\end{align*}
This is clearly false.

Thus $\{K(n,m) \}_m$ is an infinite strictly ascending chain of ideals
whose union is $K^n$, showing that $K^n$ is not finitely generated as a two-sided ideal of $R$.
Thus $J^n$ is not finitely generated either.
It follows by standard associated graded arguments that $\gr (J^n)$ is not finitely generated as a Poisson ideal of $\Sa(\Lg)$.

We next prove $(4)$ for $n=1$.
Without loss of generality, we may let  $\Omega := \Omega_1$ be the (unique) {\em quadratic Casimir element} of $\U$:  that is, if 
$\{ g^1, \dots, g^d\}$ is a basis of $\mf g$,
with dual basis $\{ g_1, \dots, g_d\}$ (with respect to the Killing form)
then
\beq\label{casimir}
\Omega = \sum_{i =1}^d  g^i g_i.
\eeq
Let $\omega = \gr \Omega \in \Sa^2(\mf g)$.
The formula for $\omega$ is the same as \eqref{casimir}, but notice that $\Omega \in \U$ and so $g^i , g_i$ may not commute in the formula for $\Omega$.

Now, $\Phi_1$ respects canonical filtrations: that is, $\Phi_1(\Ua_{\leq i}(\Lg)) \subseteq \U_{\leq i}[t,t^{-1}]$ for all $i$ and so $\Phi_1$ induces a graded Poisson homomorphism $
\underline{\Phi}: \Sa(\Lg) \to \Sa(\mf g)[t, t^{-1}]$, where $\underline{\Phi}(g s^i) = g t^i$ for all $g \in \mf g$.
(Here the grading on $\Sa(\mf g)[t, t^{-1}]$ is by giving elements of $\mf g$ degree 1 and giving $t$ degree 0.)
The kernel of $\underline{\Phi}$ is $\gr(\ker \Phi_1)$ and so is contained in the Poisson ideal $\gr J$ of $\Sa(\Lg)$.

Since the elements $\Omega t^i$ are in $R$, we have $\omega t^i \in \underline{\Phi}((\gr J)_2)$ for all $i \in \ZZ$.
These elements are Poisson central and linearly independent, so 
the $\mf g$-trivial component of $\underline{\Phi}((\gr J)_2)$, considered as a representation of $\Lg$ under the adjoint action, is an infinite  direct sum of one-dimensional  representations.
As a result, $\underline{\Phi}((\gr J)_2)$  is not finitely generated.

Now let $n \geq 2$ and consider $((\gr J)^n)_{2n}$.
It is easy to see that $\underline{\Phi}(((\gr J)^n)_{2n}) \supseteq \bigoplus_{i \in \ZZ} \kk \omega^n t^i$, which again is not finitely generated.  
Thus $((\gr J)^n)_{2n}$ is not finitely generated either.

Finally, for $(3)$, if $(\gr J)^n$ were a finitely generated Poisson ideal, its lowest-degree piece $((\gr J)^n)_{2n}$ would be a finitely generated subrepresentation of $\Sa^{2n}(\Lg)$, contradicting $(4)$.
\end{proof}

See Example~\ref{eg:noeth} for more discussion of the ideals $J^n$.

\section{$H$-ideals and $\Theta$-maps}\label{H IDEALS}
The main result of this paper is that, in spite of Theorem~\ref{thm:nonnoeth}, two-sided ideals of $\Ua(\Lg)$ have a great deal of structure:
if $(0) \neq I \normal \Ua(\Lg)$, then $I \supseteq \ker \Phi_n$ for some $n \geq 1$.  
In this section we begin to develop the tools we need for this result, whose proof will take most of the rest of the paper.

\subsection{$H$-ideals}\label{ss:Hideals}
One of our main techniques is to look at $I \cap \Ua(E)$, where $E$ is a suitable infinite-dimensional abelian subalgebra of $\Lg$, and to study its structure.
Let us establish the relevant notation,  which we will use throughout the next two sections.

\begin{notation}\label{not:E}
Let   $\mf b$ be a Borel subalgebra of $\mf g$ and let $\mf h$ be a Cartan subalgebra of $\mf b$.
Let $e\in\mf b$ be a  highest weight vector  for the action of $\mf h$ on $\mf b$,
and let $E:= e\otimes\kk[s, s^{-1}] \subseteq \Lg$.
Since $E$ is an abelian Lie subalgebra of $\Lg$, we can consider $\Sa(E) \cong \Ua(E)$ as a subalgebra both of $\Sa(\Lg)$ and of $\Ua(\Lg)$.
Let $H := \mathrm{L} \mf h = \mf h \otimes \kk[s, s^{-1}]$.
Since $E$ is stable under the adjoint action of $H$, thus $H$ acts on each $E^{\otimes k}$ and $\Sa^k(E)$, and thus also on $\Sa(E)$.
We say that an ideal $I$ of $\Sa(E)$ that is preserved under the adjoint action of $H$ is an {\em $H$-ideal} of $\Sa(E)$, and write 
\[ I \normal_H \Sa(E)\]
to denote this. 

The action of $\mf h$ on $e$ has a kernel of codimension 1 and thus it reduces to the action of a single element $h\in \mf h$ with $[h, e]=e$. 
We set $\Lh= h\otimes \CC[s, s^{-1}]$. 
It is clear that an ideal $I$ of $\Sa(E)$ is $H$-stable if and only if it is $\Lh$-stable. Set $h_{i}:=h\otimes s^i$ and $e_j : = e \otimes s^j$, so $[h_i, e_j] = e_{i+j}$.
Note that $\Sa(E)$ and the $\Lh$-action on it are the same for any choice of a finite-dimensional  simple Lie algebra $\mf g$; in particular, $H$-ideals of $\Sa(E)$ are independent of $\mf g$.

\end{notation}

In this section, we study the structure of $H$-ideals of  $\Sa(E) = \kk[\dots, e_{-1}, e_0, e_1, \dots]$.  
We will see they can be related to symmetrically invariant ideals of a Laurent polynomial ring in countably many variables,
and that the structure of invariant ideals of the infinite polynomial ring also gives structure to $H$-ideals. 
Combining these results, in Section~\ref{S:disc} we will see (Corollary~\ref{Cacc}) that $H$-ideals of $\Sa(E)$ satisfy the ascending chain condition.

The importance of $H$-ideals to our project comes from the observation, following from Theorem~\ref{thm:BS}, that nonzero two-sided ideals of $\Ua(\Lg)$ or Poisson ideals of $\Sa(\Lg)$ give rise to nonzero $H$-ideals of $\Sa(E)$.

\begin{lemma}\label{lem:JtoH} 
Let $J \neq (0)$ be either a two-sided ideal of $\Ua(\Lg)$ or a Poisson ideal of $\Sa(\Lg)$.
Then $J \cap \Sa(E)$ is a nonzero $H$-ideal of $\Sa(E)$.
\end{lemma}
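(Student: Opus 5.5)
The statement has two parts: $J\cap\Sa(E)$ is an $H$-ideal, and it is nonzero. I will treat them in turn.

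\emph{$H$-stability.} Clearly $J\cap\Sa(E)$ is an ideal of $\Sa(E)$, since $J$ is a two-sided ideal of $\Ua(\Lg)$ (or an ideal of $\Sa(\Lg)$) and $\Sa(E)$ is a subalgebra. For $x\in H$ and $a\in J\cap\Sa(E)$, the adjoint action gives $[x,a]=xa-ax$ in the enveloping case, respectively $\{x,a\}$ in the Poisson case. This element lies in $J$ because $J$ is a two-sided ideal, respectively a Poisson ideal. It lies in $\Sa(E)$ because $[H,E]\subseteq E$ and $\operatorname{ad}x$ is a derivation. Hence $J\cap\Sa(E)$ is an $H$-ideal.

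\emph{Nonvanishing.} By Theorem~\ref{thm:BS}, $A:=\Ua(\Lg)/J$ (respectively $\Sa(\Lg)/J$) has finite GK-dimension, say $\GKdim A = r<\infty$. Suppose for contradiction that $J\cap\Sa(E)=0$. Then the composite $\Sa(E)\to A$ is injective, so $\Sa(E)$ embeds as a subalgebra of $A$. Since GK-dimension is monotone under subalgebras \cite{KL}, we get $\GKdim\Sa(E)\le r$. But $\Sa(E)\cong\kk[e_i\mid i\in\ZZ]$ contains the polynomial ring $\kk[e_0,\dots,e_r]$, which has GK-dimension $r+1$. This contradiction shows $J\cap\Sa(E)\neq0$.

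The only point needing care is that the subalgebra monotonicity of GK-dimension applies to the image of $\Sa(E)$ in $A$. It does: for any finite-dimensional $V\subseteq\Sa(E)$, the growth of $V^n$ is the same as that of its isomorphic image in $A$. This step is the main (mild) obstacle; everything else is formal.
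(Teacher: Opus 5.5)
Your proof is correct and follows the paper's argument. $H$-stability holds because $J$ is closed under the adjoint action of $H$, and nonvanishing follows from Theorem~\ref{thm:BS}, since $\Sa(E)$ has infinite GK-dimension and so cannot inject into the finite-GK-dimension quotient; you just make explicit the monotonicity step that the paper leaves implicit.
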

\begin{proof}
    In either case $J$ is preserved by the adjoint action of $H$, and so the same is true for $J \cap \Sa(E)$.

    If $J \normal \Ua(\Lg)$ then by Theorem~\ref{thm:BS}(1) the natural map $\Sa(E) \to \Ua(\Lg)/J$ is not injective and so $J \cap \Sa(E)\neq (0)$.  The proof if $J$ is a Poisson ideal of $\Sa(\Lg)$ is the same, using Theorem~\ref{thm:BS}(2).
\end{proof}

We  remark that the proof of Theorem~\ref{thm:BS} can easily be modified to prove:
\begin{proposition}\label{prop:H-BS}
Let $I$ be a nonzero $H$-ideal of $\Sa(E)$.  Then $\Sa(E)/I$ has polynomial growth.  That is, $\Sa(E)$ has just-infinite growth in terms of $H$-ideals. \qed
\end{proposition}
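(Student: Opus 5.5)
Fix a nonzero $H$-ideal $I$ of $\Sa(E)$ and a nonzero element $f \in I$. Since $I$ is graded (it is $\mf h$-stable), we may take $f$ homogeneous of degree $k$ in the $e_j$. The plan mimics the argument of \cite{BS}. First we use the $\Lh$-action to produce a large supply of elements of $I$. Then we show that these elements let us rewrite every monomial of high degree in a controlled way. This bounds the growth of $\Sa(E)/I$.

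\textbf{Step 1: a relation that lowers ``spread''.} Iterate the operators $\mathrm{ad}\, h_i$, which act by $e_j \mapsto e_{i+j}$ as derivations, on $f$. We obtain the elements
\[ h_{i_1}\cdots h_{i_r}\cdot f \in I \quad\text{for all } i_1,\dots,i_r\in\ZZ. \]
Via $\Theta$, the $H$-submodule generated by $f$ in $\Sa^k(E)$ corresponds to the ideal of $\mc Z_k^{\Sg_k}$ generated by $\Theta_\star(f)$ (Lemma~\ref{Lthid}). Hence $I_k$ contains $\Theta^\star$ of the whole ideal $\Theta_\star(f)\,\mc Z_k^{\Sg_k}$, which is much more than the span of the shifts of $f$. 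Using this ideal, we aim to find an element of $I_k$ whose leading monomial, in a suitable order, is a ``clustered'' monomial $e_{j}^{a}\cdots$. Concretely, we want that for every monomial $e_{j_1}\cdots e_{j_k}$ with $j_1\le\dots\le j_k$ and $j_k - j_1$ larger than some constant $N=N(f)$, the monomial is congruent modulo $I$ to a combination of monomials with smaller spread. Multiplying by the symmetric functions $\Ryn_k(z^{\alpha})$ shifts the support of $\Theta_\star(f)$ arbitrarily. A division-algorithm argument with respect to a monomial order on exponent vectors then shows that, modulo this ideal, $\mc Z_k^{\Sg_k}$ is spanned by orbit-sums of exponent vectors lying in a region that is bounded up to the diagonal translation $(1,\dots,1)$.

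\textbf{Step 2: growth count.} Fix a finite-dimensional subspace $V\subseteq \Sa(E)$ spanned by $e_{-m},\dots,e_m$. A monomial in $V^n$ of degree $n$ factors as a product of $\lfloor n/k\rfloor$ blocks of size $k$. Each block can be reduced modulo $I$ to blocks of bounded spread, and there are only $O(m)$ possible positions for a block. A key subtlety is that the reductions may increase the index range beyond $[-m,m]$. We control this by noting that the reductions preserve the total $\ZZ$-degree $\sum j$ and that the spread is bounded. Counting bounded-spread blocks by their position and shape gives a polynomial bound on $\dim (V^n + I)/I$.

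\textbf{Main obstacle.} The hard part is Step 1: making a single relation control \emph{all} degrees $n\ge k$. A block decomposition of a long monomial need not have each block of large spread, even when the whole monomial is spread out. To handle this, we will first sort the indices and group consecutive ones, so that blocks straddling large gaps are exactly the ones to be reduced. We then argue by induction on a well-ordering of monomials, for instance by the sum of squared gaps, that the reduction terminates. If this is too delicate, the fallback is to follow \cite{BS} verbatim: replace their use of $\mathrm{ad}\,\Lg$ by $\mathrm{ad}\,\Lh$, note that their estimates only ever used brackets with the Cartan loop acting on root-vector loops, and conclude as in their proof.
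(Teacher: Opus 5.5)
Your Steps 1--2 have two genuine gaps, and the fallback does not close them. The paper itself only asserts that the proof of \cite{BS} adapts. Your added claim, that \cite{BS} uses nothing but $\Lh$-brackets on root-vector loops, is an unchecked assertion rather than an argument.

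\textbf{Step 1 is false as stated.} Take $I$ to be the $H$-ideal generated by $f=D_3$. By Example~\ref{Esgen}, $\gr_3 I$ corresponds to the principal ideal $\FPol_3^2\,\mc Z_3^{\Sg_3}$. So the degree-$3$ part of $\Sa(E)/I$ is $\mc Z_3^{\Sg_3}/(\FPol_3^2)$, which has Krull dimension $2$. But the span of orbit sums over exponents bounded modulo $\ZZ(1,1,1)$ is a finitely generated $\kk[(z_1z_2z_3)^{\pm1}]$-module, so it cannot surject onto a $2$-dimensional quotient.

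Here is a concrete witness. We have $I\subseteq\ker\Phi_2$ (Lemma~\ref{Lfpolgen}(3)), and
$\Phi_2(e_0^2e_N)=(e_{\{1\}}+e_{\{2\}})^2(e_{\{1\}}t_1^N+e_{\{2\}}t_2^N)$
contains the term $e_{\{1\}}^2e_{\{2\}}t_2^N$. This term occurs in $\Phi_2(e_ae_be_c)$ only if one index equals $N$ and the other two sum to $0$, which forces spread at least $N$. Hence $e_0^2e_N$ is not congruent modulo $I$ to any combination of monomials of spread less than $N$.

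The same obstruction persists in every degree. Modulo the $H$-ideal generated by $D_{n+1}$, degree-$\ell$ classes are $\Sg_\ell$-invariant functions on the $n$-fold diagonal $\Delta^\ell_n$ (Proposition~\ref{prop:kernel}). So the correct normal forms have up to $n$ independently placed clusters, not bounded spread.

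Also, $H$-ideals need not be homogeneous for $\deg e_j=j$; the $H$-ideal generated by $e_1-e_0$ is an example. Your reductions therefore preserve $\sum j$ only if you first find a $\ZZ$-homogeneous element of $I$.

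\textbf{Step 2 does not bound GK-dimension, even granting Step 1.} For fixed $V=\kk e_{-m}+\dots+\kk e_m$, $\dim V^N$ is already polynomial in $N$ in $\Sa(E)$ itself. Finite GK-dimension needs an exponent independent of $m$. You treat a product of $\lfloor N/k\rfloor$ reduced blocks as a commutative monomial in $O(m)$ block types. That gives roughly $N^{O(m)}$ classes, with exponent growing with $m$, so the count proves nothing.

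What is missing is how blocks in \emph{different} positions interact. That information lives in $\gr_\ell I$ for $\ell\gg k$, not in the single degree-$k$ relation.

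The paper's own results give a short proof:
\begin{enumerate}
\item By Theorem~\ref{Disc for H ideals}, $D_{n+1}\in I$ for some $n\ge1$.
\item By Lemma~\ref{Lfpolgen}(3), the $H$-ideal generated by $D_{n+1}$ is $\ker\Phi_n\cap\Sa(E)$.
\item Hence $\Sa(E)/I$ is a quotient of $\Phi_n(\Sa(E))$. This is a subalgebra of the commutative ring $\kk[e_{\{1\}},\dots,e_{\{n\}},t_1^{\pm1},\dots,t_n^{\pm1}]$, so $\GKdim \Sa(E)/I\le 2n$.
\end{enumerate}
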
 
We will not need this result, however.

If $r \in \Sa^k(E)$, then $h \cdot r = kr$, where $- \cdot - $ denotes the adjoint action of $H$ on $\Sa(E)$.
It follows that $H$-ideals of $\Sa(E)$ are automatically graded: 
\beq\label{H-graded}
\mbox {if $I \normal_H \Sa(E)$, then }
 I = \bigoplus_{k \geq 0} \gr_k I, 
\mbox{ where $\gr_k I := I \cap \Sa^k(E)$.}
\eeq
Note that $\gr_k I$ is an $H$-subrepresentation of $\Sa^k(E)$.

From \eqref{H-graded}, it makes sense to consider  the structure of $H$-subrepresentations of $\Sa^k(E)$ as a part of our effort to understand $H$-ideals.
To do this, we will use the obvious vector space isomorphism between $\Sa^k(E)$ and a Laurent polynomial ring.
For $k \geq 1$ let  $\mc Z_k:=\kk[z_1^{\pm 1}, \dots, z_k^{\pm 1}]$.
Let 
\[ \Theta^\star_{e^k}: \mc Z_k \to E^{\otimes k}\]
be the linear isomorphism that sends $z_1^{i_1}z_2^{i_2}\dots z_k^{i_k}$ to $e_{i_1}\otimes e_{i_2}\otimes\dots\otimes e_{i_k}$.
(The reason for including $e$ in the notation $\Theta^\star_{e^k}$ is that we will see a more general version of this map later.)
Let 
\[\Theta_{\star}^{e^k} := (\Theta^\star_{e^k})^{-1}: E^{\otimes k} \to \mc Z_k,\] and note that $\Theta^\star_{e^k}$ and $\Theta_\star^{e^k}$ intertwine the natural actions of the symmetric group $\Sg_k$ on $\mc Z_k$ and $E^{\otimes k}$.
Thus $\Theta^\star_{e^k}$ induces a linear isomorphism $\mc Z_k^{\Sg_k} \to \Sa^k(E)$ (using \eqref{split2}), which we denote by $\Theta^{\Sg ; \star}_{e^k}$.
We let $\Theta_{\Sg ; \star}^{e^k} := (\Theta^{\Sg ; \star}_{e^k})^{-1}$, and record the notation in the following display:
\beq \label{Theta} \Theta_{\Sg ; \star}^{e^k} :  \Sa^k(E) \longleftrightarrow \mc Z_k^{\Sg_k} : \Theta^{\Sg ; \star}_{e^k},\eeq
\[e_{i_1}e_{i_2}\dots e_{i_k}
\leftrightarrow 
\Ryn_k(z_1^{i_1} \dots z_k^{i_k}) 
.\]
We refer to the maps $ \Theta^\star_{e^k}$, $\Theta_{\star}^{e^k}$, $\Theta^{\Sg ; \star}_{e^k}$, $\Theta_{\Sg ; \star}^{e^k}$, as well as related maps we will define later, as {\em $\Theta$-maps}.

The reason for introducing the $\Theta$-maps is the following key observation.
\begin{lemma}\label{Lthid}
A subspace $V$ of $E^{\otimes k}$ is an $H$-subrepresentation if and only if $\Theta^{e^k}_{\star}(V)$ is a $\mc Z_k^{\Sg_k}$-submodule of $\mc Z_k$.
Likewise, a subspace $W$ of $\Sa^k(E)$ is an $H$-subrepresentation if and only if $\Theta^{e^k}_{\Sg ; \star}(W)$ is an ideal of  $\mc Z_k^{\Sg_k}$.
\end{lemma}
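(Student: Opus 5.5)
The plan is to compute the $H$-action on $E^{\otimes k}$ explicitly and transport it through $\Theta^{e^k}_\star$. By Notation~\ref{not:E}, an $H$-subrepresentation is the same as an $\Lh$-subrepresentation, so it suffices to track the elements $h_i$, $i\in\ZZ$. Since $[h_i,e_j]=e_{i+j}$, the element $h_i$ acts on $E^{\otimes k}$ as a derivation:
\[ h_i\cdot(e_{j_1}\otimes\cdots\otimes e_{j_k})=\sum_{r=1}^k e_{j_1}\otimes\cdots\otimes e_{j_r+i}\otimes\cdots\otimes e_{j_k}. \]
Under $\Theta^{e^k}_\star$ this becomes multiplication on $\mc Z_k$ by the power sum
\[ p_i:=z_1^i+\cdots+z_k^i \in \mc Z_k^{\Sg_k}. \]
Hence a subspace $V\subseteq E^{\otimes k}$ is $H$-stable if and only if $\Theta^{e^k}_\star(V)$ is stable under multiplication by every $p_i$, $i\in\ZZ$.

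It therefore remains to show that the $p_i$ ($i\in\ZZ$), together with $1$, generate $\mc Z_k^{\Sg_k}$ as a $\kk$-algebra. Then stability under all $p_i$ is equivalent to being a $\mc Z_k^{\Sg_k}$-submodule. This is the only real point.

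First, $\mc Z_k^{\Sg_k}=\kk[\sigma_1,\dots,\sigma_{k-1},\sigma_k^{\pm1}]$, where the $\sigma_j$ are the elementary symmetric polynomials. To see this, take an invariant $f$ and multiply by a power $\sigma_k^N$ to land in $\kk[z_1,\dots,z_k]^{\Sg_k}=\kk[\sigma_1,\dots,\sigma_k]$.

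Second, by Newton's identities (characteristic $0$), $\sigma_1,\dots,\sigma_k$ lie in $\kk[p_1,\dots,p_k]$. Applying the same identities to the variables $z_j^{-1}$, the element $\sigma_k^{-1}=\prod z_j^{-1}$ lies in $\kk[p_{-1},\dots,p_{-k}]$. This gives the generation claim.

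For the second statement, let $W\subseteq \Sa^k(E)$ and identify $\Sa^k(E)$ with $\Sym^k(E)$ via \eqref{split2}. The $H$-action commutes with the $\Sg_k$-action on $E^{\otimes k}$, since $h_i$ acts symmetrically in the tensor factors. So $W$ corresponds to an $H$-stable subspace of $\Sym^k(E)$, and $\Theta^{e^k}_\star$ carries $\Sym^k(E)$ onto $\mc Z_k^{\Sg_k}$. The isomorphism $\Theta^{e^k}_{\Sg;\star}$ is this restriction: it sends $e_{i_1}\cdots e_{i_k}$ to $\Ryn_k(z^{i})$, which is compatible with symmetrization. Under it, $h_i$ still acts as multiplication by $p_i$. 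Multiplication by $p_i$ commutes with $\Ryn_k$, because $\Ryn_k$ is $\mc Z_k^{\Sg_k}$-linear, so it is harmless to work with either side of \eqref{split2}. By the first part, $W$ is $H$-stable exactly when its image is closed under multiplication by $\mc Z_k^{\Sg_k}$, that is, when it is an ideal.
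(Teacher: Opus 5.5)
Your proposal is correct and follows the same route as the paper: through $\Theta^{e^k}_\star$, each $h_i$ acts as multiplication by the power sum $z_1^i+\cdots+z_k^i$, and these power sums generate $\mc Z_k^{\Sg_k}$. The paper only asserts the generation fact, whereas you prove it (clear denominators by a power of $\sigma_k$, then apply Newton's identities to the $z_j$ and to the $z_j^{-1}$), and you also spell out the passage through $\Sym^k(E)$ for the second statement; both additions are sound.
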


\begin{proof}
Both parts come from the observation that 
\[ 
\Theta^{e^k}_\star(h_j \cdot \Theta^\star_{e^k}(z_1^{i_1}\dots z_k^{i_k})) 
= \Theta^{e^k}_\star(e_{i_1+j} \otimes \cdots \otimes e_{i_k} +\cdots + e_{i_1} \otimes \cdots \otimes e_{i_k+j})
= z_1^{i_1}\dots z_k^{i_k}(z_1^j + \dots + z_k^j).
\]
Thus $V$ (respectively, $W$) is an $H$-submodule if and only if $\Theta^{e^k}_\star(V)$ (respectively, $\Theta^{e^k}_{\Sg ; \star}(W)$) is closed under multiplication by all $z_1^j + \cdots + z_k^j $.
But the power sums $\{z_1^j + \cdots + z_k^j \mid j \in \ZZ\}$
generate the ring $ \mc Z_k^{\Sg_k}$ of invariant Laurent polynomials.
\end{proof}

It will be helpful to assemble some observations relating ideals of $\mc Z_k$ and ideals of $\mc Z_k^{\Sg_k}$. 
We   define a useful operator that  produces an ideal of $\mc Z_k$ from an  ideal of $\mc Z_k^{\Sg_k}$.
 
\begin{defn}\label{Dceil} If $I \normal \mc Z_k^{\Sg_k}$, define
\[ \lceil I \rceil := \{ p \in \mc Z_k \mid \Ryn_k(p \mc Z_k) \subseteq I \}.\]
\end{defn}

For the next two results, fix $k\geq 1$, and let $\mc Z = \mc Z_k$, $\Sg = \Sg_k$, and $\Ryn = \Ryn_k$.

\begin{lemma}\label{lem:A}
\begin{enumerate}
\item  If $J \normal \mc Z$ then 
$\Ryn(J) = \Bigl( \sum_{\sigma \in \Sg}\sigma(J) \Bigr)
\cap \mc Z^{\Sg}$.

\item There is a bijection 
\[ \{ \mbox{ $\Sg$-invariant radical ideals of $\mc Z$ } \} \to \{ \mbox{ radical ideals of $\mc Z^{\Sg}$ } \} \]
induced by sending $J \mapsto \Ryn(J)$.
\end{enumerate}
\end{lemma}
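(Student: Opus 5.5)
For part (1), I will prove the two inclusions directly. Suppose $J \normal \mc Z$ and $p \in J$. Then $\Ryn(p) = \frac{1}{\abs{\Sg}}\sum_\sigma \sigma(p)$ lies in $\sum_\sigma \sigma(J)$. It is also $\Sg$-invariant, so $\Ryn(J) \subseteq \bigl(\sum_\sigma \sigma(J)\bigr)\cap \mc Z^{\Sg}$.

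For the reverse inclusion, take $q \in \bigl(\sum_\sigma \sigma(J)\bigr)\cap \mc Z^{\Sg}$. Since $q$ is invariant, $q = \Ryn(q)$. Write $q = \sum_\sigma \sigma(p_\sigma)$ with $p_\sigma \in J$. Since $\Ryn$ is linear and $\Ryn\circ\sigma = \Ryn$, we get $q = \Ryn(q) = \sum_\sigma \Ryn(p_\sigma) = \Ryn\bigl(\sum_\sigma p_\sigma\bigr)$. The element $\sum_\sigma p_\sigma$ lies in $J$, so $q \in \Ryn(J)$.

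A byproduct is that $\Ryn(J)$ is an ideal of $\mc Z^{\Sg}$. Indeed, it equals the contraction to $\mc Z^{\Sg}$ of the ideal $\sum_\sigma \sigma(J)$. Alternatively, $\Ryn$ is $\mc Z^{\Sg}$-linear. If $J$ is $\Sg$-invariant, then $\Ryn(J) = J \cap \mc Z^{\Sg}$.

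For part (2), I will use that $\mc Z^{\Sg} \subseteq \mc Z$ is an integral extension of noetherian rings, since $\Sg$ is finite. Each $z_i$ is a root of $\prod_j (x - z_j)$, whose coefficients are invariant, and the $z_i^{-1}$ are handled in the same way. By part (1), for $\Sg$-invariant $J$ the map is $J \mapsto J\cap \mc Z^{\Sg}$. A contraction of a radical ideal is radical, so the map lands in radical ideals of $\mc Z^{\Sg}$. For the inverse I propose $I \mapsto \sqrt{I\mc Z}$. This ideal is $\Sg$-invariant because $I\mc Z$ is, and it is radical by construction.

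I then need to check that the two compositions are identities.

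\textbf{First composition.} I must show $\sqrt{I\mc Z}\cap \mc Z^{\Sg} = I$ for radical $I$. The inclusion $\supseteq$ is clear. For $\subseteq$, suppose $a \in \mc Z^{\Sg}$ and $a^n \in I\mc Z$. Applying $\Ryn$, which is $\mc Z^{\Sg}$-linear, gives $a^n = \Ryn(a^n) \in I\,\Ryn(\mc Z) = I$. Since $I$ is radical, $a \in I$.

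\textbf{Second composition.} I must show $\sqrt{(J\cap \mc Z^{\Sg})\mc Z} = J$ for $\Sg$-invariant radical $J$. The inclusion $\subseteq$ is clear. For $\supseteq$, take $p \in J$ and consider $\prod_{\sigma\in\Sg}(x - \sigma(p))$. Its coefficients are elementary symmetric functions in the $\sigma(p)$, and they lie in $J \cap \mc Z^{\Sg}$ because $J$ is invariant. Since $p$ is a root, $p^{\abs{\Sg}}$ is a combination of lower powers of $p$ with coefficients in $J\cap\mc Z^{\Sg}$. Hence $p^{\abs{\Sg}} \in (J\cap\mc Z^{\Sg})\mc Z$, and so $p$ lies in the radical.

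I expect no real obstacle here. The only care needed is in part (1), to make sure that the sum over $\sigma$ of $\sigma(J)$ is handled by the Reynolds identity $\Ryn\circ\sigma=\Ryn$ and not by an appeal to invariance of $J$. That identity is \eqref{proj} applied with the trivial subgroup generated by $\sigma$.
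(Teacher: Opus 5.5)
Your proof is correct. For part (1) it is essentially the paper's argument. The paper puts $K=\sum_{\sigma\in\Sg}\sigma(J)$ and observes $\Ryn(J)=\Ryn(K)=K\cap\mc Z^{\Sg}$, and your two inclusions unpack this element by element. (A small correction: $\Ryn\circ\sigma=\Ryn$ is just the reindexing $\tau\mapsto\tau\sigma$ in the defining sum, not an instance of \eqref{proj}; with $H=\langle\sigma\rangle$, \eqref{proj} only says $\Ryn=\Ryn\circ\Ryn_{\langle\sigma\rangle}$.)

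For part (2) you take a genuinely different route. The paper argues geometrically via the Nullstellensatz. Radical ideals on each side are intersections of the maximal ideals containing them. Contraction then matches $\Sg$-orbits of maximal ideals of $\mc Z$, i.e.\ $\Sg$-orbits in $(\kk^\times)^k$, with maximal ideals of $\mc Z^{\Sg}$. You instead exhibit the inverse $I\mapsto\sqrt{I\mc Z}$ and check both composites algebraically. For one composite you use the $\mc Z^{\Sg}$-linearity of $\Ryn$. For the other you use the polynomial $\prod_{\sigma}(x-\sigma(p))$, whose non-leading coefficients lie in $J\cap\mc Z^{\Sg}$.

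Your route avoids the Nullstellensatz and the orbit description of the maximal spectrum of $\mc Z^{\Sg}$, which is itself usually proved by exactly this kind of integrality argument. It also works for any commutative ring with an action of a finite group whose order is invertible. Finally, it makes explicit the inverse $\sqrt{I\mc Z}$ that the paper later uses in Lemma~\ref{Lceil}(4). The paper's version is shorter but leaves those verifications implicit.

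Two small points about your write-up: noetherianity is never used in your argument, and your preliminary integrality remark is already covered by the characteristic-polynomial step.
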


\begin{proof}
(1)  Let $K = \sum_{\sigma \in \Sg}\sigma(J)$, which is an $\Sg$-invariant ideal of $\mc Z$.
Since $K$ is $\Sg$-invariant,  $\Ryn(K) = K \cap Z^{\Sg}$.  
Further, as $\Ryn (\sigma(p)) = \Ryn (p)$ for all $p \in \mc Z$ and we are in characteristic 0, we have $\Ryn(J) = \Ryn(K)$.

(2) This follows from (1), the Nullstellensatz, and the observation that restriction (that is intersecting with $\mc Z^{\Sg}$) gives  a bijection between maximal $\Sg$-invariant ideals of $\mc Z$ and maximal ideals of $\mc Z^{\Sg}$, as both are in bijection with $\Sg$-orbits in $(\kk^\times)^k$.
\end{proof}

Note that restriction does not give a bijection between $\Sg$-invariant ideals of $\mc Z$ and ideals of $\mc Z^{\Sg}$: 
for example, the ideals of $\mc Z_2$ generated by $z_1-z_2$ and $(z_1-z_2)^2$ both restrict to the same ideal of $\mc Z_2^{\Sg_2}$.

The next lemma gives some elementary properties of the operator $\lceil - \rceil$.

\begin{lemma}\label{Lceil}
Let $I\normal \mc Z^{\Sg}$ and $J \normal \mc Z$.

\begin{enumerate}
\item $\lceil I \rceil$ is an $\Sg$-invariant ideal of $\mc Z$.

\item  $\Ryn(J) \subseteq I$ if and only if $J \subseteq \lceil I \rceil$.

\item $ I = \Ryn(\lceil I \rceil) = \lceil I \rceil \cap \mc Z^{\Sg}$.

\item $\sqrt{\lceil I \rceil} = \sqrt{I \mc Z}$.
\end{enumerate}
\end{lemma}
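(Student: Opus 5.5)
We prove the four parts in order, using throughout that $\Ryn$ is $\mc Z^{\Sg}$-linear, idempotent, and satisfies $\Ryn(\sigma(p)) = \Ryn(p)$.

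For (1), closure of $\lceil I \rceil$ under addition is clear. If $p \in \lceil I \rceil$ and $q \in \mc Z$, then $\Ryn(pq\mc Z) \subseteq \Ryn(p \mc Z) \subseteq I$, so $\lceil I \rceil$ is an ideal. For $\Sg$-invariance, observe that $\sigma(p)\mc Z = \sigma(p\mc Z)$. Hence $\Ryn(\sigma(p)\mc Z) = \Ryn(\sigma(p \mc Z)) = \Ryn(p\mc Z) \subseteq I$.

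For (2), suppose first that $J \subseteq \lceil I \rceil$. For $p \in J$ we get $\Ryn(p) \in \Ryn(p\mc Z) \subseteq I$, so $\Ryn(J) \subseteq I$. Conversely, suppose $\Ryn(J) \subseteq I$ and let $p \in J$. Since $J$ is an ideal, $p\mc Z \subseteq J$, so $\Ryn(p\mc Z) \subseteq \Ryn(J) \subseteq I$, and therefore $p \in \lceil I \rceil$.

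For (3), we establish the chain
\[ I \subseteq \lceil I \rceil \cap \mc Z^{\Sg} \subseteq \Ryn(\lceil I \rceil) \subseteq I. \]
For the first inclusion, let $a \in I$ and $q \in \mc Z$. By $\mc Z^{\Sg}$-linearity, $\Ryn(aq) = a\Ryn(q) \in I\mc Z^{\Sg} = I$, so $a \in \lceil I \rceil$. The second inclusion holds because $\Ryn$ fixes invariant elements. The third is the case $J = \lceil I \rceil$ of (2), which applies since $\lceil I \rceil \subseteq \lceil I \rceil$. All three sets are therefore equal.

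Part (4) is where the real content lies. The inclusion $\sqrt{I\mc Z} \subseteq \sqrt{\lceil I \rceil}$ follows from (3): since $I \subseteq \lceil I \rceil$ and $\lceil I \rceil$ is an ideal, $I\mc Z \subseteq \lceil I \rceil$.

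For the reverse inclusion, let $p \in \lceil I \rceil$. We use the norm trick. The element $N := \prod_{\sigma \in \Sg} \sigma(p)$ is $\Sg$-invariant. It lies in $\lceil I \rceil$, because $p$ is one of its factors and $\lceil I \rceil$ is an ideal. By (3), $N \in \lceil I \rceil \cap \mc Z^{\Sg} = I$.

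Now $p$ is a root of the monic polynomial $\prod_{\sigma}(X - \sigma(p))$, whose coefficients are elementary symmetric functions $c_j$ in the $\sigma(p)$. Each $c_j$ is $\Sg$-invariant and lies in $\lceil I \rceil$, since every product of conjugates of $p$ does. By (3), each $c_j$ therefore lies in $I$. Expanding $\prod_\sigma(p - \sigma(p)) = 0$ gives
\[ p^{k!} = -\sum_{j \geq 1} (\pm c_j)\, p^{k!-j} \in I\mc Z, \]
so $p \in \sqrt{I\mc Z}$.

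Alternatively, one could use the Nullstellensatz: a maximal ideal $\mf m \supseteq I\mc Z$ gives a maximal ideal $\mf m \cap \mc Z^{\Sg} \supseteq I$, and then all $\Sg$-conjugates of $\mf m$ contain $\lceil I \rceil$. The coefficient argument above is more direct, and it is the step requiring care, namely checking that every $c_j$ lies in $\lceil I \rceil$.
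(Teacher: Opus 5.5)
Your proof is correct. Parts (1)--(3) follow the paper's proof essentially verbatim. Your chain in (3) even avoids the paper's appeal to Lemma~\ref{lem:A}(1), since $\lceil I\rceil\cap\mc Z^{\Sg}\subseteq\Ryn(\lceil I\rceil)$ holds trivially because $\Ryn$ fixes invariants.

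The genuine difference is in (4). The paper argues that $\sqrt{\lceil I\rceil}$ and $\sqrt{I\mc Z}$ are both $\Sg$-invariant radical ideals of $\mc Z$ with the same contraction to $\mc Z^{\Sg}$. That common contraction is $\sqrt{I}$; the paper writes $I$, which is accurate only when $I$ is radical. The paper then invokes the bijection of Lemma~\ref{lem:A}(2), which rests on the Nullstellensatz and on matching maximal ideals of $\mc Z^{\Sg}$ with $\Sg$-orbits in $(\kk^\times)^k$.

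You instead give a direct integrality argument. Every $p\in\lceil I\rceil$ is a root of $\prod_{\sigma}(X-\sigma(p))$. The non-leading coefficients of this polynomial lie in $\lceil I\rceil\cap\mc Z^{\Sg}=I$, using (1) and (3), so $p^{k!}\in I\mc Z$.

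Your route is more elementary and more general. It works for any finite group acting on any commutative ring in which the group order is invertible, needs no geometric input, and gives an explicit uniform exponent $k!$. The paper's route is shorter only because Lemma~\ref{lem:A}(2) is already available and needed elsewhere.

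Two side remarks do not affect your main argument. Your separate treatment of $N=\prod_\sigma\sigma(p)$ is just the constant-term case of the coefficient argument. The Nullstellensatz alternative you mention is only a sketch: the claim that every conjugate of $\mf m$ contains $\lceil I\rceil$ would itself need justification, for instance by your coefficient argument.
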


\begin{proof}
$(1)$, $(2)$ follow from the definition of $\lceil I \rceil$.
For the first equality of $(3)$, 
observe that as $\Ryn$ is $\mc Z^{\Sg}$-linear, $\Ryn(I\mc Z) = I \Ryn(\mc Z) = I$, so $I \subseteq \lceil I \rceil$.
 The second equality is Lemma~\ref{lem:A}(1) combined with $(1)$.

For $(4)$, both
 $\sqrt{\lceil I \rceil}$ and $\sqrt{I \mc Z}$ are $\Sg$-invariant radical ideals of $\mc Z$ whose restriction to $\mc Z^{\Sg}$ is $I$, and the statement follows from Lemma~\ref{lem:A}(2).
\end{proof}

We now consider the action of $\Sg_k \times \Sg_\ell $ on $\mc Z_{k+\ell} \cong \mc Z_k \otimes \mc Z_{\ell}$.

\begin{lemma}\label{lem:proj}
Let $k, \ell \geq 1$, and consider $\mc Z_k, \mc Z_\ell$ as subrings of $\mc Z_k \otimes \mc Z_\ell \cong \mc Z_{k+\ell}$.
If $I \normal \mc Z_k^{\Sg_k}$, then
\[ 
\Ryn_{k+\ell}(I \mc Z_{k+\ell}) = \Ryn_{k+\ell}(I\otimes \mc Z_\ell^{\Sg_\ell}).
\]
\end{lemma}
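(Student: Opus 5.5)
We prove the two inclusions separately. The inclusion $\supseteq$ is immediate, since $I\otimes \mc Z_\ell^{\Sg_\ell} \subseteq I\mc Z_{k+\ell}$. For $\subseteq$, we factor the Reynolds operator through the intermediate subgroup $\Sg_k\times\Sg_\ell \leq \Sg_{k+\ell}$ using \eqref{proj}:
\[ \Ryn_{k+\ell} = \Ryn_{k+\ell}\circ \Ryn_{\Sg_k\times\Sg_\ell}.\]
It therefore suffices to show that
\[ \Ryn_{\Sg_k\times\Sg_\ell}(I\mc Z_{k+\ell}) \subseteq I\otimes \mc Z_\ell^{\Sg_\ell}. \]

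For the reduced inclusion, recall that $\mc Z_{k+\ell}\cong \mc Z_k\otimes\mc Z_\ell$. Hence $I\mc Z_{k+\ell}$ is spanned by the elements $a\,p\otimes q$ with $a\in I$, $p\in\mc Z_k$ and $q\in\mc Z_\ell$. Since $\Sg_k$ acts only on the first tensor factor and $\Sg_\ell$ only on the second, the Reynolds operator splits:
\[ \Ryn_{\Sg_k\times\Sg_\ell} = \Ryn_k\otimes\Ryn_\ell. \]
Because $a$ is $\Sg_k$-invariant and $\Ryn_k$ is $\mc Z_k^{\Sg_k}$-linear, we obtain
\[ \Ryn_{\Sg_k\times\Sg_\ell}(a\,p\otimes q) = a\Ryn_k(p)\otimes \Ryn_\ell(q). \]
Here $a\Ryn_k(p)\in I$, because $I$ is an ideal of $\mc Z_k^{\Sg_k}$, and $\Ryn_\ell(q)\in\mc Z_\ell^{\Sg_\ell}$. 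This gives the reduced inclusion, and applying $\Ryn_{k+\ell}$ to both sides yields $\subseteq$.

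The argument has no real obstacle. The only points that need care are the tensor-product splitting of the Reynolds operator for a product group acting factorwise, and the linearity of $\Ryn_k$ over the invariant ring, both noted in Section~\ref{BACKGROUND}.
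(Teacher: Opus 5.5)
Your proof is correct and follows essentially the same route as the paper's: both use \eqref{proj} to reduce $\Ryn_{k+\ell}(ap\otimes q)$ to $\Ryn_{k+\ell}(\Ryn_k(ap)\otimes\Ryn_\ell(q))$, and then observe that $\Ryn_k(ap)\in I$ and $\Ryn_\ell(q)\in\mc Z_\ell^{\Sg_\ell}$. The only difference is wording: the paper justifies $\Ryn_k(ap)\in I$ by citing $I\subseteq\lceil I\rceil$ from Lemma~\ref{Lceil}(3), which is exactly your $\mc Z_k^{\Sg_k}$-linearity argument.
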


\begin{proof}
Clearly the right-hand side is contained in the left-hand side.  For the other containment, 
it suffices to show that  for all $a \in I$, $p \in \mc Z_k$, and $q \in \mc Z_\ell$, we have 
  \[ \Ryn_{k+\ell}(ap \otimes q) \in \Ryn_{k+\ell}(I\otimes \mc Z_\ell^{\Sg_\ell}).\]
  By \eqref{proj}, 
 \[ \Ryn_{k+\ell}(ap \otimes q) = \Ryn_{k+\ell}(\Ryn_k(ap) \otimes \Ryn_\ell(q)).\]
 Since $I \subseteq \lceil I \rceil $ by Lemma~\ref{Lceil}(3) and $\Ryn_\ell(q)$ is $\Sg_\ell$-invariant, 
$\Ryn_k(ap) \otimes \Ryn_\ell(q) \in I \otimes \mc Z_\ell^{\Sg_\ell}$, as we need.
  \end{proof}

We now discuss how the bijections \eqref{Theta}  act on $H$-ideals of $\Sa(E)$ as $k$ varies.  
For principal $H$-ideals, it is straightforward, as the following example shows, but in general it is  more complicated.

\begin{example}\label{Esgen}
    Let $k\ge 1$ and let $p\in\mc Z_k^{\Sg_k}$.
    Let $J$ be the $H$-ideal of $\Sa(E)$ generated by $\Theta_{e^k}^{\Sg; \star}(p)$. 
    Then
$\gr_\ell J=(0)$ if $\ell< k$,
  and $\gr_\ell J=\Theta^{\Sg; \star}_{e^\ell}(\Ryn_\ell(p\mc Z_\ell))$ 
  for all $\ell\ge k$. 
\end{example}

\begin{lemma}\label{lem:B} For all $k \geq 0$ let  $I_k$ be an ideal of $\mc Z_k^{\Sg_k}$. 
The following are equivalent:
\begin{enumerate}
\item  $\bigoplus_{k\ge0}\Theta^{\Sg ; \star}_{e^k}(I_k)$ is an $H$-ideal of $\Sa(E)$; 
\item for all $\ell\ge k$ we have $I_k \subseteq \lceil I_\ell \rceil$;
\item for all $\ell\ge k$ we have $\lceil I_k \rceil \subseteq \lceil I_\ell \rceil$.
\end{enumerate}
(Since $\lceil I_k  \rceil \normal \mc Z_k$ and  $\lceil I_\ell \rceil \normal \mc Z_\ell$, in $(2)$, $(3)$ we are using the natural inclusion $\mc Z_k \subseteq \mc Z_\ell$.)
\end{lemma}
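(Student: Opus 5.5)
The plan is to set $I = \bigoplus_k \Theta^{\Sg;\star}_{e^k}(I_k)$ and translate the ideal condition into the $\Theta$-picture. By Lemma~\ref{Lthid}, each summand $\Theta^{\Sg;\star}_{e^k}(I_k)$ is already an $H$-subrepresentation of $\Sa^k(E)$, since $I_k$ is an ideal of $\mc Z_k^{\Sg_k}$. So $I$ is automatically $H$-stable. Moreover $I$ is a graded subspace, and $\Sa(E)$ is generated by $E = \Sa^1(E)$. Hence $I$ is an ideal if and only if $E \cdot \gr_k I \subseteq \gr_{k+1} I$ for all $k$, and by induction this is equivalent to $\Sa^{\ell-k}(E)\cdot \gr_k I \subseteq \gr_\ell I$ for all $\ell \geq k$.

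The key computation is to identify multiplication by $\Sa^{\ell-k}(E)$ in $\Theta$-coordinates. Multiplying $e_{i_1}\cdots e_{i_k}$ by $e_{j_1}\cdots e_{j_{\ell-k}}$ gives the monomial corresponding to $\Ryn_\ell(z_1^{i_1}\cdots z_k^{i_k} z_{k+1}^{j_1}\cdots z_\ell^{j_{\ell-k}})$. Using \eqref{proj} with the subgroup $\Sg_k\times\Sg_{\ell-k}$, this also equals $\Ryn_\ell(\Ryn_k(\cdot)\otimes\Ryn_{\ell-k}(\cdot))$. Thus, for $p \in \mc Z_k^{\Sg_k}$,
\[ \Theta^{e^\ell}_{\Sg;\star}\bigl(\Sa^{\ell-k}(E)\cdot \Theta^{\Sg;\star}_{e^k}(p)\bigr) = \Ryn_\ell\bigl(p \otimes \mc Z_{\ell-k}\bigr).\]
By Lemma~\ref{lem:proj}, the ideal-generated span of these, taken over $p\in I_k$, is $\Ryn_\ell(I_k \mc Z_\ell)$; this matches Example~\ref{Esgen}. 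Therefore (1) is equivalent to $\Ryn_\ell(I_k\mc Z_\ell) \subseteq I_\ell$ for all $\ell \ge k$. By Lemma~\ref{Lceil}(2), applied to the ideal $J = I_k\mc Z_\ell$ of $\mc Z_\ell$, this is equivalent to $I_k\mc Z_\ell \subseteq \lceil I_\ell\rceil$, i.e.\ to $I_k \subseteq \lceil I_\ell\rceil$, since $\lceil I_\ell\rceil$ is an ideal. This gives (1)$\Leftrightarrow$(2).

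For (3)$\Rightarrow$(2), use $I_k \subseteq \lceil I_k\rceil$ from Lemma~\ref{Lceil}(3).

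For (2)$\Rightarrow$(3), take $p \in \lceil I_k\rceil$. We must show $\Ryn_\ell(p q) \in I_\ell$ for all $q \in \mc Z_\ell$.
\begin{itemize}
\item Reduce to $q = q_1\otimes q_2$ with $q_1\in\mc Z_k$ and $q_2\in\mc Z_{\ell-k}$.
\item By \eqref{proj}, $\Ryn_\ell(pq_1\otimes q_2) = \Ryn_\ell(\Ryn_k(pq_1)\otimes q_2)$.
\item By definition of $\lceil I_k\rceil$, $\Ryn_k(pq_1)\in I_k$.
\item So the element lies in $\Ryn_\ell(I_k\mc Z_\ell)$, which is contained in $I_\ell$ by (2) and Lemma~\ref{Lceil}(2).
\end{itemize}
Hence $p\in\lceil I_\ell\rceil$.

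The main point requiring care is the multiplication formula in $\Theta$-coordinates. One must check that the product in $\Sa(E)$ really corresponds to symmetrizing the tensor product of representatives, with the Reynolds normalizations consistent under \eqref{split2}. Everything else is formal bookkeeping with Lemma~\ref{Lceil}.
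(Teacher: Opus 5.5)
Your proposal is correct and follows the paper's own proof. The main steps match: $H$-stability comes from Lemma~\ref{Lthid}, and the product rule \eqref{dag} (from \eqref{proj}) together with Lemma~\ref{lem:proj} reduces the ideal condition to $\Ryn_\ell(I_k\mc Z_\ell)\subseteq I_\ell$, which Lemma~\ref{Lceil}(2) turns into (2). The only difference is that you write out (2)$\Rightarrow$(3), which the paper calls immediate, and that argument is also correct.
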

\begin{proof}
By Lemma~\ref{Lthid}, all $\Theta^{\Sg ; \star}_{e^k}(I_k)$ are $H$-subrepresentations of $\Sa(E)$, so $(1)$ is equivalent to the statement that
$\bigoplus_{k\ge0}\Theta^{\Sg ; \star}_{e^k}(I_k)$
is an ideal of $\Sa(E)$.
To understand what this means, we must consider how multiplication in $\Sa(E)$ interacts with the $\Theta$-maps.
If $a \in \Sa^k(E)$, $b \in \Sa^j(E)$, then
\beq\label{dag}
\Theta_{\Sg ; \star}^{e^{k+j}}(ab) = \Ryn_{k + j}(\Theta_{\Sg ; \star}^{e^k}(a)(z_1, \dots, z_k) \Theta_{\Sg ; \star}^{e^j}(b)(z_{k+1}, \dots, z_{k + j})),
\eeq
using \eqref{proj}.
So $(1)$ is equivalent to the statement that
\[
I_{k+j} \supseteq \Ryn_{k+j}(I_k \otimes \mc Z_j^{\Sg_j}) = \Ryn_{k+j}(I_k \mc Z_{k+j})
\]
for all $ j , k \geq 1$, where we have used Lemma~\ref{lem:proj}.
This says precisely that $(1)$ $\iff$ $(2)$. 
Then $(2) \iff (3)$ is immediate.
\end{proof}

\subsection{$\Theta$-maps and ideals}\label{ss:Thetaideal}

The compatibility conditions in Lemma~\ref{lem:B} can feel somewhat artificial in terms of the rings $\mc Z_k^{\Sg_k}$, but we can use Lemma~\ref{lem:B} and the $\Theta$-maps to relate $H$-ideals of $\Sa(E)$ to ideals of the infinite Laurent ring
\[ \mc Z_\infty:= \kk[z_i^{\pm 1} \mid i  \geq 1]\]
that are invariant under the natural permutation action of the infinite symmetric group $\Sg_\infty = \bigcup_n \Sg_n$. 
(We refer to such ideals as {\em $\Sg_\infty$-ideals of $\mc Z_\infty$}.)
It turns out that the maps $\Theta^{\Sg ;\star}_{e^k}$ automatically send  an $\Sg_\infty$-ideal to an $H$-ideal, but there are several possible ways to go in the other direction.  

Let us define the relevant notations.
Following our convention with the $\Theta$-maps, we will in general use $(-)_\star$ to denote an object in $\mc Z_\infty$, and $(-)^{\star}$ to denote an object in $\Sa(E)$.

\begin{defn}\label{Dtheta}
Let $I_\star$ be an $\Sg_\infty$-ideal of $\mc Z_\infty$, and let $I^\star$ be an $H$-ideal of $\Sa(E)$.
\begin{enumerate} 
\setcounter{enumi}{-1}
\item Recall from \eqref{H-graded} that  $I^\star$ is automatically a graded ideal of $\Sa(E)$, and that $\gr_k(I^\star) = I^\star \cap \Sa^k(E)$.
\item For all $k$, let $\lyr_k I_\star := I_\star \cap \mc Z_k^{\Sg_k}$.  (Although the subrings $\mc Z_k^{\Sg_k} $ of $\mc Z_\infty$ are not contained in each other, it is useful to think of them as giving a filtration of $\mc Z_\infty$, as the $\mc Z_k$ do.) 
\item We define 
\[\Theta^\star(I_\star):= \bigoplus_{k \geq 0} \Theta^{\Sg ; \star}_{e^k}(\lyr_k I_\star ).\]
\item We define $\Theta_\star(I^\star)$ to be the $\Sg_\infty$-ideal of $\mc Z_\infty$ generated by all $\Theta^{e^k}_{\Sg ; \star}(\gr_k I^\star)$.  
\item We define 
\[ \lceil \Theta \rceil_\star(I^\star) := \bigcup_{k \geq 0} \lceil \Theta^{e^k}_{\Sg ; \star}(\gr_k I^\star ) \rceil. \]
\end{enumerate}
\end{defn}

The next lemma says that these constructions do send ideals to ideals.

\begin{lemma}\label{lem:C}
Let $I_\star$ be an $\Sg_\infty$-ideal of $\mc Z_\infty$, and let $I^\star$ be an $H$-ideal of $\Sa(E)$.
\begin{enumerate}
\setcounter{enumi}{-1}
\item By definition, $\Theta_\star(I^\star)$ is an $\Sg_\infty$-ideal of $\mc Z_\infty$.
\item Further, $\Theta^\star(I_\star)$ is an $H$-ideal of $\Sa(E)$.
\item Finally, $\lceil \Theta \rceil_\star(I^\star)$ is an  $\Sg_\infty$-ideal of $\mc Z_\infty$ and contains $\Theta_\star(I^\star)$.
\end{enumerate}
\end{lemma}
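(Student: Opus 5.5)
Part (0) holds by definition, so the work is in (1) and (2). The approach is to reduce both to Lemma~\ref{lem:B}, which characterises when a family $(I_k)$ of ideals of $\mc Z_k^{\Sg_k}$ assembles, via the $\Theta$-maps, into an $H$-ideal.

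For (1), set $I_k := \lyr_k I_\star = I_\star \cap \mc Z_k^{\Sg_k}$. This is an ideal of $\mc Z_k^{\Sg_k}$ because $I_\star$ is an ideal of $\mc Z_\infty \supseteq \mc Z_k^{\Sg_k}$. By Lemma~\ref{lem:B} it suffices to check condition $(2)$: for $\ell \ge k$, $I_k \subseteq \lceil I_\ell \rceil$. So we take $a \in I_k$ and $p \in \mc Z_\ell$ and must show $\Ryn_\ell(ap) \in I_\ell$.
\begin{itemize}
\item Since $a \in I_\star$, we have $ap \in I_\star$.
\item Since $I_\star$ is $\Sg_\infty$-invariant, and in particular $\Sg_\ell$-invariant, every $\sigma(ap)$ with $\sigma\in\Sg_\ell$ lies in $I_\star$.
\item In characteristic $0$ the average $\Ryn_\ell(ap)$ therefore lies in $I_\star$. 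It also lies in $\mc Z_\ell^{\Sg_\ell}$, hence in $I_\ell$.
\end{itemize}
This gives $(1)$.

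For (2), write $J_k := \Theta^{e^k}_{\Sg;\star}(\gr_k I^\star)$. By Lemma~\ref{Lthid} this is an ideal of $\mc Z_k^{\Sg_k}$. Since $I^\star$ is an $H$-ideal, the implication $(1)\Rightarrow(3)$ of Lemma~\ref{lem:B} gives $\lceil J_k\rceil \subseteq \lceil J_\ell \rceil$ for $k\le \ell$. Hence $\lceil\Theta\rceil_\star(I^\star)$ is an increasing union of ideals $\lceil J_k\rceil \normal \mc Z_k$.
\begin{itemize}
\item \emph{Ideal of $\mc Z_\infty$.} It is closed under addition because the union is nested. Given $q\in\lceil J_k\rceil$ and $r\in \mc Z_\infty$, choose $\ell\ge k$ with $r\in\mc Z_\ell$; then $qr \in \lceil J_\ell\rceil$, since $q \in \lceil J_\ell\rceil$ and the latter is an ideal of $\mc Z_\ell$. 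Note $\mc Z_\infty=\bigcup_\ell \mc Z_\ell$.
\item \emph{$\Sg_\infty$-invariance.} Take $\sigma\in\Sg_\infty$, say $\sigma\in\Sg_m$, and $q\in\lceil J_k\rceil$. With $\ell=\max(k,m)$ we have $q\in\lceil J_\ell\rceil$, which is $\Sg_\ell$-invariant by Lemma~\ref{Lceil}(1). So $\sigma(q)\in \lceil J_\ell\rceil$.
\item \emph{Containment of $\Theta_\star(I^\star)$.} By Lemma~\ref{Lceil}(3), $J_k \subseteq \lceil J_k\rceil$. So the $\Sg_\infty$-ideal $\lceil\Theta\rceil_\star(I^\star)$ contains all the generators $J_k$ of $\Theta_\star(I^\star)$, and therefore contains $\Theta_\star(I^\star)$.
\end{itemize}

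The only mildly delicate point is the bookkeeping of the embeddings $\mc Z_k\subseteq\mc Z_\ell\subseteq\mc Z_\infty$, and the fact that $\Sg_\ell$-invariance at each finite level suffices for $\Sg_\infty$-invariance. Both are handled by passing to a large enough $\ell$, which the monotonicity from Lemma~\ref{lem:B} permits.
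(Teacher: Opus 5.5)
Your proof is correct and follows the paper's argument. Part (1) is reduced to condition (2) of Lemma~\ref{lem:B} by Reynolds averaging, which the paper phrases as $\Ryn_\ell(I_\star\cap\mc Z_\ell)=\lyr_\ell I_\star$ via Lemma~\ref{lem:A}; part (2) uses the monotonicity from condition (3) of Lemma~\ref{lem:B} to exhibit $\lceil\Theta\rceil_\star(I^\star)$ as a nested union of $\Sg_\ell$-invariant ideals, and you simply spell out the ideal, $\Sg_\infty$-invariance and containment checks that the paper treats as immediate.
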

\begin{proof}
$(1)$  Let $\ell \geq k$.
Certainly $(\lyr_k I_\star ) \mc Z_\ell \subseteq I_\star$, so
\[ \Ryn_\ell( (\lyr_k I_\star) \mc Z_\ell) \subseteq \Ryn_\ell(I_\star \cap \mc Z_\ell) = \lyr_\ell I_\star ,\]
where we have used Lemma~\ref{lem:A} for the final equality.
This means by definition that 
$\lyr_k I_* \subseteq \lceil \lyr_\ell I_* \rceil$, 
so the result follows by Lemma~\ref{lem:B}.

$(2)$
First,  $\Theta^{e^k}_{\Sg ; \star}( \gr_k I^\star ) \normal \mc Z_k^{\Sg_k}$ by Lemma~\ref{Lthid}, and thus 
$\lceil \Theta^{e^k}_{\Sg ; \star}( \gr_k I^\star ) \rceil$ makes sense and is an $\Sg_k$-invariant ideal of $\mc Z_k$.
Since $I^\star$ is an $H$-ideal,  by condition $(3)$ of Lemma~\ref{lem:B} we have 
$\lceil \Theta^{e^k}_{\Sg ; \star}( \gr_k I^\star ) \rceil \subseteq \lceil \Theta^{e^\ell}_{\Sg ; \star}( \gr_\ell I^\star ) \rceil$
for all $\ell \geq k$, so the union in the definition of $\lceil \Theta \rceil_\star(I^\star)$ is an increasing union.
It follows that $\lceil \Theta \rceil_\star(I^\star)$ is an $\Sg_\infty$-invariant ideal of $\mc Z_\infty$.  The final statement is then immediate.
\end{proof}

We will see in  Section~\ref{EXAMPLES} that none of
$\Theta_\star, \lceil\Theta\rceil_\star$, $\Theta^\star$ is injective. 
Nevertheless, these operations do behave well with respect to radicals, as the next result shows.

 For an ideal $I^* \normal \Sa(E)$ let 
$$\Sat(I^\star):=\{ r \in \Sa(E) \mid r \Sa^k(E) \subseteq I^\star \mbox{ for $k \gg 0$ } \}$$
be the saturation of $I^\star$ with respect to $\Sa^{\geq 1}(E)$.  
\begin{proposition}\label{Lheqs}
Let $I^\star \normal_H \Sa(E)$ and let $I_\star$ be an $\Sg_\infty$-ideal of $ \mc Z_\infty$.
\begin{enumerate}
\item  $\Sat(I^\star)=\Theta^\star\lceil\Theta\rceil_\star(I^\star)$.
    \item Suppose that $\Sa^d(E)\subseteq I^\star$ for $d\gg0$. Then $\lceil\Theta\rceil_*(I^\star)=\Theta_*(I^\star)=\mc Z_\infty$ and thus $$\Theta^\star\lceil\Theta\rceil_\star(I^\star)=\Theta^\star\Theta_\star(I^\star)=\Sa(E).$$
    \item If the hypothesis of $(2)$ fails, then $\sqrt{I^\star}=\sqrt{\Theta^\star\lceil\Theta\rceil_\star(I^\star)}$.
\item  
$\sqrt{I_\star}=\sqrt{\lceil\Theta\rceil_\star\Theta^\star(I_\star)}$.
\end{enumerate}
\end{proposition}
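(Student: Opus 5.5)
Throughout, write $J_k := \Theta^{e^k}_{\Sg;\star}(\gr_k I^\star)$. By Lemma~\ref{Lthid} this is an ideal of $\mc Z_k^{\Sg_k}$. Put $L := \lceil\Theta\rceil_\star(I^\star) = \bigcup_k \lceil J_k\rceil$; by the proof of Lemma~\ref{lem:C}(2) this is an increasing union.

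\emph{Part (1).} Both sides are graded: $\Sat(I^\star)$ is graded because $I^\star$ is, and $\Theta^\star(-)$ is graded by definition. So it suffices to compare degree-$k$ pieces. Take $r \in \Sa^k(E)$ and set $p = \Theta^{e^k}_{\Sg;\star}(r)$. By \eqref{dag} and Lemma~\ref{lem:proj},
\[
\Theta^{e^{k+j}}_{\Sg;\star}\bigl(r\,\Sa^j(E)\bigr) = \Ryn_{k+j}\bigl(p\,\mc Z_{k+j}\bigr).
\]
Hence $r\Sa^j(E) \subseteq I^\star$ holds exactly when $\Ryn_{k+j}(p\mc Z_{k+j}) \subseteq J_{k+j}$, that is, when $p \in \lceil J_{k+j}\rceil$. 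Because the union defining $L$ is increasing, "$p \in \lceil J_\ell\rceil$ for all $\ell \gg 0$" is equivalent to "$p \in L$". For symmetric $p$ this says $p \in L \cap \mc Z_k^{\Sg_k} = \lyr_k L$. Therefore $\gr_k \Sat(I^\star) = \Theta^{\Sg;\star}_{e^k}(\lyr_k L)$, which is (1).

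\emph{Part (2).} Suppose $\Sa^d(E) \subseteq I^\star$. Then $J_d = \mc Z_d^{\Sg_d} \ni 1$. It follows that $1 \in \Theta_\star(I^\star) \subseteq L$, using Lemma~\ref{lem:C}(2) for the inclusion. So both ideals equal $\mc Z_\infty$, and $\Theta^\star(\mc Z_\infty) = \Sa(E)$.

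\emph{Part (3).} By (1) it suffices to show $\sqrt{I^\star} = \sqrt{\Sat(I^\star)}$. The inclusion $I^\star \subseteq \Sat(I^\star)$ is clear, so I need every prime $P \supseteq I^\star$ to contain $\Sat(I^\star)$.

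First suppose $\Sa^k(E) \not\subseteq P$ for some $k \geq 1$, and pick $s \in \Sa^k(E) \setminus P$. For homogeneous $r \in \Sat(I^\star)$, there is $m$ with $r s^m \in r\Sa^{km}(E) \subseteq I^\star \subseteq P$. Since $P$ is prime and $s \notin P$, this gives $r \in P$.

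Otherwise $P$ contains the augmentation ideal $\mf m = \Sa^{\geq 1}(E)$. Since the hypothesis of (2) fails, $1 \notin \Sat(I^\star)$. As $\Sat(I^\star)$ is graded and proper, it lies in $\mf m \subseteq P$. Taking the intersection over all such primes gives the claim.

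\emph{Part (4).} Set $L' := \lceil\Theta\rceil_\star\Theta^\star(I_\star) = \bigcup_k \lceil \lyr_k I_\star\rceil$.

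For $L' \subseteq \sqrt{I_\star}$: Lemma~\ref{Lceil}(4) gives $\lceil \lyr_k I_\star\rceil \subseteq \sqrt{(\lyr_k I_\star)\mc Z_k}$. This lies in $\sqrt{I_\star}$ because $(\lyr_k I_\star)\mc Z_k \subseteq I_\star$.

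For $I_\star \subseteq \sqrt{L'}$: take $p \in I_\star$, say $p \in \mc Z_k$. All $\sigma(p)$ with $\sigma \in \Sg_k$ lie in $I_\star \cap \mc Z_k$, so the elementary symmetric functions $c_i$ of the multiset $\{\sigma(p)\}_{\sigma \in \Sg_k}$ lie in $\lyr_k I_\star$. Evaluating $\prod_{\sigma}(X - \sigma(p))$ at $X = p$ shows $p^{k!} \in \sum_i c_i\mc Z_k$. This is contained in $(\lyr_k I_\star)\mc Z_k \subseteq \lceil \lyr_k I_\star\rceil$, using $I \subseteq \lceil I\rceil$ from Lemma~\ref{Lceil}(3). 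Hence $p \in \sqrt{L'}$.

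The main obstacle is part (3). A saturation need not lie in the radical in general; the point is that the only prime containing $\Sa^{\geq 1}(E)$ is $\mf m$ itself, and the failure of the hypothesis of (2) is exactly what ensures $\Sat(I^\star) \subseteq \mf m$.
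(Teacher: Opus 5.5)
Your overall plan for parts (1)--(4) is the paper's, but one step in part (1) is false as written. The displayed identity
\[
\Theta^{e^{k+j}}_{\Sg;\star}\bigl(r\,\Sa^j(E)\bigr)=\Ryn_{k+j}\bigl(p\,\mc Z_{k+j}\bigr)
\]
does not hold. What \eqref{dag} gives is $\Theta^{e^{k+j}}_{\Sg;\star}(r\,\Sa^j(E))=\Ryn_{k+j}(p\otimes\mc Z_j^{\Sg_j})$. Lemma~\ref{lem:proj} is a statement about \emph{ideals} of $\mc Z_k^{\Sg_k}$, not single elements. Apply it to the ideal $p\,\mc Z_k^{\Sg_k}$, which by Lemma~\ref{Lthid} is the image of the $H$-subrepresentation $V_r\subseteq\Sa^k(E)$ generated by $r$. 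It then computes $\Theta^{e^{k+j}}_{\Sg;\star}(V_r\,\Sa^j(E))$, which is usually much larger than the image of $r\,\Sa^j(E)$. For example, take $k=j=1$, $r=e_0$, $p=1$. Then $e_0\Sa^1(E)$ corresponds to the span of the $\frac12(z_1^i+z_2^i)$, whereas $\Ryn_2(\mc Z_2)=\mc Z_2^{\Sg_2}\ni z_1z_2$. So only ``$\subseteq$'' holds. That gives $\Theta^\star\lceil\Theta\rceil_\star(I^\star)\subseteq\Sat(I^\star)$, but not the reverse inclusion.

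The equivalence you want ($r\,\Sa^j(E)\subseteq I^\star$ iff $p\in\lceil J_{k+j}\rceil$) is nevertheless true. Its forward direction needs the $H$-stability of $I^\star$, which your argument for (1) never invokes. The fix: $H$ acts by derivations preserving $\Sa^j(E)$. So if $r\,\Sa^j(E)\subseteq I^\star$, then for $x\in H$ and $b\in\Sa^j(E)$ we get $(x\cdot r)b=x\cdot(rb)-r(x\cdot b)\in I^\star$. Iterating gives $V_r\,\Sa^j(E)\subseteq I^\star$. Now Lemma~\ref{lem:proj} for the ideal $p\,\mc Z_k^{\Sg_k}$ gives $\Ryn_{k+j}(p\,\mc Z_{k+j})\subseteq J_{k+j}$. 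Equivalently, on the $\mc Z$ side: $J_{k+j}$ is an ideal of $\mc Z_{k+j}^{\Sg_{k+j}}$, so multiplying by power sums lets you replace $p$ by $p\,\mc Z_k^{\Sg_k}$. This is exactly what the paper's membership criterion for $\Theta^{e^k}_{\Sg;\star}(\gr_k\Sat(I^\star))$ encodes. With this line added, your degree-by-degree argument proves both inclusions of (1) at once.

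The remaining parts are correct.
\begin{itemize}
\item Part (2) is the paper's argument.
\item In (3), your prime-by-prime argument is a valid substitute for the paper's one-line observation: a homogeneous $r\in\Sat(I^\star)$ of degree $m\ge 1$ satisfies $r^N\in r\,\Sa^{(N-1)m}(E)\subseteq I^\star$ for $N\gg0$.
\item In (4), the detour through elementary symmetric functions works but is unnecessary. For $p\in I_\star\cap\mc Z_k$, the ideal $I_\star\cap\mc Z_k$ is $\Sg_k$-stable, so $\Ryn_k(p\,\mc Z_k)\subseteq\lyr_k I_\star$. Hence $I_\star\subseteq\lceil\Theta\rceil_\star\Theta^\star(I_\star)$ outright, which is what the paper uses.
\end{itemize}
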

\begin{proof}
    $(2)$ We have $1= \Theta_{\Sg; \star}^{e^d}(e_0^d) \in\Theta_*(I^*)$. The rest follows.

$(1)$  Let $J_k := \Theta^{e^k}_{\Sg ; \star}(\gr_k I^\star )$ for all $k\ge0$. 
    By definition, $\Theta_\star (I^\star)$ is the $\Sg_\infty$-ideal of $\mc Z_\infty$ generated by $\sum_k J_k$. 
    By Lemma~\ref{lem:B}, for all $\ell \geq k$ we have $\Ryn_\ell(J_k \mc Z_\ell) \subseteq J_\ell$ and $\lceil J_k\rceil\subseteq\lceil J_\ell\rceil$.
It follows that 
\[\Ryn_\ell((\Theta_\star (I^\star) \cap \mc Z_k^{\Sg_k}) \mc Z_\ell) \subseteq \Theta_\star (I^\star) \cap \mc Z^{\Sg_\ell}_\ell\]
for all $\ell \geq k$.
Applying $\Theta^\star$ and \eqref{dag}, we see that $\Theta^\star\lceil\Theta\rceil_\star(I^\star)\subseteq \Sat(I^\star)$.

We now check that 
$\Sat(I^\star)\subseteq \Theta^\star\lceil\Theta\rceil_\star(I^\star)$. 
By \eqref{dag} and Lemma~\ref{lem:proj}, for all $k\ge 0$ we have
$$\Theta^{e^k}_{\Sg ; \star}(\gr_k \Sat(I^\star))=\{p\in\mc Z_k^{\Sg_k}\mid \exists \ell\ge k\colon \forall q\in\mc Z_\ell~\Ryn_\ell(pq)\in J_\ell\}.$$
This implies that for all $k\ge0$ we have 
$\Theta^{e^k}_{\Sg ; \star}(\gr_k \Sat(I^\star)) \subseteq \mc Z_k^{\Sg_k}\cap \lceil\Theta\rceil_\star(I^\star)$ as needed.

The hypothesis of statement (2) fails for $I^\star$ and hence $\Sat(I^\star)\ne\Sa(E)$. 
As $I^\star$ is graded, $\Sat(I^\star)\subseteq \sqrt{I^\star}$ and so the statement follows from  (1). 

$(4)$   By definition,
\[ I_\star \subseteq \bigcup_{k \geq 0} \lceil I_\star \cap \mc Z_k^{\Sg_k} \rceil = \lceil \Theta \rceil_\star \Theta^\star(I_\star).\]
This is contained in $\sqrt{I_\star} $ by Lemma~\ref{Lceil}(4). 
\end{proof}

\section{Discriminants and structure theorems for $H$-ideals} \label{S:disc}

We retain the notation of the previous section, in particular Notation~\ref{not:E} and Definition~\ref{Dtheta}.  In the previous section, we related $H$-ideals of $\Sa(E)$ to $\Sg_\infty$-invariant ideals of the infinite Laurent ring $\mc Z_\infty$.
It is known that $\Sg_\infty$-ideals of $\mc Z_\infty$ have a great deal of structure, and the goal of this section is to derive consequences for $H$-ideals.

\subsection{Finite generation}\label{ss:fingen}

In this subsection we prove that $H$-ideals of $\Sa(E)$ are always finitely generated, that is, $\Sa(E)$ satisfies the ascending chain condition on $H$-ideals.  
We begin with an illustrative example. 
\begin{example}\label{eg:noeth}
Recall from Theorem~\ref{thm:nonnoeth} that $\Phi_1(\Ua(\Lg))$ does not satisfy the ascending chain condition on ideals.
However, we will see later in Corollary~\ref{Cacc} that $\Sa(E)$ does satisfy the ascending chain condition on $H$-ideals.
To make sense of this, let us consider the $H$-structure on $\Phi_1(\Sa(E))$.
Here  $H$ naturally acts on $\kk[e,t, t^{-1}]$ by $h_j = e t^j  \partial_e $, so 
\[ h_j \cdot e^a t^b = a e^{a} t^{b+j},\]
and  $\Phi_1$ intertwines this action with the $H$-action on $\Sa(E)$.
Thus if $I$ is an $H$-ideal of $\Phi_1(\Sa(E)) = \kk \oplus e \kk[e,t, t^{-1}]$, then not only is $I$ closed under multiplication by $e$, but $I \cap e \kk[e,t, t^{-1}]$ is closed under multiplication by all $t^j$ --- in other words, $I \cap e \kk[e,t, t^{-1}]$ is an ideal of $ \kk[e,t, t^{-1}]$.  Effectively, even though   $t \not\in \Phi_1(\Sa(E))$,  the $H$-action allows us to multiply by $t$.

By \eqref{H-graded},  $H$-ideals of $\Phi_1(\Sa(E))$ are graded, where we set  $\deg e =1$, $\deg t=\deg t^{-1} = 0$.
Thus the proper nontrivial  $H$-ideals of $\Phi_1(\Sa(E))$ are 
of the form
$ \bigoplus_{a \geq 1} e^a I_a$,
where $I_1 \subseteq I_2 \subseteq \dots $ are ideals of $\kk[t, t^{-1}]$.
In particular, $\Phi_1(\Sa(E))$ satisfies the ascending chain condition on $H$-ideals, although it is  not noetherian considered as a $\kk$-algebra.

On the other hand, 
recall that the centre of $\Ua(\mf g)[t,t^{-1}]$ is  $Z[t,t^{-1}] = \kk[\Omega_1, \dots, \Omega_\ell, t, t^{-1}]$.
We see that $\Phi_1(\Sa(E)) \cap Z[t,t^{-1}] = \kk$.
It follows that if $K$ is the (infinitely generated) ideal of $\Phi_1(\Ua(\Lg))$ constructed in the proof of Theorem~\ref{thm:nonnoeth} and $J = \Phi_1^{-1}(K)$, then $K^n \cap \Phi_1(\Sa(E)) = (0)$ and $J^n \cap \Sa(E) = (\ker \Phi_1)^n \cap \Sa(E)$ for all $n$.
\end{example}

To study the general structure of $H$-ideals, we begin with the following fundamental result of Cohen.
\begin{theorem}
[\cite{Cohen}]
\label{thm:Cohen}
$\mc Z_\infty$ satisfies the ascending chain condition on $\Sg_\infty$-ideals: that is, any $\Sg_\infty$-ideal of $\mc Z_\infty$ is finitely generated as an $\Sg_\infty$-ideal.
\end{theorem}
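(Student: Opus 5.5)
The statement is Cohen's theorem, cited from the literature, but it can be proved by reducing to the polynomial case. That case is Cohen's classical result for $A[z_1,z_2,\dots]$ with $A$ noetherian: the polynomial ring in countably many variables is noetherian up to the $\Sg_\infty$-action. The standard proof runs through a well-quasi-order argument. One orders monomials by the divisibility-up-to-symmetry relation: $m \preceq m'$ if some $\sigma\in\Sg_\infty$ and some order-preserving injection of variable indices make $\sigma(m)$ divide $m'$ in a compatible way. Higman's lemma shows that this relation is a well-quasi-order on monomials. A Gr\"obner-type leading-term argument then shows that any ascending chain of $\Sg_\infty$-ideals stabilises.

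The plan is therefore to prove the Laurent case from the polynomial case. Set $\mc P:=\kk[z_1,z_2,\dots]\subseteq \mc Z_\infty$. Let $J$ be an $\Sg_\infty$-ideal of $\mc Z_\infty$ and put $J_0:=J\cap\mc P$, which is an $\Sg_\infty$-ideal of $\mc P$.

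First I will check that $J = J_0\,\mc Z_\infty$. Any $p\in J$ lies in $\mc Z_n$ for some $n$, so multiplying by a monomial $(z_1\cdots z_n)^N$, which is a unit in $\mc Z_\infty$, puts it in $J_0$.

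Next, given an ascending chain $J^{(1)}\subseteq J^{(2)}\subseteq\cdots$ of $\Sg_\infty$-ideals of $\mc Z_\infty$, the contracted chain $J^{(i)}\cap\mc P$ is an ascending chain of $\Sg_\infty$-ideals of $\mc P$. By Cohen's theorem for $\mc P$ it stabilises. Extending back via $J^{(i)} = (J^{(i)}\cap\mc P)\mc Z_\infty$ shows that the original chain stabilises too.

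Finite generation as an $\Sg_\infty$-ideal follows in the same way. A finite set of $\Sg_\infty$-generators of $J_0$ in $\mc P$ also generates $J$ as an $\Sg_\infty$-ideal of $\mc Z_\infty$, because the $\Sg_\infty$-ideal they generate in $\mc Z_\infty$ contains $J_0\mc Z_\infty=J$.

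The main obstacle is the polynomial case itself. If it is not simply cited, the key step is the well-quasi-ordering of monomials under the symmetric divisibility order, together with the compatibility of this order with multiplication and with the action of $\Sg_\infty$. One needs a term order that is preserved by the order-preserving index shifts $z_i\mapsto z_{i+1}$, which are not elements of $\Sg_\infty$ but act in a compatible way. I would follow Cohen (or Aschenbrenner--Hillar) for exactly this step.
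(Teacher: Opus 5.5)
Your argument is correct and agrees with the paper, which gives no proof of its own and simply cites Cohen for this statement. Your reduction $J=(J\cap\mc P)\,\mc Z_\infty$, obtained by clearing denominators with the units $(z_1\cdots z_n)^N$, is the same device the paper uses in the proof of Theorem~\ref{thm:NS}; it correctly transfers both the ACC and finite generation up to symmetry from $\kk[z_1,z_2,\dots]$ to $\mc Z_\infty$, and you rightly defer the polynomial case to Cohen or Aschenbrenner--Hillar.
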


We will use this structure and the $\Theta$-maps to prove that $H$-ideals of $\Sa(E)$ also satisfy the ascending chain condition.
However, because the $\Theta$-maps are neither injective nor surjective on ideals, the proof is a little indirect.
We first show that $H$-ideals, which, recall, must be graded, are generated in finitely many degrees.

\begin{theorem}\label{Tacc} Let $I^\star$ be an $H$-ideal of $\Sa(E)$. Then there exists $n \geq 0$ such that
\begin{equation}I^\star=\Sa(E)\gr_0(I^\star)+ \dots+\Sa(E) \gr_n(I^\star).\label{Eacc}\end{equation}\end{theorem}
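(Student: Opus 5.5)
The plan is to transport the question to $\mc Z_\infty$, where Cohen's Theorem~\ref{thm:Cohen} applies, and to pull the resulting finiteness back along the $\Theta$-maps. For each $k$ put $J_k := \Theta^{e^k}_{\Sg;\star}(\gr_k I^\star) \normal \mc Z_k^{\Sg_k}$. By Lemma~\ref{lem:B}, the ideals $\lceil J_k \rceil \normal \mc Z_k$ satisfy $\lceil J_k\rceil \subseteq \lceil J_\ell \rceil$ for $\ell \ge k$. Let $L_k$ denote the $\Sg_\infty$-ideal of $\mc Z_\infty$ generated by $\lceil J_k \rceil$. This is an ascending chain whose union is $\lceil\Theta\rceil_\star(I^\star)$, so by Theorem~\ref{thm:Cohen} it stabilises: there is $n$ with $L_\ell = L_n$ for all $\ell \ge n$. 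Moreover $L_n$ is generated as an $\Sg_\infty$-ideal by finitely many $f_1,\dots,f_r$ with $f_i \in \lceil J_{k_i}\rceil$ and $k_i \le n$. We may assume $I^\star$ does not contain some $\Sa^d(E)$, since otherwise Proposition~\ref{Lheqs}(2) reduces us to a trivial case.

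Let $I' := \Sa(E)\gr_0(I^\star)+\dots+\Sa(E)\gr_n(I^\star)$. By Example~\ref{Esgen}, \eqref{dag} and Lemma~\ref{lem:proj}, we have
\[ \Theta^{e^\ell}_{\Sg;\star}(\gr_\ell I') = \sum_{k\le n} \Ryn_\ell(J_k \mc Z_\ell). \]
The goal is to show that $J_\ell$ is contained in this sum for every $\ell > n$. Take $p \in J_\ell \subseteq \lceil J_\ell\rceil \subseteq L_n$ and write
\[ p = \sum_i c_i\,\sigma_i(f_i), \qquad c_i \in \mc Z_m,\ \sigma_i \in \Sg_m, \]
for some $m \ge \ell$. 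If $m = \ell$ we are done. Indeed $\Ryn_\ell$ is $\mc Z_\ell^{\Sg_\ell}$-linear and $p$ is $\Sg_\ell$-invariant, so
\[ p = \Ryn_\ell(p) = \sum_i \Ryn_\ell\bigl(\sigma_i^{-1}(c_i) f_i\bigr) \in \sum_{k \le n} \Ryn_\ell(J_k\mc Z_\ell), \]
where the last step uses Lemma~\ref{Lceil}(2),(3), namely $\Ryn_\ell(\lceil J_k\rceil \mc Z_\ell)\subseteq \Ryn_\ell(\Ryn_k(\lceil J_k\rceil\mc Z_k)\mc Z_\ell)$ and $\Ryn_k(\lceil J_k \rceil \mc Z_k) = J_k$.

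The main obstacle is the case $m > \ell$: the expression for $p$ may involve variables $z_j$ with $j > \ell$. In other words, I need to compare $L_n \cap \mc Z_\ell$ with the $\Sg_\ell$-ideal of $\mc Z_\ell$ generated by the $\lceil J_k\rceil$, $k\le n$.

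My first attempt is to choose $n$ more cleverly. Since $L_n$ is finitely generated, and each of the generators $f_i$ involves at most $n$ variables, I would enlarge the threshold to some $N \ge n$. I would then try to show that for $\ell \ge N$ every $p\in \lceil J_\ell\rceil$ admits an expression as above with $m=\ell$. To produce such an expression, I would apply a specialisation $z_j \mapsto z_{a(j)}$ for $j>\ell$, choosing the indices $a(j)\le\ell$ so that for each of the finitely many terms $\sigma_i(f_i)$ the variables actually occurring in that term go to distinct indices. After specialisation each term is again an $\Sg_\ell$-translate of some $f_i$. The specialisation fixes $\mc Z_\ell$ pointwise, so it fixes $p$.

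If no single such specialisation can be chosen uniformly, the fallback uses the other route from Proposition~\ref{Lheqs}(1). There, the equality $L_\ell=L_n$ gives $\Theta^\star\lceil\Theta\rceil_\star(I^\star)=\Sat(I^\star)$, and on the saturation the degree-$m$ comparison does go through (it is exactly the $m\ge\ell$ computation above). Then $I^\star$ and $I'$ differ by something killed by $\Sa^{\ge j}(E)$. Finally, I would use the identification of multiplication by $e_0^{m-\ell}$ with $p\mapsto\Ryn_m(p\otimes 1)$, via \eqref{dag}, to descend from degree $m$ back to degree $\ell$ for all sufficiently large $\ell$.
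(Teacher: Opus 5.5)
Your setup is correct, and your $m=\ell$ computation is exactly how the paper finishes. The gap is the case $m>\ell$ that you flag: it is the whole difficulty, and neither remedy resolves it.

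What is needed is a \emph{finite-level} statement. There must be $N$ such that, for every $m\ge N$, the ideal $\lceil\Theta\rceil_\star(I^\star)\cap\mc Z_m$ is generated as an $\Sg_m$-ideal of $\mc Z_m$ (not of $\mc Z_\infty$) by $\lceil\Theta\rceil_\star(I^\star)\cap\mc Z_N$. Cohen's Theorem~\ref{thm:Cohen} concerns only $\Sg_\infty$-ideals of $\mc Z_\infty$ and does not give this.

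The paper imports this statement from \cite[Theorem~4.7]{AH}: symmetrisation-invariant chains $(I_\star\cap\mc Z_n)_n$ stabilise modulo the symmetric group. It then uses noetherianity of $\mc Z_N$ and the monotonicity from Lemma~\ref{lem:B} to find $N'\ge N$ with $\lceil\Theta\rceil_\star(I^\star)\cap\mc Z_N=\lceil J_{N'}\rceil\cap\mc Z_N$. For $m\ge N'$, every $p\in J_m$ then has an expression at level $m$ in terms of $\Sg_m$-translates of elements of $\lceil J_{N'}\rceil$. Your $m=\ell$ argument then gives $J_m\subseteq\Ryn_m(J_{N'}\mc Z_m)$, that is, $\gr_m I^\star\subseteq\Sa(E)\gr_{N'}I^\star$.

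Neither of your workarounds substitutes for this input.

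\emph{Uniform specialisation.} A single substitution $z_j\mapsto z_{a(j)}$ must be injective on the variables of every term simultaneously, and this can be impossible. Terms supported on $\{1,\ell+1\},\{2,\ell+1\},\dots,\{\ell,\ell+1\}$ force $a(\ell+1)\notin\{1,\dots,\ell\}$. On terms where $a$ is not injective you get diagonal specialisations of the $f_i$, which need not lie in the $\Sg_\ell$-ideal generated by the $f_j$.

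\emph{Saturation route.} This only yields $I'\subseteq I^\star\subseteq\Sat(I')$, i.e.\ $I^\star/I'$ is torsion. That says nothing about the degrees in which $I^\star/I'$ is nonzero. By Proposition~\ref{Lheqs}(1), the $\Sg_\infty$-ideal to which Cohen's theorem applies sees $I^\star$ only up to saturation. For instance, the ideals $A^\star\cap\Sa^{\ge m}(E)$ of Example~\ref{Ex1} all have the same image under $\Theta_\star$ and $\lceil\Theta\rceil_\star$. So Cohen's theorem alone cannot supply the uniform degree bound you need.

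\emph{Descent via $e_0^{m-\ell}$.} Multiplication by $e_0^{m-\ell}$ is injective on $\Sa(E)$, but $I'$ need not be saturated. Hence $r e_0^{m-\ell}\in I'$, or even $r\,\Sa^{m-\ell}(E)\subseteq I'$, does not imply $r\in I'$.
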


\begin{remark}
The result says that $I^\star$ is generated in finitely many degrees {\em as an ideal of $\Sa(E)$}.
Note that 
all $\gr_k(I^\star)$ are $H$-subrepresentations,
and that an ideal of $\Sa(E)$ generated by an $H$-subrepresentation is automatically an $H$-ideal. 
\end{remark}
\begin{proof}[Proof of Theorem~\ref{Tacc}]
Consider $I_\star:=\lceil \Theta \rceil_\star(I^\star)$. It is easy to see that $(I_\star\cap\mc Z_n), n\ge 1,$ is a symmetrisation-invariant chain of ideals in the sense of~\cite{AH}: that is, $\sigma \cdot(I_*\cap \mc Z_n)\subseteq I_*\cap \mc Z_m$ for all $m\ge n$ and $\sigma\in\Sg_m$.
This and~\cite[Theorem 4.7]{AH} imply that this chain stabilises modulo the symmetric group:  
that is, there exists $N$ such that for all $m \ge N$
the $\Sg_m$-ideal $I_\star\cap \mc Z_m$ of $\mc Z_m$ is generated by $I_\star\cap\mc Z_N$. 

For all $n\ge 0$ we clearly have $I_\star\cap\mc Z_n=\bigcup_{m\ge n} (\lceil \Theta^{e^{m}}_{\Sg ; \star}(\gr_m I^\star ) \rceil\cap\mc Z_n)$. 
Thanks to Lemma~\ref{lem:B}, $$\lceil \Theta^{e^{m_2}}_{\Sg ; \star}(\gr_{m_2} I^\star ) \rceil\cap\mc Z_n \subseteq \lceil \Theta^{e^{m_1}}_{\Sg ; \star}(\gr_{m_1} I^\star ) \rceil\cap\mc Z_n$$ if $m_1\ge m_2\ge n$. 
Thus $$\lceil \Theta^{e^N}_{\Sg ; \star}(\gr_{N} I^\star ) \rceil\cap\mc Z_N, \lceil \Theta^{e^{N+1}}_{\Sg ; \star}(\gr_{N+1} I^\star ) \rceil\cap\mc Z_N,\dots $$
is an ascending chain of ideals of $\mc Z_N$. 
By the noetherianity of $\mc Z_N$ there exists $N'\ge N$ such that $$I_*\cap\mc Z_N=\lceil \Theta^{e^{N'}}_{\Sg ; \star}(\gr_{N'} I^\star ) \rceil\cap\mc Z_N.$$ 
In this way we attach $N'$ to $N$. 

Let $m \geq N' \geq N$.  Then the $\Sg_m$-ideal of $\mc Z_m$ generated by $I_\star\cap\mc Z_N=\lceil \Theta^{e^{N'}}_{\Sg ; \star}(\gr_{N'} I^\star ) \rceil\cap \mc Z_N$ equals $I_\star\cap\mc Z_m$:
that is $I_\star\cap\mc Z_m$ is the smallest $\Sg_m$-ideal of $\mc Z_m$ containing $\lceil \Theta^{e^{N'}}_{\Sg ; \star}(\gr_{N'} I^\star)\rceil$. 
The latter condition together with Lemma~\ref{lem:B} 
implies that $\gr_m I^\star \subseteq  \Sa(E) \gr_{N'}I^\star$ for all $m\ge N'$. Thus $$I^\star=\Sa(E)\gr_0(I^\star)+ \dots+\Sa(E) \gr_{N'}(I^\star)$$as needed.
\end{proof}

As a corollary, we obtain our first fundamental structure result for $H$-ideals.
Note the contrast with Theorem~\ref{thm:nonnoeth}.

\begin{corollary}\label{Cacc} The lattice of $H$-ideals of $\Sa(E)$ satisfies the ascending chain condition (ACC).\end{corollary}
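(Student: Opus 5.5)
We must prove Corollary~\ref{Cacc}: every ascending chain $I^\star_1 \subseteq I^\star_2 \subseteq \cdots$ of $H$-ideals of $\Sa(E)$ stabilises. Equivalently, every $H$-ideal is finitely generated as an $H$-ideal. The plan is to deduce this from Theorem~\ref{Tacc} together with a finiteness statement for $H$-subrepresentations of each fixed graded piece $\Sa^k(E)$.

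Let $I^\star = \bigcup_j I^\star_j$. This is again an $H$-ideal, since an increasing union of $H$-stable ideals is $H$-stable. By Theorem~\ref{Tacc} there is $n$ with
\[ I^\star = \Sa(E)\gr_0(I^\star) + \cdots + \Sa(E)\gr_n(I^\star). \]
So it suffices to find a single index $j$ with $\gr_k(I^\star_j) = \gr_k(I^\star)$ for every $k = 0,\dots,n$. Then $I^\star \subseteq I^\star_j$, and the chain stabilises at $j$.

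Fix $k \le n$. The spaces $\gr_k(I^\star_j)$ form an ascending chain of $H$-subrepresentations of $\Sa^k(E)$ whose union is $\gr_k(I^\star)$, because grading commutes with increasing unions. By Lemma~\ref{Lthid}, the map $\Theta^{e^k}_{\Sg;\star}$ carries $H$-subrepresentations of $\Sa^k(E)$ bijectively and inclusion-preservingly onto ideals of $\mc Z_k^{\Sg_k}$. This ring is the invariant ring of a finite group acting on the finitely generated algebra $\mc Z_k$, so it is finitely generated (Hilbert--Noether) and hence noetherian. Therefore the transported chain of ideals stabilises, and so the chain $\gr_k(I^\star_j)$ stabilises at some index $j_k$. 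Taking $j = \max(j_0,\dots,j_n)$ finishes the argument.

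The main obstacle was already handled by Theorem~\ref{Tacc}: bounding the generating degrees uniformly, which relies on Cohen-type stabilisation in the form of \cite{AH}. The remaining step only needs noetherianity of $\mc Z_k^{\Sg_k}$ for each fixed $k$. One point to check is that Theorem~\ref{Tacc} is applied to the union $I^\star$, not to the individual $I^\star_j$; this is legitimate because the union is an $H$-ideal.
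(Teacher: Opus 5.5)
Your proposal is correct and takes the same approach as the paper's proof. You pass to the union $I^\star$, use Theorem~\ref{Tacc} to bound the generating degrees by some $n$, and then use Lemma~\ref{Lthid} together with the noetherianity of $\mc Z_k^{\Sg_k}$ to stabilise each of the finitely many pieces $\gr_k$ with $k \le n$.
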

\begin{proof} Let $I^\star_1\subseteq I^\star_2\subseteq\dots$ be an ascending chain of $H$-ideals in $\Sa(E)$. 
Consider $I^\star:=\bigcup_nI^\star_n$. 
By  Theorem~\ref{Tacc} $I^\star$ is generated by $\gr_0I^\star, \gr_1 I^\star, \dots, \gr_{N}I^\star$ for some $N\ge0$. 
By Lemma~\ref{Lthid} and the noetherianity of $\mc Z_k^{\Sg_k}$ there are $d_0, d_1, \dots, d_N\ge 0$ such that $\gr_k I^\star=\gr_k I^\star_{d_k}$ for all $ 0\le k\le N$. Thus $I^\star=I^\star_n$ for all $n\ge\max_k(d_k)$.
\end{proof}

This proves Theorem~\ref{ithm:B}(1).

\subsection{Discriminants}\label{ss:disc}
To prove Theorem~\ref{ithm:B}(2), we will use Nagpal and Snowden's~\cite[Proposition~2.6]{NS} recent significant strengthening of Cohen's fundamental Theorem~\ref{thm:Cohen}. We reproduced this result in Proposition~\ref{P:NS}. As before consider the {\em determinants}
\beq\label{det} \FPol_k := \prod_{1 \leq i< j \leq k} (z_i -z_j) \in \mc Z_k.\eeq
(As is well known, up to sign
\[ \FPol_k = \begin{vmatrix} 1 & \dots & 1 \\ z_1 & \dots & z_k \\ \vdots & \ddots & \vdots \\ z_1^{k-1} & \dots & z_k^{k-1} \end{vmatrix}\]
is the determinant of a Vandermonde matrix.)
We use the following modification of the result of Nagpal and Snowden.
\begin{theorem}
\label{thm:NS}
Let $I_\star$ be a nonzero  $\Sg_\infty$-ideal of $\mc Z_\infty$.
Then there exists $k \geq 1$ so that $\FPol_\ell \in I_\star$ for all $\ell \geq k$.
\end{theorem}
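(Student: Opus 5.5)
We reduce to Proposition~\ref{P:NS} by clearing denominators, and then use the product structure of the $\FPol_\ell$ to pass from one $k$ to all $\ell \geq k$.

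First, pick a nonzero $g \in I_\star$. Then $g \in \mc Z_m$ for some $m$. Multiplying by a suitable monomial $(z_1\cdots z_m)^N$, which is a unit of $\mc Z_\infty$, we get a nonzero polynomial
\[ f \in \kk[z_1, \dots, z_m] \cap I_\star. \]
Now apply Proposition~\ref{P:NS} with $A = \kk$ and $\mc A = \kk[z_1, z_2, \dots]$. It gives $n \ge 0$ and a nonzero coefficient $c \in \kk$ of $f$ with $c\,\FPol_n \in I_f \subseteq \mc A$. Here $I_f$ is the $\Sg_\infty$-invariant ideal of $\mc A$ generated by $f$.

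Since $\mc A \subseteq \mc Z_\infty$ and $I_\star$ is an $\Sg_\infty$-ideal of $\mc Z_\infty$ containing $f$, we have $I_f \subseteq I_\star \cap \mc A$. As $c$ is a nonzero scalar, it follows that $\FPol_n \in I_\star$. Set $k = \max(n,1)$; if $n = 0$ then $\FPol_0 = 1 \in I_\star$ and everything is trivial.

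Next, let $\ell \geq k$. We need $\FPol_\ell \in I_\star$, and this follows from the factorisation
\[ \FPol_\ell = \FPol_k(z_1,\dots,z_k)\cdot \prod_{\substack{1\le i<j\le \ell\\ j>k}} (z_i - z_j). \]
The first factor is $\FPol_k \in I_\star$. If $k > n$, write $\FPol_k$ itself as $\FPol_n$ times a polynomial in the same way. Since $I_\star$ is an ideal of $\mc Z_\infty$, the product lies in $I_\star$.

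The only step needing care is the reduction from Laurent polynomials to polynomials. This is harmless because monomials are units in $\mc Z_\infty$, and the $\Sg_\infty$-ideal generated by $f$ in $\mc A$ sits inside $I_\star$.
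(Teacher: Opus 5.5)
Your proof is correct and follows the paper's argument: clear denominators by a monomial (a unit of $\mc Z_\infty$), apply Proposition~\ref{P:NS} with $A=\kk$ so that the coefficient $c$ is invertible, and then use that $\FPol_k$ divides $\FPol_\ell$ for $\ell\ge k$. Your explicit checks that $I_f\subseteq I_\star$ and that the case $n=0$ is trivial fill in small details the paper leaves implicit.
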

\begin{proof}
    Choose $f\in I_\star\setminus(0)$. Assume that $f\in\mc Z_i, i\ge0$. Then for some $N$ we have $(z_1z_2\ldots z_i)^Nf\in\mathcal A$ where $\mc A$ is as in Proposition~\ref{P:NS}. By Proposition~\ref{P:NS},  $\FPol_k\in I_\star$ for some $k$. It remains to note that $\FPol_k\mid\FPol_{k+1}$.
\end{proof}
It is useful to understand the determinants $\FPol_k$ more geometrically.  
For all $\ell, k$, let 
\beq\label{diagonal} \Delta^\ell_k := \{ (\alpha_1, \dots, \alpha_\ell) \in (\kk^\times)^\ell \mid \alpha_1, \dots, \alpha_\ell \mbox { take on at most $k$ distinct values }\}\eeq
be the $k$-fold diagonal in $(\kk^\times)^\ell$.
(If $\ell \leq k$ then $\Delta^\ell_k = (\kk^\times)^\ell$.)
Then we have
\begin{theorem}[{\cite[Theorem~2.4]{Lov}}]\label{thm:Lov}
The vanishing ideal 
\[ \I(\Delta^\ell_k) = \{ f \in \mc Z_\ell \mid f \mid_{\Delta^\ell_k} \equiv 0 \} \]
is the $\Sg_\ell$-ideal of $\mc Z_\ell$ generated by $\FPol_{k+1}$.
\end{theorem}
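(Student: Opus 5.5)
The inclusion of the $\Sg_\ell$-ideal $J := \sum_{\sigma \in \Sg_\ell} \mc Z_\ell\, \sigma(\FPol_{k+1})$ in $\I(\Delta^\ell_k)$ is immediate. Any point of $\Delta^\ell_k$ has two equal coordinates among any $k+1$ of them, so every $\sigma(\FPol_{k+1})$ vanishes on $\Delta^\ell_k$. (For $\ell \le k$ we read $\FPol_{k+1}$ as absent, $J=0$, and the statement is trivial.)

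Conversely, a point with at least $k+1$ distinct coordinates is not a zero of a suitable $\sigma(\FPol_{k+1})$, so the zero locus of $J$ is exactly $\Delta^\ell_k$. By the Nullstellensatz, $\I(\Delta^\ell_k)=\sqrt J$. The whole content of the theorem is therefore that $J$ is radical. Since $\mc Z_\ell$ is a localisation of $\kk[z_1,\dots,z_\ell]$ at a monomial, both $J$ and the vanishing ideal are extended from the polynomial ring. So I will prove the polynomial version: $\I(\Delta^\ell_k)\subseteq \kk[z_1,\dots,z_\ell]$ is generated by the $\Sg_\ell$-orbit of $\FPol_{k+1}$, with $\Delta$ now taken in $\kk^\ell$. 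The Laurent statement then follows by localising.

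I propose a double induction on $(\ell,k)$. The case $k=1$ is classical: $\Delta^\ell_1$ is the small diagonal, whose ideal is generated by the $z_i-z_j$. Let $f$ vanish on $\Delta^\ell_k$ and view $f$ as a polynomial in $z_\ell$ over $B=\kk[z_1,\dots,z_{\ell-1}]$. The first step is to split $\Delta^\ell_k$ into two parts.
\begin{itemize}
\item The points with $z_\ell=z_j$ for some $j<\ell$. Each $f|_{z_\ell=z_j}$ vanishes on $\Delta^{\ell-1}_k$, so by induction on $\ell$ it lies in the ideal generated by the orbit of $\FPol_{k+1}$ in $\ell-1$ variables.
\item The points where $z_\ell$ is a new value. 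There the first $\ell-1$ coordinates take at most $k-1$ values, which is where induction on $k$ enters.
\end{itemize}

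To combine these, I will use a Lagrange/Vandermonde interpolation in $z_\ell$, in the spirit of the case $i=1$ in the proof of Proposition~\ref{P:NS}. Modulo $J$, write $f = f_0 + \prod_{j<\ell}(z_\ell-z_j)\,g$, where $f_0$ interpolates the restrictions $f|_{z_\ell=z_j}$. The point is that the lifts of these restrictions can be chosen inside $J$, because the generators in $\ell-1$ variables are themselves generators in $\ell$ variables. The remaining factor $g$ must then vanish wherever $z_1,\dots,z_{\ell-1}$ take at most $k-1$ values, away from the diagonals $z_\ell = z_j$. So $g$ lies in $\I(\Delta^{\ell-1}_{k-1})\otimes\kk[z_\ell]$, which by induction is generated by the orbit of $\FPol_k$. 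Finally, $\FPol_k(z_{i_1},\dots,z_{i_k})\prod_{j}(z_\ell-z_j)$ is divisible by $\FPol_{k+1}(z_{i_1},\dots,z_{i_k},z_\ell)$, so the second term also lies in $J$.

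The main obstacle is making the interpolation step honest. The restrictions $f|_{z_\ell=z_j}$ for different $j$ must be lifted compatibly, that is, to elements of $J$ that agree on the overlaps $z_\ell=z_i=z_j$. Without this, the difference $f-f_0$ need not be divisible by the full product $\prod_{j<\ell}(z_\ell-z_j)$. I would first try to resolve this by an inner induction on the number of diagonals already handled, using that the ideals $(z_\ell-z_j)$ are pairwise comaximal modulo the relevant intersections. If that fails, I would instead prove radicality directly, by exhibiting a Gröbner basis of $J$ consisting of orbit elements of $\FPol_{k+1}$ whose initial ideal is squarefree.
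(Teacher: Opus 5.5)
The paper does not prove this statement; it quotes it from \cite[Theorem~2.4]{Lov}. Judged on its own, your proposal has a genuine gap, and it is exactly the step you flag.

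\textbf{The gap: the simultaneous lift $f_0$.} Your reductions are sound. The inclusion $J\subseteq\I(\Delta^\ell_k)$ holds, the zero set of $J$ is $\Delta^\ell_k$, and so $\I(\Delta^\ell_k)=\sqrt J$. The passage to $\kk[z_1,\dots,z_\ell]$ works, using that $\Delta^\ell_k$ is dense in its affine counterpart, a finite union of linear subspaces each meeting the torus. The end of your induction step is also correct: once $f_0\in J$ with $f-f_0=Pg$ and $P=\prod_{j<\ell}(z_\ell-z_j)$, you get $g\in\I(\Delta^{\ell-1}_{k-1})[z_\ell]$, and $P\cdot\FPol_k(z_{i_1},\dots,z_{i_k})$ is a multiple of $\FPol_{k+1}(z_{i_1},\dots,z_{i_k},z_\ell)$. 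This settles $\ell=k+1$, where $f_0=0$.

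The problem is the existence of $f_0$, i.e.\ the inclusion $\I(\Delta^\ell_k)\subseteq J+(P)$. Given your inductive hypotheses, this inclusion is equivalent to the statement being proved, and you do not establish it.

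\begin{itemize}
\item Each restriction $a_j=f|_{z_\ell=z_j}$ lifts on its own, as an element of $J$ not involving $z_\ell$. But that lift restricts to $a_j$, not $a_i$, on the hyperplane $z_\ell=z_i$.
\item The ideals $(z_\ell-z_i)$ and $(z_\ell-z_j)$ are not comaximal: their sum is the height-two prime $(z_\ell-z_i,z_i-z_j)$. So there is no Chinese-remainder gluing, and ``comaximal modulo the relevant intersections'' does not supply one.
\item Gluing one hyperplane at a time by Newton interpolation produces divided differences such as $(a_2-a_1)/(z_2-z_1)$. These lie in the colon ideal $\I(\Delta^{\ell-1}_k):(z_1-z_2)$.
\item That colon ideal is the vanishing ideal of only some components of $\Delta^{\ell-1}_k$. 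The components are the subspaces where coordinates are constant on the blocks of a partition into $k$ blocks, and the colon keeps those in which $1$ and $2$ lie in different blocks.
\item Such a partial union is not a $k$-fold diagonal, so your inductive hypothesis says nothing about it.
\end{itemize}

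For the induction to close you need a strengthened statement describing the vanishing ideals of such partial unions. In effect, you must handle colourings of a graph rather than only of the empty graph. That strengthened statement is the missing idea.

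\textbf{The Gr\"obner fallback cannot work as stated.} Take $k\geq 2$ and $\ell\geq k+1$, and set $D=\binom{k+1}{2}$.

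\begin{itemize}
\item $J$ is homogeneous and zero below degree $D$, and $J_D$ is spanned by the $\sigma(\FPol_{k+1})$.
\item Every monomial of $\FPol_{k+1}(z_{i_1},\dots,z_{i_{k+1}})$ has exponents forming a permutation of $0,1,\dots,k$ on those variables. So every monomial in the support of any element of $J_D$ has an exponent $k\geq 2$.
\item Hence, for every term order, the initial ideal of $J$ has non-squarefree minimal generators in degree $D$.
\end{itemize}

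Radicality of $J$ therefore can never be certified by a squarefree initial ideal. You need a different mechanism: either the strengthened induction above, or a direct comparison of the Hilbert functions of $\kk[z_1,\dots,z_\ell]/J$ and of the coordinate ring of $\Delta^\ell_k$.
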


Since by definition $\Delta^\ell_k \subseteq \Delta^\ell_{k+1} \subseteq \dots $, we obtain immediately:
\begin{corollary}\label{cor:Lov}
Let $m \geq \ell, k \geq 1$.
Then $\FPol_{\ell}$ is in the $\Sg_m$-ideal of $\mc Z_m$ generated by $\FPol_k$ if and only if $\ell \geq k$. \qed
\end{corollary}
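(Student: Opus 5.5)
The statement to prove is Corollary~\ref{cor:Lov}: for $m \geq \ell, k$, the element $\FPol_\ell$ lies in the $\Sg_m$-ideal of $\mc Z_m$ generated by $\FPol_k$ if and only if $\ell \geq k$. The plan is to translate membership into vanishing on diagonals using Theorem~\ref{thm:Lov}, applied with $m$ variables. That theorem says the $\Sg_m$-ideal of $\mc Z_m$ generated by $\FPol_k$ is exactly $\I(\Delta^m_{k-1})$. So the claim becomes: $\FPol_\ell$, viewed in $\mc Z_m$ via $\mc Z_\ell \subseteq \mc Z_m$, vanishes on $\Delta^m_{k-1}$ if and only if $\ell \geq k$.

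For the direction $\ell \geq k$, I would use the explicit product formula. Take a point $(\alpha_1,\dots,\alpha_m) \in \Delta^m_{k-1}$. Then the coordinates $\alpha_1,\dots,\alpha_\ell$ take at most $k-1 < \ell$ distinct values. By pigeonhole, $\alpha_i = \alpha_j$ for some $1 \le i < j \le \ell$. Hence the factor $z_i - z_j$ of $\FPol_\ell$ vanishes there, and so does $\FPol_\ell$. One could alternatively note that $\FPol_k \mid \FPol_\ell$ directly, which makes this direction immediate even without Theorem~\ref{thm:Lov}.

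For the direction $\ell < k$, I would exhibit a point of $\Delta^m_{k-1}$ where $\FPol_\ell$ does not vanish. Choose $\alpha_1,\dots,\alpha_\ell \in \kk^\times$ pairwise distinct, which is possible since $\kk$ is infinite. Set $\alpha_{\ell+1} = \dots = \alpha_m = \alpha_1$. This point uses exactly $\ell \le k-1$ distinct values, so it lies in $\Delta^m_{k-1}$. At this point $\FPol_\ell = \prod_{i<j\le \ell}(\alpha_i-\alpha_j) \neq 0$. By Theorem~\ref{thm:Lov}, $\FPol_\ell$ is therefore not in the $\Sg_m$-ideal generated by $\FPol_k$.

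The only delicate point is bookkeeping the index shift in Theorem~\ref{thm:Lov}: $\FPol_{k+1}$ corresponds to $\Delta_k$, so $\FPol_k$ corresponds to $\Delta_{k-1}$. The edge case $k=1$ is trivial, since $\FPol_1 = 1$ and both sides of the equivalence hold for every $\ell$.
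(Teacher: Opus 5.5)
Your proof is correct and follows the paper's route: both deduce the corollary from Theorem~\ref{thm:Lov} together with the nesting $\Delta^m_1 \subseteq \Delta^m_2 \subseteq \cdots$ of the diagonals, and your explicit witness point supplies the strictness of this chain, which the paper leaves implicit. In fact, combined with $\FPol_k \mid \FPol_\ell$, your argument only uses the trivial inclusion of the $\Sg_m$-ideal generated by $\FPol_k$ into $\I(\Delta^m_{k-1})$, so it does not need the nontrivial generation half of Theorem~\ref{thm:Lov}.
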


Observe also that
\begin{equation} \label{Erad}\mbox{ if $p\in \mc Z_k^{\Sg_k}$ and $\FPol_{k}\mid p$, then $ \FPol_{k}^2\mid p$}.\end{equation}
This is important because $\FPol_k$ is not $\Sg_k$-invariant, but $\FPol_k^2$ is.  
We define 
\beq\label{disc} D_k := \Theta^{\Sg ; \star}_{e^k}(\FPol_k^2) \in \Sa^k(E) = \gr_k(\Sa(E)),\eeq
and refer to the $D_k $ as {\em discriminants}.
For example, 
\beq \label{D2}
D_2 = \Theta_{e^2}^{\Sg; \star}\bigl( (z_1-z_2)^2 \bigr) = 2 (e_0 e_2 - e_1^2).
\eeq
Note that $\Phi_1(D_2) = 0$.

We now prove our second fundamental structure result for $H$-ideals.

\begin{theorem}\label{Disc for H ideals} 
Let $I^\star$ be a nonzero $H$-ideal of $\Sa(E)$.
Then there is $k \geq 1$ so that $D_\ell \in I^\star$ for all $\ell \geq k$.
\end{theorem}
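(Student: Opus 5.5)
The plan is to pass from $I^\star$ to an $\Sg_\infty$-ideal of $\mc Z_\infty$, apply Theorem~\ref{thm:NS} there, and come back using the compatibility conditions of Lemma~\ref{lem:B}.

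\textbf{Step 1: reduce to a statement in a single degree.} Fix a nonzero homogeneous element of $I^\star$. Since $I^\star$ is an $H$-ideal, Example~\ref{Esgen} shows that $I^\star$ contains a principal $H$-ideal generated in some degree $m$. So we may assume $\gr_m I^\star \neq 0$. Let $J_k := \Theta^{e^k}_{\Sg;\star}(\gr_k I^\star) \normal \mc Z_k^{\Sg_k}$, and set
\[ I_\star := \lceil \Theta\rceil_\star(I^\star) = \bigcup_k \lceil J_k\rceil .\]
This is a nonzero $\Sg_\infty$-ideal by Lemma~\ref{lem:C}(2), since $J_m \subseteq \lceil J_m \rceil$. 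By Theorem~\ref{thm:NS} there is $k_0$ with $\FPol_\ell \in I_\star$ for all $\ell \geq k_0$. Fix one such $\ell$. By Lemma~\ref{lem:C}(2) the union defining $I_\star$ is increasing, so there is $n \geq \ell$ with $\FPol_\ell \in \lceil J_n\rceil$.

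\textbf{Step 2: descend from degree $n$ to degree $\ell$.} This is the main obstacle: we know $\FPol_\ell$ lies in $\lceil J_n\rceil$ for some large $n$, but we need $\FPol_\ell^2 \in J_\ell$ itself, with $n = \ell$. I would avoid this descent by arranging $n=\ell$ from the start. Since $\lceil J_k\rceil \subseteq \lceil J_n\rceil$ for $k\le n$, it is enough to find a fixed $N$ with $\FPol_N \in \lceil J_N\rceil$, and then propagate upward.

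To find such an $N$, take $0\neq p\in J_m$ and apply Proposition~\ref{P:NS} directly to $p$ (cleared of denominators by a unit monomial). This gives
\[ \FPol_r = \sum_i a_i\,\sigma_i(p) \quad\text{with } a_i\in\mc Z_N,\ \sigma_i\in \Sg_N, \]
for some $N\ge r,m$. Each term $\sigma_i(p)$ lies in $\lceil J_m\rceil\mc Z_N$, which is contained in $\lceil J_N\rceil$ by Lemma~\ref{lem:B}(3) and Lemma~\ref{Lceil}(1). Hence $\FPol_r\in\lceil J_N\rceil$. Multiplying by the remaining Vandermonde factors gives $\FPol_N \in \lceil J_N\rceil$, because $\FPol_r \mid \FPol_N$ and $\lceil J_N\rceil$ is an ideal.

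\textbf{Step 3: conclude.} By Lemma~\ref{Lceil}(3), $\lceil J_N\rceil\cap\mc Z_N^{\Sg_N}=J_N$. Since $\FPol_N^2$ is $\Sg_N$-invariant and lies in $\lceil J_N\rceil$, we get $\FPol_N^2\in J_N$, that is, $D_N\in I^\star$.

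For $\ell\ge N$, the ideal $\FPol_N\mc Z_\ell$ lies in $\lceil J_\ell\rceil$ by Lemma~\ref{lem:B}(3). So $\FPol_\ell$, being a multiple of $\FPol_N$, is in $\lceil J_\ell\rceil$. Then $\FPol_\ell^2$ is an $\Sg_\ell$-invariant element of $\lceil J_\ell\rceil$, so it lies in $J_\ell$ by Lemma~\ref{Lceil}(3), giving $D_\ell\in I^\star$. Thus $k=N$ works.
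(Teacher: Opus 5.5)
Your proof is correct. It follows the paper's overall strategy (pass to $\mc Z_\infty$, invoke Nagpal--Snowden, come back via Lemma~\ref{lem:B}), but it is leaner.

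\textbf{How the paper argues.} The paper works with $\Theta_\star(I^\star)$. It uses Cohen's Theorem~\ref{thm:Cohen} to bound the degrees of the generators by some $N$, uses Corollary~\ref{cor:Lov} to place $\FPol_\ell$ in $\sum_{i\le N,\sigma\in\Sg_\ell}\sigma(J_i)\mc Z_\ell$, and then applies $\Ryn_\ell$ together with Lemma~\ref{lem:B}.

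\textbf{How your argument differs.} You work with $\lceil\Theta\rceil_\star(I^\star)$ instead. The $\Sg_N$-invariance of $\lceil J_N\rceil$ (Lemma~\ref{Lceil}(1)) absorbs the permutations, and Lemma~\ref{Lceil}(3) replaces the Reynolds-operator step. This shows that neither Cohen's theorem nor Lovett's theorem is needed for this statement. Any single expression of $\FPol_r$ in terms of translates of elements of the $J_i$ involves only finitely many variables, so a finite $N$ comes for free. The divisibility $\FPol_r\mid\FPol_\ell$ then handles all larger $\ell$.

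\textbf{Two small corrections.}
\begin{itemize}
\item The ``descent'' obstacle you raise at the end of Step~1 is not really there. Once $\FPol_\ell\in\lceil J_n\rceil$ with $n\ge\ell$, divisibility already gives $\FPol_n\in\lceil J_n\rceil$. So Step~2 is redundant, though correct.
\item In Step~2, the intermediate claim $\sigma_i(p)\in\lceil J_m\rceil\mc Z_N$ is not right as stated, since $\sigma_i$ may move the first $m$ variables. What your cited lemmas actually give is
\[
\sigma_i(p)\in\sigma_i(\lceil J_m\rceil)\subseteq\sigma_i(\lceil J_N\rceil)=\lceil J_N\rceil,
\]
which is all you need.
\end{itemize}
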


\begin{proof}
Consider the $\Sg_\infty$-ideal $\Theta_\star(I^\star)$.
By Theorem~\ref{thm:Cohen}, there is $N$ so that as an $\Sg_\infty$-ideal, $\Theta_\star(I^\star)$ is generated by
\[ \{ \Theta^{e^i}_{\Sg ; \star}(\gr_i I^\star) \mid i \leq N\}.\]
Further, by Theorem~\ref{thm:NS} there is $k$ so that $\FPol_k \in \Theta_\star(I^\star)$, so, possibly increasing $N$, we have
\[ \FPol_k \in \sum_{i \leq N, \sigma \in \Sg_N} \sigma(\Theta^{e^i}_{\Sg ; \star}(\gr_i I^\star) )\mc Z_N.\]
By Corollary~\ref{cor:Lov}, for all $\ell \geq \max(k, N)$, we have
\[ \FPol_\ell \in \sum_{i \leq N, \sigma \in \Sg_\ell} \sigma(\Theta^{e^i}_{\Sg ; \star}(\gr_i I^\star)) \mc Z_\ell.\]

Let $\ell \geq \max(k,N)$.  Then
\[
    \FPol_\ell^2 \in \Ryn_\ell\Bigl(\sum_{i \leq N, \sigma \in \Sg_\ell} \sigma(\Theta^{e^i}_{\Sg ; \star}(\gr_i I^\star)) \mc Z_\ell \Bigr)
     = \sum_{i \leq N} \Ryn_\ell(\Theta^{e^i}_{\Sg ; \star}(\gr_i I^\star) \mc Z_\ell).
\]
By Lemma~\ref{lem:B}, this is contained in $\Theta^{e^\ell}_{\Sg ; \star}(\gr_\ell I^\star)$, and applying $\Theta_{e^\ell}^{\Sg ; \star}$ we obtain that $D_\ell \in \gr_\ell I^\star$.
\end{proof}

\section{A structure theorem for ideals}\label{S:strid}

We have seen that the ideal lattice of $\U(\Lg)$ is non-noetherian.   However, our results on $H$-ideals do give some structure to ideals of $\U(\Lg)$, which we will explore in this section.  

We immediately have:

\begin{theorem}\label{TmainD+} 
Fix $e \in \mf g$ as in Notation~\ref{not:E}, and let $D_1, D_2,\dots$ be the discriminants defined in \eqref{disc}. 
Then each nonzero two-sided ideal of $\Ua(\Lmf g)$ contains all but finitely many of them.
\end{theorem}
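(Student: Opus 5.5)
The plan is to combine Lemma~\ref{lem:JtoH} with Theorem~\ref{Disc for H ideals}; no new ingredient should be needed.

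Let $K$ be a nonzero two-sided ideal of $\Ua(\Lg)$. We regard $\Sa(E)\cong\Ua(E)$ as a subalgebra of $\Ua(\Lg)$, as in Notation~\ref{not:E}. Lemma~\ref{lem:JtoH} then says that $I^\star := K \cap \Sa(E)$ is a nonzero $H$-ideal of $\Sa(E)$. This lemma is where the real input enters: it uses the just-infinite growth of $\Ua(\Lg)$ from Theorem~\ref{thm:BS}(1) to guarantee that the intersection is nonzero. The $H$-stability of $I^\star$ comes from the fact that $K$ is stable under the adjoint action of $H \subseteq \Lg$, since $K$ is a two-sided ideal.

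Next we apply Theorem~\ref{Disc for H ideals} to $I^\star$. It gives some $k \geq 1$ with $D_\ell \in I^\star \subseteq K$ for all $\ell \geq k$. Hence $K$ contains every $D_\ell$ except possibly $D_1, \dots, D_{k-1}$, which is the claim.

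The main point to check is that the $D_\ell$ of \eqref{disc}, defined as elements of $\Sa^\ell(E)$, are the same elements when viewed inside $\Ua(\Lg)$. They are, because $E$ is abelian: the identification $\Sa(E)\cong \Ua(E)\subseteq\Ua(\Lg)$ is canonical, and no symmetrisation ambiguity arises. With that settled, the deduction is immediate.
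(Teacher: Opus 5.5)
Your proposal is correct and is exactly the paper's argument: the paper proves the theorem as a direct consequence of Lemma~\ref{lem:JtoH} (so $K\cap\Sa(E)$ is a nonzero $H$-ideal) and Theorem~\ref{Disc for H ideals}. Your extra remark that $\Sa(E)\cong\Ua(E)$ canonically, so the $D_\ell$ are unambiguous elements of $\Ua(\Lg)$, is a correct and harmless sanity check.
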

\begin{proof}
    This is a direct consequence of Lemma~\ref{lem:JtoH} and Theorem~\ref{Disc for H ideals}.
\end{proof}

Recall the definition of the homomorphisms $\Phi_n:  \Ua(\Lg) \to \Ua(\mf g^{\oplus n})[t_1^{\pm 1}, \dots, t_n^{\pm 1}]$ from \eqref{Phidef}.
The main result of this section is that the ideals $\ker \Phi_n$ are cofinal in the ideal lattice of $\Ua(\Lg)$. 
To prove this, we need to relate the discriminants $D_k$ to the kernels of the $\Phi_n$.

We will need to establish more notation.
\begin{notation}\label{not:grade}
Recall that  $d=\dim\mf g$ and let $b_1, b_2, \dots, b_d$ be a basis of $\mf g$.
This choice of basis defines $\NN^d$-gradings on $\Ua(\Lg)$, on $\Ua(\mf g)$, and on $\Ua(\mf g^{\oplus n})[t_1^{\pm 1}, \dots, t_n^{\pm 1}]$ as follows.
For $1\leq i\leq d$ and
 $k \in \ZZ$ let
\[ b_{i[k]} = b_i s^k \in\Lg.\]
For $1\leq j\leq n$ let $b_{1\{j\}}, \dots, b_{d\{j\}}$ be the elements corresponding to $b_1, \dots, b_d$ in the $j$th copy of $\mf g$ in $\mf g^{\oplus n}$, as in \eqref{eq:component}.

As vector spaces, 
\[ \Ua(\Lg) \cong \kk[b_{i[k] }\mid 1 \leq i \leq d, k \in \ZZ]\]
and
\[ \Ua(\mf g^{\oplus n}) \cong \kk[b_{i\{j\}} \mid 1 \leq i \leq d, 1 \leq j \leq n],\]
by the PBW theorem.
Let \[\epsilon_1:=(1, 0, \dots, 0), \epsilon_2:=(0, 1, 0, \dots, 0), \dots, \epsilon_d:=(0, \dots, 0, 1) \in \ZZ^d.\]
Define $\ZZ^d_{\geq 0}$-gradings on $\Ua(\Lg)$ and on $\Ua(\mf g^{\oplus n})$ by setting
\[ \deg b_{i[k]} = \deg b_{i\{j\}} = \epsilon_i.\]
For $q\in\NN^d$ we let $\Ua(\Lmf g)_q$  be the respective graded component, and similarly for the other enveloping algebras.
\end{notation}

The gradings defined by Notation~\ref{not:grade} are vector space gradings, not algebra gradings, but nevertheless it is immediate from the definition that $\Phi_n$ respects the graded structure: for any basis $\{b_1, \dots, b_d\}  $ of $\mf g$ we have 
\beq\label{qgradeA}
\Phi_n(\Ua(\Lg)_q) \subseteq \Ua(\mf g^{\oplus n})_q[t_1^{\pm 1}, \dots, t_n^{\pm 1}]
\eeq
for all $ q\in \NN^d$.
Thus
\beq\label{qgradeB}
\ker \Phi_n = \bigoplus_{q \in \ZZ^d_{\geq 0}}(\ker \Phi_n)_q.
\eeq

Our  goal is to describe the components $(\ker \Phi_n)_q$.  
To do this, we  will need more general versions of the $\Theta$-maps, and we now set up notation.
Let $q=(q_1, \dots, q_d) \in\NN^d$. Let  $\hat q_1:=0$, $\hat q_i:=q_1+q_2+\dots+q_{i-1}$, and $ |q|:=\hat q_{d+1} =  q_1+\dots+q_d$. Put $\mc Z_q:=\mc Z_{|q|}$. We denote by $\Sg_q$ the group $\Sg_{q_1}\times \Sg_{q_2}\times\dots$ where $\Sg_{q_1}$ permutes $z_1, z_2, \dots, z_{q_1}$, $\Sg_{q_2}$ permutes $z_{\hat q_2+1}, \dots, z_{\hat q_2+q_2}$ and so on.  
Clearly,  $\Sg_q\subseteq\Sg_{|q|}$. 
Let $\Ryn_q$ denote the Reynolds operator $\Ryn_{\Sg_q}:\mc Z_q\to\mc \mc Z_q^{\Sg_q}$. 
   
Let $b^q:=b_1^{q_1} \dots b_d^{q_d}$. 
By a clear analogy with \eqref{Theta}, for $q\in \NN^d$ we 
identify the vector spaces $\Ua(\Lmf g)_q$ and $ \mc Z_q^{\Sg_q}$ via linear maps
\beq \label{Theta2} \Theta_{\Sg ; \star}^{b^q}: \Ua(\Lmf g)_q  \longleftrightarrow    \mc Z_q^{\Sg_q} : \Theta^{\Sg ; \star}_{b^q}
 ,\eeq
\beq\label{E:horf} 
\prod\limits_{i=1}^d b_{i[c_{1, i}]}b_{i[c_{2, i}]}\dots b_{i[c_{q_i, i}]}
\leftrightarrow
\Ryn_{q} \Bigl( \prod_{i=1}^d z_{\hat q_i+1}^{c_{1, i}}z_{\hat q_i+2}^{c_{2, i}}\dots z_{\hat q_i+q_i}^{c_{q_i, i}}\Bigr).
\eeq
The notation of (\ref{Theta}) is compatible with (\ref{Theta2}) in the following sense:
if we choose the basis $b_1, \dots, b_d$ of $\mf g$ in such a way that $e=b_1$, then  $\Theta^{\Sg; \star}_{e^\ell}=\Theta^{\Sg; \star}_{b^{(\ell, 0, \dots, 0)}}$. 

We will compute the kernel of $\Phi_n$ below, but first let us consider an example.

\begin{example}\label{eg:kerphi}
Let $\mf g = \mf{sl}_2$ and consider $ \ker \Phi_1$.
Let $e,h,f$ be the standard basis of $\mf sl_2$, 
and write $e_i = e s^i, h_i = hs^i, f_i = fs^i$.
It is easy to see that if $i+j=k+\ell$ then $x_iy_j-x_ky_\ell\in\ker\Phi_1$ for all $x, y\in\{e, h, f\}$.
Denote by $J$ the ideal of $\Ua(\LslT)$ generated by the above elements. 
We claim that $J = \ker \Phi_1$.

We have observed that  $J\subseteq\ker\Phi_1$. 
On the other hand, if $a \geq 1$ then 
$$e_{i_1}e_{i_2}\dots e_{i_a}h_{j_1}h_{j_2}\dots h_{j_b}f_{k_1}f_{k_2}\dots f_{k_c}\equiv e_{i_1+\dots+i_a+j_1+\dots+j_b+k_1+\dots+k_c}e_0^{a-1}h_0^bf_0^c\mod J.$$
This  and similar relations show that $\Ua(\LslT)/J$ is spanned by the images of the  elements
$$1, e_m e_0^ah_0^bf_0^c, a, b, c\in \NN, m \in\ZZ, \hspace{10pt} h_m h_0^bf_0^c, b, c\in \NN, m\in\ZZ, \hspace{10pt} f_m f_0^c, c\in \NN, m\in\ZZ.$$
Under $\Phi_1$, these map respectively to
\[ 1, e^{a+1}h^b f^c t^m, h^{b+1} f^c t^m, f^{c+1} t^m,\]
which give a basis of $\im \Phi_1$. 
Thus these elements are also a basis of $\Ua(\LslT)/J$: in other words, $\Ua(\LslT)/J\to \im \Phi_1$ is an isomorphism and hence $J=\ker\Phi_1$.

A similar analysis shows that the discriminant $D_2$ does not generate $\Ker \Phi_1$, although from \eqref{D2} we know that  $D_2 \in \Ker \Phi_1$.
\end{example}

We  now describe the kernel of $\Phi_n$.   

\begin{proposition}\label{prop:kernel}
Fix $q \in \NN^d$ and let $n \in \ZZ_{\geq 1}$.  
Then 
\[ \ker \Phi_n \cap \Ua(\Lg)_q = 
\Theta^{\Sg ; \star}_{b^q}\Bigl(
\I(\Delta_n^{|q|}) \cap \mc Z_q^{\Sg_q}\Bigr),\]
where $\Delta^{|q|}_n$ is the $n$-fold diagonal defined in \eqref{diagonal}.
In particular, if we let $e \in \mf g$ following Notation~\ref{not:E}, then  $\Phi_n(D_{n+1}) = 0$.
\end{proposition}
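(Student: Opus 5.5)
The plan is to compute $\Phi_n$ on the graded piece $\Ua(\Lg)_q$ explicitly and to recognise it, via $\Theta^{\Sg;\star}_{b^q}$, as an evaluation map on Laurent polynomials. By \eqref{qgradeA}, $\Phi_n$ sends $\Ua(\Lg)_q$ into $\Ua(\mf g^{\oplus n})_q[t_1^{\pm1},\dots,t_n^{\pm1}]$. The first step is to expand a PBW monomial $\prod_{i}\prod_{r=1}^{q_i} b_{i[c_{r,i}]}$ using \eqref{EPhi2}. Each factor $b_{i[c]}$ maps to $\sum_{j=1}^n b_{i\{j\}}t_j^{c}$. So the image is a sum over functions $\phi:\{1,\dots,|q|\}\to\{1,\dots,n\}$, which assign to each of the $|q|$ slots one of the $n$ copies of $\mf g$. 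The term attached to $\phi$ is the ordered product of the $b_{i\{\phi(\cdot)\}}$ times $\prod_{\text{slots } p} t_{\phi(p)}^{c_p}$.

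Different copies of $\mf g$ commute, and within each $\Ua(\mf g^{\oplus n})_q$ we fix the PBW basis of ordered monomials in the $b_{i\{j\}}$. The issue is that the product attached to $\phi$ need not be PBW-ordered inside a single copy. To get around this, I would first work with symmetrised elements. Equivalently, I would compose with the symmetrisation isomorphism $\Sa(\Lg)\to\Ua(\Lg)$, which is an $\Lg$-map compatible with the coproduct; this is legitimate because $\Phi_n$ is built from Hopf maps, and the symmetrisation maps on both sides intertwine them. In the symmetric picture the term attached to $\phi$ depends only on the multiset data $m_{i,j}=\#\{r:\phi(\hat q_i+r)=j\}$. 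Its coefficient is then obtained by substituting $z_p\mapsto t_{\phi(p)}$ into the polynomial $P=\Theta^{\Sg;\star}_{b^q}$-preimage, summed over the $\Sg_q$-orbit.

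The key identity to establish is therefore the following. For $P\in\mc Z_q^{\Sg_q}$, the coefficient of the basis monomial $\prod_{i,j} b_{i\{j\}}^{m_{i,j}}$ in $\Phi_n(\Theta^{\Sg;\star}_{b^q}(P))$ equals, up to a nonzero combinatorial constant, the specialisation $P(t_{\phi(1)},\dots,t_{\phi(|q|)})$ for any $\phi$ realising the data $m$. Given this, $\Phi_n(\Theta^{\Sg;\star}_{b^q}(P))=0$ if and only if every such specialisation vanishes identically in $t_1,\dots,t_n$. Since $\Sg_q$-invariance lets us use any $\phi$ realising $m$, this is the same as $P$ vanishing at every point of $(\kk^\times)^{|q|}$ whose coordinates take at most $n$ distinct values, i.e. on $\Delta^{|q|}_n$. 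This gives $\ker\Phi_n\cap\Ua(\Lg)_q=\Theta^{\Sg;\star}_{b^q}(\I(\Delta_n^{|q|})\cap\mc Z_q^{\Sg_q})$.

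For the final claim, take $b_1=e$ and $q=(n+1,0,\dots,0)$. Then $\FPol_{n+1}^2$ vanishes on $\Delta^{n+1}_n$, because some two coordinates coincide there. Hence $\Phi_n(D_{n+1})=0$. In this case everything lies in the commutative algebra $\Sa(E)$, so no ordering issues arise.

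The main obstacle is the noncommutativity in the first step: making precise that PBW monomials in $\Ua(\Lg)_q$ correspond to $\Ryn_q$-symmetrised monomials, and that $\Phi_n$ respects this correspondence modulo lower-order terms. I would handle it by an induction on the filtration degree. The leading (associated graded) part is exactly the commutative computation above, carried out for $\underline\Phi_n$ on $\Sa(\Lg)$. A kernel element whose leading part fails to vanish on $\Delta_n$ would map to something nonzero. Conversely, for $P\in\I(\Delta_n^{|q|})$ one shows the symmetrised lift maps to zero exactly, using that symmetrisation commutes with $\Delta^n$ and with $\Phi_1^{\otimes n}$ (the latter because $\Phi_1$ is induced by a Lie map).
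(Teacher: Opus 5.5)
Your expansion over $\phi$, collecting terms by the multiplicities $m_{i,j}$, and reading off a positive multiple of $P(t_{\phi(1)},\dots,t_{\phi(|q|)})$ is exactly the paper's argument, and your treatment of $\Phi_n(D_{n+1})=0$ is fine. The problem is the ``obstacle'' you then raise: it does not exist, and you are missing the observation that dissolves it. For fixed $i$, the elements $b_{i[c]}=b_is^c$ pairwise commute in $\Ua(\Lg)$, since $[b_i,b_i]=0$. Likewise, $b_{i\{1\}},\dots,b_{i\{n\}}$ pairwise commute in $\Ua(\mf g^{\oplus n})$. The monomials in \eqref{E:horf} are block-ordered by $i$, so inside each copy $j$ the term attached to $\phi$ has its factors in increasing order of $i$. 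Hence that term equals the PBW monomial $\prod_{i=1}^d\prod_{j=1}^n b_{i\{j\}}^{m_{i,j}}$, with no reordering and no lower-order terms. This is also why \eqref{qgradeA}, which you cite, holds. So your ``key identity'' holds on the nose by direct expansion, and that is all the paper does.

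The workaround you propose instead does not prove the statement as written. The symmetrisation map $\beta\colon\Sa(\Lg)\to\Ua(\Lg)$ does not respect the $\NN^d$-grading of Notation~\ref{not:grade}. For example, for $\mf{sl}_2$ with $b_1=e$, $b_2=h$, $b_3=f$, one has $\beta(e_af_b)=e_af_b-\tfrac12 h_{a+b}$. Thus ``the symmetrised lift maps to zero'' is a statement about $\beta(\underline{\Theta}^{\Sg;\star}_{b^q}(P))$. That element can differ from $\Theta^{\Sg;\star}_{b^q}(P)$ even when $P\in\I(\Delta^{|q|}_n)$: for $\mf{sl}_2$, $n=1$, $q=(2,0,1)$, $P=z_1+z_2-2z_3$, the two differ by $e_0h_1-e_1h_0$. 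So your converse does not show $\Theta^{\Sg;\star}_{b^q}(P)\in\ker\Phi_n$, and the unspecified ``induction on filtration degree'' does not bridge this.

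If you want a filtration argument, the correct one is the following. By \eqref{qgradeA}, $\Phi_n(\Theta^{\Sg;\star}_{b^q}(P))$ is a $\kk[t_1^{\pm1},\dots,t_n^{\pm1}]$-linear combination of PBW monomials of length exactly $|q|$. It therefore vanishes if and only if its degree-$|q|$ symbol $\underline{\Phi}_n(\underline{\Theta}^{\Sg;\star}_{b^q}(P))$ vanishes. Your commutative computation then gives both inclusions at once, with no symmetrisation needed.
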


Before the proof, we give an example  which may be helpful.
\begin{example}
Let $\mf g = \mf{sl}_2$ with standard basis $e,h,f$.
As before, let $e_i := e s^i \in \mathrm{L}\mf{sl}_2$.
Let $p = z_1z_2$.
Then 
\begin{multline*}
\Phi_2(\Theta_{e^2}^{\Sg;\star}(p)) = \Phi_2(e_1^2)
 = (e_{\{1\}} t_1 + e_{\{2\}} t_2)^2 
 = (e_{\{1\}})^2 t_1^2 + 2 e_{\{1\}} e_{\{2\}} t_1 t_2 + (e_{\{2\}})^2 t_2^2 \\
 = 
 (e_{\{1\}})^2 p(t_1, t_1) +  e_{\{1\}} e_{\{2\}}(p( t_1, t_2) + p(t_2, t_1))  +( e_{\{2\}})^2 p(t_2, t_2).
\end{multline*}
\end{example}

\begin{proof}[Proof of Proposition~\ref{prop:kernel}] It might be easier to write than to read this proof. 

Let's consider a single monomial on the left-hand side of ~\eqref{E:horf}. 
We have 
$$\Phi_n(\prod_{i=1}^d \prod_{j=1}^{q_i} b_{i[{c_{j, i}}]})=
\prod_{i=1}^d \prod_{j=1}^{q_i} (b_{i\{1\}}t_1^{c_{j, i}}+b_{i\{2\}}t_2^{c_{j, i}}+\dots+ b_{i\{n\}}t_n^{c_{j, i}}).$$
Combining this with~\eqref{E:horf}, for $p \in \mc Z_q^{\Sg_q}$ we have
$$\Phi_n(\Theta^{\Sg; \star}_{b^q}(p))=\sum\limits_{s_1, \dots, s_{|q|}\in\{1, \dots, n\}}
\Bigl(p(t_{s_1}, \dots, t_{s_{|q|}})\prod\limits_i b_{i\{s_{\hat q_i+1}\}}b_{i\{s_{\hat q_i +2}\}}\dots b_{i\{s_{\hat q_i+q_i}\}}\Bigr).$$
Since $p\in\mc Z_q^{\Sg_q}$, 
when we collect terms the (polynomial) coefficient  of 
$\prod\limits_i b_{i\{s_{\hat q_i+1}\}}b_{i\{s_{\hat q_i+2}\}}\dots b_{i\{s_{\hat q_i+q_i}\}}$ 
is always a positive integer multiple of 
$p(t_{s_1}, \dots, t_{s_{|q|}})$. 
Thus all the coefficients vanish if and only if $p(t_{s_1}, \dots, t_{s_{|q|}})=0$ for all choices of $s_i$,
in other words, if and only if $p$ vanishes on $\Delta^{|q|}_n$. 
(Here we use that $\Delta^{|q|}_n \supseteq \Delta^{|q|}_{n-1} \supseteq \dots$.)

The last statement follows from Theorem~\ref{thm:Lov}.
\end{proof}

\begin{corollary}\label{cor:kernew}
\begin{enumerate}
    \item If $|q|\leq n$ then $(\ker \Phi_n)_q = (0)$. 
    \item We have
    \[ \bigcap \{ \Ann_{\U(\Lg)}(V) \mid \mbox{ $V$ is a finite-dimensional irreducible representation of $\Lg$ } \} = \bigcap_n \ker(\Phi_n) = (0).\]
\end{enumerate}
\end{corollary}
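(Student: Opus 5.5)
Both parts come from Proposition~\ref{prop:kernel}, together with Lemma~\ref{lem:Phi evaluation} and Remark~\ref{rem:findim}.

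For $(1)$, suppose $|q| \leq n$. By definition \eqref{diagonal}, $\Delta^{|q|}_n = (\kk^\times)^{|q|}$, since $|q|$ coordinates take at most $|q| \leq n$ distinct values. A Laurent polynomial vanishing on all of $(\kk^\times)^{|q|}$ is zero, because $\kk$ is infinite. Hence $\I(\Delta^{|q|}_n) = (0)$, and Proposition~\ref{prop:kernel} gives $(\ker\Phi_n)_q = \Theta^{\Sg;\star}_{b^q}(0) = (0)$.

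For $(2)$, first show $\bigcap_n \ker \Phi_n = (0)$. By \eqref{qgradeB}, each $\ker\Phi_n$ is graded with respect to the $\NN^d$-grading of Notation~\ref{not:grade}. Therefore an element $r \in \bigcap_n \ker\Phi_n$ has all its homogeneous components $r_q$ in $\bigcap_n \ker\Phi_n$. Only finitely many $r_q$ are nonzero; each lies in $(\ker\Phi_n)_q$ for $n \geq |q|$, which is zero by $(1)$. So $r = 0$.

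Next, identify the intersection of annihilators with $\bigcap_n \ker\Phi_n$. By \cite{Chari86}, every finite-dimensional irreducible $V$ is an $n$-fold evaluation representation for some $n$. Lemma~\ref{lem:Phi evaluation} then gives $\ker\Phi_n \subseteq \Ann(V)$. Using $\ker\Phi_{n+1} \subseteq \ker\Phi_n$ (for example, by taking one tensor factor trivial), we get $\bigcap_n\ker\Phi_n \subseteq \bigcap_V \Ann(V)$. Conversely, Remark~\ref{rem:findim} shows that $\ker\Phi_n$ is the intersection of annihilators of finite-dimensional irreducible $n$-fold evaluation representations. Hence $\bigcap_V \Ann(V) \subseteq \ker\Phi_n$ for every $n$, which gives the reverse inclusion. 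Combining the two inclusions with the previous paragraph yields both equalities.

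The only mildly delicate point is the nesting $\ker\Phi_{n+1}\subseteq\ker\Phi_n$. It also follows from Proposition~\ref{prop:kernel}, since $\Delta^{|q|}_n \subseteq \Delta^{|q|}_{n+1}$ gives $\I(\Delta^{|q|}_{n+1}) \subseteq \I(\Delta^{|q|}_n)$. Strictly speaking the argument does not need the nesting, since each $V$ lies over some $n$ and the reverse inclusion holds for every $n$; it is convenient but not essential.
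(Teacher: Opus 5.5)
Your proof is correct and follows the paper's argument. Part (1) comes from $\Delta^{|q|}_n=(\kk^\times)^{|q|}$ and Proposition~\ref{prop:kernel}, and the second equality in (2) comes from (1) together with the grading \eqref{qgradeB}. For the first equality, the paper uses only the containment $\bigcap_V \Ann(V)\subseteq\bigcap_n\ker\Phi_n$ supplied by Remark~\ref{rem:findim}. Your reverse inclusion via \cite{Chari86} is therefore superfluous: it holds trivially once $\bigcap_n\ker\Phi_n=(0)$.
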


\begin{proof} 
$(1)$  is because $\Delta_n^{|q|} =(\kk^\times)^{|q|}$.

$(2)$  The second equality follows from  $(1)$ and \eqref{qgradeB}.
By Remark~\ref{rem:findim}, 
\[ 
\bigcap \{ \Ann_{\U(\Lg)}(V) \mid \mbox{ $V$ is an irreducible finite-dimensional representation of $\Lg$ } \}
\]
is contained in $\bigcap_n \ker \Phi_n$ and the first equality follows.
\end{proof}

\begin{remark}
Suppose that $|q|=n+1$.
Note that $\Sg_{n+1}$ acts on $\FPol_{n+1}$ by the sign representation.
It follows from the discussion after Theorem~\ref{thm:NS} that $\I(\Delta_n^{n+1}) \cap \mc Z_{n+1}^{\Sg_q}$ is the ideal of $\mc Z_q^{\Sg_q}$ generated by
 \[  \prod_{i < j \in \{1, \dots, n+1\}} (z_i-z_j)^{\gamma_{ij}},\]
 where 
 \[ \gamma_{ij} = \begin{cases} 2 & \mbox{ if $i,j $ are in the same  $\Sg_q $-orbit } \\ 1 & \mbox{ else.}\end{cases}\]
 (Here the action of $\Sg_q$ on $\{1, \dots, n+1\}$ is via the embedding $\Sg_q \subseteq \Sg_{|q|}$.)
\end{remark}

\begin{notation}\label{not:E2}
The description of $\Ker \Phi_n$ from Proposition~\ref{prop:kernel} is especially nice if $b_1 = e$ is  a highest weight vector for some Cartan subalgebra of $\mf g$ and $q = (n,0, \dots, 0)$, that is, when $\Ua(\Lg)_q = \Sa^n(E)$, where as before $E = e\otimes \kk[s,s^{-1}] \subseteq\Lg$.  We will abbreviate this situation by saying that {\em $b_1 = e$ is a highest weight vector} if $b_1$ is a highest weight vector for some Cartan subalgebra of $\mf g$.
If $e$ is a highest weight vector, we always let $E = e\otimes \kk[s,s^{-1}]$.
\end{notation}

\begin{lemma}\label{Lfpolgen}
Let $n \geq 1$ and $1 \leq i\leq d$.  Fix a basis $b_1,\dots, b_d$ of $\mf g$ so $b_i = e$ is a highest weight vector.  
Recall $\FPol_{n+1} \in \mc Z_{n+1}$ from \eqref{det} and let $D = D_{n+1} \in \Sa^{n+1}(E)$ be the $(n+1)$st discriminant \eqref{disc}.
 Let  $J$ be the ideal of $\Ua(\Lg)$ generated by $D$, and let $I$ be the $H$-ideal of $\Sa(E)$ generated by $D$.
\begin{enumerate}
   \item For all $\ell \geq n+1$ we have
   \[ \Theta^{\Sg;\star}_{e^\ell}(\Ryn_\ell(\mc Z_\ell \FPol_{n+1})) \subseteq \gr_\ell(I).\]
   \item For all $q\in \NN^d$ with $q_i\geq n+1$, 
   \[ \Theta^{\Sg;\star}_{b^q}(\Ryn_q(\mc Z_q \FPol_{n+1}(z_{\hat{q_i}+1}, \dots, z_{\hat{q_i}+n+1}))) \subseteq J.\]
   \item  $I = \ker \Phi_n \cap \Sa(E)$.
\end{enumerate}
\end{lemma}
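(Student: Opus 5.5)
The plan is to prove the three parts in order, with (1) as the technical core and (2), (3) deduced from it.

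For (1), recall that $D=\Theta^{\Sg;\star}_{e^{n+1}}(\FPol_{n+1}^2)$. By Example~\ref{Esgen}, for $\ell\ge n+1$ we have $\gr_\ell I=\Theta^{\Sg;\star}_{e^\ell}(\Ryn_\ell(\FPol_{n+1}^2\mc Z_\ell))$. So it suffices to show
\[\Ryn_\ell(\mc Z_\ell \FPol_{n+1})\subseteq \Ryn_\ell(\FPol_{n+1}^2\mc Z_\ell).\]
\begin{itemize}
\item For $\ell=n+1$: any $p\in\mc Z_{n+1}$ satisfies $\Ryn_{n+1}(p\FPol_{n+1})=\FPol_{n+1}\cdot \mathrm{Alt}(p)$, where $\mathrm{Alt}(p)$ is the antisymmetrization of $p$. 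Antisymmetric Laurent polynomials are divisible by $\FPol_{n+1}$, so $\mathrm{Alt}(p)=\FPol_{n+1}s$ with $s$ symmetric. Then $\Ryn_{n+1}(p\FPol_{n+1})=\FPol_{n+1}^2 s=\Ryn_{n+1}(\FPol_{n+1}^2s)$, as needed.
\item For $\ell>n+1$: use \eqref{proj} with $\Sg_{n+1}\times\Sg_{\ell-n-1}\le\Sg_\ell$. Write $p\in\mc Z_\ell$ as a sum of products $p'(z_1,\dots,z_{n+1})p''(z_{n+2},\dots)$. Then $\Ryn_\ell(p\FPol_{n+1})=\Ryn_\ell(\Ryn_{n+1}(p'\FPol_{n+1})\,p'')$, which lies in $\Ryn_\ell(\FPol_{n+1}^2\mc Z_\ell)$ by the case $\ell=n+1$.
\end{itemize}

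For (2), work inside $\Ua(\Lg)$ using the adjoint action of $\Lg$ on $J$. Take the element of $\gr_{q_i}I\subseteq J$ produced by (1) in degree $(0,\dots,q_i,\dots,0)$. Then multiply it by monomials in the other $b_{j[k]}$, in a PBW order with the $e$-factors grouped; this stays in $J$ since $J$ is an ideal. Under the identification \eqref{E:horf}, and modulo lower filtration terms, this product corresponds to $\Ryn_q(\text{(tensor of the pieces)})$.

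The filtration-error terms are the main obstacle. One option is to run an induction on the PBW degree to absorb them. A cleaner option is to avoid reordering entirely: write the desired element as a product $x\cdot y$ where $y\in\Sa(E)$ and $x$ is a product of the other basis vectors, placing the $e$-variables last in the ordered monomial. In that case \eqref{E:horf} is literally multiplicative, since $\Theta^{\Sg;\star}_{b^q}$ of $p\otimes r$ equals the ordered product. Any symmetrization that still remains is handled by \eqref{proj} and (1) with $\ell=q_i$. I would try this ordered-product approach first.

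For (3), the inclusion $I\subseteq\ker\Phi_n\cap\Sa(E)$ holds because $\Phi_n(D)=0$ by Proposition~\ref{prop:kernel}. For the converse, Proposition~\ref{prop:kernel} gives $\gr_\ell(\ker\Phi_n\cap\Sa(E))=\Theta^{\Sg;\star}_{e^\ell}(\I(\Delta^\ell_n)\cap\mc Z_\ell^{\Sg_\ell})$. Theorem~\ref{thm:Lov} identifies $\I(\Delta^\ell_n)$ with the $\Sg_\ell$-ideal generated by $\FPol_{n+1}$. Applying $\Ryn_\ell$ via Lemma~\ref{lem:A}(1) then gives $\I(\Delta^\ell_n)\cap\mc Z_\ell^{\Sg_\ell}=\Ryn_\ell(\mc Z_\ell\FPol_{n+1})$, since $\Ryn_\ell(\sigma(\FPol_{n+1})f)=\Ryn_\ell(\FPol_{n+1}\sigma^{-1}f)$. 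By (1) this is contained in $\gr_\ell I$. For $\ell\le n$ both sides vanish by Corollary~\ref{cor:kernew}(1).
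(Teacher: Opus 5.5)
Your proposal is correct and follows the paper's proof: (1) rests on $\Ryn_{n+1}(p\FPol_{n+1})$ being a symmetric multiple of $\FPol_{n+1}$, hence of $\FPol_{n+1}^2$, combined with \eqref{dag} and Lemma~\ref{lem:proj} (equivalently Example~\ref{Esgen}), and (3) comes from Proposition~\ref{prop:kernel}, Theorem~\ref{thm:Lov} and (1). For (2), which the paper only calls ``similar'', your ordered-product route is the right one, with one correction: the $b_i=e$ block sits in the $i$th slot of \eqref{E:horf}, so the element is $x\,y\,x'$ with $y\in\gr_{q_i}I\subseteq J$, not $x\cdot y$; it lies in $J$ because $J$ is two-sided, so no filtration-error terms ever arise.
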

\begin{proof}
   $(1)$ Let $p \in \mc Z_\ell$.  Since $\Ryn_{n+1}(p \FPol_{n+1})$ is $\Sg_{n+1}$-invariant, it is divisible by $\FPol_{n+1}^2$ by \eqref{Erad}.
   Thus
  \[
   \Ryn_\ell(\mc Z_\ell \FPol_{n+1}) = \Ryn_\ell(\mc Z_\ell \FPol_{n+1}^2).
   \]
   By \eqref{dag} and Lemma~\ref{lem:proj}, 
   \[ \Ryn_\ell(\mc Z_\ell \FPol_{n+1}^2) \subseteq \Theta^{e^\ell}_{\Sg;\star}(\gr_\ell I).\]

  The proof of  $(2)$ is similar.  

  $(3)$ now follows from Proposition~\ref{prop:kernel}. 
\end{proof}

We immediately obtain:

\begin{corollary}\label{CordD} 
Let $k, \ell \geq 1$.  
Let $e$ be  a highest weight vector and consider the discriminants $D_{\ell+1}, D_{k+1} \in \Sa(E)$.
The following are equivalent:
\begin{enumerate}
   \item  $\ell \geq k$;
   \item $D_{\ell+1} \in \ker \Phi_k$;
   \item $\Ker \Phi_\ell \subseteq \ker \Phi_k$;
   \item $D_{\ell+1}$ is in the ideal of $\Ua(\Lg)$ generated by $D_{k+1}$.
\end{enumerate}
\end{corollary}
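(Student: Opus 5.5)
We prove $(1)\Rightarrow(3)\Rightarrow(2)\Rightarrow(1)$ and $(1)\Leftrightarrow(4)$, working throughout with the graded pieces supplied by Proposition~\ref{prop:kernel}.

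For $(1)\Rightarrow(3)$, suppose $\ell \geq k$. Since $\Delta^{m}_k \subseteq \Delta^m_\ell$ for every $m$, we have $\I(\Delta^{|q|}_\ell) \subseteq \I(\Delta^{|q|}_k)$ for all $q \in \NN^d$. Proposition~\ref{prop:kernel}, applied componentwise together with \eqref{qgradeB}, then gives $\ker\Phi_\ell \subseteq \ker\Phi_k$.

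For $(3)\Rightarrow(2)$, note that Proposition~\ref{prop:kernel} gives $\Phi_\ell(D_{\ell+1})=0$. Hence $D_{\ell+1} \in \ker\Phi_\ell \subseteq \ker\Phi_k$.

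For $(2)\Rightarrow(1)$, suppose $D_{\ell+1} \in \ker\Phi_k$. Take $q=(\ell+1,0,\dots,0)$ with $b_1=e$. Then Proposition~\ref{prop:kernel} says $\FPol_{\ell+1}^2$ vanishes on $\Delta^{\ell+1}_k$. If $\ell<k$, then $\Delta^{\ell+1}_k$ contains points with all $\ell+1$ coordinates distinct, since $k \geq \ell+1$. At such a point $\FPol_{\ell+1}^2 \neq 0$, which is a contradiction. So $\ell \geq k$.

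For $(1)\Rightarrow(4)$, suppose $\ell \geq k$. Lemma~\ref{Lfpolgen}(1), applied with $n=k$ and degree $\ell+1 \geq k+1$, gives
\[ \Theta^{\Sg;\star}_{e^{\ell+1}}(\Ryn_{\ell+1}(\mc Z_{\ell+1}\FPol_{k+1})) \subseteq \gr_{\ell+1}(I), \]
where $I$ is the $H$-ideal of $\Sa(E)$ generated by $D_{k+1}$. Since $\FPol_{k+1}$ divides $\FPol_{\ell+1}$, and $\FPol_{\ell+1}^2$ is $\Sg_{\ell+1}$-invariant, we have
\[ \FPol_{\ell+1}^2=\Ryn_{\ell+1}(\FPol_{\ell+1}^2) \in \Ryn_{\ell+1}(\mc Z_{\ell+1}\FPol_{k+1}). \]
Therefore $D_{\ell+1} \in I$. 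Finally, $I$ lies in the two-sided ideal of $\Ua(\Lg)$ generated by $D_{k+1}$. This is because $H$-stability is obtained from commutators with $H\subseteq \Lg$, and $\Sa(E)$ is a subalgebra of $\Ua(\Lg)$.

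For $(4)\Rightarrow(2)$, suppose $D_{\ell+1}$ lies in the ideal generated by $D_{k+1}$. Since $D_{k+1}\in\ker\Phi_k$, which is a two-sided ideal, we get $D_{\ell+1}\in\ker\Phi_k$. Combined with $(2)\Rightarrow(1)$, this closes the cycle.

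The only point needing care is the step in $(1)\Rightarrow(4)$ passing from the $H$-ideal to the two-sided ideal, and checking that Lemma~\ref{Lfpolgen}(1) applies at degree $\ell+1$ with $n=k$. Both follow directly from the statements above.
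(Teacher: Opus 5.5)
Your proof is correct and is essentially the paper's argument, which cites Lemma~\ref{Lfpolgen}, Proposition~\ref{prop:kernel} and Corollary~\ref{cor:Lov}. The only difference is that you replace the appeal to Corollary~\ref{cor:Lov} with two elementary facts, which suffice here: $\FPol_{k+1}$ divides $\FPol_{\ell+1}$, and $\FPol_{\ell+1}^2$ does not vanish identically on $\Delta^{\ell+1}_k=(\kk^\times)^{\ell+1}$ when $k>\ell$.
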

\begin{proof}
This follows from Lemma~\ref{Lfpolgen}, Proposition~\ref{prop:kernel}, and Corollary~\ref{cor:Lov}.
\end{proof}

We have seen in Proposition~\ref{prop:kernel} and Example~\ref{eg:kerphi} that although $D_{k+1} \in \Ker \Phi_k$, in general $D_{k+1}$ does not generate $\ker \Phi_k$.  However, we do have the following result.

\begin{proposition}\label{Pkto3k}
Let $e \in \mf g$ be a highest weight vector.
Pick $k\ge 1$ and let $J$ be the ideal of $\Ua(\Lmf g)$ generated by $D_{k+1} \in \Sa^{k+1}(E)$.  Then $J \supseteq \Ker \Phi_{dk}$.\end{proposition}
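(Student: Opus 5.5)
The plan is to use the $\NN^d$-grading \eqref{qgradeB} of $\ker\Phi_{dk}$ attached to a well-chosen basis, and to show that each graded component $(\ker\Phi_{dk})_q$ lies in $J$. I would do this by combining the explicit description in Proposition~\ref{prop:kernel} with Lemma~\ref{Lfpolgen}(2).

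\textbf{Step 1: choice of basis and independence of $J$.} Choose a basis $b_1,\dots,b_d$ of $\mf g$ in which \emph{every} $b_i$ is a highest weight vector, in the sense of Notation~\ref{not:E2}. This is possible because the span of the orbit of a highest root vector under the adjoint group is a nonzero ideal of the simple algebra $\mf g$, hence equals $\mf g$. Any two highest weight vectors are conjugate up to a scalar under an inner automorphism of $\mf g$. Such an automorphism extends to an automorphism of $\Ua(\Lg)$ that preserves $\Sa(E)$-type subalgebras and carries the discriminant $D_{k+1}$ for one vector to a nonzero scalar multiple of the discriminant for the other. Hence the two-sided ideal $J$ generated by $D_{k+1}$ does not depend on which highest weight vector is used. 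So Lemma~\ref{Lfpolgen}(2) applies with this same $J$ for every index $i$.

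\textbf{Step 2: reduction to an ideal-membership statement in $\mc Z_q$.} Fix $q$ and set $\mc I_i := $ the $\Sg_{q_i}$-ideal of $\mc Z_q$ generated by $\FPol_{k+1}(z_{\hat q_i+1},\dots,z_{\hat q_i+k+1})$ when $q_i\ge k+1$, and $\mc I_i:=0$ otherwise. By Theorem~\ref{thm:Lov}, applied in the variables of block $i$ and extended by a polynomial ring in the others, $\mc I_i$ is the (radical) vanishing ideal of the set $V_i$ of points whose block-$i$ coordinates take at most $k$ distinct values. If $q_i\le k$ this set is everything, consistent with $\mc I_i=0$.

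The $\mc I_i$ involve disjoint sets of variables, so $\sum_i\mc I_i$ corresponds to a tensor product of reduced algebras over $\kk$ and is therefore radical. Its zero set is $\bigcap_i V_i$. Every point of $\bigcap_i V_i$ has at most $k$ distinct values in each of the $d$ blocks, so at most $dk$ distinct values in total. Thus $\bigcap_i V_i\subseteq\Delta^{|q|}_{dk}$, and by the Nullstellensatz
\[ \I(\Delta^{|q|}_{dk}) \subseteq \sum_i \mc I_i .\]

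\textbf{Step 3: conclusion.} Let $p\in \I(\Delta^{|q|}_{dk})\cap\mc Z_q^{\Sg_q}$. Then
\[ p=\Ryn_q(p)\in\sum_i\Ryn_q(\mc I_i)=\sum_{i:\,q_i\ge k+1}\Ryn_q\bigl(\mc Z_q\FPol_{k+1}(z_{\hat q_i+1},\dots,z_{\hat q_i+k+1})\bigr),\]
using that $\Ryn_q\circ\sigma=\Ryn_q$ for $\sigma\in\Sg_q$. Applying $\Theta^{\Sg;\star}_{b^q}$, Proposition~\ref{prop:kernel} identifies the left side with $(\ker\Phi_{dk})_q$, and Lemma~\ref{Lfpolgen}(2) puts each summand on the right inside $J$. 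Summing over $q$ via \eqref{qgradeB} gives $\ker\Phi_{dk}\subseteq J$.

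I expect the main obstacle to be Step 2, namely justifying that $\sum_i\mc I_i$ is exactly a vanishing ideal, so that no radical is lost. This comes down to reducedness of tensor products of reduced algebras over an algebraically closed field. A secondary point needing care is the conjugation argument in Step 1, which is what allows a single generator $D_{k+1}$ to serve every block.
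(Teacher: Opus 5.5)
Your proof is correct. Steps 1 and 3 match the paper's proof: a basis of highest weight vectors, $G$-conjugation so that a single ideal $J$ serves every block, and then Proposition~\ref{prop:kernel} together with Lemma~\ref{Lfpolgen}(2) after passing $\Ryn_q$ through the $\Sg_q$-translates.

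The genuine difference is how you get the key containment $\I(\Delta^{|q|}_{dk})\subseteq\sum_i\mc I_i$.

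\textbf{The paper's route.} It applies Theorem~\ref{thm:Lov} at level $dk$, so $\I(\Delta^{|q|}_{dk})$ is generated by the translates $\FPol_{dk+1}(z_{i_1},\dots,z_{i_{dk+1}})$. It then pigeonholes the indices: some block contains $k+1$ of the $i_j$, so each translate is divisible by a $\FPol_{k+1}$ in $k+1$ variables from a single block. This is a pure divisibility argument and never needs to know that $\sum_i\mc I_i$ is radical.

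\textbf{Your route.} You apply Theorem~\ref{thm:Lov} at level $k$, block by block, to see that each $\mc I_i$ is radical. You deduce that the sum is radical from reducedness of tensor products over $\kk$. You then pigeonhole geometrically (at most $k$ values per block gives at most $dk$ values in total) and finish with the Nullstellensatz.

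\textbf{Comparison.} Your argument is more conceptual: it is just the statement that the product of the blockwise $k$-fold diagonals lies in $\Delta^{|q|}_{dk}$. However, the step you flagged as the main obstacle, radicality of $\sum_i\mc I_i$, is exactly what the paper's divisibility argument sidesteps. Both routes rest on Theorem~\ref{thm:Lov}. You use it for radicality of the generated ideal; the paper uses it for generation of the vanishing ideal.

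\textbf{A small point in Step 1.} The extended automorphism carries $J(e)$ onto $J(e')$. To conclude $J(e)=J(e')$ you also need $J(e)$ to be stable under that automorphism. This holds because a two-sided ideal is $\operatorname{ad}\mf g$-stable, and hence stable under the adjoint group, which is generated by the $\exp(\operatorname{ad}x)$ with $x$ nilpotent. The paper states this explicitly, and you should too.
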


\begin{proof}
    We pick a basis $b_1, \dots, b_d$ of $\mf g$ such that $b_1=e$ and  each $b_i$ is a highest weight vector for a Borel subalgebra $\mf b_i$ of $\mf g$. 
    Since  the adjoint action of $\mf g$ on $\Ua(\Lmf g)$ is rational, it gives rise to the adjoint action of a  semisimple algebraic group $G$ whose Lie algebra is isomorphic to  $\mf g$. 
    Such a group $G$ acts transitively on  the set of Borel subalgebras of $\mf g$.
    Thus for all $i$ there is $g_i \in G$ so that $g_i e = b_i$. 
     
    For all  $i$, we have $\Theta$-maps which identify
    \[\Theta^{b^{(k+1) \epsilon_i}}_{\Sg;\star}: \Ua(\Lg)_{(k+1)\epsilon_i}
 \leftrightarrow 
     \mc Z_{k+1}^{\Sg_{k+1}} :\Theta^{\Sg; \star}_{b^{(k+1)\epsilon_i}}
   .\]
    Recall that $D_{k+1} = \Theta_{e^{k+1}}^{\Sg; \star}(\FPol^2_{k+1})$.
    Because every two-sided ideal of $\Ua(\Lmf g)$ is stable under the adjoint action of $\mf g$, 
    it is also stable under the natural action of $G$.
    Applying the $g_i$, we see  that for all $i$ the two-sided ideal of $\Ua(\Lmf g)$ generated by $\Theta_{b_i^{k+1}}^{\Sg; \star}(\FPol_{k+1}^2)$ is equal to $J$. 
    
Thanks to Proposition~\ref{prop:kernel}, for any $q \in \NN^d$  we have  $\Ker \Phi_{dk}\cap \Ua(\Lg)_{q}=\Theta^{\Sg; \star}_{b^q}(\I(\Delta_{dk}^{|q|})\cap\mc Z_q^{\Sg_q})$, and this intersection is nonzero if and only if $|q| \geq dk+1$.
    Let $q \in \NN^d$ with $|q| \geq dk+1$ and let $r\in\ker\Phi_{dk}\cap \Ua(\Lg)_{q}$. 
    Then
    $$\Theta^{b^q}_{\Sg; \star}(r)=\sum\limits_{1\le i_1<\dots<i_{dk+1}\leq|q|}\FPol_{dk+1}(z_{i_1}, \dots, z_{i_{kd+1}})p_I$$
    for some $p_I\in\mc Z_{q}$.
    Note that the left-hand side belongs to $\mc Z_q^{\Sg_q}=\im \Ryn_q$. 
    Thus it is enough to show that $$\Theta_{b^q}^{\Sg; \star}(\Ryn_q(\FPol_{dk+1}(z_{i_1}, \dots, z_{i_{kd+1}})p_I))\in J.$$
    
Recall the notation $\hat q_i = q_1 + \dots + q_{i-1}$.
    By the pigeonhole principle there exists $1 \leq \beta \leq d$ such that the interval $[\hat q_\beta+1, \hat q_{\beta+1}]$ contains at least $k+1$ numbers out of $i_1<i_2<\dots<i_{dk+1}$.    
    Let them be $i_{w_-}, \dots, i_{w_+}$. 
    Then $\FPol_{k+1}(z_{i_{w_-}}, \dots, z_{i_{w_+}})$ divides $\FPol_{dk+1}(z_{i_1}, \dots, z_{i_{dk+1}})p_I$. 
    This and Lemma~\ref{Lfpolgen} imply the desired result.
\end{proof}

Theorem~\ref{TmainD+} and Proposition~\ref{Pkto3k} immediately imply the following structure theorem for ideals of $\Ua(\Lg)$. 
\begin{theorem}\label{TmainI+} Each nonzero two-sided ideal  of $\Ua(\Lmf g)$ contains all but finitely many of the ideals $\Ker\Phi_k$. \qed \end{theorem}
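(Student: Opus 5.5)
The plan is to combine Theorem~\ref{TmainD+} with Proposition~\ref{Pkto3k}, and then use the monotonicity of the kernels. Let $K$ be a nonzero two-sided ideal of $\Ua(\Lg)$.

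First, fix a highest weight vector $e \in \mf g$ as in Notation~\ref{not:E}. By Lemma~\ref{lem:JtoH}, $K\cap\Sa(E)$ is a nonzero $H$-ideal. Theorem~\ref{TmainD+} (which is Theorem~\ref{Disc for H ideals} applied to this $H$-ideal) then gives some $k\geq 1$ such that $D_{\ell}\in K$ for all $\ell\geq k+1$. In particular $D_{k+1}\in K$, so $K$ contains the two-sided ideal $J$ generated by $D_{k+1}$.

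Second, Proposition~\ref{Pkto3k} gives $J\supseteq \ker\Phi_{dk}$, where $d=\dim\mf g$. Hence $K\supseteq\ker\Phi_{dk}$.

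Finally, I upgrade this to all but finitely many indices. By Corollary~\ref{CordD}, $(1)\Rightarrow(3)$, we have $\ker\Phi_m\subseteq\ker\Phi_{dk}$ for every $m\geq dk$. Alternatively, this follows directly from Proposition~\ref{prop:kernel}, since $\Delta^{|q|}_{dk}\subseteq\Delta^{|q|}_m$ implies $\I(\Delta^{|q|}_m)\subseteq\I(\Delta^{|q|}_{dk})$ in each graded component $q$, and \eqref{qgradeB} assembles the components. Therefore $K\supseteq\ker\Phi_m$ for all $m\geq dk$, so only the finitely many indices $m<dk$ can fail.

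There is no real obstacle here: all the substantive work sits in Theorem~\ref{Disc for H ideals} and in the pigeonhole argument of Proposition~\ref{Pkto3k}. The only point to check is that the nesting of the kernels runs in the right direction, and it does.
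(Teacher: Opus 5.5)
Your proposal is correct and follows the paper's argument exactly. Theorem~\ref{TmainD+} supplies some $D_{k+1}\in K$, Proposition~\ref{Pkto3k} upgrades this to $K\supseteq\ker\Phi_{dk}$, and the nesting $\ker\Phi_m\subseteq\ker\Phi_{dk}$ for $m\geq dk$ (noted in the paper's introduction, and verified by you via Corollary~\ref{CordD} or Proposition~\ref{prop:kernel}) finishes the proof.
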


\begin{remark}\label{rmk:fingen}
We believe the ideals $\ker \Phi_k$ are all finitely generated, although we have not been able to prove it in general.  This is the subject of ongoing research.
\end{remark}

\section{Annihilators of finite-dimensional representations}\label{REPRESENTATIONS}

In this section, we apply our results to ideals of $\Ua(\Lg)$ coming from representation theory.
From Theorem~\ref{TmainI+}, we immediately obtain the following corollary.

\begin{corollary}\label{cor:findim}
  Let $V$ be    a finite-dimensional representation of $\Lg$.
  Then there is some $k \geq 1$ so that $\ker \Phi_k \subseteq \Ann_{\U(\Lg)}(V)$.  \qed
\end{corollary}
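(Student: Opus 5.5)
The plan is to apply Theorem~\ref{TmainI+} to the annihilator ideal $K := \Ann_{\U(\Lg)}(V)$. That theorem says any nonzero two-sided ideal of $\Ua(\Lg)$ contains $\ker\Phi_k$ for all $k\gg0$. So the only real work is to show $K\neq (0)$; the case $V=0$ is trivial, since then $K=\Ua(\Lg)$.

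First, $K$ is a two-sided ideal: it is the kernel of the algebra homomorphism $\rho:\Ua(\Lg)\to\End_\kk(V)$ given by the representation.

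Second, to see $K\neq(0)$, I use that $\End_\kk(V)$ is finite-dimensional, of dimension $(\dim V)^2$. Since $\Ua(\Lg)$ is infinite-dimensional, $\rho$ cannot be injective. To be concrete, choose any $0\neq g\in\mf g$ and consider the infinitely many linearly independent elements $gs^i\in\Lg\subseteq\Ua(\Lg)$, $i\in\ZZ$. Their images $\rho(gs^i)$ lie in a finite-dimensional space, so some nontrivial finite linear combination $\sum_i c_i\, gs^i$ lies in $K$. This element is nonzero in $\Ua(\Lg)$ by PBW. Alternatively, one could argue via GK-dimension: $\Ua(\Lg)/K$ embeds in $\End_\kk(V)$ and so has GK-dimension $0$, while $\GKdim\Ua(\Lg)=\infty$ by Theorem~\ref{thm:BS}(1). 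The direct argument is simpler, though.

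Third, applying Theorem~\ref{TmainI+} to $K$ gives $k$ with $\ker\Phi_k\subseteq K$, which is the claim. I also note that since $\ker\Phi_{k+1}\subseteq\ker\Phi_k$ (Corollary~\ref{CordD}), the same inclusion holds for all larger $k$.

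There is no real obstacle here; all the depth is in Theorem~\ref{TmainI+}. The only point to check is nonvanishing of the annihilator, which follows from finite-dimensionality of $V$ as above.
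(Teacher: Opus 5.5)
Your proposal is correct and is exactly the paper's argument: the corollary is deduced immediately from Theorem~\ref{TmainI+} applied to $\Ann_{\U(\Lg)}(V)$, which is nonzero because $\End_\kk(V)$ is finite-dimensional. The paper also sketches a separate direct proof that avoids Theorem~\ref{TmainI+}: a highest $h$-weight argument puts all $D_{\ell'}$ with $\ell' \gg 0$ in the annihilator, and Proposition~\ref{Pkto3k} then gives the inclusion.
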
  
While this follows  from Theorem~\ref{TmainI+}, a direct proof can also be given using the techniques of this paper.

\begin{proof}[Direct proof of Corollary~\ref{cor:findim}]
Fix a Borel subalgebra $\mf b$ of $\mf g$, a Cartan subalgebra $\mf h$ of $\mf b$,  a highest weight vector $e \in \mf b$,
and $h \in \mf h$ so that $[h, e] = e$. 
Recall  that  $E= e\otimes\kk[s, s^{-1}]$ is an abelian subalgebra of $\Lg$.

Let $J = \Ann_{\U(\Lg)}(V)$ and let $I = J \cap \Sa(E)$.
Since $V$ is finite-dimensional, it has a highest $h$-weight, so $I \supseteq \Sa^{\geq \ell}(E)$ for some $\ell$. 
Thus $I$ contains the discriminants $D_{\ell'}$ for $\ell' \geq \ell$.
By Proposition~\ref{Pkto3k}, $J \supseteq \ker \Phi_{d\ell}$, where recall $d = \dim \mf g$.
\end{proof}

For a non-faithful representation $V$ of $\Lg$, the minimum $k$ so that $\ker \Phi_k \cdot V = 0$ is an interesting invariant of $V$.  
The next result gives another, often much better, upper bound for this invariant.

Let $\mf k$ be an ideal of $\Lmf g$ with $\dim\Lmf g/\mf k<\infty$; we set $\bar{\mf k}=\Lmf g/\mf k$. Recall that in this case we have $\mf k=p(s)\Lmf g$ for a unique monic polynomial $p(s)\in\CC[s]$ with $p(0) \neq 0$, and also $\bar{\mf k}=\mf g\otimes (\CC[s,s^{-1}]/(p(s)))$. 
Let $k :=\deg p(s)$, so $\dim \bar{\mf k} = dk$. 
We wish to show the following.
\begin{proposition}\label{prop:annihilator2}
     The ideal of $\Ua(\Lmf g)$ generated by $\mf k$ contains $\Ker \Phi_{k}$.
\end{proposition}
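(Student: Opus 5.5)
Let $K$ denote the two-sided ideal of $\Ua(\Lg)$ generated by $\mf k = p(s)\Lg$. The plan is to work degree by degree, using the $\NN^d$-grading of Notation~\ref{not:grade} and the description of $(\ker\Phi_k)_q$ in Proposition~\ref{prop:kernel}.

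\textbf{Step 1: $K$ is graded and each piece is an ideal.} Fix a basis $b_1,\dots,b_d$ of $\mf g$. Since $\mf k=\bigoplus_i b_i\otimes p(s)\kk[s,s^{-1}]$, the ideal $K$ is spanned by products in which one factor lies in some $b_i\otimes p(s)\kk[s^{\pm1}]$. Reordering such a product modulo lower-length terms, which again lie in $K$ because $\mf k$ is a Lie ideal, we can write $K$ as a sum of its graded pieces $K_q$. I then claim that
\[ \Theta^{b^q}_{\Sg;\star}(K_q)\supseteq \Ryn_q\bigl(\textstyle\sum_{j=1}^{|q|} p(z_j)\,\mc Z_q\bigr) =: M_q . \]
The justification is that a PBW monomial of degree $q$ in which one factor $b_{i[c]}$ is replaced by $b_i\otimes s^c p(s)$ corresponds under $\Theta$ to $\Ryn_q$ of the monomial times $p(z_j)$. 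The commutator corrections that arise when reordering have strictly lower total degree, so one should argue by induction on $|q|$, comparing leading terms in the PBW filtration. This is the delicate point; the reason to expect it to work is that commutators of $\mf k$ with $\Lg$ stay in $\mf k$.

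\textbf{Step 2: identify the ideal $M_q$.} Since $\Sg_q$ permutes the $z_j$ within blocks, $\sum_j p(z_j)\mc Z_q$ is an $\Sg_{|q|}$-invariant ideal. By Lemma~\ref{lem:A}(1), $M_q$ is therefore its intersection with $\mc Z_q^{\Sg_q}$. Its zero locus is the set of points $(\alpha_1,\dots,\alpha_{|q|})$ with no $\alpha_j$ a root of $p$.

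\textbf{Step 3: show $\I(\Delta^{|q|}_k)\cap\mc Z_q^{\Sg_q}\subseteq M_q$.} This is the main obstacle. Let $f$ vanish on $\Delta^{|q|}_k$. The idea is that modulo $\sum_j p(z_j)\mc Z_q$, each variable $z_j$ effectively ranges over the roots of $p$, of which there are at most $k$ distinct values. The case of repeated roots needs care and is handled by considering the quotient ring. Concretely, the quotient
\[ \mc Z_q\Big/\textstyle\sum_j p(z_j)\mc Z_q \;\cong\; \bigl(\kk[s^{\pm1}]/(p)\bigr)^{\otimes|q|} \]
is a finite-dimensional algebra. I need the image of $f$ to be zero there, not merely nilpotent. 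The plan is to reduce $f$ by Theorem~\ref{thm:Lov}, which writes $f$ as an $\Sg_{|q|}$-combination of multiples of $\FPol_{k+1}$. It then suffices to show that $\FPol_{k+1}(z_1,\dots,z_{k+1})$ is zero in $(\kk[z]/(p))^{\otimes (k+1)}$. This holds because $\FPol_{k+1}$ is the Vandermonde determinant of the rows $1,z,\dots,z^k$: in the quotient, $z^k$ is a linear combination of $1,z,\dots,z^{k-1}$ with constant coefficients, since $p$ is monic of degree $k$. So the rows are dependent and the determinant vanishes identically, even when $p$ has repeated roots.

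\textbf{Conclusion.} Hence every multiple of an $\Sg$-translate of $\FPol_{k+1}$ lies in $\sum_j p(z_j)\mc Z_q$. Applying $\Ryn_q$ to an $\Sg_q$-invariant $f$ leaves $f$ unchanged, so $f\in M_q$. Combining Proposition~\ref{prop:kernel} with Steps 1–3 gives $(\ker\Phi_k)_q\subseteq K_q$ for all $q$, and therefore $\ker\Phi_k\subseteq K$ by \eqref{qgradeB}.
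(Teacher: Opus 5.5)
Your proof is correct. Its architecture matches the paper's: both reduce, via the $\NN^d$-grading, Proposition~\ref{prop:kernel} and Theorem~\ref{thm:Lov}, to showing that the $\Sg_{|q|}$-ideal generated by $\FPol_{k+1}$ lies in $\langle p(z_1),\dots,p(z_{|q|})\rangle$.

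You differ from the paper in how you prove this inclusion.

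\begin{itemize}
\item \textbf{The paper's route} (Lemma~\ref{lem:formalfoo}) replaces $p$ by a generic monic $\tilde p$ over $T=\kk[p_0,\dots,p_{k-1}]$. It uses $T$-torsion-freeness of $\mc Z_q\otimes T$ and of its quotient to pass to the algebraic closure of $\mathrm{Frac}(T)$. There $\tilde p$ has $k$ distinct roots, and the quotient splits into points with at most $k$ distinct coordinates. It then specialises back to $p$ by right exactness of $\otimes$.
\item \textbf{Your route} uses that $p$ is monic of degree $k$ with constant coefficients. Row operations turn the last row $(z_j^k)$ of the Vandermonde matrix into $(p(z_j))$, and expanding along that row puts $\FPol_{k+1}$ in $\sum_j p(z_j)\mc Z_q$. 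This works over any commutative base, and repeated roots cause no trouble. It is exactly the determinant trick from the case $i=1$ in the proof of Proposition~\ref{P:NS}.
\end{itemize}

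Your argument is shorter and avoids the flatness and base-change bookkeeping. The paper's makes the geometric intuition (``the $z_j$ take at most $k$ values'') literal. Likewise, you only need the inclusion $\Theta^{\Sg;\star}_{b^q}(M_q)\subseteq K$. The paper proves more: an exact identification of $\ker\pi\cap\Ua(\Lg)_q$ via PBW for $\Ua(\bar{\mf k})$, which you do not need.

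The point you flag as delicate in Step~1 is in fact immediate. $\Theta^{\Sg;\star}_{b^q}(\Ryn_q(p(z_j)m))$ is the block-ordered product with $b_{i[c]}$ replaced by $b_i\otimes s^cp(s)\in\mf k$. That factor expands into elements of the same block $b_i\otimes\kk[s^{\pm1}]$, which commute with each other. So no reordering or commutator corrections arise, and the product lies in $K$ simply because $K$ is a two-sided ideal containing $\mf k$. You also do not need $K$ to be graded, since \eqref{qgradeB} already decomposes $\ker\Phi_k$.

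One small slip in Step~2: the zero locus of $\sum_j p(z_j)\mc Z_q$ consists of the points all of whose coordinates are roots of $p$, not those with none. This is not used in your argument.
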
 
\begin{proof} It is enough to check that the natural map $\pi: \Ua(\Lmf g)\to \Ua(\bar{\mf k})$ sends $\Ker \Phi_{k}$ to 0. 
Fix a basis $\{b_1,\dots, b_d\}$ of $\mf g$ and consider the $\NN^d$-grading on $\Ua(\Lg)$ from Notation~\ref{not:grade}.
Recall from \eqref{qgradeB} that
$$\Ker \Phi_{k}=\bigoplus_{q\in\ZZ_{\ge0}^d}(\Ker \Phi_{k}\cap\Ua(\Lmf g)_q)$$
and also recall the definitions of the   linear bijections
$$  \Theta_{\Sg;\star}^{b^q}: \Ua(\Lmf g)_q  \leftrightarrow  \mc Z_q^{\Sg_q}: \Theta^{\Sg;\star}_{b^q}.$$
We will suppress some notation and simply refer to the bijections above as $\Theta$-maps.
Likewise,  there is a decomposition
\[ \Ua(\bar{\mf k}) = \bigoplus_q \Ua(\bar{\mf k})_q,\]
where $\Ua(\bar{\mf k})_q = \pi(\Ua(\Lg)_q)$.
There are also natural bijections, which we again refer to as $\Theta$-maps, between 
\[ \Ua(\bar{\mf k})_q \leftrightarrow \Bigl( \mc Z_q/\langle p(z_1), \dots, p(z_{|q|})\rangle \Bigr)^{\Sg_q}.\]
(Here we use $\langle p(z_1), \dots, p(z_{|q|})\rangle$ to denote the ideal of $\mc Z_q$ generated by $p(z_1), \dots, p(z_{|q|})$.)
Since $\pi$ respects these decompositions,  it suffices to prove that $(\ker \Phi_k)_q \subseteq \ker \pi$ for all $q$.

Fix $q\in \NN^d$, and set $I_{k}$ to be the $\Sg_{|q|}$-ideal of $\mc Z_q$ generated by $\FPol_{k+1}$, where recall that
\[ \FPol_{k+1}=\prod_{1\le i<j\le k+1}(z_i-z_j).\]
Thanks to Proposition~\ref{prop:kernel} and Theorem~\ref{thm:Lov}, 
 $\Ker \Phi_{k}\cap\Ua(\Lmf g)_q$ is identified under the $\Theta$-maps with  $\mc Z_{q}^{\Sg_q}\cap I_{k}$. 
Further, 
 the  $\Theta$-maps identify
\[ \ker \pi \cap \Ua(\Lg)_q \leftrightarrow  \langle p(z_1), \dots, p(z_{|q|})\rangle \cap \mc Z_q^{\Sg_q}.\]
To prove that $\pi$ sends $(\ker \Phi_k)_q$ to $0$, it thus suffices to show that $I_k \subseteq \langle p(z_1), \dots, p(z_{|q|})\rangle$.

To prove this,  we consider a formal polynomial
$$\tilde p(u):=u^{k}+p_{k-1}u^{k-1}+\dots+p_1u+p_0$$
where $p_0, p_1, \dots$ are free parameters. 
We set 
$T:=\CC[p_0, p_1, \dots, p_{k-1}]$ and let $ \tilde{\CC}:=\CC(p_0, p_1, \dots, p_{k-1})$ be  the quotient field of $T$. 

 As the next step we need the following.

\begin{lemma} \label{lem:formalfoo}
The ideal $I_{k}\otimes T\subseteq\mc Z_{q}\otimes T$ is contained in the ideal $\langle \tilde p(z_1),\dots, \tilde p(z_{|q|})\rangle$ of $\mc Z_{q}\otimes T$. \end{lemma}
\begin{proof}
Note that both $\mc Z_{q}\otimes T$ and $(\mc Z_{q}\otimes T)/\langle \tilde p(z_1), \dots, \tilde p(z_{|q|})\rangle$ are flat  $T$-modules: the first  is free  and the second one is an injective limit of a sequence of free $T$-modules. 
In particular both are $T$-torsionfree.
Thus the needed inclusion is equivalent to showing that $I_{k}\otimes \tilde{\CC}\subseteq\langle \tilde p(z_1),\dots, \tilde p(z_{|q|})\rangle\otimes_T \tilde{\CC}$. 

Set $\hat \kk$ to be the algebraic closure of $\tilde{\CC}$. 
It is clear that it is enough to prove the analogue of the desired statement over $\hat{\CC}$.

To see that the statement holds over $\hat{\CC}$, note we have $$\tilde p(u)=(u-u_1)(u-u_2)\dots(u-u_{k})$$ for some pairwise distinct $u_1,  \dots u_k \in\hat{\CC}$ and moreover 
$$(\tilde{\CC}[u]/(\tilde p(u)))\otimes_{\tilde\CC}\hat{\CC}\cong \hat{\CC}[u]/(\tilde p(u))\cong\bigoplus_{i=1}^{ k} \hat{\CC}[u]/(u-u_i)\cong \bigoplus_{i=1}^{ k} \hat{\CC}.$$ 
Thus we have
\beq\label{formal}
\frac{\mc  Z_q\otimes\hat{\CC}}{\langle \tilde p(z_1),\dots, \tilde p(z_{|q|})\rangle} \cong
\bigoplus \limits_{i_1, i_2, \dots, i_{|q|}\in\{1, 2, \dots, k\}} \frac{\mc Z_{q}\otimes\hat{\CC}}{\langle z_1-u_{i_1}, z_2-u_{i_2}, \dots, z_{|q|}-u_{i_{|q|}}\rangle}.
\eeq
But in every component of the right-hand side of \eqref{formal}, the image of any polynomial in $I_k\otimes \hat{\kk}$ 
must vanish, as $z_1, \dots, z_{|q|}$ can certainly take on no more than $k$ distinct values.
This proves the claim.
\end{proof}

We recall that $p(u)$ is a monic polynomial of degree $k$ and thus $p(u)=u^k+\alpha_{k-1}u^{k-1}+\dots+\alpha_0$ for some $\alpha_i\in\CC$. We denote by $\mf m$ the ideal of $T$ generated by $p_i-\alpha_i$ with $0\le i\le k-1$. 
This provides a specialisation map 
$$T\to T/\mf m \cong \kk, \hspace{10pt}p_i\mapsto \alpha_i.$$

By right exactness of $\otimes$, 
\[ \frac{\mc Z_{q}\otimes T}{\langle\tilde p(z_1), \dots, \tilde p(z_{|q|})\rangle} \otimes_T T/\mf m \cong \mc Z_{q}/\langle p(z_1), \dots,  p(z_{|q|})\rangle.\]
Thus applying $-\otimes_T T/\mf m$ to the composition
\beq\label{zeromap}
I_k \otimes T \subseteq \mc Z_{q} \otimes T \to \frac{\mc Z_{q}\otimes T}{\langle\tilde p(z_1), \dots, \tilde p(z_{|q|})\rangle}
\eeq
and identifying  $I_k \otimes T \otimes_T T/\mf m$  with $I_k$,
we obtain the natural map
\beq\label{zeromap2}
I_k \to  \mc Z_{q}/\langle p(z_1), \dots,  p(z_{|q|})\rangle.
\eeq
Since by Lemma~\ref{lem:formalfoo} the composition \eqref{zeromap} is zero, the map \eqref{zeromap2} is also zero, as needed. 
\end{proof}  

  \begin{example} \label{eg:chari}
  We thank Vyjayanthi Chari for suggesting the following example to us.
We write elements of $\LslT$ as $yp$ where $y \in \mf{sl}_2$, $p \in \kk[s,s^{-1}]$.
Let $V$ be the adjoint representation of $\mf{sl}_2$, and let $N = V \oplus \CC x$ be the $\mathrm{L} \mf{sl}_2$ representation defined by:  $\CC x $ is the trivial representation, and if $yp \in \LslT$, $v\in V$, then
\[ yp \cdot v = p(1) [y,v] + p'(1) \kappa(y,v) x,\]
where $\kappa$ is the Killing form.
By  construction, $(s-1)^2 \LslT \cdot N = 0$ and so by Proposition~\ref{prop:annihilator2} $\Ker \Phi_2 \subseteq \Ann_{\Ua(\LslT)}(N)$.
However,  $\ker \Phi_1 \cdot N \neq 0$.
To see this, let $e, h, f$ be the standard basis for $\mf sl_2$.
Then  $(e s)h - e( hs) \in \ker \Phi_1$.
 But
 \[ ((es) h - e(hs)) \cdot f = -2e (s-1) \cdot f = -8x \neq 0. \]
\end{example}

\section{Poisson ideals of $\Sa(\Lg)$}\label{POISSON}

In this section, we  prove  Poisson analogues of Theorems~\ref{TmainD+} and \ref{TmainI+}, and then discuss some additional results for $\Sa(\Lg)$.   

We need to modify the notation given in Section~\ref{S:strid} a little. 
Fix a basis $\{b_1, \dots, b_d\}$ of $\mf g$ and use this to induce $\NN^d$-grading on $\Sa(\Lmf g)$ by setting $\deg b_{i[j]}=\epsilon_i$ as in Notation~\ref{not:grade}. 
We denote by $\Sa(\Lmf g)_q$ the graded components with respect to this grading.
We also introduce a similar grading on $$(\Sa(\mf g)[t, t^{-1}])^{\otimes n} \cong \Sa(\mf g^{\oplus n})\otimes \kk[t_1^\pm, \dots, t_n^\pm]$$ by setting $\deg t_i=0, \deg b_{i\{j\}}=\epsilon_i$.  

Next, we define the Poisson analogue $\PhiP$ of $\Phi$:  
\begin{equation}\label{EPPhi}
\PhiP_n: \Sa(\Lmf g) \to \Sa(\mf g^{\oplus n})\otimes \kk[t_1^{\pm 1}, \dots, t_n^{\pm 1}], \hspace{10pt}g s^i \mapsto g_{\{1\}}\otimes t_1^i + \cdots + g_{\{n\}}\otimes t_n^i.\end{equation}
It is clear that this is a $\NN^d$-graded Poisson map. 
Thus
\beq \label{julyQ2+P}
\ker \PhiP_n = \bigoplus_{q \in \NN^d} \ker \PhiP_n \cap \Sa(\Lmf g)_q.
\eeq

\subsection{Poisson ideals and discriminants}

We now give the Poisson analogues of our structural results on ideals of $\Ua(\Lg)$.  These are  relatively straightforward.

\begin{theorem}\label{TPmainD+} 
Fix  a highest weight vector $e \in \mf g$ and let $D_1, D_2,\dots \in \Sa(E)$ be the discriminants defined in Section~\ref{S:disc}. Each nonzero Poisson ideal of $\Sa(\Lmf g)$ contains all but finitely many of the $D_i$.\end{theorem}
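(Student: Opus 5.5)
The plan is to mirror the proof of Theorem~\ref{TmainD+}, replacing the associative input by its Poisson counterpart. Let $K$ be a nonzero Poisson ideal of $\Sa(\Lg)$. By Lemma~\ref{lem:JtoH}, whose proof already covers the Poisson case via Theorem~\ref{thm:BS}(2), the intersection $K \cap \Sa(E)$ is a nonzero $H$-ideal of $\Sa(E)$. We then apply Theorem~\ref{Disc for H ideals} to it, which gives $k \geq 1$ with $D_\ell \in K \cap \Sa(E) \subseteq K$ for all $\ell \geq k$.

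Before doing so, we check that the two ingredients really apply in the Poisson setting.

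First, $K$ must be stable under the adjoint action of $H$. In $\Sa(\Lg)$ this action is $x \cdot r = \{x, r\}$ for $x \in H$, and $K$ is closed under it precisely because it is a Poisson ideal.

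Second, the subalgebra $\Sa(E) \subseteq \Sa(\Lg)$ with this $H$-action must be the same object as in Section~\ref{H IDEALS}. On generators, $\{h_i, e_j\} = [h_i, e_j] = e_{i+j}$, and $\{h_i, -\}$ is a derivation, so the action on $\Sa(E)$ agrees with the one used in Notation~\ref{not:E}. Hence $K \cap \Sa(E)$ is an $H$-ideal in the sense of that section.

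The only real content is the nonvanishing of $K \cap \Sa(E)$. This comes from Theorem~\ref{thm:BS}(2): $\Sa(\Lg)/K$ has polynomial growth, while $\Sa(E)$ is a polynomial ring in infinitely many variables. So the map $\Sa(E) \to \Sa(\Lg)/K$ cannot be injective. This step is already encapsulated in Lemma~\ref{lem:JtoH}, so no genuine obstacle is expected.
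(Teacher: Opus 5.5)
Your proposal is correct and follows the paper's proof exactly. The paper also intersects the nonzero Poisson ideal with $\Sa(E)$, uses Lemma~\ref{lem:JtoH} (via Theorem~\ref{thm:BS}(2)) to get a nonzero $H$-ideal, and then applies Theorem~\ref{Disc for H ideals}; your extra checks that the Poisson adjoint action of $H$ agrees with that of Notation~\ref{not:E} just make explicit what the paper leaves implicit.
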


\begin{theorem}\label{TPmainI+} Each nonzero Poisson ideal  of $\Sa(\Lmf g)$ contains all but finitely many of the  ideals $\Ker\PhiP_k$.\end{theorem}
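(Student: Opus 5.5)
The plan follows the associative argument step by step, with $\Ua(\Lg)$ replaced by the Poisson algebra $\Sa(\Lg)$ and two-sided ideals replaced by Poisson ideals. Let $K$ be a nonzero Poisson ideal of $\Sa(\Lg)$. By Lemma~\ref{lem:JtoH} (which uses Theorem~\ref{thm:BS}(2)), $K\cap\Sa(E)$ is a nonzero $H$-ideal. Theorem~\ref{Disc for H ideals} then gives $m$ with $D_{\ell}\in K$ for all $\ell\geq m$; this is Theorem~\ref{TPmainD+}. It therefore suffices to prove the Poisson analogue of Proposition~\ref{Pkto3k}: if $J$ is the Poisson ideal of $\Sa(\Lg)$ generated by $D_{k+1}$, then $J\supseteq\ker\PhiP_{dk}$. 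Since $\ker\PhiP_{n}\supseteq\ker\PhiP_{n+1}$ (the proof of Corollary~\ref{CordD} transfers), this gives Theorem~\ref{TPmainI+}.

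\textbf{Step 1: describe $\ker\PhiP_n$.} I will prove the analogue of Proposition~\ref{prop:kernel}:
\[\ker\PhiP_n\cap\Sa(\Lg)_q=\Theta^{\Sg;\star}_{b^q}\bigl(\I(\Delta_n^{|q|})\cap\mc Z_q^{\Sg_q}\bigr),\]
where the $\Theta$-maps are now read in $\Sa(\Lg)_q$. The computation is literally the same, and is in fact easier because $\Sa$ is commutative. Expanding $\PhiP_n$ of a monomial gives the sum over $s_1,\dots,s_{|q|}\in\{1,\dots,n\}$ of $p(t_{s_1},\dots,t_{s_{|q|}})$ times a monomial in the $b_{i\{j\}}$. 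Collecting terms, each coefficient is a positive multiple of such an evaluation of $p$, so $\PhiP_n(\Theta^{\Sg;\star}_{b^q}(p))=0$ exactly when $p$ vanishes on $\Delta^{|q|}_n$.

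\textbf{Step 2: the Poisson version of Lemma~\ref{Lfpolgen}(2).} Let $J$ be the Poisson ideal generated by $D=D_{k+1}$ with $b_i=e$ a highest weight vector, and take $q$ with $q_i\geq k+1$. I must show
\[\Theta^{\Sg;\star}_{b^q}\bigl(\Ryn_q(\mc Z_q\FPol_{k+1}(z_{\hat q_i+1},\dots,z_{\hat q_i+k+1}))\bigr)\subseteq J.\]
By \eqref{Erad}, the symmetrisation over $\Sg_{k+1}$ of anything divisible by $\FPol_{k+1}$ is divisible by $\FPol_{k+1}^2$. So it suffices to obtain all $\Ryn_q\bigl(\FPol_{k+1}^2(z_{\hat q_i+1},\dots)\cdot r\bigr)$ for $r\in\mc Z_q$. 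Multiplying $D$ by a monomial in the $b_{j[c]}$ with $j\neq i$ realises the factors in the other blocks. The $H$-action (which is the Poisson bracket with $\Lh$) realises multiplication by power sums in the $i$-th block. Combining this with Lemma~\ref{lem:proj} and \eqref{dag} gives all of $\Ryn_q(\mc Z_q\FPol^2_{k+1})$. This mirrors Lemma~\ref{Lfpolgen}(1), since in $\Sa(\Lg)$ the product of $D$ with elements of other graded components corresponds to the symmetrised product of polynomials.

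\textbf{Step 3: the pigeonhole argument of Proposition~\ref{Pkto3k}.} Choose a basis $b_1,\dots,b_d$ of $\mf g$ consisting of highest weight vectors for various Borels, with $b_1=e$. The adjoint group $G$ acts on $\Sa(\Lg)$ by Poisson automorphisms and preserves Poisson ideals. Hence the Poisson ideal generated by $\Theta^{\Sg;\star}_{b_i^{k+1}}(\FPol_{k+1}^2)$ equals $J$ for every $i$. Now let $r\in\ker\PhiP_{dk}\cap\Sa(\Lg)_q$. By Step 1 and Theorem~\ref{thm:Lov}, $\Theta^{b^q}_{\Sg;\star}(r)$ is a sum of terms $\FPol_{dk+1}(z_{i_1},\dots,z_{i_{dk+1}})p_I$. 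Applying $\Ryn_q$ to each term, pigeonhole gives a block containing at least $k+1$ of the indices $i_j$. The term is then divisible by a $\FPol_{k+1}$ in that block, and Step 2 puts it in $J$.

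The main obstacle is Step 2: checking that the Poisson ideal generated by $D$ really captures all symmetrised multiples in mixed degrees $q$. I expect it to go through exactly as in the associative case, since commutativity only simplifies the bookkeeping.
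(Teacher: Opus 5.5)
Your proposal is correct and follows the paper's proof step for step. Lemma~\ref{lem:JtoH} and Theorem~\ref{Disc for H ideals} put $D_\ell$ into the ideal for all $\ell\gg0$, and the Poisson analogues of Proposition~\ref{prop:kernel} and Proposition~\ref{Pkto3k} (Propositions~\ref{prop:kernel-Poi} and~\ref{Pkto3k-Poi}) then give $\ker\PhiP_{dk}\subseteq J$; these analogues come from the Poisson version of Lemma~\ref{Lfpolgen}(2), $G$-conjugation of the generator, and the pigeonhole argument. Your remark that commutativity simplifies the mixed-degree step is accurate: since $\Sa(\Lg)_q\cong\bigotimes_j\Sa^{q_j}(b_j\otimes\kk[s^{\pm1}])$, products correspond exactly to symmetrised products of polynomials, which is why the paper only sketches this case.
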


We prove these results together.
\begin{proof}[Proof of Theorems~\ref{TPmainD+} and \ref{TPmainI+}]
The scheme of the proof is very similar to the scheme of proof of Theorems~\ref{TmainD+} and \ref{TmainI+}, although   simpler in some details. Let $J$ be a nonzero Poisson ideal of $\Sa(\Lg)$.

Step 1. Let $b_1, \dots, b_d$ be a basis of $\mf g$ so that $b_1=e$ is a highest weight vector.
By Lemma~\ref{lem:JtoH}, $I:= J \cap \Sa(E)$ is a nonzero $H$-ideal of $\Sa(E)$.

Step 2. By Theorem~\ref{Disc for H ideals} we have  $D_k\in I$ for all $k\gg0$.

Step 3. We use the notation of Section~\ref{H IDEALS}. The goal for this step is to prove a Poisson analogue of Proposition~\ref{prop:kernel}.
By analogy with (\ref{Theta}) and~(\ref{Theta2}), for $q\in \NN^d$ we identify $\Sa(\Lmf g)_q$ 
with $\mc Z_q^{\Sg_q}$ through maps
\beq \label{Theta2+P}
  \underline{\Theta}_{\Sg;  \star}^{b^q}: \Sa(\Lmf g)_q
\longleftrightarrow 
\mc Z_q^{\Sg_q}: \underline{\Theta}^{\Sg; \star}_{b^q} \eeq
as in ~\eqref{E:horf}. 
Note that in \eqref{E:horf} for $\Ua(\Lmf g)$ the order of variables/factors in the left-hand side product does matter, while for $\Sa(\Lmf g)$ this order is irrelevant.
Similarly to Proposition~\ref{prop:kernel}, this gives the following.
\begin{proposition}\label{prop:kernel-Poi}
Fix  $q \in \NN^d$ and  $n \in \ZZ_{\geq 1}$.  Then
\[\ker \PhiP_n \cap \Sa(\Lg)_q =  \underline{\Theta}_{b^q}^{\Sg;\star}(\I(\Delta_n^{|q|}) \cap \mc Z_q^{\Sg_q}).\]
\qed
\end{proposition}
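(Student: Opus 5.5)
The plan is to repeat the proof of Proposition~\ref{prop:kernel} essentially verbatim, now in the commutative setting. Take a monomial $\prod_{i=1}^d\prod_{j=1}^{q_i} b_{i[c_{j,i}]}$ in $\Sa(\Lg)_q$; here the order of the factors no longer matters. By \eqref{EPPhi} its image under $\PhiP_n$ is
\[
\prod_{i=1}^d\prod_{j=1}^{q_i}\Bigl(b_{i\{1\}}t_1^{c_{j,i}}+\dots+b_{i\{n\}}t_n^{c_{j,i}}\Bigr).
\]
Expanding the product and summing over monomials, using linearity of $\underline{\Theta}^{\Sg;\star}_{b^q}$ as defined via \eqref{E:horf}, gives for $p\in\mc Z_q^{\Sg_q}$
\[
\PhiP_n\bigl(\underline{\Theta}^{\Sg;\star}_{b^q}(p)\bigr)=\sum_{s_1,\dots,s_{|q|}\in\{1,\dots,n\}} p(t_{s_1},\dots,t_{s_{|q|}})\prod_{i=1}^d b_{i\{s_{\hat q_i+1}\}}\cdots b_{i\{s_{\hat q_i+q_i}\}}.
\]
To justify this, I would first check it on a single $\Sg_q$-orbit sum $\Ryn_q(z^{c})$, where the factor $1/|\Sg_q|$ is matched by the orbit counting, and then extend by linearity.

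Next I would collect coefficients in $\Sa(\mf g^{\oplus n})\otimes\kk[t_1^{\pm1},\dots,t_n^{\pm1}]$. This is now a genuine polynomial ring in the $b_{i\{j\}}$ over the Laurent ring, so the commutative monomials $\prod_i\prod_j b_{i\{s_j\}}$ are linearly independent. Two index tuples $(s_1,\dots,s_{|q|})$ give the same monomial exactly when they differ by an element of $\Sg_q$. Since $p$ is $\Sg_q$-invariant, the coefficient of each distinct monomial is a positive integer multiple of $p(t_{s_1},\dots,t_{s_{|q|}})$, and we are in characteristic $0$.

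Hence $\PhiP_n(\underline{\Theta}^{\Sg;\star}_{b^q}(p))=0$ if and only if $p(t_{s_1},\dots,t_{s_{|q|}})=0$ for every map $s\colon\{1,\dots,|q|\}\to\{1,\dots,n\}$. This is equivalent to $p$ vanishing on $\Delta^{|q|}_n$. Indeed, any point of $\Delta^{|q|}_n$ is obtained by specialising $t_1,\dots,t_n$ to values in $\kk^\times$ and choosing some $s$. Conversely, the polynomial $p(t_{s_1},\dots,t_{s_{|q|}})$ in $t_1,\dots,t_n$ vanishes identically if it vanishes at all points of $(\kk^\times)^n$, because $\kk$ is infinite. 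This gives the stated equality of $\ker\PhiP_n\cap\Sa(\Lg)_q$, using that $\PhiP_n$ is $\NN^d$-graded so that \eqref{julyQ2+P} reduces everything to a single component.

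The only step needing care is the bookkeeping in the multiplicity claim. It is actually easier than in the enveloping algebra case, since there is no reordering issue, so I do not expect a genuine obstacle.
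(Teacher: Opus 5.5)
Your proposal is correct and follows the paper's approach. The paper gives no separate proof: it presents this result as the direct analogue of Proposition~\ref{prop:kernel}, and you have carried out exactly that argument in $\Sa(\Lg)$. You expand $\PhiP_n$ on monomials, note that each commutative monomial in the $b_{i\{j\}}$ has as coefficient a positive integer multiple of $p(t_{s_1},\dots,t_{s_{|q|}})$, and identify the simultaneous vanishing of these coefficients with vanishing on $\Delta^{|q|}_n$. Your final appeal to the $\NN^d$-grading is not needed for this componentwise statement, since $\underline{\Theta}^{\Sg;\star}_{b^q}$ is already a bijection onto $\Sa(\Lg)_q$, but it does no harm.
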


Step 4.  We would like to prove the following Poisson analogue of Proposition~\ref{Pkto3k}. 
\begin{proposition}\label{Pkto3k-Poi} Pick $k\ge1$ and recall that $d = \dim \mf g$. 
Then the Poisson ideal  of $\Sa(\Lmf g)$ generated by $\underline{\Theta}_{e^{k+1}}^{\Sg; \star}(\FPol_{k+1}^2)$ contains $\Ker \PhiP_{dk}$.
\end{proposition}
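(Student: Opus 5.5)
We will transcribe the proof of Proposition~\ref{Pkto3k} into the Poisson setting, replacing the two-sided ideal $J$ by the Poisson ideal $J$ of $\Sa(\Lg)$ generated by $\underline{D}:=\underline{\Theta}_{e^{k+1}}^{\Sg; \star}(\FPol_{k+1}^2)$. As there, we choose a basis $b_1=e, b_2,\dots,b_d$ of $\mf g$ in which every $b_i$ is a highest weight vector for some Borel subalgebra. We then pick $g_i$ in the adjoint group $G$ with $g_i e=b_i$.

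The first step is the Poisson analogue of Lemma~\ref{Lfpolgen}(2). We claim that for every $q$ with $q_i\ge k+1$,
\[\underline{\Theta}^{\Sg;\star}_{b^q}\bigl(\Ryn_q(\mc Z_q\FPol_{k+1}(z_{\hat q_i+1},\dots,z_{\hat q_i+k+1}))\bigr)\subseteq J.\]
A Poisson ideal is stable under the adjoint action of $\mf g$, hence under the rational action of $G$. Applying $g_i$ shows that $J$ contains $\underline{\Theta}_{b_i^{k+1}}^{\Sg;\star}(\FPol_{k+1}^2)$ and is stable under $\mathrm{L}(g_i\mf h)$. Exactly as in Lemma~\ref{Lthid}, the bracket with $(g_ih)s^j$ corresponds to multiplication by the power sum $z_1^j+\dots+z_{k+1}^j$ in the $b_i$-variables, so $J$ contains the image of $\FPol_{k+1}^2\mc Z_{k+1}^{\Sg_{k+1}}$.

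Next we multiply by the remaining monomials. In $\Sa(\Lg)$ this is honest commutative multiplication, which is simpler than in $\Ua(\Lg)$ because no reordering corrections appear. The analogue of \eqref{dag} for the $\Sg_q$-symmetrisation gives
\[\underline{\Theta}_{b^q}^{\Sg;\star}\bigl(\Ryn_q(\FPol_{k+1}^2(z_{\hat q_i+1},\dots)\,m)\bigr)\in J\]
for all monomials $m$. Using \eqref{proj} and \eqref{Erad}, as in Lemma~\ref{Lfpolgen}(1), the symmetrisation of $\FPol_{k+1}\cdot p$ equals the symmetrisation of an element of $\FPol_{k+1}^2\mc Z_q$. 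This proves the claim. The argument needs the fact that $\Ryn_{k+1}(p\FPol_{k+1})$ is divisible by $\FPol_{k+1}^2$, after first averaging over $\Sg_{k+1}$ acting on the relevant block.

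The second step is the pigeonhole argument, carried out verbatim. By Proposition~\ref{prop:kernel-Poi} and Theorem~\ref{thm:Lov}, any $r\in\ker\PhiP_{dk}\cap\Sa(\Lg)_q$ satisfies
\[\underline\Theta^{b^q}_{\Sg;\star}(r)=\Ryn_q\Bigl(\sum_I \FPol_{dk+1}(z_{i_1},\dots,z_{i_{dk+1}})p_I\Bigr).\]
Some block $[\hat q_\beta+1,\hat q_{\beta+1}]$ contains at least $k+1$ of the indices. Each summand is then divisible by a Vandermonde factor $\FPol_{k+1}$ in $k+1$ variables of a single block. After a permutation in $\Sg_q$ this is $\FPol_{k+1}(z_{\hat q_\beta+1},\dots,z_{\hat q_\beta+k+1})$ times an element of $\mc Z_q$; a transposition only changes the sign of $\FPol_{k+1}$, and $\Ryn_q$ is $\Sg_q$-invariant. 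Step one then gives $r\in J$. Finally, \eqref{julyQ2+P} assembles the graded pieces.

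The main obstacle is the first step, namely justifying that the $\mathrm{L}(g_i\mf h)$-action and multiplication generate all of the symmetrised ideal inside a single block. The cleanest route is to invoke Example~\ref{Esgen} and Lemma~\ref{lem:proj} for the $b_i$-block, and then simply multiply by the other $b_j$-monomials in the commutative ring $\Sa(\Lg)$.
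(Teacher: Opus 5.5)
Your proposal is correct and follows the paper's own route. The paper proves this only by saying it is ``very similar'' to Proposition~\ref{Pkto3k}, and your argument is that transcription: conjugate by $g_i\in G$, prove the Poisson analogue of Lemma~\ref{Lfpolgen} via Example~\ref{Esgen}, Lemma~\ref{lem:proj} and \eqref{Erad}, then apply the pigeonhole argument to the $dk+1$ indices among the $d$ blocks. Your explicit use of the $\Sg_q$-invariance of $\Ryn_q$, to move the $k+1$ chosen indices into the first $k+1$ slots of the block, fills in a detail the paper leaves implicit.
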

\begin{proof} The proof is very similar to the proof of Proposition~\ref{Pkto3k}.
\end{proof}
Combining Step 2  with Propositions~\ref{prop:kernel-Poi} and~\ref{Pkto3k-Poi} we get the desired result.
\end{proof}

\subsection{$H$-primitive ideals of $\Sa(E)$}\label{SShprim}
From now until the end of Section~\ref{POISSON}, we assume that $\kk$ is uncountable.
Fix a highest weight vector $e \in \mf g$ and adopt Notation~\ref{not:E}, so $E$ has basis $\{ e_i =e \otimes s^i \mid i\in \ZZ\}$.
Let $J$ be an ideal of $\Sa(E)$ and recall the subalgebra $H= \L\mf h = \mf h\otimes\kk[s, s^{-1}]$, which acts on $E$ and on $\Sa(E)$.
Recall  also that  the action of $H$ on $E$ and $\Sa(E)$ reduces to the action of $\Lh:= h\otimes\kk[s, s^{-1}]$
for some $h \in \mathfrak h$ with $[h, e]  = e$.
We denote by $\Core_H(J)$ the sum of all $H$-ideals of $\Sa(E)$ contained in $J$, and we call this ideal {\it the $H$-core of $J$}. 
This is exactly the differential core of $J$ with respect to $H$, which acts on $\Sa(E)$ by derivations.
It is well known that if $J$ is prime then $\Core_H(J)$ is also prime; see for example~\cite[Lemma~3.3.2]{Dix}.
We say that an ideal $I$ is {\it $H$-primitive} if it is the $H$-core of a maximal ideal of $\Sa(E)$. 
An $H$-primitive ideal is automatically prime. 

\begin{definition} For a linear function $\lambda\in E^*$ we set $I_H(\lambda)$ to be $\Core_H(\mf m_\lambda)$ where $\mf m_\lambda$ is the kernel of the evaluation map $\ev_\lambda: \Sa(E)\to\kk, e_i \mapsto \lambda(e_i)$.\end{definition}
By the generalised Nullstellensatz, which holds since $\kk$ is uncountable, all $H$-primitive ideals of $\Sa(E)$ are of the form $I_H(\lambda)$,
and every radical $H$-ideal is an intersection of $H$-primitive ideals.
All these definitions follow the definitions of Poisson ideal, Poisson core, and Poisson primitive ideal,
where we recall, for example, that if $I$ is an ideal of a Poisson algebra $A$ then the {\em Poisson core} of $I$ is the maximal Poisson ideal contained in $I$, and a {\em Poisson primitive} ideal of $A$ is the Poisson core of a maximal ideal of $A$. Compare this with~\cite[Section~1]{Goodearl2006}.

To proceed further we need to introduce a notion of local function on $E$ and more generally on $\Lmf g$. 
To do this we recall that $\Theta^{e}_{\star}: E\to\kk[z_1, z_1^{-1}]=\mc Z_1$ identifies $E = e \otimes \kk[s, s^{-1}]$ with $\mc Z_1$
by sending $e \otimes p (s)$ to $  p(z_1)$. 
Let us set $\mc Z := \mc Z_1 = \kk[z, z^{-1}]$, $\Theta_\star := \Theta^{e}_{\star}$, and $\Theta^\star := \Theta_{e}^{\star}$.
For $x \in \kk^\times$ and $\alpha_0, \dots, \alpha_n \in \kk$, we define a linear function
\begin{equation}\label{Eoneloc}
\chi_{x; \alpha_0, \alpha_1, \dots, \alpha_n}: E\to\kk
\end{equation}
as follows.  Let $g \in E$ and let $p = \Theta_{\star}(g)$.
Then we set 
\begin{equation}
  \chi_{x; \alpha_0, \alpha_1, \dots, \alpha_n}(g) =   
\alpha_0 p(x)+\alpha_1 p^\prime(x)+\dots+\alpha_n p^{(n)}(x).
\end{equation}

\begin{definition}
    Inspired by~\cite[Definition~3.0.2]{PS2} we call a function $\lambda: E\to\kk$  {\em local} if it is a finite sum of  functions of the form~(\ref{Eoneloc}). 
\end{definition}

Even though  $\Lg$ is a very different Lie algebra from the Witt algebra $W = \kk[t,t^{-1}]\del$, we will see that many results from \cite{PS2} about Poisson primitive ideals of $\Sa(W)$ have close analogues for Poisson primitive ideals of $\Sa(\Lg)$ and even for $H$-primitive ideals of $\Sa(E)$.

\begin{lemma}[{cf.~\cite[Theorem~3.1.1]{PS2}}]\label{Lloceq}The following conditions are equivalent for $\lambda\in E^*$:\\
\begin{enumerate}
\item $\lambda$ is a local function;
\item the  two-sided sequence 
\[\dots, \lambda(e_{-2}), \lambda(e_{-1}), \lambda(e_{0}), \lambda(e_{1}), \lambda(e_{2}),\dots\]
is linearly recursive;
\item there exists $\tilde h\in H$ with $[\tilde h, E]\ne(0)$ such that $\lambda([\tilde h, E])=0$;
\item there exists $\tilde e\in E$ with $\tilde e\ne0$ such that $\lambda([\tilde e, H])=0$;
\item there exists a nonzero ideal $I$ of $\mc Z$ such that $\lambda(\Theta^\star(I))=0$.  
\end{enumerate}
\end{lemma}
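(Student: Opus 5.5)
The plan is to translate everything through the linear isomorphism $\Theta_\star\colon E\to\mc Z=\kk[z,z^{-1}]$, $e\otimes p(s)\mapsto p(z)$. Under it the sequence $\lambda(e_i)$ is $\lambda\Theta^\star(z^i)$, and the bracket $[h\otimes f(s), e\otimes p(s)]=e\otimes fp$ becomes multiplication by $f$. Thus $\lambda$ is just a functional $\mu:=\lambda\circ\Theta^\star$ on $\mc Z$. I will prove $(2)\Leftrightarrow(5)$, $(3)\Leftrightarrow(5)$, $(4)\Leftrightarrow(5)$ and $(1)\Leftrightarrow(5)$.

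\textbf{The easy equivalences.} For $(3)$, the subspace $[\tilde h,E]$ with $\tilde h=h\otimes f$ corresponds to the principal ideal $f\mc Z$. The condition $[\tilde h,E]\neq 0$ is just $f\neq0$, and every nonzero ideal of the PID $\mc Z$ is of this form. Hence $(3)\Leftrightarrow(5)$; general $\tilde h\in H$ reduce to $\Lh$, since only the $h$-component acts nontrivially on $E$. For $(4)$, write $\tilde e=e\otimes g$ with $g\neq0$. Then $[\tilde e,H]\supseteq[\tilde e,\Lh]$ corresponds to $g\mc Z$, while the part of $H$ killing $e$ contributes $0$. So $(4)\Leftrightarrow(5)$ as well. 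For $(2)$, a linear recursion $\sum_{j=0}^m c_j\lambda(e_{i+j})=0$ for all $i\in\ZZ$ says exactly that $\mu$ vanishes on $z^i c(z)$ for every $i$, i.e.\ on the ideal $(c(z))$. We may take $c\neq0$, and it is automatically a unit times a polynomial. This gives $(2)\Leftrightarrow(5)$.

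\textbf{Local functions.} To show $(1)\Rightarrow(5)$, note that $\chi_{x;\alpha_0,\dots,\alpha_n}$ kills $(z-x)^{n+1}\mc Z$, by Leibniz: every derivative of order $\le n$ of $(z-x)^{n+1}q$ vanishes at $x$. A finite sum of such functionals kills the product of the corresponding ideals, which is nonzero. For $(5)\Rightarrow(1)$, suppose $\mu$ vanishes on $I=(f)$ with $f\neq0$. We may take $f$ to be a polynomial with nonzero roots, $f=\prod_r (z-x_r)^{m_r}$ up to a unit. Then $\mu$ factors through $\mc Z/(f)\cong\prod_r \kk[z]/(z-x_r)^{m_r}$ by the Chinese remainder theorem. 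The dual of $\kk[z]/(z-x)^m$ is spanned by the functionals $p\mapsto p^{(j)}(x)$ for $0\le j<m$. These are linearly independent on that quotient, since they detect the Taylor coefficients and we are in characteristic $0$, and there are $m$ of them. Therefore $\mu$ is a sum of functionals $\chi_{x_r;\alpha}$, i.e.\ $\lambda$ is local.

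\textbf{Main obstacle.} The only step requiring care is $(5)\Rightarrow(1)$: identifying the dual of the finite-dimensional quotient with spans of derivative evaluations. This is the dimension count above, using characteristic $0$ and the fact that $x_r\neq0$, so that the $x_r$ are genuine points of $\kk^\times$. The rest is bookkeeping of the correspondence between $H$-brackets on $E$ and multiplication in $\mc Z$, as in Lemma~\ref{Lthid} with $k=1$.
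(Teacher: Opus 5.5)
Your proposal is correct and follows the paper's proof. The key step $(3)\Leftrightarrow(5)$ is exactly the paper's observation that $[\tilde h,E]$ corresponds under $\Theta_\star$ to the principal ideal of $\mc Z$ generated by the $h$-coefficient of $\tilde h$. You treat $(4)$ by the same translation rather than deducing it from $(3)$. The only real difference is that you actually prove $(1)\Leftrightarrow(2)\Leftrightarrow(5)$, via the Chinese remainder theorem and the identification of the dual of $\kk[z]/(z-x)^m$ with derivative evaluations at $x\neq0$ in characteristic $0$, whereas the paper simply cites this equivalence as a folklore fact about two-sided sequences.
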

\begin{proof}
    That $(1)$, $(2)$, $(5)$ are mutually equivalent is a well-known folklore result about two-sided sequences, which was also used in the proof of~\cite[Theorem~3.1.1]{PS2}. 
    Also (3) is clearly equivalent to (4).

    To see that $(3)$ and $(5)$ are equivalent, write 
     $\widetilde h=\sum\limits_{j=-k}^k\beta_j h_j$, where $h_j = h\otimes s^j$.
     Then  
     \[\lambda([\widetilde h, e_i])= \lambda(\sum\limits_{j=-k}^k\beta_je_{i+j})
     =\lambda(\Theta^{\star}((\sum\limits_{j=-k}^k\beta_jz^j)z^i)).\]
     Now $(3)$ holds if and only if for some $\widetilde h = \sum \beta_j h_j$, we have
     $\lambda([\widetilde h, e_i]) = 0$ for all $i$, and by the above
     this is if and only if $\lambda \circ \Theta^\star$ vanishes on $(\sum \beta_j z^j) \mc Z$.
     This is clearly equivalent to $(5)$ and thus the proof is finished. 
\end{proof}
It is clear that if ideals $I_1, I_2$ of $\mc Z$  satisfy condition $(5)$ of Lemma~\ref{Lloceq} then so does $I_1+I_2$. Thus for a local function $\lambda$ we can consider the largest ideal $I_\lambda$ satisfying $(5)$, which is the sum of all  such ideals.  
Note that $I_\lambda \normal \mc Z$, whereas $I_H(\lambda) \normal_H \Sa(E)$.

Let $p_\lambda$ be the unique monic polynomial of minimal degree in $\kk[z]$ which generates $I_\lambda$.
The next result shows that $p_\lambda$ determines $I_H(\lambda)$.

\begin{proposition}\label{PHpr}
\begin{enumerate}
\item We have $I_H(\lambda)\ne (0)$ $\iff$ $\lambda$ is local.
\item If $\lambda$ is local then $I_H(\lambda)=\Sa(E)\Theta^\star(I_\lambda) = \Sa(E) \Theta^\star(\mc Z p_\lambda)$;
\item We have $I_H(\lambda)= I_H(\mu)$ $\iff$ $p_\lambda=p_\mu$.
\end{enumerate} 
\end{proposition}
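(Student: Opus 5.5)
The plan is to identify $I_H(\lambda)$ degree by degree through the $\Theta$-maps, reducing everything to a statement about nondegenerate bilinear forms on Laurent polynomial rings. Throughout write $J := I_H(\lambda) = \Core_H(\mf m_\lambda)$, and recall from \eqref{H-graded} that $J$ is graded.

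\emph{Easy inclusion.} The space $\Theta^\star(I_\lambda) \subseteq E$ is an $H$-subrepresentation by Lemma~\ref{Lthid} with $k=1$. By definition of $I_\lambda$, the functional $\lambda$ vanishes on it, so $\Theta^\star(I_\lambda) \subseteq \mf m_\lambda$. Hence $\Sa(E)\Theta^\star(I_\lambda)$ is an $H$-ideal contained in $\mf m_\lambda$, and therefore
\[ \Sa(E)\Theta^\star(I_\lambda) \subseteq J. \]
Conversely, $\gr_1 J$ is an $H$-subrepresentation of $E$ contained in $\ker\lambda$. By Lemma~\ref{Lthid} it has the form $\Theta^\star(I')$ for an ideal $I'$ of $\mc Z$ killed by $\lambda$, so $I' \subseteq I_\lambda$. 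Thus $\gr_1 J = \Theta^\star(I_\lambda)$. This gives (3) immediately, once (2) is known: $p_\lambda$ determines $I_\lambda$ and hence $J$, and $J$ determines $I_\lambda = \Theta_\star(\gr_1 J)$ and hence $p_\lambda$.

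\emph{Reverse inclusion in each degree $k$.} Let $I_k := \Theta^{e^k}_{\Sg;\star}(\gr_k J) \normal \mc Z_k^{\Sg_k}$. For homogeneous $f \in \Sa^k(E)$, evaluation at $\lambda$ corresponds to applying the symmetric functional $\lambda^{\otimes k}$ to $\Theta^{e^k}_{\Sg;\star}(f)$. Since $\lambda^{\otimes k}$ is symmetric, $\lambda^{\otimes k}\circ \Ryn_k = \lambda^{\otimes k}$. Hence $J \subseteq \mf m_\lambda$ gives $\lambda^{\otimes k}(\Ryn_k(p\mc Z_k)) = 0$ for $p \in \lceil I_k\rceil$, so $\lambda^{\otimes k}$ kills the whole ideal $\lceil I_k \rceil$ of $\mc Z_k$.

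The key claim is that the largest ideal of $\mc Z_k$ killed by $\lambda^{\otimes k}$ is
\[ L_k := \langle I_\lambda(z_1), \dots, I_\lambda(z_k)\rangle. \]
To prove it, I will pass to the quotient $A := \mc Z/I_\lambda$. The bilinear form $(a,b)\mapsto \lambda(ab)$ on $A$ is nondegenerate, since its radical is an ideal killed by $\lambda$. The form $\lambda^{\otimes k}$ on $\mc Z_k/L_k \cong A^{\otimes k}$ is the tensor product of these forms, hence also nondegenerate; this works whether $A$ is finite- or infinite-dimensional. Therefore any ideal killed by $\lambda^{\otimes k}$ lies in $L_k$, which gives $\lceil I_k\rceil \subseteq L_k$. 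Finally, $\Ryn_k(L_k)$ corresponds under $\Theta^{\Sg;\star}_{e^k}$ to $\gr_k\bigl(\Sa(E)\Theta^\star(I_\lambda)\bigr)$, by \eqref{dag} and Lemma~\ref{lem:proj}. So $\gr_k J \subseteq \gr_k(\Sa(E)\Theta^\star(I_\lambda))$ for every $k$, which proves (2). Also $I_\lambda = \mc Z p_\lambda$, since $\mc Z$ is a PID.

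\emph{Statement (1).} For (1), the argument above applies to any $\lambda$, local or not: if $I_\lambda = 0$ then $J = 0$. By Lemma~\ref{Lloceq}(5), $I_\lambda \neq 0$ exactly when $\lambda$ is local, and in that case $J \supseteq \Theta^\star(I_\lambda) \neq 0$.

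The main obstacle I expect is the key claim: checking that the tensor product of the nondegenerate forms is again nondegenerate, and that $\mc Z_k/L_k$ really is $A^{\otimes k}$ with the form $\lambda^{\otimes k}$, when $A$ is infinite-dimensional. I would handle this by induction on $k$, writing an element of the radical as $\sum a_i\otimes b_i$ with the $b_i$ linearly independent and pairing against elementary tensors.
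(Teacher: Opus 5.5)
Your argument is correct, and it takes a genuinely different route from the paper's.

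\textbf{The paper's route.} For the forward direction of (1), the paper uses Lemma~\ref{lem:8.14}, a lowest-order-term argument for derivation-stable ideals of a polynomial ring. It produces $0\neq\tilde e\in E$ with $\lambda([\tilde e,H])=0$, which is condition (4) of Lemma~\ref{Lloceq}. For (2), the paper identifies $I_H(\lambda)\cap E$ with $\Theta^\star(\mc Z p_\lambda)$ just as you do, and then argues by growth. The quotient of $\Sa(E)$ by the ideal generated by $\Theta^\star(\mc Z p_\lambda)$ is a polynomial ring of GK-dimension $\deg p_\lambda$. One has $\GK(\Sa(E)/I_H(\lambda))\ge\deg p_\lambda$ by an analogue of \cite[Lemma~2.1.1]{PS1}. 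Finally, $I_H(\lambda)$ is prime as the core of a maximal ideal, so the surjection between these two domains must be an isomorphism.

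\textbf{Your route.} You determine every graded piece of $I_H(\lambda)$ directly. Through the $\Theta$-maps, $\lceil I_k\rceil$ is an ideal of $\mc Z_k$ killed by $\lambda^{\otimes k}$. Nondegeneracy of the $k$-th tensor power of the form $(a,b)\mapsto\lambda(ab)$ on $\mc Z/I_\lambda$ then forces $\lceil I_k\rceil\subseteq\langle I_\lambda(z_1),\dots,I_\lambda(z_k)\rangle$. This has several advantages:
\begin{itemize}
\item it treats local and non-local $\lambda$ uniformly, reading $I_\lambda$ as the largest ideal killed by $\lambda$, which is $0$ in the non-local case;
\item it proves (1) and (2) together;
\item it needs neither primeness of the core, nor GK-dimension, nor the cited analogue from \cite{PS1}, which the paper does not prove.
\end{itemize}

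\textbf{The step you flagged.} The step you flag as the main obstacle is routine in any dimension. Suppose $x=\sum_i a_i\otimes b_i$, with the $b_i$ linearly independent, pairs to zero with every $y\otimes c$ under $\beta_1\otimes\beta_2$. Nondegeneracy of $\beta_2$ gives $\sum_i\beta_1(a_i,y)\,b_i=0$ for all $y$. Hence each $\beta_1(a_i,-)=0$, and so each $a_i=0$.

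\textbf{What the paper's route buys.} In exchange, the paper's argument parallels more closely the growth-based arguments of \cite{PS2} for the Witt algebra. It also isolates a general lemma (Lemma~\ref{lem:8.14}) about derivation-stable ideals that does not depend on the specific combinatorics of the $\Theta$-maps.
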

\begin{proof}
    $(1)$ Pick $\lambda\in E^*$. 
    We would like to show first that $I_H(\lambda)\ne0$ implies (4) of Lemma~\ref{Lloceq}. This follows from the next lemma.
    \begin{lemma}\label{lem:8.14}
    Let $A:=\kk[x_1, x_2, \dots]$ be a polynomial ring in finitely or countably many variables and let $\lambda: A\to \kk$ be a homomorphism of algebras.  Let $\mc D$ be a  subspace of the space of derivations of $A$ and assume that there exists a nonzero ideal $I $ of $ A$ such that $I$ is $\mc D$-stable and $\lambda(I)=0$. Then there exists $\tilde x=\alpha_1 x_1+\alpha_2 x_2+\dots\in A\setminus(0)$ with $\lambda(\mc D\tilde x)=0$ for all $D\in\mc D$.  
    \end{lemma}
    \begin{proof}
        Without loss of generality we can assume that $\lambda(x_i)=0$ for all $i$. Next, choose $f\in I\setminus(0)$. Then $f=f_0+f_1+f_2+\cdots$ where each $f_i$ is a homogeneous polynomial of  total degree $i$.
        Then $\lambda(f)=0$ implies $f_0=0$. 
        Set
        $$\ord f:=\min\{i\mid f_i\ne0\}, \hspace{10pt}\ord I:=\min\{\ord f\mid f\in I\setminus (0)\}.$$ 
        Without loss of generality we can assume that $\ord f=\ord I$. 
        Set $i:=\ord f$. 

        Pick $D\in\mc D$. Then we have $D(f)=\sum_j D(x_j)\partial_jf$, where $\del_j = \frac{\del}{\del x_j}$. We have $D(f)\in I$ and hence $\ord D(f)\ge i$. 
        This implies $(D(f))_{i-1}=0$. Since $\lambda(x_j) = 0$, we have
        $$0=(D(f))_{i-1}=\sum_j\lambda(D(x_j))\partial_jf_i.$$
        Recall that $f_i\ne0$; this implies $\partial_j f_i\ne0$ for some $j$. 
        Consider the space $V_f:={\rm span}_{\kk}\{\partial_j f_i\}$ together with a linear function $\alpha: V_f\to \kk$ such that $\alpha(\partial_jf_i)\ne0$. 
        Then $$0=\alpha(\sum_j\lambda(D(x_j))\partial_jf_i)=\sum_j\lambda(D(x_j))\alpha(\partial_jf_i)=\lambda(D(\sum_j\alpha(\partial_jf_i)x_j)).$$
        Thus $\tilde x=\sum_j\alpha(\partial_jf_i)x_j$ is as needed.
    \end{proof}
    Returning to the proof of Proposition~\ref{PHpr}, let $A:=\Sa(E)$, let $\mc D$  be the derivations defined by $H$, let  $I=I_H(\lambda)$ and use $\lambda\in E^*$ to define a homomorphism $\Sa(E)\to \kk$. This choice together with Lemma~\ref{lem:8.14} shows that (4) of Lemma~\ref{Lloceq} holds and hence $\lambda$ is local. 
    
    We are left to show that if $\lambda$ is local then $I_H(\lambda)\ne0$.  
     Assume that $\lambda$ is local.
     Pick $\tilde h\in H$ for $\lambda$ as in $(3)$ of Lemma~\ref{Lloceq}.
     Then $[\tilde h, E]$ is an $H$-stable subspace of $E$ and $[\tilde h, E]\subseteq \mf m_\lambda$. 
    This implies that the $H$-ideal generated by $[\tilde h, E]$ is contained in $\mf m_\lambda$ and hence  $I_H(\lambda)\ne0$. 

    $(2)$ The argument of $(1)$ implies that $I_H(\lambda)\cap E$ is the maximal $H$-stable subspace of $E\cap \mf m_\lambda$. Such a subspace is clearly identified with $\mc Z p_\lambda$ under the $\Theta$-maps.

    Let $J$ be the $H$-ideal of $\Sa(E)$ generated by $\Theta^\star(\mc Z  p_\lambda)$, which is contained in $I_H(\lambda)$.
    It is easy to verify that $$\GK(\Sa(E)/J)=\deg p_\lambda.$$ 
    On the other hand we have $\GK(\Sa(E)/I_H(\lambda))\ge\deg p_\lambda$ by a straightforward analogue of~\cite[Lemma~2.1.1]{PS1}.  
    This implies that we have a surjection
    \beq\label{domain} \Sa(E)/J\to \Sa(E)/I_H(\lambda) \eeq
    between two rings of the same GK-dimension.
    Now, $I_H(\lambda)$  is $H$-primitive by definition, and thus prime. 
    On the other hand, 
    $\Sa(E)/J \cong \Sa(E/\Theta^\star(\mc Z p_\lambda))$; 
this is true because $\Sa(E)$ is freely generated by the elements of a basis of $E$ and  $\Theta^\star(\mc Z p_\lambda)$ provides exactly a family of linear relations on the elements of this basis and thus the quotient is a polynomial ring $\Sa(E/\Theta^\star(\mc Z p_\lambda))$. 
Thus both rings in \eqref{domain} are domains and so \eqref{domain} is an isomorphism. 

    $(3)$ clearly follows from $(2)$.
\end{proof}

\begin{corollary}\label{CcritP} Let $I$ be an $H$-ideal of $\Sa(E)$.
Then $I$ is $H$-primitive if and only if  $I$  is generated as an $H$-ideal of $\Sa(E)$ by $I\cap E$. 
\end{corollary}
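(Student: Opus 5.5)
The forward direction comes straight from Proposition~\ref{PHpr}; for the converse I will build a local $\lambda$ with $I = I_H(\lambda)$.

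\emph{Forward direction.} Suppose $I$ is $H$-primitive, so $I = I_H(\lambda)$ for some $\lambda \in E^*$. If $I = (0)$, then $I$ is trivially generated by $I \cap E = (0)$. Otherwise $\lambda$ is local by Proposition~\ref{PHpr}(1). Proposition~\ref{PHpr}(2) then gives $I = \Sa(E)\Theta^\star(\mc Z p_\lambda)$. The subspace $\Theta^\star(\mc Z p_\lambda) \subseteq E$ is contained in $I \cap E$, so $I$ is generated by $I \cap E$.

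\emph{Converse: set-up.} Suppose $I$ is generated as an $H$-ideal by $V := I \cap E$. By Lemma~\ref{Lthid} with $k=1$, the $H$-subrepresentations of $E$ correspond to ideals of $\mc Z$. Hence $\Theta_\star(V)$ is an ideal of $\mc Z$, so $\Theta_\star(V) = \mc Z p$ for some polynomial $p$; we may take $p$ monic with $p(0) \neq 0$, or $p = 0$.

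\emph{Converse: degenerate cases.} If $p = 0$, then $I = (0)$. This equals $I_H(\lambda)$ for any non-local $\lambda$, for instance a non-recursive sequence, by Proposition~\ref{PHpr}(1) together with Lemma~\ref{Lloceq}. If $p$ is a unit, then $I = \Sa(E)$. That case has to be checked against the convention for $H$-primitivity, since an $H$-core of a maximal ideal is proper. I expect the statement implicitly concerns proper ideals, and I will note this.

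\emph{Converse: main case.} For $p$ of positive degree, I will construct a local $\lambda$ with $p_\lambda = p$. Here $I_\lambda$ is the largest ideal of $\mc Z$ on which $\lambda \circ \Theta^\star$ vanishes. So I need a linear functional on $\mc Z$ killing $\mc Z p$ but no strictly larger ideal. Equivalently, I need a functional on $A := \mc Z/(p)$ whose kernel contains no nonzero ideal of $A$.

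The algebra $A \cong \prod_x \kk[z]/(z-x)^{m_x}$ is a finite-dimensional commutative algebra and a product of local Gorenstein rings. It therefore admits a functional $\mu$ whose kernel contains no nonzero ideal, namely a Frobenius form. Concretely, I take $\mu$ to be the sum over the roots $x$ of the functional ``coefficient of $(z-x)^{m_x-1}$''. This is a local function of the form \eqref{Eoneloc}, using the $(m_x-1)$st derivative at $x$. Then $\lambda := \mu \circ \Theta_\star$ satisfies $I_\lambda = \mc Z p$, and Proposition~\ref{PHpr}(2) gives $I_H(\lambda) = \Sa(E)\Theta^\star(\mc Z p) = I$.

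\emph{Main obstacle.} The key point is the nondegeneracy claim: any nonzero ideal of $A$ meets the socle $\prod_x \kk(z-x)^{m_x-1}$ nontrivially, and $\mu$ is nonzero on each socle component. This is standard but is the one step requiring care. Everything else is bookkeeping with Proposition~\ref{PHpr}.
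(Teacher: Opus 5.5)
Your route is the paper's. The forward direction is read off from Proposition~\ref{PHpr}. For the converse you produce a local $\lambda$ with $p_\lambda = p$ and apply Proposition~\ref{PHpr}(2). The paper only asserts that such a $\lambda$ exists, and your Frobenius-form construction correctly fills that in. Take $\lambda$ to be the sum over the roots $x$ of $p$ of the $(m_x-1)$st Taylor coefficient of $\Theta_\star(g)$ at $x$. Its kernel on $\mc Z/(p)$ contains no nonzero ideal, because every nonzero ideal of $\prod_x \kk[z]/(z-x)^{m_x}$ contains a socle element $(z-x)^{m_x-1}$.

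There is, however, a mistake in your unit case. If $\Theta_\star(I\cap E) = \mc Z$, then $I\cap E = E$, and the ideal generated by $E$ is $\Sa(E)E = \Sa^{\geq 1}(E)$, not $\Sa(E)$. This is the maximal ideal $\mf m_0$ of the zero functional. It is $H$-stable, being graded and preserved by degree-preserving derivations, so $\mf m_0 = \Core_H(\mf m_0) = I_H(0)$ is $H$-primitive. Indeed $\lambda = 0$ is local with $I_\lambda = \mc Z$ and $p_\lambda = 1$. Your main construction with $p = 1$ (so $A = 0$ and $\mu = 0$) already covers this case, exactly as the paper's uniform argument does.

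Conversely, $\Sa(E)$ itself never satisfies the hypothesis, since its intersection with $E$ generates only $\Sa^{\geq 1}(E)$. So for $I=\Sa(E)$ both sides of the equivalence are false, and no ``proper ideals'' convention is needed. As written, your proof leaves the case $I = \Sa^{\geq 1}(E)$ unproved and wrongly suggests the statement needs an extra hypothesis there. Dropping the restriction to positive degree in your main case fixes this.
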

\begin{proof}
    Note that $(0)$ is $H$-primitive since $(0) = I_H(\eta)$ for any non-local $\eta \in E^*$ by Proposition~\ref{PHpr}.
    Assume $I \neq (0)$ and $I$ is $H$-primitive then the result follows from Proposition~\ref{PHpr}. Conversely assume $I \neq (0)$ and $I$ is generated by $I\cap E$. Let $p_I$ be the monic polynomial generator of $\Theta_\star(I\cap E)$ of minimal degree. One can find a local function $\lambda\in E^*$ with $p_\lambda=p_I$; for such $\lambda$ we have $I=I_H(\lambda)$.
\end{proof}
\subsection{Poisson primitive ideals of $\Sa(\Lg)$}

In this subsection we consider Poisson primitive ideals of $\Sa(\Lg)$ and of $\Sa(\widehat{\mf g})$, where $\widehat{\mf g}$ is the affine Lie algebra associated to $\mf g$, that is the unique nontrivial central extension of $\Lg$. 
We will see there is a striking analogy with the structure of Poisson primitive ideals of the symmetric algebras of $W$ and of its central extension the Virasoro algebra $\Vir = W \oplus \kk z$, and we first recall what is known in that situation.
Recall that  a function $\chi\in W^*$ is {\em local} if it is a linear combination of functions of the form
\[ f\del \mapsto \alpha_0 f(x) + \dots + \alpha_n f^{(n)}(x)\]
for some $\alpha_0, \dots, \alpha_n \in \kk$ and $x \in \kk^\times$.  (Note the similarity with local functions on $E$.)
A function $\chi \in \Vir^*$ is {\em local} if $\chi(z) =0$ and $\chi$ induces a local function on $W = \Vir/(z)$.
If $\chi \in W^*$, respectively  if $\chi\in \Vir^*$, we define $P(\chi)$ to be the Poisson core of the maximal ideal of $\Sa(W)$, respectively of $\Sa(\Vir)$, corresponding to $\chi$.
Then we have:

\begin{theorem}[{\cite[Theorem~1.1]{PS2}}]\label{thm:WVir}
\begin{enumerate}
\item $\chi \in W^*$ is local if and only if $P(\chi) \neq (0)$.
\item  $\chi \in \Vir^*$ is local if and only if $P(\chi) \neq (z - \chi(z))$.
\end{enumerate}
\end{theorem}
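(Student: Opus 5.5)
Although Theorem~\ref{thm:WVir} is quoted from \cite{PS2}, I would prove it by mirroring the proof of Proposition~\ref{PHpr}. For $W$ the argument splits into ``local $\Rightarrow$ nonzero core'' (a coadjoint orbit argument) and ``nonzero core $\Rightarrow$ local'' (a lowest-order-term argument in the style of Lemma~\ref{lem:8.14}). The Virasoro case then reduces to the Witt case modulo $(z-\chi(z))$, with the cocycle ruling out $\chi(z)\neq 0$.

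For $(1)$, suppose first that $\chi\in W^*$ is local, supported at points $x_1,\dots,x_r$ with derivative orders at most $n_i$. Put $p=\prod_i (t-x_i)^{n_i+1}$, so that $\chi$ vanishes on $p^N W$ for $N\ge 1$. One checks that $[f\del, p^2 g\del]\in pW$, so the stabiliser of $\chi$ in $W$ contains a subalgebra of finite codimension, namely $p^{N}W$ for suitable $N$. Hence the orbit $W\cdot\chi$ lies in a finite-dimensional affine subspace of $W^*$. Any polynomial function on $W^*$ vanishing on the Zariski closure of this orbit lies in $P(\chi)$, and such nonzero functions exist because the closure is finite-dimensional while $W^*$ is not. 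So $P(\chi)\neq(0)$.

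Conversely, suppose $P(\chi)\neq(0)$. Apply Lemma~\ref{lem:8.14} with $A=\Sa(W)$, $\mc D$ the Hamiltonian derivations $\{w,-\}$, and $I=P(\chi)$. This produces $0\neq g\del\in W$ with $\chi([W,g\del])=0$. Since $[fg\del,g\del]=g^2 f'\del$ and $f'$ ranges over all Laurent polynomials except multiples of $t^{-1}$, $\chi$ vanishes on a codimension-$\le 1$ subspace of $g^2\kk[t,t^{-1}]\del$. On $g^2\kk[t^{\pm1}]\del$ the functional $\chi$ is therefore a multiple of $u\del\mapsto \Res(u/g^2)$, and this multiple is a local function supported at the roots of $g$ and at $0$. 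Since $g^2\kk[t^{\pm1}]\del$ has finite codimension in $W$, the linear-recursion criterion, i.e.\ the $W$-analogue of Lemma~\ref{Lloceq}$(2)\Leftrightarrow(1)$, shows that $\chi$ itself is local.

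For $(2)$, let $\chi\in\Vir^*$ and $c=\chi(z)$. If $c=0$ and $\chi$ is local, run the orbit argument above inside $\Sa(\Vir)/(z)\cong\Sa(W)$; this gives $P(\chi)\supsetneq(z)$. Conversely, if $P(\chi)\neq(z-c)$, apply the lowest-order argument of Lemma~\ref{lem:8.14} to the Poisson algebra $\Sa(\Vir)/(z-c)$, which is a polynomial ring on $W$. This yields $0\neq g\del$ with $\chi([v,g\del])=0$ for all $v$, where the bracket now includes the Virasoro cocycle term $c\cdot\Res(f'''g)/12$. If $c\neq 0$, this condition expresses the map $f\mapsto \Res(f''' g)$ as a combination of values of $\chi$ on $[W,g\del]$. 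Rerunning the residue computation should then give an impossible identity, because third derivatives are not killed by the codimension-one constraint. Hence $c=0$, and we reduce to $(1)$.

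The step I expect to be the main obstacle is this last cocycle argument in the case $c\neq 0$. I would first try to pin it down by testing the condition on $f=g t^i$ for many values of $i$ and comparing the resulting recursions.
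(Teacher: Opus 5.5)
The paper only cites \cite{PS2} for this theorem, so your argument has to stand on its own. It has a genuine gap in both directions, and the converse directions rest on a false step.

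\textbf{The converse directions.} Lemma~\ref{lem:8.14} gives you only a single nonzero $g\del$ with $\chi([W,g\del])=0$. For $\Sa(E)$ that suffices, because of Lemma~\ref{Lloceq}$(4)\Rightarrow(1)$, but for $W$ it does not. Take $g=1$ and $\chi(f\del)=\Res_0 f$, the coefficient of $t^{-1}$ in $f$.
\begin{itemize}
\item Then $\chi([h\del,\del])=-\Res_0(h')=0$, so $\del$ stabilises $\chi$.
\item Yet $\chi$ is not local. A local function kills $q\kk[t^{\pm1}]\del$ for some $q\neq 0$, whereas $\chi(qt^{-1-j}\del)$ is the $t^j$-coefficient of $q$.
\item This $\chi$ is exactly your functional $u\del\mapsto\Res(u/g^2)$ for $g=1$. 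A residue at $0$ is not ``local supported at the roots of $g$ and at $0$'': local functions live at points of $\kk^\times$.
\end{itemize}
The same obstruction blocks your argument for $(2)$. If $\chi|_W=0$ and $\chi(z)=c\neq 0$, then $\del$, $t\del$ and $t^2\del$ all stabilise $\chi$, because $\Res(f'''g)=0$ for $g\in\{1,t,t^2\}$. So no impossible identity can be extracted from one stabilising element.

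The missing idea is that the stabiliser $\{w:\chi([w,W])=0\}$ must have \emph{finite codimension}, i.e.\ the form $B_\chi(u,v)=\chi([u,v])$ must have finite rank. A natural route is as follows.
\begin{itemize}
\item First, show that $\Sa(W)/P(\chi)$ has finite GK-dimension (respectively $\Sa(\Vir)/P(\chi)$ when $P(\chi)\supsetneq(z-c)$). This is a Witt/Virasoro analogue of Theorem~\ref{thm:BS}(2), and it is where the real input lies.
\item Second, combine it with the bound $\GKdim \Sa(W)/P(\chi)\geq \rk B_\chi$. This follows by running the argument of Lemma~\ref{lem:8.14} on dual families $u_i,v_j$ with $\chi([u_i,v_j])=\delta_{ij}$; compare \cite[Lemma~2.1.1]{PS1}.
\item Third, finite rank finishes the job. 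Put $a_k=\chi(t^{k+1}\del)$ and $R_m(n)=B_\chi(t^{m+1}\del,t^{n+1}\del)=(n-m)a_{m+n}$. Then $R_m(n+1)-R_{m+1}(n)=2a_{m+n+1}$, so all shifts of $(a_k)$ lie in a finite-dimensional space. Hence $(a_k)$ is linearly recursive and $\chi$ is local.
\item In $\Vir$ the same difference acquires an extra term proportional to $c\,m(m+1)\delta_{m+n+1,0}$. One further difference along antidiagonals leaves a finite-rank matrix supported on an antidiagonal with entries proportional to $ck$. That matrix has infinite rank unless $c=0$.
\end{itemize}

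\textbf{The forward direction.} There is no group integrating $W$, and $\{\chi([w,-]):w\in W\}$ is only the tangent space at $\chi$. A function vanishing on $\chi$ plus this space has vanishing first brackets at $\chi$, but nothing controls the iterated brackets, so it need not lie in $P(\chi)$. Nor can any finite-dimensional affine subspace through $\chi\neq 0$ be stable under the coadjoint vector fields. The annihilator in $W$ of its direction space would be a nonzero ideal; this forces the direction space to be $0$, and then $\chi([W,W])=0$.

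What works is a non-linear finite-dimensional family. Let $X$ consist of all local functions with the same number of support points and the same order bounds as $\chi$, with the points allowed to move.
\begin{itemize}
\item Suppose $\mu(f\del)=\sum_{j\le n}\alpha_j f^{(j)}(x)$. Then $f\del\mapsto \mu([g\del,f\del])$ equals $g(x)\,\partial_x\mu$ (i.e.\ $f\del\mapsto\sum_j\alpha_jf^{(j+1)}(x)$) plus a combination of the functionals $f\del\mapsto f^{(k)}(x)$ with $k\le n$.
\item So every Hamiltonian vector field is tangent to $X$.
\item Hence the functions vanishing on $X$ form a Poisson ideal contained in $\mf m_\chi$.
\item This ideal is nonzero, because $X$ is the image of a finite-dimensional parameter space, so finitely many of the coordinates $\mu\mapsto\mu(t^i\del)$ are algebraically dependent on $X$.
\end{itemize}
With both repairs, your reduction of $(2)$ to $(1)$ via $\Sa(\Vir)/(z)\cong\Sa(W)$ goes through.
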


We now return to $\Lg$. 
Pick $0 \neq b\in\mf g$.  
We define local functions on $b\otimes\kk[s, s^{-1}]$ by  analogy  with local functions on $E=e\otimes\kk[t, t^{-1}]$.
 
 \begin{definition}
     A function $\lambda: \Lmf g\to\kk$ is called {\em local} if
     it is local after restriction to $b\otimes\kk[s, s^{-1}]$ for all $b\in\mf g$.
\end{definition}
Let $b_1, \dots, b_d$ be a basis of $\mf g$. It is easy to check that an equivalent basis-dependent definition is as follows: 
\begin{center}{\it a function $\lambda\in\Lmf g^*$ is local if and only if $\lambda|_{b_i\otimes\kk[s, s^{-1}]}$ is local for all $i$.} \end{center}

If $\lambda \in \Lg^*$, let $\ev_\lambda: \Sa(\Lg) \to \kk$ be the associated evaluation map, and let $\mf m_\lambda  = \ker \ev_\lambda$.  
Let 
$P(\lambda)$ be the Poisson core of  the maximal  ideal  $\mf m_\lambda$.

We first show that Poisson primitive ideals of $\Sa(\Lg)$ are analogous to those of $\Sa(W)$.
\begin{proposition}\label{Ppoipr} For $\lambda\in\Lmf g^*$ we have $P(\lambda)\ne (0)$ $\iff$  $\lambda$ is local.\end{proposition}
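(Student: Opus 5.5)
Both directions go through the restriction to $E$ and to its conjugates $b\otimes\kk[s,s^{-1}]$, where $b$ is a highest weight vector.

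\emph{($\Rightarrow$)} Suppose $P(\lambda)\neq(0)$. Since $P(\lambda)$ is a nonzero Poisson ideal, Theorem~\ref{TPmainI+} gives $P(\lambda)\supseteq\ker\PhiP_n$ for some $n$. Choose a basis $b_1,\dots,b_d$ of $\mf g$ consisting of highest weight vectors, as in the proof of Proposition~\ref{Pkto3k}; by the basis-dependent criterion it suffices to show $\lambda|_{b_i\otimes\kk[s,s^{-1}]}$ is local for each $i$. Fix $i$, set $b=b_i$, $B=b\otimes\kk[s,s^{-1}]$, and take a Cartan subalgebra for which $b$ is a highest weight vector. Then $P(\lambda)\cap\Sa(B)$ is a nonzero $H$-ideal (Lemma~\ref{lem:JtoH}) contained in the maximal ideal $\mf m_{\lambda|_B}$ of $\Sa(B)$. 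Hence $I_H(\lambda|_B)=\Core_H(\mf m_{\lambda|_B})\ne(0)$, and Proposition~\ref{PHpr}(1) makes $\lambda|_B$ local. This direction is short; the only point to check is that the $H$-action used in Proposition~\ref{PHpr} is the one coming from $\mathrm{L}\mf h$ for the Cartan adapted to $b$, which is fine since that proposition is independent of $\mf g$ and of the conjugate chosen.

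\emph{($\Leftarrow$)} Suppose $\lambda$ is local. Each restriction $\lambda|_{b_i\otimes\kk[s,s^{-1}]}$ is then killed by $b_i\otimes p_i(s)\kk[s,s^{-1}]$ for some nonzero polynomial $p_i$ (Lemma~\ref{Lloceq}(5)). Let $p=\prod_i p_i$, normalized to have $p(0)\neq0$ after dividing by a power of $s$, and set $\mf k=p(s)\Lg$, a cofinite ideal of $\Lg$ with $\lambda(\mf k)=0$ by linearity. The ideal $\Sa(\Lg)\mf k$ is Poisson, since $\mf k$ is a Lie ideal and the bracket is a derivation. It is nonzero and contained in $\mf m_\lambda$, so $P(\lambda)\supseteq\Sa(\Lg)\mf k\neq(0)$. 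The one subtlety is that locality on each line $b\otimes\kk[s,s^{-1}]$ for $b$ in a basis gives vanishing on $p(s)\Lg$; this is just linearity, since $p_i\mid p$.

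The main obstacle, if any, is confirming that the equivalence of the basis-dependent and basis-free definitions of locality is genuinely easy. The argument above needs only the direction ``local on basis lines $\Rightarrow$ vanishes on a cofinite ideal,'' which has just been shown. Conversely, if $\lambda(\mf k)=0$ then, for every $b$, the restriction of $\lambda$ to $b\otimes\kk[s,s^{-1}]$ vanishes on $b\otimes p\kk[s,s^{-1}]$ and is therefore local. So in the ($\Rightarrow$) direction I will use the basis criterion directly.
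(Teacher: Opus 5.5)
Your proof is correct and follows the paper's argument in both directions: for $(\Rightarrow)$ you intersect $P(\lambda)$ with each $\Sa(b_i\otimes\kk[s,s^{-1}])$ for a basis of highest weight vectors and apply Proposition~\ref{PHpr}(1), and for $(\Leftarrow)$ you use the Poisson ideal generated by the Lie ideal $p(s)\Lg$. The differences are cosmetic. Your appeal to Theorem~\ref{TPmainI+} is never actually used, since nonvanishing of $P(\lambda)\cap\Sa(B)$ comes from Lemma~\ref{lem:JtoH}, i.e.\ Theorem~\ref{thm:BS}(2), exactly as in the paper. You also spell out the product $p=\prod_i p_i$ and the basis-independence of locality, which the paper leaves as ``by definition'' and ``easy to check''.
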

\begin{proof}
    We will show first that if $P(\lambda)\ne0$ then $\lambda$ is local. 
    Pick a basis $b_1, \dots,b_d$ of $\mf g$ consisting of highest weight vectors as in the proof of Proposition~\ref{Pkto3k}. 
    By Theorem~\ref{thm:BS}(2) we have that $P(\lambda)\cap \Sa(b_i\otimes\kk[s, s^{-1}])\ne(0)$ for all $i$. 
    Note that $P(\lambda)\cap \Sa(b_i\otimes\kk[s, s^{-1}])$ is an $H$-ideal of $ \Sa(b_i\otimes\kk[s, s^{-1}])$ under the obvious definition.
    By Proposition~\ref{PHpr}, $\lambda|_{b_i\otimes\kk[s, s^{-1}]}$ is local for all $i$ and thus $\lambda$ is local as needed.

    We will show now that if $\lambda$ is local then $P(\lambda)\ne (0)$. 
    By definition there exists $p\in\kk[s]\backslash \{0\}$ so that $\lambda|_{p\Lmf g}\equiv0$.
    Note that $p\Lmf g$ is a Lie ideal of $\Lmf g$ and hence the ideal generated by $p\Lmf g$ is Poisson and is contained in $P(\lambda)$. This implies that $P(\lambda)\ne0$.
\end{proof}

Recall that  $\widehat{\mf g} = \Lg \oplus \kk c$ is the unique nontrivial central extension of $\Lg$.
We say a function $\lambda \in (\widehat{\mf g})^*$ is {\em local} if $\lambda(c) = 0$ and the induced function in $\Lg^*$ is local.
Then Proposition~\ref{Ppoipr} generalises to $\widehat{\mf g}$ as follows.
\begin{proposition}\label{Ppoipr2}
If $\lambda \in (\widehat{\mf g})^*$ let $P(\lambda)$ be the Poisson core of the maximal ideal of $\Sa(\widehat{\mf g})$ corresponding to $\lambda$.
Then $\lambda$ is local $\iff$ $P(\lambda) \neq (c-\lambda(c))$.
\end{proposition}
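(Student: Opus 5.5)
The plan is to prove both directions of Proposition~\ref{Ppoipr2} by reducing to Proposition~\ref{Ppoipr}, with a separate argument for $\lambda(c)\neq 0$. Since $c$ is Poisson central in $\Sa(\widehat{\mf g})$ and $c-\lambda(c)\in\mf m_\lambda$, the ideal $(c-\lambda(c))$ is Poisson and lies in $\mf m_\lambda$, so $P(\lambda)\supseteq (c-\lambda(c))$ always. The statement is therefore that the containment is strict exactly when $\lambda$ is local. We split into the cases $\lambda(c)=0$ and $\lambda(c)\neq 0$.

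\emph{Case $\lambda(c)=0$.} Here $\Sa(\widehat{\mf g})/(c)\cong\Sa(\Lg)$ as Poisson algebras. Poisson ideals of $\Sa(\widehat{\mf g})$ containing $c$ correspond bijectively to Poisson ideals of $\Sa(\Lg)$, and this correspondence respects containment in $\mf m_\lambda$. Hence $P(\lambda)/(c)$ is the Poisson core of $\mf m_{\bar\lambda}\subseteq\Sa(\Lg)$, where $\bar\lambda$ is the induced function on $\Lg$. By Proposition~\ref{Ppoipr}, $P(\lambda)\neq(c)$ if and only if $\bar\lambda$ is local, which by definition means $\lambda$ is local.

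\emph{Case $\lambda(c)=\mu\neq 0$.} Then $\lambda$ is not local by definition, so we must show $P(\lambda)=(c-\mu)$. It suffices to show that the Poisson algebra $A_\mu:=\Sa(\widehat{\mf g})/(c-\mu)$ is Poisson simple, that is, has no nonzero proper Poisson ideals. This is the Poisson analogue of \cite[Theorem~1.1(1)]{BS}. I would cite it if \cite{BS} contains the Poisson version.

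If \cite{BS} does not contain the Poisson version, I would argue directly as follows. Let $K\neq 0$ be a Poisson ideal of $A_\mu$. We have $A_\mu\cong\Sa(\Lg)$ as commutative algebras, with bracket $\{x s^i,y s^j\}=[x,y]s^{i+j}+i\delta_{i+j,0}\kappa(x,y)\mu$. Take $0\neq f\in K$. Choose some $y s^j$ with $j$ large and outside the finitely many loop degrees appearing in $f$. Applying $\{y s^j,-\}$ lowers the degree of $f$ in the variable $x s^{-j}$, up to the nonzero scalar $\mu\kappa(x,y)(-j)$. It also produces commutator terms $[y,\cdot]s^{j+\cdot}$, whose loop indices are all shifted far beyond those in $f$.

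To control these commutator terms I would use the $\ZZ$-grading by loop degree, with $\deg c=0$, which $A_\mu$ inherits. I would also use a leading-term or order argument, as in Lemma~\ref{lem:8.14}. Iterating the derivations $\{y s^j,-\}$ for suitable $j$ should reduce to a nonzero element of $K$ of smaller total degree, and eventually to a nonzero scalar. This forces $K=A_\mu$.

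The main obstacle is this last reduction: the commutator terms must not cancel the lowered term. Ensuring this requires choosing $j$ so that the new variables are disjoint from those in $f$, and then arguing on the part of the result not involving these new variables. Once Poisson simplicity of $A_\mu$ is established, both directions of the proposition follow.
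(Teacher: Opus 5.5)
Your proposal is correct and follows the paper's proof. The paper also handles $\lambda(c)=0$ via Proposition~\ref{Ppoipr}, and for $\lambda(c)\neq 0$ it cites \cite[Theorem~1.2(1)]{BS}, which is exactly the Poisson version you hoped for: $(c-\lambda(c))$ is a maximal Poisson ideal of $\Sa(\widehat{\mf g})$. So your fallback direct argument, whose final reduction step you leave incomplete, is not needed.
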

\begin{proof}
    This follows by combining \cite[Theorem~1.2(1)]{BS}, which shows that if $\lambda(c) \neq 0$ then $(c-\lambda(c))$ is a maximal Poisson ideal of $\Sa(\widehat{\mf g})$, and Proposition~\ref{Ppoipr}.
\end{proof}
We note that this proof is significantly easier than the proof of Theorem~\ref{thm:WVir}(2), since for $\widehat{\mf g}$ we may use powerful results on $\Sa(\widehat{\mf g})$. 
By contrast, by \cite[Corollary~1.3]{PS2}, if $\lambda \neq 0$ then $(z-\lambda)$ is a maximal Poisson ideal of $\Sa(\Vir)$, but this result was only proved using Theorem~\ref{thm:WVir}(2).

We now consider how the coadjoint action of the loop group of $\mf g$ relates to Poisson primitive ideals of $\Sa(\Lg)$.

\begin{notation}\label{not:G} 
Let $\lambda, \mu\in\Lmf g^*$ be local functions, and assume that $\lambda(p\Lmf g)=\mu(p\Lmf g)=0$ for a monic polynomial $p\in\kk[s]$ with $p(0)\ne0$; such $p$ always exists --- for example, one can choose $p=p_\lambda p_\mu$. 
Then both $\lambda$ and $\mu$ come from  uniquely defined functions $\lambda^\uparrow, \mu^\uparrow\in (\Lmf g/p\Lmf g)^*$. 

We denote by $G(p)$ the set of $\kk[s,s^{-1}]/(p)$-points of the algebraic adjoint group $G$ of $\frak g$. It is clear that $G(p)$ acts on $\Lmf g/p\Lmf g\cong\mf g\otimes\kk[s,s^{-1}]/(p)$.
The image of $G(p)$ in ${\rm End}(\Lmf g/p\Lmf g)$ coincides with the adjoint group of $\Lmf g/p\Lmf g$. 
\end{notation}
The following proposition reduces the description of Poisson primitive ideals of $\Sa(\Lmf g)$ to the description of orbits of the algebraic groups defined in Notation~\ref{not:G}.
\begin{proposition} \label{Pprimlm}Let $\lambda, \mu, p, \lambda^\uparrow, \mu^\uparrow, G(p)$ be as in Notation~\ref{not:G}. Then the following conditions are equivalent:
\begin{enumerate}
    \item  $P(\lambda)=P(\mu)$;\\
    \item  $P(\lambda^\uparrow)=P(\mu^\uparrow)$, where both ideals are the obvious Poisson primitive ideals of $\Sa(\Lmf g/p\Lmf g)$;\\
    \item  $G(p)\lambda^\uparrow=G(p)\mu^\uparrow$.
\end{enumerate}\end{proposition}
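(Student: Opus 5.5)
The plan is to prove $(1)\iff(2)$ by identifying Poisson cores through the quotient map, and then $(2)\iff(3)$ by the standard description of Poisson primitive ideals in the symmetric algebra of a finite-dimensional Lie algebra.

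For $(1)\iff(2)$, let $\mf a := \Lg/p\Lg$, which is finite-dimensional, and let $\pi:\Sa(\Lg)\to \Sa(\mf a)$ be the induced surjective Poisson map. Its kernel is the Poisson ideal $K$ generated by $p\Lg$, because $p\Lg$ is a Lie ideal. Since $\lambda(p\Lg)=0$, we have $K\subseteq \mf m_\lambda$, and $K$ is Poisson, so $K\subseteq P(\lambda)$. Poisson ideals of $\Sa(\Lg)$ containing $K$ correspond bijectively to Poisson ideals of $\Sa(\mf a)$. Moreover $\mf m_\lambda=\pi^{-1}(\mf m_{\lambda^\uparrow})$. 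It follows that $P(\lambda)=\pi^{-1}(P(\lambda^\uparrow))$, and similarly for $\mu$. Hence $P(\lambda)=P(\mu)$ if and only if $P(\lambda^\uparrow)=P(\mu^\uparrow)$, since $\pi$ is surjective.

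For $(3)\Rightarrow(2)$, note that $G(p)$ acts on $\Sa(\mf a)$ by Poisson automorphisms. Indeed, its image is the adjoint group of $\mf a$, which is generated by exponentials $\exp(\operatorname{ad} x)$ of ad-nilpotent elements (or at least its identity component is), and these preserve every Poisson ideal. For $g\in G(p)$ we have $g\cdot\mf m_{\lambda^\uparrow}=\mf m_{g\lambda^\uparrow}$, so $P(g\lambda^\uparrow)=g\,P(\lambda^\uparrow)=P(\lambda^\uparrow)$. One point needs care: $G(p)$ may be disconnected, and its image is claimed in Notation~\ref{not:G} to equal the adjoint group. Since $G(p)$ is a group of $\kk[s,s^{-1}]/(p)$-points of a connected adjoint group, it is connected, as a product of jet groups. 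So it acts trivially on Poisson ideals, because Poisson ideals are stable under the Lie algebra action and hence under the connected group it integrates to.

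For $(2)\Rightarrow(3)$, I would use the known fact for finite-dimensional $\mf a$ over algebraically closed $\kk$ of characteristic $0$: the zero set of $P(\chi)$ in $\mf a^*$ is the Zariski closure $\overline{A\chi}$ of the coadjoint orbit under the adjoint group $A$. This holds because $P(\chi)$ is the ideal of functions vanishing on $A\chi$, a Poisson-stable radical ideal. In addition, $A\chi$ is open in its closure, since orbits are locally closed, and it is the unique dense orbit there.

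So $P(\lambda^\uparrow)=P(\mu^\uparrow)$ gives $\overline{A\lambda^\uparrow}=\overline{A\mu^\uparrow}$. Two orbits that are each open dense in the same irreducible closure must intersect, hence coincide. The identification $P(\chi)=\I(A\chi)$ itself I will get as follows. The ideal $\I(A\chi)$ is Poisson (it is $A$-stable) and contained in $\mf m_\chi$, so $\I(A\chi)\subseteq P(\chi)$. Conversely, $P(\chi)$ is $A$-stable and contained in $\mf m_\chi$, so it is contained in every $\mf m_{g\chi}$, which gives $P(\chi)\subseteq\I(A\chi)$.

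The main obstacle is the bookkeeping between $G(p)$ and the adjoint group of $\mf a$, namely checking connectedness, or equivalently that $G(p)$-orbits equal $A$-orbits. The statement in Notation~\ref{not:G} supplies exactly this, so I would cite it.
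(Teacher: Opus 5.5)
Your proposal is correct and follows the paper's route. For $(1)\iff(2)$ you use that the Poisson ideal generated by the Lie ideal $p\Lg$ is the kernel of $\Sa(\Lg)\to\Sa(\Lg/p\Lg)$ and lies in both $P(\lambda)$ and $P(\mu)$; for $(2)\iff(3)$ you use the standard correspondence between Poisson primitive ideals of $\Sa(\Lg/p\Lg)$ and coadjoint orbits. The only difference is that the paper simply cites this correspondence (Brown--Gordon), whereas you reprove it via $P(\chi)=\I(A\chi)$, connectedness of $G(p)$, and local closedness of orbits, which is fine.
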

\begin{proof} $(1)$ and $(2)$ are equivalent thanks to the fact that the ideal generated by the Lie ideal $p\Lmf g$ of $\Lg$ is contained in both $P(\lambda)$ and $P(\mu)$. 

$(2)$ is equivalent to $(3)$ because of the standard bijection between Poisson primitive ideals and coadjoint $G(p)$-orbits for the finite-dimensional algebraic Lie algebra $\Lmf g/p\Lmf g$ (see \cite[{\textsection 3.6}, Proposition]{BrownGordon2003}).
\end{proof}

Recall that $G$ is the adjoint group of $\mf g$.  We can rephrase Proposition~\ref{Pprimlm} in terms of the loop group of $G$.
Let $LG $ be the (polynomial) loop group of $G$, that is $LG$ is the $\kk[s,s^{-1}]$-points of the algebraic group $G$.
(If $\kk = \mb{C}$ this group is denoted $L_{\operatorname{pol}}G_{\mb C}$ in \cite[p. 33]{PressleySegal1986}.)
\begin{proposition}
Assume that $\lambda, \mu\in\Lmf g^*$ and $P(\lambda),  P(\mu)\ne (0)$. 
Then $$P(\lambda)=P(\mu)\iff LG \cdot\lambda=LG\cdot\mu.$$ 
\end{proposition}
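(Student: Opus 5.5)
The plan is to reduce to Proposition~\ref{Pprimlm}. By Proposition~\ref{Ppoipr}, the hypothesis $P(\lambda),P(\mu)\neq(0)$ means that $\lambda$ and $\mu$ are local. Hence there is a monic $p\in\kk[s]$ with $p(0)\neq 0$ and $\lambda(p\Lg)=\mu(p\Lg)=0$; for instance $p=p_\lambda p_\mu$, as in Notation~\ref{not:G}. Proposition~\ref{Pprimlm} then says that $P(\lambda)=P(\mu)$ if and only if $G(p)\lambda^\uparrow=G(p)\mu^\uparrow$. It therefore suffices to prove
\[ LG\cdot\lambda=LG\cdot\mu \iff G(p)\lambda^\uparrow=G(p)\mu^\uparrow. \]

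First I would set up the comparison of the two actions. The coadjoint action of $LG=G(\kk[s,s^{-1}])$ on $\Lg^*$ is dual to its adjoint action on $\Lg=\mf g\otimes\kk[s,s^{-1}]$. This adjoint action is $\kk[s,s^{-1}]$-linear, so it preserves the Lie ideal $p\Lg$. Consequently $LG$ preserves the subspace $(p\Lg)^\perp\cong(\Lg/p\Lg)^*$ of $\Lg^*$. The action of $g\in LG$ on this subspace factors through the reduction map $r\colon LG\to G(p)$ induced by $\kk[s,s^{-1}]\to\kk[s,s^{-1}]/(p)$. Explicitly, $(g\cdot\lambda)^\uparrow=r(g)\cdot\lambda^\uparrow$. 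From this the forward implication is immediate: if $\mu=g\lambda$, then $\mu^\uparrow=r(g)\lambda^\uparrow$.

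For the converse, given $\mu^\uparrow=\bar g\lambda^\uparrow$ with $\bar g\in G(p)$, I need some $g\in LG$ with $r(g)\bar g^{-1}$ in the stabiliser of $\lambda^\uparrow$. It is enough to show that $r(LG)$ contains the identity component $G(p)^\circ$, and to handle the component group separately. The key point is that $\kk[s,s^{-1}]/(p)\cong\prod_x\kk[s]/(s-x)^{m_x}$ is a finite product of local Artinian $\kk$-algebras. So $G(p)=\prod_x G(\kk[s]/(s-x)^{m_x})$, and each factor is an extension of $G(\kk)$ by a unipotent group.

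The unipotent radical and the root subgroups lift. For a root vector $y\in\mf g$ and $f\in\kk[s,s^{-1}]$, the element $\exp(\mathrm{ad}(y\otimes f))$ lies in $LG$. By the Chinese remainder theorem, $f$ can be chosen to have any prescribed image in $\kk[s,s^{-1}]/(p)$. Thus $r(LG)$ contains every $\exp(\mathrm{ad}(y\otimes \bar f))$ with $y$ nilpotent. These elements generate the image of $G(p)$ in $\End(\Lg/p\Lg)$, which Notation~\ref{not:G} identifies with the adjoint group of the perfect Lie algebra $\Lg/p\Lg$. That adjoint group is generated by exponentials of ad-nilpotent elements spanning the algebra, namely the elements $y\otimes\bar f$ above. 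Since the coadjoint action of $G(p)$ factors through this image, $r(LG)$ and $G(p)$ have the same orbits on $(\Lg/p\Lg)^*$, and the converse follows.

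I expect the main obstacle to be this surjectivity step. One must check that the image of $G(p)$ really is generated by those root exponentials, and is not larger, for example through torus elements in $T(\kk[s,s^{-1}]/(p))$ or non-identity components when $G$ is adjoint. I would handle torus elements separately: $T(\kk[s,s^{-1}]/(p))\cong\bigl((\kk[s,s^{-1}]/(p))^\times\bigr)^\ell$, and each unit is congruent modulo $p$ to a unit of $\kk[s,s^{-1}]$. This holds because, with $\kk$ algebraically closed, a unit of each local factor is a nonzero constant times a $1$-unit, and both lift via CRT interpolation together with exponentials. If this torus lifting becomes delicate, I would fall back on citing the orbit–primitive-ideal bijection only for the adjoint group of $\Lg/p\Lg$, as Notation~\ref{not:G} already does.
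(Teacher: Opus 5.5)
This is essentially the paper's argument: reduce to Proposition~\ref{Pprimlm} and use that reduction modulo $p$ gives a surjection $LG\to G(p)$. The paper cites Abe and Panin--Stavrova--Vavilov for this surjectivity. Your self-contained justification is a valid substitute: you lift the root-subgroup elements $\exp(\mathrm{ad}(y\otimes f))$, and the connected group they generate has Lie algebra containing the Lie algebra generated by the $y\otimes\bar f$. That Lie algebra is all of $\Lg/p\Lg$, since the $y\otimes\bar f$ generate it, though they do not span it.

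Drop the auxiliary torus-lifting claim. It is false: for $p=(s-1)(s-2)$, the units $cs^n$ of $\kk[s,s^{-1}]$ reduce only to the elements $(c,c2^n)$ of $(\kk[s,s^{-1}]/(p))^\times\cong\kk^\times\times\kk^\times$. It is also unnecessary. The torus of $G(p)$ already lies in the subgroup generated by the root subgroups over $\kk[s,s^{-1}]/(p)$, and $G(p)$ is connected, so there is no component group to handle.
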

\begin{proof}
    This is in essence a version of Proposition~\ref{Pprimlm}. 
    Here we use that if $p\in\kk[s]$ and $ G(p)$ are as in Notation~\ref{not:G}, then,  as is well known to experts, there is a  surjective map $LG = G(\kk[s, s^{-1}])\to G(p)$ --- for a reference, combine~\cite[Lemma~8.2]{PaninStavrovaVavilov2015} and~\cite[Proposition~1.6]{Abe1969}. Therefore $LG$ acts on $(\Lg)^*$ and the embedding  $(\Lg/p\Lg)^*\to (\Lg)^*$ is compatible with these structures. 
\end{proof}

\begin{remark}
    There is a bijection between $H$-primitive ideals of $\Sa(E)$ and ideals of $\kk[s, s^{-1}]$. The story becomes more involved when we pass to $\Sa(\Lg)$. There is a bijection between Lie ideals of $\Lg$ and ideals of $\kk[s, s^{-1}]$. 
    Every $\lambda\in\Lg^*$ with $P(\lambda)\ne0$ is local and thus we can attach $p_\lambda\ne0$ to it. This defines a map from (local) functions to (nonzero) Lie ideals of $\Lg$. It is clear that in the setting of Poisson primitive ideals of $\Sa(\Lg)$ this map is very far from being a bijection; nevertheless it provides an interesting invariant of a Poisson primitive ideal.
\end{remark}

\subsection{The Poisson Dixmier-Moeglin equivalence}\label{SSPDME}
In this subsection we show that the Poisson Dixmier-Moeglin equivalence holds for almost all ideals of $\Sa(\Lmf g)$ (``primitivity''$\Leftrightarrow$``rationality''$\Leftrightarrow$``local closedness''). This effectively means that we provide two conditions for a prime Poisson ideal $I$ of $\Sa(\Lmf g)$ which are equivalent to the condition ``$I$ is Poisson primitive''. This equivalence is widely known in the context of finite-dimensional Lie algebras~\cite{Dix}. It also almost holds for some infinite-dimensional Lie algebras~\cite{PS2}.
Our  proof closely follows~\cite{PS2}.

\begin{definition} Let $I$ be a prime Poisson ideal in a Poisson algebra $A$, which is assumed to be  commutative.
Then $I$ is  {\em Poisson locally closed} if it is locally closed in the Zariski topology on $\PSpec(A)$,
the set of prime Poisson ideals of $A$. 
Further, 
 $I$ is  {\em Poisson rational} if the Poisson centre of $Q(A/I)$ is an algebraic extension of $\kk$.  (That is, if  the Poisson centre of $Q(A/I)$ coincides with $\kk$ since we have assumed that $\kk$ is algebraically closed.)
\end{definition}

We say the {\em Poisson Dixmier-Moeglin equivalence (PDME) holds for $A$} if for any prime Poisson ideal $Q$ of $A$, the conditions
\begin{itemize}
\item[(LC)]
 $Q$ is Poisson locally closed,
\item[(P)] $Q$ is Poisson primitive,
\item[(R)] $Q$ is Poisson rational
\end{itemize}
are equivalent. 
If $\mf g$ is a finite-dimensional Lie algebra, then the PDME holds for $\Sa(\mf g)$ by \cite[Theorem~2]{LL}.

It turns out that the PDME fails for $\Sa(\Lmf g)$, but only because of the  ideal $(0)$. A slightly weakened version of it still holds. 
\begin{theorem}\label{thm:PDME}
\begin{enumerate}
\item The Poisson Dixmier-Moeglin equivalence holds for all prime Poisson  ideals of $\Sa(\Lmf g)$ except for $(0)$:  that is, if $I \neq (0)$ is a prime Poisson  ideal of $\Sa(\Lmf g)$ then 
\begin{center}
    $I$ is Poisson locally closed $\Leftrightarrow$ $I$ is Poisson primitive $\Leftrightarrow$ $I$ is Poisson rational.
\end{center}
\item $(0)$ is Poisson primitive and Poisson rational but not Poisson locally closed.
\end{enumerate}
\end{theorem}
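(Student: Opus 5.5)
The plan is to treat the nonzero primes via Poisson quotients by finite-codimension Lie ideals, where the finite-dimensional PDME of \cite{LL} applies. Then $(0)$ is handled separately.

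\textbf{Reduction for nonzero primes.} Let $I\neq(0)$ be a prime Poisson ideal. By Theorem~\ref{TPmainI+}, $I\supseteq\ker\PhiP_n$ for some $n$. I would then show that $I$ contains $p\Lg$ for some nonzero polynomial $p$, so that $\Sa(\Lg)/I$ is a Poisson quotient of $\Sa(\Lg/p\Lg)$ with $\Lg/p\Lg$ finite-dimensional.

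To get $p$: $\Sa(\Lg)/\ker\PhiP_n$ embeds in $\Sa(\mf g^{\oplus n})[t_1^{\pm1},\dots,t_n^{\pm1}]$. This ring is finitely generated over the Poisson-central subalgebra $\kk[t_i^{\pm1}]^{\Sg_n}$, and the image of that subalgebra should be central in a suitable sense. Since $I$ is prime, $I\cap(\text{centre})$ is a prime ideal of a finite type algebra. I would use the uncountability of $\kk$ (generic flatness / Nullstellensatz) to show that $I$ contains $\prod_i q(t_i)$-type elements, equivalently $p\Lg\subseteq I$.

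The step I expect to be the main obstacle is showing that a prime $I$ also contains a nonzero prime ideal of that centre, rather than meeting it trivially. I would try the alternative of intersecting $I$ with $\Sa(E)$. Theorem~\ref{Disc for H ideals} and the $H$-ideal structure give that $\sqrt{I\cap\Sa(E)}$ is determined by an $\Sg_\infty$-radical ideal containing $\FPol_k$. Its zero set therefore lies on a finite diagonal, and from there I would extract $p$ with $p\,e\otimes\kk[s^{\pm1}]\subseteq I$. The $G$-action as in Proposition~\ref{Pkto3k} then gives this for all $b_i$, hence $p\Lg\subseteq I$.

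\textbf{Transferring the equivalence.} Once $p\Lg\subseteq I$, the Poisson primes of $\Sa(\Lg)$ containing $I$ correspond to those of $\Sa(\Lg/p\Lg)$ containing $\bar I$. Rationality and primitivity transfer directly, using the argument of Proposition~\ref{Pprimlm} for primitivity. For local closedness, the primes strictly above $I$ in $\Sa(\Lg)$ all contain $p\Lg$, so $I$ is locally closed in $\PSpec\Sa(\Lg)$ if and only if $\bar I$ is locally closed in $\PSpec\Sa(\Lg/p\Lg)$. The PDME for $\Sa(\Lg/p\Lg)$ from \cite{LL} then gives part (1).

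\textbf{The ideal $(0)$.} It is primitive because, by Proposition~\ref{Ppoipr}, $P(\lambda)=(0)$ for any non-local $\lambda$.

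It is not locally closed: by Theorem~\ref{TPmainI+}, every nonzero Poisson prime contains some $\ker\PhiP_n$. Each $\ker\PhiP_n$ is contained in a nonzero Poisson prime, namely a Poisson primitive $P(\lambda)$ with $\lambda$ local. The intersection of all nonzero Poisson primes is therefore $\bigcap_n\ker\PhiP_n=(0)$, by the Poisson analogue of Corollary~\ref{cor:kernew} obtained from Proposition~\ref{prop:kernel-Poi}. Hence $(0)$ is not locally closed.

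For rationality, every Poisson primitive ideal is rational. The standard argument applies here: a Poisson central element $a/b$ of $Q(\Sa(\Lg))$ satisfies that $a-\alpha b$ generates a Poisson-stable situation. Choosing $\alpha=\lambda(a)/\lambda(b)$ for a non-local $\lambda$ with $\lambda(b)\neq0$, I would deduce that $a-\alpha b$ lies in $P(\lambda)=(0)$, using that $\{a-\alpha b,-\}\subseteq (a-\alpha b)Q$.
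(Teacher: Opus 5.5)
\textbf{Main gap: the reduction for nonzero primes.} Your reduction rests on a false claim: a nonzero prime Poisson ideal of $\Sa(\Lg)$ need not contain $p\Lg$ for any $p\neq 0$.

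Each $\ker\PhiP_n$ is a nonzero prime Poisson ideal. It is Poisson because $\PhiP_n$ is a Poisson map. It is prime because $\Sa(\Lg)/\ker\PhiP_n$ embeds in the domain $\Sa(\mf g^{\oplus n})[t_1^{\pm1},\dots,t_n^{\pm1}]$. It is nonzero because it contains $D_{n+1}$. Yet by Proposition~\ref{prop:kernel-Poi} it meets $\Sa(\Lg)_q$ trivially whenever $|q|\le n$, so in particular $\ker\PhiP_n\cap\Lg=0$. For $n=1$, $t=(gt)/g$ is a transcendental Poisson-central element of the fraction field of $\Sa(\Lg)/\ker\PhiP_1$, so this prime is not rational. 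That is consistent with the theorem, but such primes are invisible to your method.

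Your fallback via discriminants fails for the same reason. Knowing that $\FPol_k$ lies in the relevant radical only puts the zero locus inside the diagonal $\Delta^\ell_k$, where the at most $k$ distinct coordinate values are arbitrary. It does not confine the coordinates to the finitely many roots of a single polynomial, which is what $p\,(e\otimes\kk[s^{\pm1}])\subseteq I$ would require.

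So you have no argument for (R)$\Rightarrow$(P) or (LC)$\Rightarrow$(P) for primes containing no $p\Lg$. The paper obtains (LC)$\Rightarrow$(P)$\Leftrightarrow$(R) for all prime Poisson ideals of $\Sa(\Lg)$ from \cite[Theorem~6.3]{LSS}. That leaves only (P)$\Rightarrow$(LC) for $I\neq(0)$. For that implication your reduction is valid, and it is a clean alternative to the paper's appeal to the argument of \cite{PS2} plus finite GK-dimension. Indeed $I=P(\nu)$ with $\nu$ local contains $p\Lg$ (proof of Proposition~\ref{Ppoipr}), and local closedness can then be read off in $\Sa(\Lg/p\Lg)$ using \cite{LL}.

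\textbf{The ideal $(0)$.} Your argument that $(0)$ is not locally closed has the inclusions the wrong way round. You know that every nonzero prime contains some $\ker\PhiP_n$, and that each $\ker\PhiP_n$ lies in some nonzero prime. Neither fact gives an upper bound on the intersection of all nonzero Poisson primes. What you need is a family of nonzero Poisson primes with zero intersection. The $\ker\PhiP_n$ themselves provide it: they are prime (as above) and $\bigcap_n\ker\PhiP_n=(0)$ by Proposition~\ref{prop:kernel-Poi}. Theorem~\ref{TPmainI+} is not needed here.

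Your rationality argument for $(0)$ is the standard primitive-implies-rational argument and works: with $a,b$ coprime, unique factorisation makes $(a-\alpha b)\Sa(\Lg)$ a Poisson ideal. You should still check that a non-local $\lambda$ with $\lambda(b)\neq0$ exists. Alternatively, Remark~\ref{rem:Prat} gives a direct proof.
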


The proof of Theorem~\ref{thm:PDME}  closely follows the proof of~\cite[Theorem~5.3.1]{PS2}. 
\begin{proof}[Proof of Theorem~\ref{thm:PDME}]
By \cite[Theorem~6.3]{LSS}, for prime Poisson ideals of $\Sa(\Lg)$, Poisson locally closed implies Poisson primitive and Poisson primitive is equivalent to Poisson rational.  Thus to prove the theorem for $\Sa(\Lmf g)$, it suffices to prove:

$(1)$ Let $I$ be a prime Poisson  ideal of $\Sa(\Lmf g)$ with $I \neq (0)$.   If $I$ is Poisson primitive, then  $I$ is  Poisson locally closed.

$(2)$ The ideal $(0)$ is Poisson primitive but not Poisson locally closed.

For $(2)$, by applying Proposition~\ref{Ppoipr} to any non-local $\chi \in \Lg^*$ we see that $(0)$ is Poisson primitive.  
To show that $(0)$ is not locally closed, one uses 
 a  mild modification of the proof of~\cite[Theorem~5.3.1(b)]{PS2}.
 We leave the details to the reader.

We now prove $(1)$. Suppose $I$ is Poisson primitive.
By Proposition~\ref{Ppoipr} there exists a local function $\nu\in\Lmf g^*$ such that $I=P(\nu)$. 
By the discussion  after the proof of \cite[Theorem~5.3.1]{PS2}, for the proof of~\cite[Theorem~5.3.1]{PS2} to work
it is sufficient that $\Sa(\Lg)/P(\nu) $ has finite Gelfand-Kirillov dimension. 
Thanks to Theorem~\ref{thm:BS}(2) this holds and  $(1)$ follows. 
\end{proof}
Note that this line of proof can work for a fairly wide class of Lie algebras.

\begin{remark}\label{rem:Prat}
A direct proof that $(0)$ is a Poisson rational ideal of $\Sa(\Lg)$ is not difficult, and may be of interest.
If $(0)$ is not rational, let $0 \neq f, g \in \Sa(\Lg)$ so that $\frac{f}{g}$ is Poisson central in $Q(\Sa(\Lg))$.
Since $\Sa(\Lg)$ is a unique factorisation domain, we can assume that $f,g $ have no common factors.
For $r \in \Sa(\Lg)$ we have
\[ 
0 = \{ r, \frac{f}{g}\} = \frac{\{r, f\} g - f\{r, g\} }{g^2},
\]
so 
$\{r, f\} g = f\{r, g\} $ and $f$ divides $\{r, f\} g$.
Because $f,g$ are coprime, we see that $f$ divides $\{r, f\}$.
In other words, $f \Sa(\Lg)$ is a Poisson ideal of $\Sa(\Lg)$.
This contradicts Theorem~\ref{thm:BS}(2), since clearly 
\[\GKdim \Sa(\Lg)/f \Sa(\Lg) = \infty.\]
\end{remark}

\section{Examples and computations for $\Theta$-maps}\label{EXAMPLES}

The $\Theta$-maps
\[
\Theta_{\star}: \{ \mbox{ $H$-ideals of $\Sa(E)$ }\} 
\leftrightarrows
 \{ \mbox{ $\Sg_\infty$-ideals of $\mc Z_\infty$ }\} :\Theta^\star
\]
defined in Section~\ref{H IDEALS} are some of the central tools of this paper. 
In this section we provide several examples to explore the $\Theta$-maps.  (We have left this section until now since we need some results and notation from Section~\ref{POISSON}.) 
 
 If the $\Theta$-maps were bijective, many of the results of Section~\ref{S:disc} would follow directly from Cohen's fundamental Theorem~\ref{thm:Cohen}.  
 Thus we are particularly interested in examples showing that neither $\Theta^\star$ nor $\Theta_\star$ is injective.
  Example~\ref{Ex1} gives an infinite family of $H$-ideals of $\Sa(E)$ mapped by $\Theta_*$ and $\lceil\Theta\rceil_\star$ to a single ideal of $\mc Z_\infty$,  thus showing that $\Theta_\star$ and $\lceil\Theta\rceil_\star$ are far from being injective.
Then Example~\ref{Exmz} gives two distinct $\Sg_\infty$-ideals of $\mc Z_\infty$ for which $\Theta^\star$ is the same, showing  that $\Theta^\star$ is not injective.
Example~\ref{Exrec} gives two distinct $H$-primitive ideals of  $\Sa(E)$ whose images under $\Theta_\star$ have the same radical, thus  showing that $\sqrt{\Theta_\star(-)}$ is not injective on $H$-primitive ideals.
Finally, in Example~\ref{Exm1} we define two distinct radical ideals $A_\star$ and $C_\star$ of $\mc Z_\infty$ for which $\Theta^\star(A_\star)$ and $\Theta^\star(C_\star)$ are distinct whereas $\sqrt{\Theta^\star(A_\star)}=\sqrt{\Theta^\star(C_\star)}$.
In fact, $\Theta^\star(A_\star)$ is radical and (thus) $\Theta^\star(C_\star)$ is not, so $\Theta^\star$ does not take radical ideals to radical ideals.

Before giving the examples, we restate the definitions of the $\Theta$-maps from Section~\ref{H IDEALS} for the reader's convenience. 
If $I_\star$ is an $\Sg_\infty$-ideal of $\mc Z_\infty$ and  $I^\star$ is an $H$-ideal of $\Sa(E)$, we  define:
\[\Theta^\star(I_\star):= \bigoplus_{k \geq 0} \Theta^{\Sg ; \star}_{e^k}(\lyr_k I_\star ).\]
We define $\Theta_\star(I^\star)$ to be the $\Sg_\infty$-ideal of $\mc Z_\infty$ generated by all $\Theta^{e^k}_{\Sg ; \star}(\gr_k I^\star)$, and we  
 define 
\[ \lceil \Theta \rceil_\star(I^\star) := \bigcup_{k \geq 0} \lceil \Theta^{e^k}_{\Sg ; \star}(\gr_k I^\star ) \rceil. \]

\begin{example}\label{Ex1}
Let $A_\star \normal \mc Z_\infty$ be the kernel of the homomorphism
$$\alpha: \mc Z_\infty \to\kk,\hspace{10pt}z_i\mapsto 1.$$
It is clear that $A_\star$ is an $\Sg_\infty$-ideal.
As an ideal of $\mc Z_\infty$, we have that $A_\star$ is generated by $\{ z_i -1 \mid i \geq 1\}$ and as an $\Sg_\infty$-ideal, it is generated by $z_1-1$.
 Set $A^\star:=\Theta^\star(A_\star)$, and for $m \geq 0$ let $A^\star_{\geq m} = A^\star \cap \Sa^{\geq m}(E)$.
 Then $A^\star_{\geq m_1}\ne A^\star_{\geq m_2}$ for $m_1\ne m_2$.
 On the other hand,  we claim that $A_\star=\Theta_*(A^\star_{\geq m})$ for all $m\ge0$. 
 This is because $z_1+\cdots +z_k -k \in A_\star \cap \mc Z_k^{\Sg_k} \subseteq \Theta_\star(A^\star_{\geq m})$ for all $m \leq k$.
 Thus for any $m$ we have $z_N-1 \in \Theta_\star(A^\star_{\geq m})$ for all $N > m$ and so $\Theta_\star(A^\star_{\geq m}) = A_\star$.

 It is instructive to consider some other properties of $A^\star$.
 Since $A_\star$ is generated as an $\Sg_\infty$-ideal by $z_1-1$,
 $A^\star$ is generated as an $H$-ideal by $\Theta^{\Sg; \star}_e(z_1-1) = e_1-e_0$.
 Therefore $A^\star$ is $H$-primitive, and thus radical, by Corollary~\ref{CcritP}. 
 In fact $A^\star$ is the kernel of the homomorphism
 $$\Sa(E) \to\kk[e],\hspace{10pt}e_i\mapsto e$$
 and is prime.

 Note that $\dim_\kk \Sa(E)/A^\star = \infty$ and $\dim_\kk \mc Z_\infty/A_\star = 1$.
\end{example}
\begin{example} \label{Exmz}Let $B_\star \normal \mc Z_\infty$ be the kernel of the homomorphism 
$$\beta: \mc Z_\infty\to\kk[z, z^{-1}],\hspace{10pt}z_i\mapsto z.$$
It is clear that $B_\star$ is generated by $\{ z_i-z_j \mid i,j\geq 1\}$ and that $B_\star \cap \mc Z_k^{\Sg_k}$ is generated by the  elements \begin{equation}\label{Efr}\frac{z_1^n+z_2^n+\dots+z_k^n}k-(\frac{z_1+z_2+\dots+z_k}k)^n,\hspace{10pt}n\in \ZZ_{\ge0}.\end{equation}


Consider the commutative algebra
$$R =\kk [z,z^{-1}, \delta_1, \delta_2, \dots]/\langle \delta_i\delta_j, \delta_i^2\rangle,$$
where we think of the  $\delta_i$ as  ``infinitely small'' variables of order 1.
Let $\beta': \mc Z_\infty \to R$ be the homomorphism given by mapping
 $z_i \mapsto z+\delta_i, \frac{1}{z_i}\to\frac{1}{z}-\frac{\delta_i}{z^2}$, and let $B'_\star = \ker \beta'$. 
 Since $\beta$ factors through $\beta'$ and $\beta'(z_i-z_j) \neq 0$ if $i \neq j$, we have $B'_\star \subsetneqq B_\star$.
 On the other hand, all elements of the form~\eqref{Efr} 
 belong to $B'_\star$.
 Thus $\Theta^\star(B'_\star) = \Theta^\star(B_\star)$. 
 To summarise, $B_\star$ and $B_\star'$ are distinct $\Sg_\infty$-ideals of $\mc Z_\infty$ such that $\Theta^\star(B_\star)=\Theta^\star(B'_\star).$ 

\end{example}

\begin{example}\label{Exrec} 
We use the notation of Example~\ref{Ex1}. Let  $\lambda: E\to\kk$ be the linear function defined by $\lambda(e_k)=k$. 
It is clear that 
\[\lambda(e_{k+2})-2\lambda(e_{k+1})+\lambda(e_k)=\lambda(e_{k+2}-2e_{k+1}+e_k)=0 \quad \mbox{for all $k$}\]
and it is well known that this is a  linear homogeneous recursive relation of minimal degree for the sequence $\lambda(e_i), i\in\mathbb Z$. 
This implies that $p_\lambda=(z_1-1)^2$ and $ I_H(\lambda)=\Sa(E)\Theta^{\Sg;\star}_{e}(\mc Z_1(z_1-1)^2)$.
We have $(z_1-1)^2\in\Theta_\star(I_H(\lambda))$ and hence $z_1-1\in\sqrt{\Theta_\star(I_H(\lambda))}$. 
Thus $A_\star\subseteq\sqrt{\Theta_\star(I_H(\lambda))}$ by Example~\ref{Ex1}. 
Since $A_\star$ is a maximal ideal of $\mc Z_\infty$,  hence $\sqrt{\Theta_\star(I_H(\lambda))}=A_\star$.

To summarise, $A^\star$ and $I_H(\lambda)$ are distinct $H$-primitive  ideals of $\Sa(E)$ such that $$\sqrt{\Theta_\star(A^\star)}=\sqrt{\Theta_\star(I_H(\lambda))}.$$ 
\end{example}

\begin{example}\label{Exm1} We continue to use the notation of Example~\ref{Ex1}.
Consider the homomorphism
\[\gamma: \mc Z_{\infty}\to\kk, \hspace{10pt}z_1\mapsto -1, \quad z_i\mapsto 1 \mbox{ if $i \geq 2$.}\]
Let 
\[C_\star = \bigcap_{\sigma \in \Sg_\infty} \sigma\cdot(\ker\gamma)\]
be the maximal $\Sg_\infty$-ideal contained in $\ker\gamma$. 
Since $\ker \gamma$ is a maximal ideal of $\mc Z_\infty$, thus
 $C_\star$ is radical. 

For all $k$, by construction,  $\mc Z_k\cap C_\star$ is the intersection of $k+1$ distinct maximal ideals corresponding to the $k+1$ points $(\underbrace{1, 1, \dots, 1}_{k~\mathrm{times}})$ and the same point with a single $1$ replaced by $-1$.
In particular, $\mc Z_k \cap C_\star \subsetneqq \mc Z_k \cap A_\star$ for all $k$ and thus $C_\star \subsetneqq A_\star$.

We have $\mc Z_1\cap C_\star=\mc Z_1(z_1^2-1) \subsetneqq \mc Z_1 \cap A_\star = \mc Z_1(z_1-1)$. 
This implies that  $A^\star = \Theta^\star(A_\star)\supsetneqq\Theta^\star(C_\star)$. 

Recall that $e_1 - e_0 \in A^\star$.  
Since $(z_1-1)(z_2-1) \in C_\star \cap \mc Z_2^{\Sg_2}$, we have
\[ \Theta^{\Sg; \star}_{e^2}((z_1-1)(z_2-1) )= (e_1-e_0)^2 \in \Theta^\star(C_\star).\]

We claim that $A^\star = \sqrt{\Theta^\star(C_\star)}.$
By the previous paragraph and the fact that $A^\star$ is generated as an  $H$-ideal by $e_1-e_0$, we have $A^\star \subseteq \sqrt{\Theta^\star(C_\star)}$.
For all $k$, further, $\gr_k(A^\star)$ has codimension 1 in $\Sa^k(E)$.
Thus either $\sqrt{\Theta^\star(C_\star)} = A_\star$ or $\sqrt{\Theta^\star(C_\star)} \supseteq \Sa^{\geq 1}(E)$.
If $e_1\in \sqrt{\Theta^\star(C_\star)}$, then some $e_1^N\in\Theta^\star(C_\star)$ and so  $z_1z_2\dots z_N\in C_\star$ and
$\gamma(z_1 z_2 \dots z_N) = 0$.
This is not true and hence $\sqrt{\Theta^\star(C_\star)}=A_\star$. 

To summarise,   $A_\star$ and $C_\star$ are distinct radical $\Sg_\infty$-ideals of $\mc Z_\infty$ with $\Theta^\star(A_\star) = \sqrt{\Theta^\star(A_\star)}=\sqrt{\Theta^\star(C_\star)}$.
Since $\Theta^\star(A_\star) \neq \Theta^\star(C_\star)$, thus $\Theta^\star(C_\star)$ is not radical.
\end{example}
\bibliographystyle{amsalpha}
\bibliography{biblio}
\end{document}